\documentclass[twoside,a4paper]{amsart}
\usepackage[bookmarksnumbered,plainpages,driverfallback=dvipdfm,backref=page]{hyperref}
\usepackage[english]{babel}
\usepackage[T1]{fontenc}
\usepackage{xcolor}
\usepackage{graphicx}
\usepackage{float}
\usepackage{amsfonts,amsmath,amsthm,amssymb,latexsym}
\usepackage{vmargin}
\usepackage{amscd}
\usepackage[all]{xy}
\usepackage[final]{showlabels} 
\usepackage{enumerate}
\usepackage{version}
\usepackage[normalem]{ulem}
\usepackage{tikz-cd}
\usepackage{orcidlink}

\newtheorem{thm}{Theorem}[section]
\newtheorem{cor}[thm]{Corollary}
\newtheorem{lem}[thm]{Lemma}
\newtheorem{prop}[thm]{Proposition}
\theoremstyle{definition}
\newtheorem{defn}[thm]{Definition}
\newtheorem{es}[thm]{Example}

\newtheorem{rmk}[thm]{Remark}

\newcommand{\id}{\mathrm{Id}}
\newcommand{\im}{\mathrm{Im}}

\newcommand{\Nat}{\mathrm{Nat}}
\newcommand{\Hom}{\mathrm{Hom}}

\newcommand{\cc}{\mathcal{C}}
\newcommand{\dd}{\mathcal{D}}
\newcommand{\e}{\mathcal{E}}
\newcommand{\f}{\mathcal{F}}

\newcommand{\m}{\mathcal{M}}

\newcommand{\p}{\mathcal{P}}

\newcommand{\rd}[1]{{\color{red}{#1}}}

\newcommand{\Set}{{\sf Set}}

\newcommand{\vect}{{\sf Vec}}

\def\Coalg{{\sf Coalg}}

\allowdisplaybreaks[4]

\newenvironment{invisible}{{\noindent\sc \colorbox{yellow}{Invisible:}\;}\color{gray}}{\medskip}
\excludeversion{invisible}

\begin{document}

\title{Semiseparability of induction functors in a monoidal category}

\author{Lucrezia Bottegoni \orcidlink{0009-0005-8172-5605}}
\address{%
\parbox[b]{\linewidth} {(Lucrezia Bottegoni)}}
\email{lucrezia.bottegoni@edu.unito.it}
\author{Zhenbang Zuo~\orcidlink{0009-0008-9013-0365}}
\address{%
\parbox[b]{\linewidth}{(Zhenbang Zuo) University of Turin, Department of Mathematics ``G. Peano'', via
Carlo Alberto 10, I-10123 Torino, Italy}}
\email{zhenbang.zuo@edu.unito.it }

\date{}
\subjclass[2020]{Primary 18A22; Secondary 18M05, 18A40, 18M50}
\begin{abstract}
For any algebra morphism in a monoidal category, we provide sufficient conditions (which are also necessary if the unit is a left tensor generator) for the attached induction functor being semiseparable. Under mild assumptions, we prove that the semiseparability of the induction functor is preserved if one applies a lax monoidal functor.  Similar results are shown for the coinduction functors attached to coalgebra morphisms in a monoidal category. As an application, we study the semiseparability of combinations of (co)induction functors in the context of duoidal categories. 

\end{abstract}

\keywords{Semiseparable functor; Monoidal category; (Co)induction functor; Duoidal category}
\maketitle

\tableofcontents

\section{Introduction}
The notion of separable functor was introduced in \cite{NVV89}, and several results and applications are described e.g.\ in
\cite{CMZ02}. As main examples, given a ring morphism $\varphi:R\to S$, the restriction of scalars functor $\varphi_*:\m_S\to\m_R$ is separable if and only if the corresponding ring extension $S/R$ is
separable, 
while its left adjoint, the induction functor $\varphi^*=-\otimes_RS:\m_R\to\m_S$, is separable if and only if there exists an $R$-bimodule morphism $E:S\to R$ such that $E\circ \varphi=\id_R$. In \cite[Theorem 3.6]{BT15}, the previous  characterizations were extended to an algebra extension in a
monoidal category where the unit object is a left $\otimes$-generator.

Semiseparable functors are defined in \cite{AB22} as a weaker notion of separable and naturally full functors \cite{ACMM06}. 
In fact, a functor results to be separable (resp., naturally full) if and only if it is faithful (resp., full) and semiseparable, see \cite[Proposition 1.3]{AB22}. 
Besides, the Rafael-type Theorem \cite[Theorem 2.1]{AB22} characterizes the semiseparability of a left (resp., right) adjoint functor in terms of a regularity condition for the (co)unit of the adjunction.

In this paper, we study semiseparability (and natural fullness) for (co)induction functors attached to a (co)algebra morphism in a monoidal category $(\cc, \otimes, 1)$ with coequalizers (resp., equalizers). We prove that, given an algebra morphism $\varphi:R\to S$ in $\cc$, the induction functor $\varphi^*=-\otimes_RS:\cc_R\to\cc_S$ is semiseparable (resp., naturally full) if $\varphi$ is regular (resp., split-epi) as an $R$-bimodule morphism in $\cc$, see Theorem \ref{thm:phi*semisep} (resp. Proposition \ref{prop:phi*natfull}). The converse holds true as well, if the monoidal unit $1$ is a left $\otimes$-generator. As a particular case, we recover \cite[Proposition 3.1]{AB22} in which $\cc$ is the category $(\mathsf{Ab}, \otimes_\mathbb{Z}, \mathbb{Z})$ of abelian groups. 
We then provide examples for some  monoidal categories, such as the category $(\Set, \times, \{0\})$ of sets, the category $({}_H\m,\otimes, \Bbbk)$ of left modules over a bialgebra $H$, the category $({}_{H}\m_{H}, \otimes_{H}, H)$ of $H$-bimodules where $H$ is a separable $\Bbbk$-algebra. 

As expected, for an algebra $A$ in $\cc$ with unit $u_A$, the induction functor $u_A^*=-\otimes _1 A: \cc_1\to \cc_A$ is exactly the tensor functor $-\otimes A:\cc\to\cc_A$. In Proposition \ref{prop:algebra-induct-functor} we characterize the semiseparability of the latter functor without requiring extra assumptions on the monoidal category $\cc$. 
Besides, in an abelian monoidal category where the unit is a left $\otimes$-generator, for an algebra morphism $f$ with image factorization $f= \varphi \circ \psi$, we show that, if the induction functor $f^*$ is semiseparable, then $\psi^*=-\otimes_R \im(f)$ is naturally full and $\varphi^*= -\otimes_{\im(f)} S$ is separable, see Corollary \ref{semisimple factorization}.

Next we explore variations under which the semiseparability of induction functors is preserved or reflected. We are mainly interested in the variation obtained through monoidal functors, see Proposition \ref{prop:algebra-monoidalfunctor}, Proposition \ref{lax monoidal variation unit}, Proposition \ref{prop:monoidalfunct} and Proposition \ref{prop:refl-semisep-ind}.

Dual results hold true for coinduction functors attached to coalgebras in a monoidal category $\cc$ with equalizers. More explicitly, given a morphism of coalgebras $\psi:C\to D$ in $\cc$, the coinduction functor $\psi^*=-\square_DC:\cc^D\to\cc^C$ is semiseparable if $\psi$ is a regular morphism of $D$-bicomodules in $\cc$, see Theorem \ref{prop:semisep-coind-nocogen}. 
Moreover, in Proposition  \ref{prop:sep-coind-nocogen} (resp., Proposition \ref{prop:natfull-coind}), we show that $\psi^*$ is separable (resp., naturally full) if $\psi$ is a split-epi (resp., split-mono) as a $D$-bicomodule morphism in $\cc$. 
The converses hold true if the unit object $1$ of $\cc$ is a left $\otimes$-cogenerator. In particular, if $\cc$ is the category of $\Bbbk$-vector spaces we recover the known \cite[Proposition 3.8]{AB22}, \cite[Theorem 2.7]{CGN97}, and \cite[Examples 3.23 (1)]{ACMM06}. We give examples in the monoidal categories of sets, of bimodules over a ring $R$, of left modules over a bialgebra $H$, of $n$-by-$n$ matrices with entries in $\vect_f$, etc.

We then turn our attention to the setting of duoidal categories $(\cc,\circ, \bullet)$. Given two algebra morphisms $f_1$, $f_2$ in $\cc^\circ$ (with unit being a $\circ$-generator) such that the attached induction functors are semiseparable (resp., separable, naturally full), we prove that $(f_1\bullet f_2)^*$ is also semiseparable (resp., separable, naturally full), see Proposition \ref{prop:duoidal}. 
This result can be applied, for instance, to a monoidal category with finite products (Corollary \ref{additivemonoidal}), and to a pre-braided monoidal category (Corollary \ref{cor:prebraid}). Moreover, for algebras $R$, $S$ in $\cc^\circ$ such that the functors $-\circ R:\cc\to \cc_R$, $-\circ S: \cc\to\cc_S$ are semiseparable, we consider the semiseparability of  $-\circ (R\bullet S): \cc\to\cc_{R\bullet S}$, see Proposition \ref{prop:duoidal-delta}. Similar results can be shown for the coinduction functors, see e.g.\ Proposition \ref{prop:duoidal-coind}, Proposition \ref{prop:duoidal-coalgebras}.

\medskip

The paper is organized as follows. In Section \ref{background}, we recall some basic definitions and results about semiseparable, separable, and naturally full functors. Section \ref{sub:induct} investigates the semiseparability for the induction functor attached to an algebra morphism in a monoidal category. 
Several examples are provided in Subsection \ref{subsect:examples}. As a particular case, in Subsection \ref{subsect:alg-mon} we consider the tensor functors attached to algebras in a monoidal category. Subsection \ref{subsect:monoidal-functor} collects results for induction functors varied by a monoidal functor. In Section \ref{sect:coinduct}, we study the semiseparability for the coinduction functor attached to a coalgebra morphism in a monoidal category. Finally, in Section \ref{sect:duoidal} we show results for (co)induction functors in duoidal categories.
\medskip

\emph{Notations.} Given an object $X$ in a category $\cc$, the identity morphism on $X$ will be denoted either by $\id_X$ or $X$ for short. For categories $\cc$ and $\dd$, a functor $F:\cc\to \dd$ means a covariant functor. We denote the identity functor on $\cc$ by $\id_{\cc}$. 
Besides, we denote by $K$ a commutative unital ring and by $\Bbbk$ a field. By a ring (resp., $K$-algebra) we mean a unital associative ring (resp., $K$-algebra). The category of left modules over a ring $R$ is denoted by ${}_R\m$. Given a coalgebra $C$ over a field, the category of right $C$-comodules and their morphisms is denoted by $\m^{C}$. For an algebra $A$ in a monoidal category $\cc$, we denote by $m_A$ and $u_A$ its multiplication and unit, and by ${}_A\cc$ (resp., $\cc_A$) the category of left (resp., right) modules over $A$. For a coalgebra $C$ in a monoidal category $\cc$, we denote by $\Delta_C$ and $\varepsilon_C$ its comultiplication and counit, and by ${}^C\cc$ (resp., $\cc^C$) the category of left (resp., right) comodules over $C$. A morphism (resp., natural transformation) $f$ is called \emph{regular} if there is a morphism (resp., natural transformation) $g$ such that $f\circ g\circ f=f$.

\section{Background on semiseparable functors}\label{background}

Let $F: \cc \rightarrow \dd$ be a functor and consider the natural transformation
\begin{equation*}\label{nat_transf}
	\f : \Hom_{\cc}(-,-)\rightarrow \Hom_{\dd}(F-, F-),
\end{equation*}
defined by setting $\f_{X,X'}(f)= F(f)$, for any $f:X\rightarrow X'$ in $\cc$.

If there is a natural transformation $\p : \Hom_{\dd}(F-, F-)\rightarrow \Hom_{\cc}(-,-)$ such that
\begin{itemize}
	\item $\p\circ\f = \id $, then $F$ is called \emph{separable} \cite{NVV89};
	\item $\f\circ\p = \id $, then $F$ is called \emph{naturally full} \cite{ACMM06};
	\item $\f\circ\p\circ\f = \f $, then $F$ is called \emph{semiseparable} \cite{AB22}.
\end{itemize}

Semiseparability allows to characterize separable and naturally full functors as follows.
\begin{prop}\label{prop:sep} \cite[Proposition 1.3]{AB22}
	Let $F: \cc \rightarrow \dd$ be a functor. Then,
	\begin{itemize}
		\item[$i)$]$F$ is separable if, and only if, $F$ is semiseparable and faithful;
		\item[$ii)$]$F$ is naturally full if, and only if, $F$ is semiseparable and full.
	\end{itemize}
\end{prop}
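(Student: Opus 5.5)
Both equivalences follow by working componentwise with the natural transformation $\p$ that witnesses semiseparability. For each part, one direction is immediate from the definitions; for the other, the missing property (faithfulness or fullness) is used to upgrade the semiseparability identity $\f\circ\p\circ\f=\f$ to the required one.

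For $i)$, suppose first that $F$ is separable, with $\p\circ\f=\id$. Then $\f\circ\p\circ\f=\f\circ\id=\f$, so $F$ is semiseparable. Moreover each map $\f_{X,X'}$ has a left inverse $\p_{X,X'}$, hence is injective, which says exactly that $F$ is faithful. Conversely, suppose $F$ is semiseparable via $\p$ and faithful. Take a component: $\f_{X,X'}\circ\p_{X,X'}\circ\f_{X,X'}=\f_{X,X'}$, and $\f_{X,X'}$ is injective. Injectivity lets us cancel it on the left, giving $\p_{X,X'}\circ\f_{X,X'}=\id$. Hence $\p\circ\f=\id$, so the same $\p$ shows that $F$ is separable.

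For $ii)$ the argument is dual. If $\f\circ\p=\id$, then $\f\circ\p\circ\f=\f$ holds trivially, so $F$ is semiseparable. Each component $\f_{X,X'}$ has a right inverse, so it is surjective, i.e.\ $F$ is full. Conversely, if $F$ is semiseparable via $\p$ and full, each $\f_{X,X'}$ is surjective. Then $(\f\circ\p)\circ\f_{X,X'}=\f_{X,X'}$, and cancelling the surjection on the right yields $\f_{X,X'}\circ\p_{X,X'}=\id$ on $\Hom_{\dd}(FX,FX')$. Hence $F$ is naturally full via the same $\p$.

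There is no real obstacle here. The only point to be careful about is that the cancellations are performed componentwise on sets of morphisms. Because the witness $\p$ is unchanged, its naturality carries over automatically.
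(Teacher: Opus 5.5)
Your proof is correct. The paper gives no proof of its own and simply quotes the result from \cite[Proposition 1.3]{AB22}. Your componentwise cancellation is the standard argument for it: faithfulness (resp.\ fullness) makes each $\f_{X,X'}$ injective (resp.\ surjective). This lets you cancel $\f\circ\p\circ\f=\f$ on the left (resp.\ right) to obtain $\p\circ\f=\id$ (resp.\ $\f\circ\p=\id$) with the same natural $\p$.
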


Semiseparable functors are not closed under composition, see e.g. \cite[Example 3.3]{AB22}, but in the following cases the closeness is available.

\begin{lem}\label{lem:comp-semisep}\cite[Lemma 1.12]{AB22}
Let $F: \cc \rightarrow \dd$ and $G:\dd\rightarrow\e$ be functors and consider the composite $G\circ F:\cc\rightarrow \e$.
\begin{itemize}
\item[$i)$] If $F$ is semiseparable and $G$ is separable, then $G\circ F$ is semiseparable.
\item[$ii)$] If $F$ is naturally full and $G$ is semiseparable, then $G\circ F$ is semiseparable.
\end{itemize}
\end{lem}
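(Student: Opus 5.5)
We prove both parts by writing down explicit natural transformations built from the given ones for $F$ and $G$, and then checking the required identity by a short diagram chase. Write $\f^F,\f^G,\f^{GF}$ for the canonical transformations, so that $\f^{GF}_{X,X'}=\f^G_{FX,FX'}\circ\f^F_{X,X'}$. Every candidate $\p^{GF}:\Hom_{\e}(GF-,GF-)\to\Hom_{\cc}(-,-)$ will be a composite of components of the given transformations. Each such component is natural in the appropriate variables, and whiskering with $F$ preserves naturality, so every composite is natural in $X,X'$.

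For $i)$, let $\p^F$ satisfy $\f^F\p^F\f^F=\f^F$, and let $\p^G$ satisfy $\p^G\f^G=\id$. Define $\p^{GF}_{X,X'}=\p^F_{X,X'}\circ\p^G_{FX,FX'}$. Then, using first $\p^G\f^G=\id$ and then semiseparability of $F$,
\begin{align*}
\f^{GF}\p^{GF}\f^{GF}&=\f^G\f^F\p^F\,\p^G\f^G\,\f^F=\f^G\f^F\p^F\f^F=\f^G\f^F=\f^{GF}.
\end{align*}

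For $ii)$, let $\p^F$ satisfy $\f^F\p^F=\id$, and let $\p^G$ satisfy $\f^G\p^G\f^G=\f^G$. Take the same formula $\p^{GF}=\p^F\circ\p^G_{F-,F-}$. Now $\f^F\p^F=\id$ collapses the inner pair, and semiseparability of $G$ finishes the computation:
\begin{align*}
\f^{GF}\p^{GF}\f^{GF}&=\f^G\,\f^F\p^F\,\p^G\f^G\f^F=\f^G\p^G\f^G\f^F=\f^G\f^F=\f^{GF}.
\end{align*}

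The only real obstacle is choosing the right composite in the right order, since naturality of the composite is automatic. The order is essentially forced: the one-sided identity ($\p^G\f^G=\id$ in $i)$, $\f^F\p^F=\id$ in $ii)$) must sit in the middle of the expression, where it collapses, and the semiseparability identity is then applied to what remains. This also explains why a composite of two semiseparable functors need not be semiseparable: with both identities only of the form $\f\p\f=\f$, nothing in the middle collapses.
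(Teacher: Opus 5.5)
Your proof is correct. The composite $\p^{GF}_{X,X'}=\p^F_{X,X'}\circ\p^G_{FX,FX'}$ is natural, since it is $\p^F$ composed with the whiskering of $\p^G$ along $F^{\mathrm{op}}\times F$. In each part the one-sided identity collapses in the middle exactly as you compute.

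The paper gives no proof of its own here. It only quotes \cite[Lemma 1.12]{AB22}, and your direct verification is the natural argument for that result. Your closing remark about composites of two semiseparable functors is only a heuristic, since the failure of this one formula does not exclude other choices of $\p$. The actual counterexample is \cite[Example 3.3]{AB22}.
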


A suitable idempotent natural transformation can be attached to any semiseparable functor, and it determines when the functor is separable.

\begin{prop}\label{prop:idempotent} \cite[Proposition 1.4]{AB22}
	Let $F:\cc\rightarrow \dd$ be a semiseparable functor through a natural transformation $\p$. Then,
	there is a unique idempotent natural transformation $e:\id_{\cc%
	}\rightarrow \id_{\cc}$ such that $Fe=\id_{F}$ with the following universal property: if $f,g:X\to Y$ are morphisms, then $Ff=Fg$ if and only if $e_Y\circ f=e_Y\circ g$. Explicitly, $e$ is defined on components by $e_{X}:=\mathcal{P}_{X,X}\left( \id_{FX}\right)$. 
\end{prop}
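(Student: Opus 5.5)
We define $e_X:=\p_{X,X}(\id_{FX})$ and derive everything from two ingredients: naturality of $\p$, and the identity $\f\circ\p\circ\f=\f$. Naturality of $\p$ in both variables means that for $f:X\to X'$, $g:Y'\to Y$ in $\cc$ and $h:FX'\to FY'$ in $\dd$ we have
\[
\p_{X,Y}(Fg\circ h\circ Ff)=g\circ \p_{X',Y'}(h)\circ f .
\]

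The first step is to show that $e$ is natural. For $f:X\to Y$, apply the naturality formula to $h=\id_{FX}$ and post-compose with $f$, and also to $h=\id_{FY}$ and pre-compose with $f$. This gives
\[
f\circ e_X=\p_{X,Y}(Ff)=e_Y\circ f .
\]
The middle expression yields the useful description $\p_{X,Y}(Ff)=e_Y\circ f$. Next, semiseparability applied to $\id_X$ gives $F\p_{X,X}(F\id_X)=F\id_X$, that is, $Fe_X=\id_{FX}$. For idempotency, note that $e_X\circ e_X=\p_{X,X}(Fe_X)$ by the description above with $f=e_X$. Since $Fe_X=\id_{FX}$, this equals $\p_{X,X}(\id_{FX})=e_X$.

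The second step is the universal property. If $e_Y\circ f=e_Y\circ g$, apply $F$ and use $Fe_Y=\id$ to get $Ff=Fg$. Conversely, if $Ff=Fg$, then
\[
e_Y\circ f=\p_{X,Y}(Ff)=\p_{X,Y}(Fg)=e_Y\circ g .
\]

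The last step is uniqueness, which I expect to be the only point needing a little care, since it should use only the stated properties and not $\p$. Suppose $e'$ is another idempotent natural transformation with $Fe'=\id_F$ and the same universal property. From $Fe'_X=F\id_X$, the property of $e$ gives $e_X\circ e'_X=e_X$. Symmetrically, $Fe_X=F\id_X$ and the property of $e'$ give $e'_X\circ e_X=e'_X$. Naturality of $e'$ applied to the morphism $e_X$ gives $e'_X\circ e_X=e_X\circ e'_X$. Combining these, $e_X=e_X\circ e'_X=e'_X\circ e_X=e'_X$.
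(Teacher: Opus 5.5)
Your proof is correct and complete. The paper does not reprove this statement but quotes it from \cite[Proposition 1.4]{AB22}, and your argument is essentially the standard one given there: naturality of $\p$ gives $\p_{X,Y}(Ff)=e_Y\circ f=f\circ e_X$, semiseparability gives $Fe=\id_F$, idempotency and the universal property follow from these two facts, and uniqueness comes from $e_X=e_X e'_X=e'_X e_X=e'_X$.
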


\begin{cor}\cite[Corollary 1.7]{AB22}\label{cor:sep-idemp}
    Let $F:\cc\rightarrow \dd$ be a semiseparable functor and let $e:\id_\cc\to \id_\cc$ be the associated natural transformation. Then, $F$ is separable if, and only if, $e=\id$.
\end{cor}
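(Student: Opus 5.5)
The claim to prove is Corollary \ref{cor:sep-idemp}: a semiseparable $F$ with associated idempotent $e$ is separable exactly when $e=\id$. I will use Proposition \ref{prop:idempotent} together with Proposition \ref{prop:sep}$(i)$, which says that separable means semiseparable and faithful. Since $F$ is already semiseparable, it suffices to show that $F$ is faithful if and only if $e=\id$.

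Suppose first that $e=\id$. Take $f,g:X\to Y$ with $Ff=Fg$. The universal property of $e$ in Proposition \ref{prop:idempotent} gives $e_Y\circ f=e_Y\circ g$. Since $e_Y=\id_Y$, this means $f=g$, so $F$ is faithful and hence separable.

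Conversely, suppose $F$ is separable, so in particular faithful. For any object $X$ we have $Fe_X=\id_{FX}=F(\id_X)$, because $Fe=\id_F$. Faithfulness then gives $e_X=\id_X$, so $e=\id$.

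A subtlety is that the separability datum $\p'$ need not be the $\p$ used to define $e$. The argument above avoids this issue, since it only uses faithfulness and the property $Fe=\id_F$. Alternatively, one could note that $e$ is unique, so it does not depend on the choice of $\p$; then with $\p'$ one gets $e_X=\p'(\id_{FX})=\p'(F\id_X)=\id_X$. There is no real obstacle here: the only point needing care is invoking the uniqueness/universal property rather than a particular choice of $\p$.
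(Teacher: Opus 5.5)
Your proof is correct. The paper itself gives no argument for this statement; it is quoted from \cite[Corollary 1.7]{AB22}, so there is no in-text proof to compare with. Your derivation from Proposition \ref{prop:sep}$(i)$ and Proposition \ref{prop:idempotent} is complete. You also rightly note that only $Fe=\id_F$ and the universal property are used, so the choice of $\p$ defining $e$ plays no role, and the uniqueness clause in Proposition \ref{prop:idempotent} settles this as well.

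A slightly more direct route is available for the implication ``$e=\id\Rightarrow F$ separable'', which avoids Proposition \ref{prop:sep}. Naturality of $\p$ in the first variable gives $\p_{X,Y}(Ff)=\p_{Y,Y}(\id_{FY})\circ f=e_Y\circ f$, that is, $\p\circ\f(f)=e_Y\circ f$. Hence $e=\id$ says precisely that the same $\p$ already witnesses separability, $\p\circ\f=\id$. The converse still needs either faithfulness, as in your argument, or the independence of $e$ from $\p$, because a separability datum need not coincide with the given $\p$.
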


A key result for separable functors is the  \emph{Rafael Theorem} \cite[Theorem 1.2]{Raf90}, which provides a characterization of separability for functors that have an adjoint in terms of splitting properties for the (co)unit. 
A Rafael-type Theorem still holds for semiseparability. 

\begin{thm}\label{thm:rafael}\emph{\cite[Theorem 2.1]{AB22}}
	Let $F\dashv G:\dd\to\cc$ be an adjunction, with unit $\eta:\id_\cc\to GF$ and counit $\epsilon:FG\to\id_\dd$. Then,
	\begin{itemize}
		\item[$i)$] $F$ is semiseparable if, and only if, $\eta$ is regular;
		
		\item[$ii)$] $G$ is semiseparable if, and only if, $\epsilon$ is regular.
		
	\end{itemize}
\end{thm}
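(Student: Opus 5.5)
Only part $i)$ needs an argument; part $ii)$ is its dual, obtained by passing to opposite categories. Indeed, $G^{op}\dashv F^{op}$ is an adjunction whose unit is $\epsilon^{op}$. A natural transformation is regular if and only if its opposite is, and $G$ is semiseparable if and only if $G^{op}$ is (the transformations $\f$ and $\p$ are simply reindexed).

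For $i)$, write $\Phi_{X,Y}:\Hom_\dd(FX,Y)\to\Hom_\cc(X,GY)$ for the adjunction bijection, $\Phi(h)=Gh\circ\eta_X$. The inverse sends $k$ to $\epsilon_Y\circ Fk$. The key step is to use the Yoneda lemma to translate natural transformations $\p:\Hom_\dd(F-,F-)\to\Hom_\cc(-,-)$ into natural transformations $\nu:GF\to\id_\cc$. Composing with $\Phi$ identifies $\Hom_\dd(FX,FY)\cong\Hom_\cc(X,GFY)$, naturally in $X$ and $Y$. By Yoneda in the variable $X$, natural maps $\Hom_\cc(-,GFY)\to\Hom_\cc(-,Y)$ are exactly postcomposition with a morphism $\nu_Y:GFY\to Y$, and naturality in $Y$ makes $\nu$ natural. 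Explicitly, the correspondence is
\[
\p_{X,Y}(h)=\nu_Y\circ Gh\circ\eta_X,\qquad \nu_Y=\p_{GFY,Y}(\epsilon_{FY}).
\]
I will check directly that the first formula is natural in $X$ and $Y$ for any natural $\nu$. This direct check gives one direction without invoking Yoneda in full.

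Next, compute $\p\circ\f$: for $f:X\to Y$, naturality of $\eta$ gives $\p(Ff)=\nu_Y\circ GFf\circ\eta_X=\nu_Y\circ\eta_Y\circ f$. Hence $\f\circ\p\circ\f=\f$ says $F(\nu_Y\eta_Y f)=Ff$ for all $f$. Since $F$ is left adjoint, $Fg=Fg'$ for $g,g':X\to Y$ holds iff $\eta_Y\circ g=\eta_Y\circ g'$. This follows by applying $\Phi$: $\Phi(Fg)=GFg\circ\eta_X=\eta_Y\circ g$. Therefore semiseparability is equivalent to $\eta_Y\nu_Y\eta_Y f=\eta_Y f$ for all $f:X\to Y$. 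Taking $f=\id_Y$ shows this is equivalent to $\eta\circ\nu\circ\eta=\eta$, i.e. to regularity of $\eta$ with quasi-inverse $\nu$.

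Both directions then follow. If $\eta$ is regular via $\nu$, define $\p$ by the formula above. If $F$ is semiseparable via $\p$, define $\nu$ via Yoneda; the computation then yields $\eta\nu\eta=\eta$. The only delicate point, which I expect to be the main obstacle, is the Yoneda step. There one must verify that $\p_{X,Y}(h)=\p_{GFY,Y}(\epsilon_{FY})\circ Gh\circ\eta_X$. This uses naturality of $\p$ in the first variable applied to $\Phi(h):X\to GFY$, together with $\epsilon_{FY}\circ F\Phi(h)=h$. It also requires checking that $\nu$ is natural in $Y$, which uses naturality of $\p$ in the second variable and of $\epsilon$.
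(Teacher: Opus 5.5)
Your argument is correct and is essentially the standard proof of \cite[Theorem 2.1]{AB22}, which the paper quotes without reproving. You identify $\p$ with a natural $\nu:GF\to\id_\cc$ via $\p_{X,Y}(h)=\nu_Y\circ Gh\circ\eta_X$ and $\nu_Y=\p_{GFY,Y}(\epsilon_{FY})$, translate $\f\circ\p\circ\f=\f$ into $\eta\circ\nu\circ\eta=\eta$, and obtain $ii)$ by duality. One small bookkeeping point: the naturality check for $\nu$ also uses naturality of $\p$ in the first variable, either directly or through your formula for $\p$ combined with the triangle identity $G\epsilon_{FY}\circ\eta_{GFY}=\id_{GFY}$.
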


Several examples of semiseparable functors can be found in \cite[Section 3]{AB22}. We recall the main characterizations of semiseparability for the extension of scalars functors attached to a ring morphism, for the coinduction functor associated to a morphism of coalgebras, for the induction functor attached to a coring.

\begin{prop}\label{prop:inducfunc}\emph{\cite[Proposition 3.1]{AB22}}
	Let $\varphi :R\to S$ be a morphism of rings. Then, the extension of scalars functor $\varphi^*= S\otimes_{R}(-):{}_R\m\rightarrow {}_S\m$ is semiseparable if, and only if, $\varphi$ is a regular morphism of $R$-bimodules, i.e., there is $E\in {}_{R}\Hom_{R}(S,R)$ such that $\varphi\circ E\circ\varphi =\varphi$, i.e., such that $\varphi E(1_S)=1_S$.
\end{prop}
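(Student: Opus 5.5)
The plan is to apply the Rafael-type Theorem \ref{thm:rafael}$(i)$ to the adjunction $\varphi^*=S\otimes_R(-)\dashv\varphi_*:{}_S\m\to{}_R\m$. Its unit is $\eta_M:M\to S\otimes_R M$, $m\mapsto 1_S\otimes m$, for $M\in{}_R\m$. So $\varphi^*$ is semiseparable if and only if there is a natural transformation $\nu:\varphi_*\varphi^*\to\id_{{}_R\m}$ with $\eta\circ\nu\circ\eta=\eta$. The task is to identify such $\nu$ with $R$-bimodule maps $E:S\to R$.

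For sufficiency, let $E\in{}_R\Hom_R(S,R)$ satisfy $\varphi\circ E\circ\varphi=\varphi$. Define $\nu_M:S\otimes_R M\to M$ by $s\otimes m\mapsto E(s)m$. This is well defined because $E$ is right $R$-linear, it is left $R$-linear because $E$ is left $R$-linear, and it is clearly natural in $M$. Then
\[
\eta_M\nu_M\eta_M(m)=1_S\otimes E(1_S)m=\varphi(E(1_S))\otimes m=\varphi E\varphi(1_R)\otimes m=1_S\otimes m .
\]
Hence $\eta$ is regular.

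For necessity, take $\nu$ with $\eta\nu\eta=\eta$ and set $M=R$. Using $S\otimes_R R\cong S$, we get a left $R$-linear map $E:=\nu_R:S\to R$. For right $R$-linearity I will use naturality of $\nu$ with respect to the left $R$-module endomorphisms $\rho_r:R\to R$, $x\mapsto xr$. Under the identification, $\varphi^*(\rho_r)$ is right multiplication by $\varphi(r)$ on $S$, so naturality gives $E(s\varphi(r))=E(s)r$. This makes $E$ an $R$-bimodule map, with $S$ regarded as an $R$-bimodule via $\varphi$. The relation $\eta_R\nu_R\eta_R=\eta_R$ then reads $\varphi(E(1_S))=1_S$, i.e.\ $\varphi E\varphi(1_R)=\varphi(1_R)$. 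Since all maps involved are left $R$-linear and $R$ is generated by $1_R$, this is equivalent to $\varphi\circ E\circ\varphi=\varphi$. The same computation shows that the two formulations in the statement agree: for bimodule maps, $\varphi E\varphi=\varphi$ holds if and only if $\varphi E(1_S)=1_S$.

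The main point to handle carefully is the necessity direction: extracting right $R$-linearity of $E$ from the naturality of $\nu$ alone. This works because every endomorphism of ${}_R R$ is a right multiplication $\rho_r$, and the isomorphism $S\otimes_R R\cong S$ carries $\varphi^*(\rho_r)$ to right multiplication by $\varphi(r)$.
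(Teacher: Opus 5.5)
Your proof is correct and follows essentially the paper's route: the paper recovers this statement (Example \ref{es1}) as the case $\cc=\mathsf{Ab}$ of Theorem \ref{thm:phi*semisep}. That proof uses the same Rafael-type criterion and the same $\nu_M$ induced by $E$. Via Lemma \ref{lem:bij} it takes the same $E=\nu_R$ (up to $S\otimes_R R\cong S$), and the one-sided linearity that is not automatic comes from naturality of $\nu$ along multiplication maps on $R$, which is where $\mathbb{Z}$ being a $\otimes$-generator is used. The only difference is that you work with left modules, so left and right linearity swap roles relative to the paper's $-\otimes_R S$ (cf.\ Remark \ref{rmk:left-rightsymm}).
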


\begin{prop}\cite[Proposition 3.8]{AB22}
    Let $\psi :C\to D$ be a morphism of coalgebras. Then, the coinduction functor $\psi^* =(-)\square_D C:\m^D\to\m^C$ is semiseparable if, and only if, $\psi$ is a regular morphism of $D$-bicomodules if, and only if, there is a $D$-bicomodule morphism $\chi:D\to C$ such that $\varepsilon_C \circ\chi\circ\psi=\varepsilon_C$.
\end{prop}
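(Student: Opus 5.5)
The plan is to reduce the statement to the Rafael-type Theorem \ref{thm:rafael}. The coinduction functor $\psi^*=(-)\square_D C:\m^D\to\m^C$ is right adjoint to the corestriction functor $\psi_*:\m^C\to\m^D$, which views a right $C$-comodule $M$ as a $D$-comodule via $(M\otimes\psi)\circ\rho_M$. The counit $\epsilon:\psi_*\psi^*\to\id_{\m^D}$ of this adjunction is $\epsilon_N=N\otimes\varepsilon_C$ restricted to $N\square_D C$. By Theorem \ref{thm:rafael} $ii)$, $\psi^*$ is semiseparable if and only if $\epsilon$ is regular. That is, we need a natural transformation $\gamma:\id_{\m^D}\to\psi_*\psi^*$ of $D$-comodule maps with $\epsilon\circ\gamma\circ\epsilon=\epsilon$.

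The first step is to identify natural transformations $\gamma$ with $D$-bicomodule maps $\chi:D\to C$. Given $\chi$, set $\gamma_N:=(N\otimes\chi)\circ\rho_N:N\to N\square_D C$. Bicolinearity of $\chi$ ensures that this lands in the cotensor product and is $D$-colinear. Conversely, given $\gamma$, evaluate on the cofree comodule $D$ and use $D\square_D C\cong C$ to get $\chi:=\gamma_D$. This map is right $D$-colinear, and naturality of $\gamma$ with respect to the left coaction maps $d\mapsto d_1\otimes d_2$, viewed as comodule maps $D\to V\otimes D$ for vector spaces $V$ (or with respect to the maps $f\otimes D$ for $f\in D^*$), forces left $D$-colinearity. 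This is the standard argument for the coring/coalgebra version of Rafael's theorem, as in \cite{CGN97}. Naturality also shows that $\gamma$ is recovered from $\chi$, since $\rho_N:N\to N\otimes D$ is a comodule map into a cofree comodule.

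The second step is to translate the regularity condition. Evaluating $\epsilon\circ\gamma\circ\epsilon=\epsilon$ at $N=D$ and using $D\square_D C\cong C$, the counit $\epsilon_D$ becomes $\psi$ and $\gamma_D$ becomes $\chi$. The condition then reads $\psi\circ\chi\circ\psi=\psi$, so $\psi$ is a regular morphism of $D$-bicomodules. Conversely, if $\psi\chi\psi=\psi$, I check the identity componentwise. For $x=\sum n_i\otimes c_i\in N\square_D C$ we have
\[
\gamma_N\epsilon_N(x)=\sum n_{i0}\,\varepsilon(c_i)\otimes\chi(n_{i1}).
\]
Using the cotensor condition $n_{i0}\otimes n_{i1}\otimes c_i=n_i\otimes\psi(c_{i1})\otimes c_{i2}$, this becomes $\sum n_i\otimes \chi\psi(c_i)$. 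Applying $\epsilon_N$ gives $\sum n_i\,\varepsilon_C\chi\psi(c_i)$, and since $\varepsilon_C=\varepsilon_D\psi$, this equals $\sum n_i\,\varepsilon_D\psi\chi\psi(c_i)=\sum n_i\,\varepsilon_C(c_i)=\epsilon_N(x)$.

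The last equivalence concerns $\psi\chi\psi=\psi$ versus $\varepsilon_C\chi\psi=\varepsilon_C$. One direction follows by composing with $\varepsilon_D$. For the other, $\psi\chi\psi$ and $\psi$ are both right $D$-colinear maps $C\to D$, hence determined by their composites with $\varepsilon_D$: a colinear map $g:C\to D$ satisfies $g=(\varepsilon_D g\otimes D)\Delta_C$ up to the appropriate side. The hypothesis gives $\varepsilon_D\psi\chi\psi=\varepsilon_C\chi\psi=\varepsilon_C=\varepsilon_D\psi$, so the two maps coincide.

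I expect the main obstacle to be the first step, namely showing that an arbitrary natural $\gamma$ comes from a bicomodule map. In particular, obtaining left colinearity of $\gamma_D$ requires a careful naturality argument using cofree comodules.
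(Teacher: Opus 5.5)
Your proof is correct and is the paper's route specialized to $\vect_\Bbbk$. The paper obtains this statement from Theorem \ref{prop:semisep-coind-nocogen} (see Example \ref{Vec cogenerator}). That proof applies Theorem \ref{thm:rafael} $ii)$ to the counit and builds $\gamma_N$ from $\chi$ via $e^D_{N,C}\gamma_N=(N\otimes\chi)\rho^D_N$. It then uses Lemma \ref{lem:Dbicomd} (naturality along $(\xi\otimes D)\Delta_D$ for $\xi\in D^*$, together with $\Bbbk$ being a left $\otimes$-cogenerator) to show $\chi=(\Lambda'_C)^{-1}\gamma_D$ is bicolinear, and $\psi=\epsilon_D\Lambda'_C$.

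The only difference is in deriving $\psi\chi\psi=\psi$ from $\varepsilon_C\chi\psi=\varepsilon_C$: the paper computes directly using left colinearity of $\chi$. Your argument that right colinear maps into $D$ are determined by $\varepsilon_D\circ g$ also works, but the formula should read $g=(\varepsilon_D g\otimes\psi)\Delta_C$, not $(\varepsilon_D g\otimes D)\Delta_C$.
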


\begin{prop}\cite[Theorem 3.10]{AB22}\label{prop:coring}
   Let $\cc$ be an $R$-coring. Then, the following are equivalent.
\begin{enumerate}
  \item[$(i)$] The induction functor $G:=(-)\otimes_R\cc : \m_R\to\m^{\cc}$ is semiseparable. 
  \item[$(ii)$] The coring counit $\varepsilon_{\cc}:\cc\to R$ is regular as a morphism of $R$-bimodules.
  \item[$(iii)$] There exists an invariant element $z\in\cc^R$ such that $\varepsilon_{\cc}(z)\varepsilon_{\cc}(c)=\varepsilon_{\cc}(c)$ (equivalently such that $\varepsilon_{\cc}(z)c=c$), for every $c\in \cc$.
\end{enumerate} 
\end{prop}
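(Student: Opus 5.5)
The plan is to deduce the equivalences from the Rafael-type Theorem \ref{thm:rafael}$(ii)$, applied to the adjunction $(-)\otimes_R\cc\dashv G$, where $G=(-)\otimes_R\cc:\m_R\to\m^{\cc}$ is the induction functor (a left adjoint of the forgetful functor $U:\m^{\cc}\to\m_R$). Here $\cc$ denotes the coring, as in the statement. More precisely, $G$ is the \emph{right} adjoint of the forgetful functor $U$. The counit of $U\dashv G$ is $\epsilon_M=M\otimes_R\varepsilon_{\cc}:M\otimes_R\cc\to M$, a morphism in $\m_R$. The unit is the coaction $\rho_N:N\to N\otimes_R\cc$. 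By Theorem \ref{thm:rafael}$(ii)$, $G$ is semiseparable if and only if $\epsilon=-\otimes_R\varepsilon_{\cc}$ is regular as a natural transformation $UG\to\id_{\m_R}$.

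For $(i)\Rightarrow(ii)$, suppose $\epsilon\circ\gamma\circ\epsilon=\epsilon$ for some natural $\gamma:\id_{\m_R}\to -\otimes_R\cc$. Naturality, tested against the morphisms $R\to M$, $r\mapsto mr$, forces $\gamma_M(m)=m\otimes z$ with $z:=\gamma_R(1)\in\cc$. Naturality with respect to left multiplications $R\to R$, which are right $R$-linear, forces $rz=zr$, i.e.\ $z\in\cc^R$. Then the map $E:R\to\cc$, $r\mapsto zr=rz$, is an $R$-bimodule map. The identity $\epsilon_{\cc}\gamma_{\cc}\epsilon_{\cc}=\epsilon_{\cc}$ evaluated at $c\otimes c'$ gives $c\,\varepsilon_{\cc}(c')=\varepsilon_{\cc}(c)\cdots$, which is slightly messy. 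Instead I will evaluate at $M=R$: $\epsilon_R\gamma_R\epsilon_R=\epsilon_R$ reads $\varepsilon_{\cc}(z)\varepsilon_{\cc}(c)=\varepsilon_{\cc}(c)$ for all $c$. This gives $(iii)$ directly, and also $(ii)$, since $\varepsilon_{\cc}\circ E\circ\varepsilon_{\cc}(c)=\varepsilon_{\cc}(z\varepsilon_{\cc}(c))=\varepsilon_{\cc}(z)\varepsilon_{\cc}(c)$.

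For $(ii)\Rightarrow(iii)$, take $E$ with $\varepsilon_{\cc}E\varepsilon_{\cc}=\varepsilon_{\cc}$ and set $z=E(1)$. Bilinearity of $E$ gives $rz=E(r)=zr$, so $z\in\cc^R$. The regularity identity then becomes $\varepsilon_{\cc}(z)\varepsilon_{\cc}(c)=\varepsilon_{\cc}(z\varepsilon_{\cc}(c))=\varepsilon_{\cc}(c)$. For the parenthetical equivalence, I expect this to be the one genuine computation. Write $\varepsilon_{\cc}(z)c=\varepsilon_{\cc}(z)\,c_{(1)}\varepsilon_{\cc}(c_{(2)})$. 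Since $z$ is $R$-central, I will move $\varepsilon_{\cc}(z)$ through the counit axiom $c=\varepsilon_{\cc}(c_{(1)})c_{(2)}$, obtaining $\varepsilon_{\cc}(z)c=\varepsilon_{\cc}(z)\varepsilon_{\cc}(c_{(1)})c_{(2)}=\varepsilon_{\cc}(c_{(1)})c_{(2)}=c$. The converse direction follows by applying $\varepsilon_{\cc}$ to $\varepsilon_{\cc}(z)c=c$.

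For $(iii)\Rightarrow(i)$, define $\gamma_M(m)=m\otimes z$. This is well defined and right $R$-linear because $z\in\cc^R$, and it is clearly natural. Then $\epsilon_M\gamma_M\epsilon_M(m\otimes c)=m\varepsilon_{\cc}(c)\varepsilon_{\cc}(z)$. Using centrality of $\varepsilon_{\cc}(z)$ in the relevant sense (from $z\in\cc^R$, the element $\varepsilon_{\cc}(z)$ commutes with all of $R$) together with $(iii)$, this equals $m\varepsilon_{\cc}(c)$. So $\epsilon$ is regular, and Theorem \ref{thm:rafael}$(ii)$ yields that $G$ is semiseparable. The main obstacle is bookkeeping the sides of the multiplications; the tensor-product well-definedness depends precisely on $z$ being invariant.
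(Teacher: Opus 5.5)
Your proof is correct and follows essentially the argument behind the paper's treatment. The paper quotes this statement from \cite{AB22} and later says it is recovered by Proposition \ref{prop:coalgebra-induct-functor}, whose proof is exactly the Rafael-type argument on the counit: $\gamma_M=M\otimes\chi$ in one direction, and $\chi=\gamma_1$ in the other. Your one adaptation is needed because the domain here is $\m_R$ rather than ${}_R\m_R$, so $\gamma_R$ is a priori only right $R$-linear; you correctly use naturality against left multiplications $R\to R$ to obtain $z\in\cc^R$, which supplies the bilinearity of $E$.
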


In the following sections, we will investigate semiseparability for (co)induction functors attached to (co)algebra morphisms in a monoidal category. For further details on (co)algebras in a monoidal category we refer to \cite{AM10}. Without loss of generality, we usually assume monoidal categories are strict, in view of Mac Lane coherence theorem. 

\section{Extension of scalars functors}\label{sub:induct} 

We first remind preliminary notions from \cite{BT15}, \cite[Section 3]{PaI}. 
Let $\cc$ be a monoidal category with coequalizers. Consider an algebra $A$ in $\cc$, $M\in \cc_{A}$
and $X\in {}_A\cc$, with structure morphisms $\nu^A_M:M\otimes A\to M$ and $\mu_X^A:A\otimes X\to X$, respectively. 
Let $(M\otimes _AX,q^{A}_{M,X})$ be the coequalizer of the parallel morphisms
$\nu^A_M\otimes\id_X$
and $\id_M\otimes \mu_X^A$ in $\cc$:

\begin{equation}\label{eq:coeq}
\xymatrix@C=1.5cm{M\otimes A\otimes X\ar@<-1ex>[r]_-{\id_M\otimes \mu_X^A}\ar@<1ex>[r]^-{\nu^A_{M}\otimes\id_X}& M\otimes X\ar[r]^-{q^A_{M,X}}& M\otimes _AX.}
\end{equation}

For a left $A$-linear morphism $f:X\to Y$ in $\cc$, let $\tilde{f}:=M\otimes_A f:M\otimes _AX\to M\otimes_AY$ be the unique morphism in $\cc$ such that
\[
\tilde{f}q^A_{M,X}=q^A_{M,Y} (\id_M\otimes f).\]
\begin{invisible}
    Indeed, $q^A_{M,Y}(\id_M\otimes f)(\id_M\otimes \mu^A_X)=q^A_{M,Y}(\id_M\otimes \mu^A_Y)(\id_M\otimes\id_A\otimes f)=q^A_{M,Y}(\nu^A_M\otimes\id_Y)(\id_M\otimes\id_A \otimes f)=q^A_{M,Y}(\nu^A_M\otimes f)=q^A_{M,Y}(\id_M\otimes f)(\nu^A_M\otimes\id_X)$.
\end{invisible}

Let $g:M\to N$ in $\cc_A$ and $Y\in {}_A\cc$. By $\hat{g}:= g\otimes_A Y :M\otimes _AY\to N\otimes _A Y$ we denote the unique morphism in $\cc$ such that 
\[
\hat{g}q^A_{M,Y}=q^A_{N,Y}(g\otimes\id_Y).
\]
For $M\in\cc_A$, $X\in\cc$, $Y\in {}_A\cc$, there are canonical natural isomorphisms $\Upsilon_M$, $\Upsilon_{M,X}$, $\Upsilon'_Y$:
\begin{itemize}
    \item $\Upsilon_M:M\otimes_AA\to M$, uniquely determined by $\Upsilon_Mq^A_{M,A}=\nu^A_M$; 
    \item $\Upsilon_{M,X}:M\otimes _A(A\otimes X)\to M\otimes X$, uniquely determined by  $\Upsilon_{M,X}q^A_{M,A\otimes X}=\nu^A_M\otimes \id_X$;
    \item $\Upsilon'_Y:A\otimes_AY\to Y$, uniquely determined by  $\Upsilon'_Yq^A_{A,Y}=\mu^A_Y$. 
\end{itemize}

One can check that 
\[
\Upsilon^{-1}_M=q^A_{M,A}(\id_M\otimes u_A);\quad \Upsilon'^{-1}_Y=q^A_{A,Y}(u_A\otimes \id_Y);\quad \Upsilon^{-1}_{M,X}=q^A_{M,A\otimes X}(\id_M\otimes u_A\otimes \id_X). 
\]
\begin{invisible}
We only prove the second equality, as the others follow similarly.

We have\[
\Upsilon'_Yq^A_{A,Y}(u_A\otimes \id_Y)=\mu^A_Y(u_A\otimes \id_Y)=\id,\]
and \[\begin{split}
q^A_{A,Y}&(u_A\otimes \id_Y)\Upsilon'_Yq^A_{A,Y}=q^A_{A,Y}(u_A\otimes \id_Y)\mu^A_Y=q^A_{A,Y}(u_A\otimes\mu^A_Y)\\&=q^A_{A,Y}(\id_A\otimes\mu^A_Y)(u_A\otimes \id_{A\otimes Y})=q^A_{A,Y}(m_A\otimes\id_Y)(u_A\otimes \id_{A\otimes Y})=q^A_{A,Y},
\end{split}
\]
so $q^A_{A,Y}(u_A\otimes \id_Y)\Upsilon'_Y=\id$, as $q^A_{A,Y}$ is an epimorphism.
\end{invisible}

Recall that an object $X$ of $\cc$ is \emph{right} (resp., \emph{left}) \emph{coflat} if the functor $X\otimes -$ (resp., $-\otimes X$) preserves coequalizers. An object of $\cc$ is \emph{coflat} if it is both left and right coflat.

Let $A, R$ be algebras in $\cc$. By \cite[Lemma 2.4]{BC07} we have:
\begin{itemize}
    \item[$i)$] if $A$ is right coflat, then for any $X\in {}_A\cc_R$ and $Y\in {}_R\cc$, the morphism $\mu^A_{X\otimes_R Y}:A\otimes (X\otimes_R Y)\to X\otimes_RY$, uniquely determined by $\mu^A_{X\otimes _RY}(\id_A\otimes q^R_{X,Y})=q^R_{X,Y}(\mu^A_X\otimes\id_Y)$, defines on $X\otimes_R Y$ a left $A$-module structure in $\cc$; 
\item[$ii)$] if $A$ is left coflat, then for any $X\in\cc_R$ and $Y\in{}_R\cc_A$, the morphism $\nu^A_{X\otimes _RY}:(X\otimes_R Y)\otimes A\to X\otimes_RY$, uniquely determined by $\nu^A_{X\otimes_R Y}(q^R_{X,Y}\otimes\id_A)=q^R_{X,Y}(\id_X\otimes \nu^A_Y)$, 
defines on $X\otimes_RY$ a right $A$-module structure in $\cc$.
\end{itemize}

\begin{defn}\label{def:ogenerator} \cite[Definition 3.1]{BT15}
	Let $\cc$ be a monoidal category. An object $P$ of $\cc$ is a \emph{left $\otimes$-generator} of $\cc$ if wherever we consider two morphisms $f,g:Y\otimes Z\to W$ in $\cc$ such that $f(\varepsilon\otimes \id_Z)=g(\varepsilon\otimes \id_Z)$, for all $\varepsilon :P\to Y$ in $\cc$, then $f=g$.
  \end{defn}  
  The notion of \emph{right $\otimes$-generator} can be defined symmetrically. We call \emph{two-sided $\otimes$-generator} an object which is both a left and right $\otimes$-generator.
	By taking $Z=1$ in the above definition we get that a left $\otimes$-generator of a monoidal category $\cc$ is necessarily a generator for $\cc$. 

In this section, unless stated otherwise, we always assume that monoidal categories have  coequalizers and their objects are coflat.

\subsection{Semiseparability of induction functors}\label{subsect:induct}

Let $\varphi:R\to S$ be  an algebra morphism in $\cc$.
Observe that $S$ becomes an $R$-bimodule with $R$-module structures given by $m_S(\varphi\otimes \id_S)$ and $m_S(\id_S\otimes \varphi)$.
Consider the restriction of scalars functor $\varphi_*:\cc_S\to\cc_R$ and its left adjoint, the induction functor 
\[
\varphi^*:=-\otimes_RS:\cc_R\to\cc_S, \quad X\mapsto X\otimes_RS, \quad f\mapsto \hat{f},
\]
where the right $S$-module structure of $X\otimes_RS$ is induced by $m_S$.
The unit and the counit of the adjunction are given by
\[
\eta_X:=q^R_{X,S}(\id_X\otimes u_S):X\to X\otimes_RS,\quad \epsilon_M:=\overline{\nu^S_M}:M\otimes_RS\to M,
\]
where the latter is uniquely determined by $\overline{\nu^S_M}q^R_{M,S}=\nu^S_M$. 
\begin{invisible}
Indeed, $\nu^S_M(\id_M\otimes \mu^R_S)=\nu^S_M(\id_M\otimes m_S(\varphi\otimes\id_S))=\nu^S_M(\nu^S_M\otimes\id_S)(\id_M\otimes\varphi\otimes\id_S)=\nu^S_M(\nu^R_M\otimes\id_S)$.\end{invisible}

\begin{rmk}\label{rmk:comp-phi-psi}
   If $\psi:L\to R$ and $\varphi:R\to S$ are algebra morphisms in $\cc$, then $\varphi^*\circ\psi^*\cong(\varphi\circ\psi)^*$. Indeed, $(\varphi\circ\psi)_*=\psi_*\circ\varphi_*$ is the right adjoint of $\varphi^*\circ\psi^*$ 
   by \cite[Proposition 3.2.1]{Bor94}.
Meanwhile, we have the adjunction $(\varphi\circ\psi)^*\dashv(\varphi\circ\psi)_*$, hence $\varphi^*\circ\psi^*\cong(\varphi\circ\psi)^*$. 
\end{rmk}

Since $\varphi_*$ is faithful, by Proposition \ref{prop:sep}, it results to be semiseparable if and only if it is separable. Next we want to investigate the  semiseparability of $\varphi^*$. We recall that the separability of   $\varphi^*$ was characterized in \cite{BT15} as follows. 
\begin{thm}\label{thm:sepmon}\cite[Theorem 3.6]{BT15} 
    Let $\cc$ be a monoidal category such that $1$ is a left $\otimes$-generator. Consider an algebra morphism $\varphi:R\to S$ in $\cc$. 
Then, the induction functor $\varphi^*=-\otimes_RS:\cc_R\to\cc_S$ is separable if, and only if, there exists an $R$-bimodule morphism $E:S\to R$ in $\cc$ such that $Eu_S=u_R$ (equivalently $E\circ\varphi=\id_R$).     
\end{thm}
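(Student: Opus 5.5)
By Rafael's Theorem \cite[Theorem 1.2]{Raf90} (the separable analogue of Theorem \ref{thm:rafael}), $\varphi^*$ is separable if and only if the unit $\eta:\id_{\cc_R}\to\varphi_*\varphi^*$ has a natural left inverse $\beta:\varphi_*\varphi^*\to\id_{\cc_R}$, i.e.\ $\beta_X\eta_X=\id_X$ with $\beta_X$ right $R$-linear and natural in $X\in\cc_R$. The plan is to match such $\beta$'s with $R$-bimodule morphisms $E:S\to R$ satisfying $Eu_S=u_R$. First I dispose of the parenthetical equivalence. Left $R$-linearity of $E$ means $E\,m_S(\varphi\otimes\id_S)=m_R(\id_R\otimes E)$. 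Precomposing with $\id_R\otimes u_S$ gives
\[
E\varphi=m_R(\id_R\otimes Eu_S).
\]
Hence $Eu_S=u_R$ implies $E\varphi=\id_R$. Conversely, if $E\varphi=\id_R$, then $Eu_S=E\varphi u_R=u_R$.

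\emph{Sufficiency.} Given $E$, I define
\[
\beta_X:=\Upsilon_X\circ(X\otimes_R E):X\otimes_RS\to X\otimes_RR\to X.
\]
Here $X\otimes_R E$ is well defined because $E$ is left $R$-linear. It is right $R$-linear because $E$ is right $R$-linear, using the description of the induced module structures via \cite[Lemma 2.4]{BC07} and the fact that each $q^R$ is an epimorphism. Naturality in $X$ follows from the naturality of $\Upsilon$ and of $-\otimes_R E$. Finally,
\[
\beta_X\eta_X=\Upsilon_X q^R_{X,R}(\id_X\otimes Eu_S)=\Upsilon_X q^R_{X,R}(\id_X\otimes u_R)=\Upsilon_X\Upsilon_X^{-1}=\id_X .
\]
This direction does not use the generator hypothesis.

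\emph{Necessity.} Given $\beta$, I set $E:=\beta_R\circ\Upsilon'^{-1}_S:S\to R\otimes_RS\to R$. Since $\Upsilon'^{-1}_S=q^R_{R,S}(u_R\otimes\id_S)$ is right $S$-linear, hence right $R$-linear, and $\beta_R$ is right $R$-linear, $E$ is right $R$-linear. Moreover
\[
Eu_S=\beta_Rq^R_{R,S}(u_R\otimes u_S)=\beta_R\eta_R u_R=u_R .
\]
The main obstacle is left $R$-linearity, i.e.\ the equality $E\,m_S(\varphi\otimes\id_S)=m_R(\id_R\otimes E)$ of two morphisms $R\otimes S\to R$. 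Here I use that $1$ is a left $\otimes$-generator: it suffices to check the equality after precomposing with $x\otimes\id_S$ for every $x:1\to R$. For such $x$, the morphism $\ell_x:=m_R(x\otimes\id_R):R\to R$ is right $R$-linear. Naturality of $\beta$ with respect to $\ell_x$ gives $\beta_R\,\widehat{\ell_x}=\ell_x\,\beta_R$. Transporting along $\Upsilon'_S$, the map $\widehat{\ell_x}$ corresponds to left multiplication $m_S(\varphi x\otimes\id_S)$ on $S$; this is checked by precomposing with the epimorphism $q^R_{R,S}$ and using associativity. This yields
\[
E\,m_S(\varphi x\otimes \id_S)=m_R(x\otimes\id_R)E=m_R(\id_R\otimes E)(x\otimes\id_S),
\]
and the generator property concludes that $E$ is an $R$-bimodule morphism.
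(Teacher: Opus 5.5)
Your proposal is correct and takes essentially the paper's route (via Lemma \ref{lem:bij}). You combine Rafael's theorem with the correspondence $\beta_X=\Upsilon_X\tilde{E}$, $E=\beta_R\Upsilon'^{-1}_S$, and obtain left $R$-linearity of $E$ from naturality of $\beta$ along $m_R(x\otimes\id_R)$ together with the left $\otimes$-generator property of $1$; one minor precision is that right $R$-linearity of $\beta_X$ needs $q^R_{X,S}\otimes\id_R$ (not just $q^R_{X,S}$) to be an epimorphism, which holds by the standing coflatness assumption.
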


One of the main ingredients in the proof of the previous result is the following lemma.

\begin{lem}\cite[Lemma 3.3]{BT15}\label{lem:bij}
    Assume that the unit object $1$ is a left $\otimes$-generator for $\cc$ and let $\varphi:R\to S$ be an algebra morphism in $\cc$. Consider the restriction of scalars functor $\varphi_*:\cc_S\to\cc_R$  and the induction functor $\varphi^*:\cc_R\to\cc_S$, induced by $\varphi$. Then, there exists a bijection
    \[
\mathrm{Nat} (\varphi_*\varphi^*, \id_{\cc_R})\cong {}_R\mathrm{Hom}_R(S,R).
        \]
\end{lem}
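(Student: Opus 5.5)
The plan is to construct mutually inverse maps between $\mathrm{Nat}(\varphi_*\varphi^*,\id_{\cc_R})$ and ${}_R\mathrm{Hom}_R(S,R)$, following the classical ring-theoretic argument but adapted to coequalizers in $\cc$.

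\textbf{From natural transformations to morphisms.} Given $\gamma:\varphi_*\varphi^*\to\id_{\cc_R}$, evaluate at the regular module $R\in\cc_R$. Composing with the isomorphism $\Upsilon'^{-1}_S:S\to R\otimes_RS$ gives
\[
E_\gamma:=\gamma_R\circ\Upsilon'^{-1}_S=\gamma_R\, q^R_{R,S}(u_R\otimes\id_S):S\to R .
\]
Right $R$-linearity of $E_\gamma$ is immediate, since $\gamma_R$ is a morphism in $\cc_R$ and $\Upsilon'_S$ is right $R$-linear; the right structures come from coflatness and \cite[Lemma 2.4]{BC07}. For left $R$-linearity, use naturality of $\gamma$ with respect to the right $R$-module morphisms $\mu$-type maps $R\otimes R\to R$. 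More precisely, consider the free module $X\otimes R$ for $X=R$. The morphism $m_R:R\otimes R\to R$ is right $R$-linear, so naturality gives $\gamma_R\circ (m_R\otimes_RS)=m_R\circ\gamma_{R\otimes R}$. Then identify $\gamma_{R\otimes R}$ with $\id_R\otimes\gamma_R$ via $\Upsilon_{R,-}$-type isomorphisms. That identification is the key technical step, handled below.

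\textbf{From morphisms to natural transformations.} Given $E\in{}_R\mathrm{Hom}_R(S,R)$, define $\gamma^E_X:X\otimes_RS\to X$ as the unique morphism with
\[
\gamma^E_X\, q^R_{X,S}=\nu^R_X(\id_X\otimes E).
\]
This is well defined because $E$ is left $R$-linear, so $\nu^R_X(\id_X\otimes E)$ coequalizes $\nu^R_X\otimes\id_S$ and $\id_X\otimes\mu^R_S$ by associativity of $\nu^R_X$. Right $R$-linearity follows from the right linearity of $E$, using that $q^R_{X,S}\otimes\id_R$ is epi by coflatness. Naturality follows from $\hat f q=q(f\otimes\id)$. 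One composite is easy: $E_{\gamma^E}=\nu^R_R(u_R\otimes E)=E$.

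\textbf{The other composite (main obstacle).} We must show $\gamma=\gamma^{E_\gamma}$, i.e.\ $\gamma_X q^R_{X,S}=\nu^R_X(\id_X\otimes E_\gamma)$ for every $X\in\cc_R$. This is where $1$ being a left $\otimes$-generator is used. By Definition \ref{def:ogenerator} with $Y=X$, $Z=S$, it suffices to check the equality after precomposing with $x\otimes\id_S$ for every $x:1\to X$.

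Given such an $x$, form $\bar x:=\nu^R_X(x\otimes\id_R):R\to X$, a morphism in $\cc_R$ with $\bar x u_R=x$. Naturality gives $\gamma_X(\bar x\otimes_RS)=\bar x\gamma_R$. Then
\[
\gamma_X q^R_{X,S}(x\otimes\id_S)=\gamma_X(\bar x\otimes_RS)q^R_{R,S}(u_R\otimes\id_S)=\bar x\,E_\gamma=\nu^R_X(x\otimes E_\gamma),
\]
which equals $\nu^R_X(\id_X\otimes E_\gamma)(x\otimes\id_S)$. This proves the identity.

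The same generator trick, applied with $X=R$, also settles the left $R$-linearity of $E_\gamma$ deferred above, so the step-one obstacle disappears. Concretely, we check $E_\gamma\mu^R_S=m_R(\id_R\otimes E_\gamma)$ after precomposing with $r\otimes\id_S$ for $r:1\to R$. With $\bar r=m_R(r\otimes\id_R)$, naturality yields
\[
E_\gamma(\varphi r\cdot -)=\gamma_R q(\bar r\otimes\id_S)(u_R\otimes\id_S)=\bar r E_\gamma,
\]
as required. Hence both assignments are well defined and mutually inverse, giving the bijection.
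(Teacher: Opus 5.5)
Your proposal is correct and is essentially the paper's own proof. It uses the same map $\gamma\mapsto\gamma_R\Upsilon'^{-1}_S$ and the same inverse $E\mapsto(\gamma^E_X)_X$ characterized by $\gamma^E_X q^R_{X,S}=\nu^R_X(\id_X\otimes E)$, which the paper writes as $\Upsilon_X\tilde{E}$. It also applies the left $\otimes$-generator twice, through the right $R$-linear maps $m_R(r\otimes\id_R)$ and $\nu^R_X(x\otimes\id_R)$, to obtain left $R$-linearity of $E_\gamma$ and $\gamma^{E_\gamma}=\gamma$.

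In the left-linearity step you tacitly use $q^R_{R,S}(u_R\otimes\id_S)\mu^R_S=q^R_{R,S}$, which follows from the coequalizer relation and unitality of $m_R$ and which the paper states explicitly. You are right to discard the initial idea of identifying $\gamma_{R\otimes R}$ with $\id_R\otimes\gamma_R$.
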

\begin{invisible}
\begin{proof}
    It follows similarly to \cite[Theorem 27, p. 100]{CMZ02}. We consider \[
\alpha:\mathrm{Nat}(\varphi_*\varphi^*, \id_{\cc_R})\to {}_R\Hom_R(S,R)
    \]
 sending $t=(t_X:X\otimes_RS\to X)_{X\in\cc_R}$ in  $\mathrm{Nat}(\varphi_*\varphi^*, \id_{\cc_R})$, into $\alpha(t)=t_R\Upsilon'^{-1}_S$. In order to show that $\alpha(t)$ is right $R$-linear, we only need to show that $\Upsilon'^{-1}_S$ is a right $R$-linear morphism, as $t_R$ is in $\cc_R$. 
 In fact, we have 
\[
\begin{split}
\nu^R_{R\otimes_RS}(\Upsilon'^{-1}_S\otimes\id_R)&=\nu^R_{R\otimes_RS}(q^R_{R,S}(u_R\otimes\id_S)\otimes\id_R)=q^R_{R,S}(\id_R\otimes \nu^R_S)(u_R\otimes\id_S\otimes\id_R)\\&=q^{R}_{R,S}(u_R\otimes\id_S)\nu^R_S=\Upsilon'^{-1}_S\nu^R_S,
\end{split}
\]
so $\Upsilon'^{-1}_S$ is right $R$-linear. 

Consider an arbitrary morphism $\xi:1\to R$ in $\cc$ and define $f_\xi:R\to R$ by $f_{\xi}:=m_R(\xi\otimes\id_R)$. From associativity of $m_R$ it follows that $m_R(f_{\xi}\otimes \id_R)=m_R(m_R(\xi\otimes\id_R)\otimes\id_R)=m_R(m_R\otimes\id_R)(\xi\otimes\id_R\otimes\id_R)=m_R(\id_R\otimes m_R)(\xi\otimes \id_R\otimes\id_R)=m_R(\xi\otimes\id_R)m_R=f_{\xi}m_R$, so $f_{\xi}$ is right $R$-linear. By naturality of $t$, we get that $f_{\xi}t_R=t_R\varphi_*\varphi^*(f_{\xi})=t_R\hat{f_{\xi}}$. This is equivalent to $f_{\xi}t_R\Upsilon'^{-1}_S=t_R\hat{f_{\xi}}\Upsilon'^{-1}_S$. Since 
$t_R\hat{f_{\xi}}\Upsilon'^{-1}_S=t_R\hat{f}_\xi q^{R}_{R,S}(u_R\otimes\id_S)=t_Rq^{R}_{R,S}(f_{\xi}\otimes \id_S)(u_R\otimes \id_S)=t_Rq^{R}_{R,S}(\xi\otimes \id_S)$, we get
\[
m_R(\id_R\otimes \alpha(t))(\xi\otimes\id_S)=t_Rq^{R}_{R,S}(\xi\otimes \id_S).
\]

Moreover, we have \[\begin{split}\alpha(t)m_S(\varphi\otimes \id_S)&=t_R\Upsilon'^{-1}_Sm_S(\varphi\otimes \id_S)=t_Rq^{R}_{R,S}(u_R\otimes\id_S)m_S(\varphi\otimes \id_S)\\&=t_Rq^R_{R,S}(m_R(u_R\otimes\id_R)\otimes\id_S)=t_Rq^R_{R,S},
\end{split}
\]
where the second latter equality follows as $(q^R_{R,S}, R\otimes_RS)$ is a coequalizer, so we get that
\[
m_R(\id_R\otimes \alpha(t))(\xi\otimes\id_S)=\alpha(t)m_S(\varphi\otimes \id_S)(\xi\otimes \id_S),
\]
for all $\xi:1\to R$. Since $1$ is a left $\otimes$-generator for $\cc$, we have that $\alpha(t)$ is left $R$-linear, and thus it is an $R$-bimodule morphism in $\cc$.

We show that $\alpha$ is a bijection with inverse given by 
\[
\alpha^{-1}(E)=(\beta_X:=\Upsilon_X\tilde{E}:X\otimes_RS\to X)_{X\in\cc_R},
\]
for every $E\in{}_R\mathrm{Hom}_R(S,R)$. We have that $\beta:=(\beta_X)_{X\in\cc_R}$ is completely determined by the property $\beta_X q^R_{X,S}=\nu^R_{X}(\id_X\otimes E)$, for any $X\in\cc_R$, as $\beta_Xq^R_{X,S}=\Upsilon_X\tilde{E}q^R_{X,S}=\Upsilon_X q^R_{X,R}(\id_X\otimes E)=\nu^R_X(\id_X\otimes E)$, and on the other hand, if $\beta'_X$ fulfills $\beta'_X q^R_{X,S}=\nu^R_{X}(\id_X\otimes E)$, then from $\beta'_X q^R_{X,S}=\nu^R_{X}(\id_X\otimes E)=\Upsilon_X\tilde{E}q^R_{X,S}$, we get $\beta'_X=\Upsilon_X\tilde{E}$ as $q^R_{X,S}$ is an epimorphism. We check that $\beta$ is a natural transformation. For every $f:X\to Y$ in $\cc_R$, we have
\[
\begin{split}
\beta_Y\circ\varphi_*\varphi^*f\circ q^R_{X,S}&=\beta_Y \hat{f}q^R_{X,S}=\beta_Y q^R_{Y,S}(f\otimes \id_S)=\nu^R_Y(\id_Y\otimes E)(f\otimes \id_S)\\&=\nu^R_Y(f\otimes\id_R)(\id_X\otimes E)=f \nu^R_X(\id_X\otimes E)=f\beta_X q^R_{X,S}.
\end{split}
\]
Since $q^R_{X,S}$ is an epimorphism, we obtain $\beta_Y\circ \varphi_*\varphi^*f=f\circ\beta_X$, so $\beta$ is a natural transformation and then $\alpha^{-1}$ is well defined.
Moreover, we have $\alpha^{-1}\alpha (t)=\alpha^{-1}(t_R\Upsilon'^{-1}_S)=(\beta_X=\Upsilon_X\widetilde{t_R \Upsilon'^{-1}_S})_{X\in\cc_R}$, where $\beta_X$ is characterized by $\beta_X q^R_{X,S}=\nu^R_{X}(\id_X\otimes \alpha(t))$. 

Consider an arbitrary morphism $\xi:1\to X$ in $\cc$ and define $g_\xi:=\nu^R_X(\xi\otimes \id_R):R\to X$. Note that $g_{\xi}$ is right $R$-linear as $\nu^R_X(\nu^R_X(\xi\otimes\id_R)\otimes\id_R)=\nu^R_X(\nu^R_X\otimes \id_R)(\xi\otimes\id_R\otimes\id_R)=\nu^R_X(\id_X\otimes m_R)(\xi\otimes\id_R\otimes\id_R)=\nu^R_X(\xi\otimes\id_R)m_R=g_\xi m_R$. By naturality of $t$ we have \[
g_\xi\alpha(t)=g_\xi t_R\Upsilon'^{-1}_S=t_X\hat{g}_{\xi}\Upsilon'^{-1}_S=t_X\hat{g}_{\xi}(q^R_{R,S}(u_R\otimes\id_S))=t_Xq^R_{X,S}(g_\xi u_R\otimes\id_S)=t_Xq^R_{X,S}(\xi\otimes \id_S).
\]
Since $g_\xi \alpha(t)=\nu^R_X(\xi\otimes \id_R)\alpha(t)=\nu^R_X(\id_X\otimes\alpha(t))(\xi\otimes\id_S)$ and $1$ is a left $\otimes$-generator in $\cc$, we obtain that $\nu^R_X(\id_X\otimes\alpha(t))=t_Xq^R_{X,S}$, for all $X\in\cc$, and so $\beta_Xq^R_{X,S}=t_Xq^R_{X,S}$. Since $q^R_{X,S}$ is an epimorphism in $\cc$, we get $\beta_X=t_X$, for all $X\in\cc$, thus $\alpha^{-1}\alpha(t)=t$.

For $E\in{}_R\Hom_R(S,R)$ we have \[\begin{split}
\alpha\alpha^{-1}(E)&=\alpha\left((\beta_X=\Upsilon_X\tilde{E}:X\otimes_RS\to X)_{X\in\cc_R} \right)=\beta_R\Upsilon'^{-1}_S\\&=\beta_R q^{R}_{R,S}(u_R\otimes \id_S)=\nu^R_R(\id_R\otimes E)(u_R\otimes \id_S)=E.\qedhere
\end{split}
\]
\end{proof}
\end{invisible}

\begin{invisible}
\begin{rmk}\label{rmk:cond-coflat}
 We observe that coflatness condition for all objects $X\in\cc$ can be weakened by requiring that $-\otimes X$ (resp., $ X\otimes-$) preserves a particular coequalizer. More precisely, let $A, R$ be algebras
 in $\cc$, and $X\in\cc_R$ and $Y\in{}_R\cc_A$.
Note that
 $-\otimes A$ (resp., $A\otimes -$ ) preserves the coequalizer 
 \begin{equation}\label{eq:coeq2}
\xymatrix@C=1.5cm{X\otimes R\otimes Y\ar@<-1ex>[r]_-{\id_X\otimes \mu_Y^R}\ar@<1ex>[r]^-{\nu^R_{X}\otimes\id_Y}& X\otimes Y\ar[r]^-{q^R_{X,Y}}& X\otimes _RY}
\end{equation}
if and only if $q^R_{X,Y}\otimes \id_A=q^R_{X,Y\otimes A}$ (resp.,  $\id_A\otimes q^R_{X,Y}=q^R_{A\otimes X,Y}$). Interestingly, if we replace coflatness condition by $q^R_{X,Y}\otimes \id_A=q^R_{X,Y\otimes A}$, Lemma \ref{lem:bij} still holds. In fact, $q^R_{X,Y}\otimes \id_A=q^R_{X,Y\otimes A}$ provides the right $A$-module structure $\nu^A_{X\otimes _RY}:X\otimes_R Y\otimes A\to X\otimes_RY$, which is uniquely determined by $\nu^A_{X\otimes_R Y}(q^R_{X,Y}\otimes\id_A)=q^R_{X,Y}(\id_X\otimes \nu^A_Y)$, where $\nu^A_Y$ is the right $A$-module structure of $Y$.
\end{rmk}
\end{invisible}

    
We now characterize the semiseparability for $\varphi^*$.

\begin{thm}\label{thm:phi*semisep}
    Let $(\cc, \otimes, 1)$ be a monoidal category. Let  $\varphi:R\to S$ be an algebra morphism in $\cc$. Then, the following assertions are equivalent:
\begin{itemize}    
    \item[$(i)$] $\varphi$ is regular as an $R$-bimodule morphism in $\cc$, i.e.\ there exists an $R$-bimodule morphism $E:S\to R$ in $\cc$ such that $\varphi\circ E\circ\varphi=\varphi$;
    \item[$(ii)$] there exists an $R$-bimodule morphism $E:S\to R$ in $\cc$ such that $\varphi E u_S=u_S$. 
\end{itemize}    
If  one of the previous equivalent conditions holds, then
\begin{itemize}
    \item[$(iii)$] the induction functor $\varphi^*:\cc_R\to\cc_S$ is semiseparable. 
\end{itemize}
   Moreover, if $1$ is a left $\otimes$-generator in $\cc$, then $(iii)$ is equivalent to both $(i)$ and $(ii)$.    
\end{thm}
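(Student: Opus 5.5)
The plan is to prove $(i)\Leftrightarrow(ii)$ by elementary algebra and then get $(ii)\Rightarrow(iii)$ and, under the generator hypothesis, $(iii)\Rightarrow(ii)$ from the Rafael-type Theorem \ref{thm:rafael}$(i)$ applied to $\varphi^*\dashv\varphi_*$. Semiseparability of $\varphi^*$ is then equivalent to regularity of the unit $\eta_X=q^R_{X,S}(\id_X\otimes u_S)$.

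\emph{Step 1: $(i)\Leftrightarrow(ii)$.} If $\varphi E\varphi=\varphi$, precomposing with $u_R$ and using $\varphi u_R=u_S$ gives $\varphi Eu_S=u_S$. Conversely, assume $\varphi Eu_S=u_S$. Left $R$-linearity of $E$ gives $E\varphi=E\,m_S(\varphi\otimes u_S)=m_R(\id_R\otimes Eu_S)$. Hence
\[
\varphi E\varphi=m_S(\varphi\otimes \varphi Eu_S)=m_S(\varphi\otimes u_S)=\varphi ,
\]
using that $\varphi$ is multiplicative.

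\emph{Step 2: $(ii)\Rightarrow(iii)$.} Given $E$, I will use the natural transformation $\beta_X:=\Upsilon_X\tilde E:X\otimes_RS\to X$ from the proof of Lemma \ref{lem:bij}. It is characterized by $\beta_Xq^R_{X,S}=\nu^R_X(\id_X\otimes E)$. Its naturality in $X\in\cc_R$ and its right $R$-linearity need only the bimodule property of $E$, not the generator hypothesis. The goal is $\eta\circ\beta\circ\eta=\eta$. We have
\[
\beta_X\eta_X=\nu^R_X(\id_X\otimes Eu_S).
\]
Applying $\eta_X$ to this, and using the coequalizer relation to move the right $R$-action across $q^R_{X,S}$, gives
\[
q^R_{X,S}(\id_X\otimes m_S(\varphi\otimes\id_S))(\id_X\otimes Eu_S\otimes u_S)=q^R_{X,S}(\id_X\otimes\varphi Eu_S)=q^R_{X,S}(\id_X\otimes u_S)=\eta_X ,
\]
so $\eta$ is regular and $\varphi^*$ is semiseparable by Theorem \ref{thm:rafael}.

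\emph{Step 3: $(iii)\Rightarrow(ii)$ when $1$ is a left $\otimes$-generator.} By Theorem \ref{thm:rafael} there is a natural $\gamma:\varphi_*\varphi^*\to\id_{\cc_R}$ with $\eta\gamma\eta=\eta$. By Lemma \ref{lem:bij}, $\gamma=\beta$ for the $R$-bimodule morphism $E:=\gamma_R\Upsilon'^{-1}_S$. Evaluating $\eta_R\gamma_R\eta_R=\eta_R$ gives
\[
q^R_{R,S}(Eu_S\otimes u_S)=q^R_{R,S}(u_R\otimes u_S).
\]
Composing with the isomorphism $\Upsilon'_S$, which satisfies $\Upsilon'_Sq^R_{R,S}=m_S(\varphi\otimes\id_S)$, turns this into $\varphi Eu_S=\varphi u_R=u_S$.

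The main obstacle is the bookkeeping in Step 2: identifying $\eta_X\beta_X\eta_X$ via the coequalizer relation, which requires checking that the right $R$-action on $X$ passes correctly through $q^R_{X,S}$. Step 3 is essentially immediate from Lemma \ref{lem:bij}.
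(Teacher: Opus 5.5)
Your proposal is correct and follows the paper's proof essentially step for step: the same computation for $(i)\Leftrightarrow(ii)$, the same transformation $\Upsilon_X\tilde E$ with the same coequalizer manipulation for $(ii)\Rightarrow(iii)$, and Lemma \ref{lem:bij} to produce the bimodule morphism $E=\nu_R\Upsilon'^{-1}_S$ for the converse. The only cosmetic difference is in the last step: you precompose $\eta_R\gamma_R\eta_R=\eta_R$ with $u_R$ to recover $(ii)$, whereas the paper uses the identity $\varphi=\Upsilon'_S\eta_R$ to obtain $\varphi E\varphi=\varphi$, i.e.\ $(i)$, directly.
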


\begin{proof}
We show that $(i)$ is equivalent to $(ii)$. On one hand, since $\varphi u_R=u_S$, from $\varphi E\varphi=\varphi$ we get that $\varphi E u_S=\varphi E\varphi u_R=\varphi u_R=u_S$. On the other hand, assume that  $\varphi E u_S=u_S$. Then, \[\begin{split}
\varphi&=m_S(\varphi\otimes u_S)=m_S(\varphi\otimes\varphi E u_S )=\varphi m_R(\id_R\otimes E)(\id_R\otimes u_S)\\&=\varphi E\mu^R_S(\id_R\otimes u_S)=\varphi E m_S(\varphi\otimes\id_S)(\id_R\otimes u_S)=\varphi E \varphi.
\end{split}
\]
 

$(ii)\Rightarrow (iii)$. By Theorem \ref{thm:rafael}, $\varphi^*$ is semiseparable if, and only if, there exists a natural transformation $\nu \in \mathrm{Nat}(\varphi_*\varphi^*,\id_{\cc_R})$ such that $\eta\circ\nu\circ\eta = \eta$. Given $E\in {}_{R}\Hom_{R}(S,R)$ such that $\varphi E u_S= u_S$,  define for every $M\in\cc_R$,
\begin{equation}\label{eq:nuM}
\nu_M:=\Upsilon_M\tilde{E}:M\otimes_RS\to M,
\end{equation}
as in the proof of Lemma \ref{lem:bij}, where $\tilde{E}$ is the morphism such that $\tilde{E}q^R_{M,S}=q^R_{M,R}(\id_M\otimes E)$, and $\Upsilon_M$ is the isomorphism such that $\Upsilon_M q^R_{M,R}=\nu^R_M$. Then, we get 
\[
\begin{split}
\eta_M\nu_M\eta_M &=\eta_M\Upsilon_M\tilde{E}\eta_M =\eta_M\Upsilon_M\tilde{E}q^R_{M,S}(\id_M\otimes u_S)=\eta_M\Upsilon_M q^R_{M,R}(\id_M\otimes Eu_S)
\\
&= q^R_{M,S}(\id_M\otimes u_S)\nu^R_M (\id_M\otimes Eu_S)=q^R_{M,S}(\id_M\otimes u_S)(\nu^R_M\otimes \id_1)(\id_M\otimes Eu_S\otimes \id_1)\\
&=q^R_{M,S}(\nu^R_M\otimes\id_S)(\id_{M}\otimes E u_S\otimes u_S)=q^R_{M,S}(\id_M\otimes \mu^R_S)(\id_M\otimes Eu_S\otimes  u_S)\\
&=q^R_{M,S}(\id_M\otimes m_S(\varphi\otimes\id_S))(\id_M\otimes E u_S\otimes u_S)\\
&=q^R_{M,S}(\id_M\otimes m_S)(\id_M\otimes \varphi E u_S\otimes u_S)\\
&=q^R_{M,S}(\id_M\otimes m_S)(\id_M\otimes u_S\otimes u_S)=q^R_{M,S}(\id_M\otimes u_S)=\eta_M.
\end{split}
\]
Hence, $\varphi^*$ is semiseparable.

$(iii)\Rightarrow (i)$. Assume that $1$ is a left $\otimes$-generator. 
Given $\nu \in \mathrm{Nat}(\varphi_*\varphi^*,\id_{\cc_R})$ such that $\eta\nu\eta = \eta$, we consider the corresponding $E\in {}_R\Hom_{R}(S,R)$ as in Lemma \ref{lem:bij}, which is defined by $E=\nu_R\Upsilon'^{-1}_S$. Recall that $\Upsilon'_S q^R_{R,S}=\mu^R_S$. We observe that $\varphi=\Upsilon'_S\eta_R$ as $\varphi =m_S(\varphi\otimes u_S)=m_S(\varphi\otimes \id_S)(\id_R\otimes u_S)=\mu^R_S(\id_R\otimes u_S)=\Upsilon'_Sq^R_{R,S}(\id_R\otimes u_S)=\Upsilon'_S\eta_R$. Then, 
\[
\begin{split}
\varphi E\varphi=\varphi\nu_R\Upsilon'^{-1}_S\varphi=\varphi\nu_R\Upsilon'^{-1}_S\Upsilon'_S\eta_R=\varphi\nu_R\eta_R=\Upsilon'_S\eta_R\nu_R\eta_R=\Upsilon'_S\eta_R=\varphi.\qedhere
\end{split}
\]
\end{proof}

\begin{rmk}
If $\varphi$ is regular as an $R$-bimodule morphism in $\cc$ through $E:S\to R$, then the idempotent natural transformation associated with the semiseparable functor $\varphi^*$ is given on components by 
\begin{equation}\label{eq:idemp-phi*}
e_M=\nu_M\eta_M=\Upsilon_M\tilde{E}q^R_{M,S}(\id_M\otimes u_S)=\Upsilon_M q^R_{M,R}(\id_M\otimes Eu_S)=\nu^R_M (\id_M\otimes Eu_S),
\end{equation}
for every $M\in\cc_R$. 
\end{rmk}

In the next result, we characterize the natural fullness of $\varphi^*$.

\begin{prop}\label{prop:phi*natfull}
	Let $\cc$ be a monoidal category. Let  $\varphi:R\to S$ be an algebra morphism in $\cc$. Then, the following assertions are equivalent:
\begin{itemize}    
    \item[$(i)$] $\varphi$ is a split-epi as an $R$-bimodule morphism in $\cc$, i.e. there exists an $R$-bimodule morphism $E:S\to R$ in $\cc$ such that $\varphi\circ E=\id$;
    \item[$(ii)$] $\varphi$ is an epimorphism and there exists an $R$-bimodule morphism $E:S\to R$ in $\cc$ such that $\varphi E u_S=u_S$. 
\end{itemize}    
If  one of the previous equivalent conditions holds, then
\begin{itemize}
    \item[$(iii)$] the induction functor $\varphi^*$ is naturally full. 
\end{itemize}
   Moreover, if $1$ is a left $\otimes$-generator in $\cc$, then $(iii)$ is equivalent to $(i)$, $(ii)$.    
\end{prop}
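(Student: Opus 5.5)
The plan is to reduce everything to Theorem \ref{thm:phi*semisep} combined with Proposition \ref{prop:sep}$(ii)$: naturally full means semiseparable and full.

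\textbf{Equivalence of $(i)$ and $(ii)$.} For $(i)\Rightarrow(ii)$, if $\varphi E=\id_S$ then $\varphi$ is a split epimorphism, hence an epimorphism, and $\varphi E u_S=u_S$ holds trivially. For $(ii)\Rightarrow(i)$, the proof of Theorem \ref{thm:phi*semisep} already gives $\varphi E\varphi=\varphi$. Cancelling the epimorphism $\varphi$ on the right yields $\varphi E=\id_S$.

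\textbf{Proof of $(i)\Rightarrow(iii)$.} By Theorem \ref{thm:phi*semisep}, $\varphi^*$ is semiseparable, so by Proposition \ref{prop:sep} it suffices to show that $\varphi^*$ is full. I would argue via the Rafael-type description instead. With $\nu_M=\Upsilon_M\tilde E$ as in \eqref{eq:nuM}, I claim $\eta\circ\nu=\id_{\varphi_*\varphi^*}$. Since $q^R_{M,S}$ is an epimorphism, it suffices to check
\[\eta_M\nu_M q^R_{M,S}=\eta_M\nu^R_M(\id_M\otimes E)=q^R_{M,S}(\id_M\otimes u_S)\nu^R_M(\id_M\otimes E)=q^R_{M,S}(\nu^R_M\otimes\id_S)(\id_M\otimes E\otimes u_S).\]
Using the coequalizer relation, this equals
\[q^R_{M,S}(\id_M\otimes m_S(\varphi E\otimes u_S))=q^R_{M,S}(\id_M\otimes \varphi E)=q^R_{M,S}.\]
So $\eta$ is a split epimorphism (a retraction of $\nu$). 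A standard fact (the analogue of Rafael used for natural fullness in \cite{ACMM06}) is that a left adjoint $F$ is naturally full if and only if its unit $\eta$ is a split epimorphism. I expect this is the intended route.

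Alternatively, one can argue more elementarily. Since $\eta\nu=\id$, each $\eta_M$ is an epimorphism. For $g:\varphi^*M\to\varphi^*N$, set $\mathcal P(g):=\nu_N\circ\varphi_*g\circ\eta_M$, which is natural. To check that it is a preimage, compute $\varphi^*(\mathcal P g)$ through the adjunction. The adjoint transpose of $\varphi^*(\mathcal Pg)$ is $\eta_N\circ\mathcal P g=\varphi_* g\circ\eta_M$, which is exactly the transpose of $g$. Hence $\varphi^*\mathcal P(g)=g$.

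\textbf{Proof of $(iii)\Rightarrow(i)$ when $1$ is a left $\otimes$-generator.} A naturally full functor is semiseparable, so Theorem \ref{thm:phi*semisep} provides $E=\nu_R\Upsilon'^{-1}_S$, coming from some $\nu$ with $\eta\nu\eta=\eta$. Natural fullness upgrades this: by the Rafael-type statement we can choose $\nu$ with $\eta\nu=\id$. Explicitly, take $\nu_M:=\mathcal P_{M,\varphi_*\varphi^*M}$ applied to the transpose, i.e.\ $\nu_M=\mathcal P(\epsilon_{\varphi^*M})$-type data; I expect this to be the main obstacle and would cite \cite{ACMM06}. Then
\[\varphi E=\Upsilon'_S\eta_R\nu_R\Upsilon'^{-1}_S=\Upsilon'_S\Upsilon'^{-1}_S=\id_S,\]
using $\varphi=\Upsilon'_S\eta_R$. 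Here the first equality holds because the morphism $\Upsilon'_S\eta_R\nu_R$ agrees with $\varphi\nu_R$.
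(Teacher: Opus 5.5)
Your proposal is correct and follows the paper's proof essentially step for step. The steps match as follows: the same right-cancellation of the epimorphism $\varphi$ in $\varphi E\varphi=\varphi$ for $(ii)\Rightarrow(i)$; the same computation $\eta_M\nu_M q^R_{M,S}=q^R_{M,S}$ with $\nu_M=\Upsilon_M\tilde E$, combined with \cite[Theorem 2.6]{ACMM06}, for $(i)\Rightarrow(iii)$; and the same identity $\varphi E=\Upsilon'_S\eta_R\nu_R\Upsilon'^{-1}_S=\id_S$, with bilinearity of $E$ coming from Lemma~\ref{lem:bij}, for the converse.

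Two small remarks. Your alternative check that $\nu_N\circ\varphi_*g\circ\eta_M$ is a natural section just reproves the relevant half of \cite[Theorem 2.6]{ACMM06}. In the converse, the indices in your section are reversed: it should read $\nu_M=\mathcal P_{\varphi_*\varphi^*M,M}(\epsilon_{\varphi^*M})$, and then $\eta_M\nu_M=\varphi_*\epsilon_{\varphi^*M}\circ\eta_{\varphi_*\varphi^*M}=\id$ by a triangle identity.
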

\begin{proof}
$(i)\Rightarrow (ii)$. It is obvious.

$(ii)\Rightarrow (i)$. If there exists an $R$-bimodule morphism $E:S\to R$ in $\cc$ such that $\varphi E u_S=u_S$, then $\varphi E \varphi=\varphi$ by Theorem \ref{thm:phi*semisep}. Since $\varphi$ is an epimorphism, we have $\varphi\circ E=\id_S$.

$(i)\Rightarrow (iii)$.	
	By \cite[Theorem 2.6]{ACMM06} $\varphi^*$ is naturally full if, and only if, there exists a natural transformation $\nu \in \mathrm{Nat}(\varphi_*\varphi^*,\id_{\cc_R})$ such that $\eta\circ\nu = \id$. 
     Given $E\in {}_{R}\Hom_{R}(S,R)$ such that $\varphi\circ E =\id$, define $\nu_M$ as in \eqref{eq:nuM}, for every $M\in\cc_R$.
	Then, we have 
	\[
	\begin{split}	\eta_M\nu_Mq^R_{M,S}&=q^R_{M,S}(\id_M\otimes u_S)\Upsilon_M\tilde{E}q^R_{M,S}= q^R_{M,S}(\id_M\otimes u_S)\Upsilon_Mq^R_{M,R}(\id_M\otimes E)
		\\&=q^R_{M,S}(\id_M\otimes u_S)\nu^R_M (\id_M\otimes E)=q^R_{M,S}(\id_M\otimes u_S)(\nu^R_M\otimes \id_1)(\id_M\otimes E\otimes \id_1)\\&=q^R_{M,S}(\nu^R_M\otimes\id_S)(\id_{M}\otimes \id_R\otimes u_S)(\id_M\otimes E\otimes \id_1)\\&=q^R_{M,S}(\id_M\otimes \mu^R_S)(\id_M\otimes E\otimes  u_S)=q^R_{M,S}(\id_M\otimes m_S(\varphi\otimes\id_S))(\id_M\otimes E\otimes u_S)\\&=q^R_{M,S}(\id_M\otimes m_S)(\id_M\otimes \varphi E \otimes u_S)=q^R_{M,S}(\id_M\otimes m_S)(\id_M\otimes\id_S\otimes u_S)=q^R_{M,S}.
	\end{split}
	\]
Since $q^R_{M,S}$ is an epimorphism, we get $\eta_M\nu_M=\id_{M\otimes_RS}$.

    $(iii)\Rightarrow (i)$.
    Given $\nu \in \mathrm{Nat}(\varphi_*\varphi^*,\id_{\cc_R})$ such that $\eta\nu = \id$, we consider the corresponding $E\in {}_R\Hom_{R}(S,R)$, given by $E=\nu_R\Upsilon'^{-1}_S$. From the proof of Theorem \ref{thm:phi*semisep} we know that $\varphi=\Upsilon'_S\eta_R$. Then, 
	\[
	\begin{split}
		\varphi E =\Upsilon'_S\eta_R\nu_R\Upsilon'^{-1}_S=\Upsilon'_S\Upsilon'^{-1}_S=\id.\qedhere
	\end{split}
	\]
\end{proof}

\begin{cor}
    Let $\varphi:R\to S$ be an algebra morphism in a monoidal category $\cc$ with the unit being a left $\otimes$-generator. Then, the induction functor $\varphi^*$ is naturally full if, and only if, $\varphi^*$ is semiseparable and $\varphi$ is an epimorphism. Similarly, $\varphi^*$ is separable if, and only if, $\varphi^*$ is semiseparable and $\varphi$ is a monomorphism.
\end{cor}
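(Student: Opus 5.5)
The natural-fullness part is immediate from results already in the paper. If $\varphi^*$ is naturally full, Proposition \ref{prop:sep} $(ii)$ shows that it is semiseparable. Proposition \ref{prop:phi*natfull}, using that $1$ is a left $\otimes$-generator, gives an $R$-bimodule morphism $E$ with $\varphi E=\id_S$, so $\varphi$ is a split epimorphism and hence an epimorphism.

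For the converse, suppose $\varphi^*$ is semiseparable and $\varphi$ is an epimorphism. Theorem \ref{thm:phi*semisep} (with $1$ a left $\otimes$-generator) provides an $R$-bimodule morphism $E$ with $\varphi E\varphi=\varphi$. Cancelling the epimorphism $\varphi$ on the right gives $\varphi E=\id_S$. Then Proposition \ref{prop:phi*natfull} $(i)\Rightarrow(iii)$ shows that $\varphi^*$ is naturally full.

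For separability, one direction is again formal. If $\varphi^*$ is separable, it is semiseparable by Proposition \ref{prop:sep} $(i)$. Theorem \ref{thm:sepmon} then gives an $R$-bimodule morphism $E$ with $E\varphi=\id_R$, so $\varphi$ is a split monomorphism and in particular a monomorphism.

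Conversely, assume $\varphi^*$ is semiseparable and $\varphi$ is a monomorphism. Theorem \ref{thm:phi*semisep} gives an $R$-bimodule morphism $E$ with $\varphi E\varphi=\varphi$. Cancelling the monomorphism $\varphi$ on the left yields $E\varphi=\id_R$. Theorem \ref{thm:sepmon} then shows that $\varphi^*$ is separable.

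There is no real obstacle: each direction only combines the earlier characterizations with cancelling $\varphi$ on the appropriate side. The one point to check is that the ``semiseparable $\Rightarrow$ regular'' direction of Theorem \ref{thm:phi*semisep} is available, and it is, because $1$ is assumed to be a left $\otimes$-generator.
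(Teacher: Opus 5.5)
Your proof is correct and follows the paper's argument. You combine Theorem~\ref{thm:phi*semisep} with Proposition~\ref{prop:phi*natfull} for natural fullness and with Theorem~\ref{thm:sepmon} for separability, and cancel $\varphi$ on the appropriate side of $\varphi E\varphi=\varphi$. The paper states this in one line; you have simply spelled out each direction.
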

\begin{proof}
The naturally full case follows from Proposition \ref{prop:phi*natfull} and Theorem \ref{thm:phi*semisep}. 
    The separable case follows from Theorem \ref{thm:sepmon} and Theorem \ref{thm:phi*semisep}.   
\end{proof}
\begin{rmk}\label{rmk:left-rightsymm}
Given an algebra morphism $\varphi:R\to S$ in $\cc$, one can also consider the induction functor $\varphi^*=S\otimes_R-:{}_R\cc\to{}_S\cc$ between the categories of left modules, given by  $X\mapsto S\otimes_RX$, $f\mapsto S\otimes_R f$. One can show that $(i)$ in Theorem \ref{thm:phi*semisep} implies that $\varphi^*:{}_R\cc\to {}_S\cc$ is semiseparable. If in addition $1$ is a right $\otimes$-generator, then the converse holds. Therefore, if $1$ is a two-sided $\otimes$-generator, then $S\otimes_R-:{}_R\cc\to{}_S\cc$ is semiseparable  if, and only if, so is $-\otimes_RS:\cc_R\to\cc_S$. Moreover, similar statements hold for separable and naturally full cases.
\end{rmk}
\begin{rmk}\label{rmk:notens-gen}
Let $\cc$ be a monoidal category where the unit object is not necessarily a left $\otimes$-generator. From the proof of 
%
Lemma \ref{lem:bij} and Theorem \ref{thm:sepmon}, 
it follows that, if $\varphi^*$ is separable, 
then there exists $E\in \Hom_{R}(S,R)$ in $\cc$,  such that $ E\varphi=\id$, i.e.\ $Eu_S=u_R$. 
Similarly, when $\varphi^*$ is semiseparable (resp., naturally full), there is a morphism $E\in \Hom_{R}(S,R)$ in $\cc$, such that $\varphi E\varphi=\varphi$ (resp., $\varphi E=\id$). Besides, if there exists an $R$-bimodule morphism $E:S\to R$ in $\cc$ such that $ E\varphi=\id_R$, 
then $\varphi^*$ is separable. 
\end{rmk}
We observe that the left $\otimes$-generator condition is not always necessary for $(iii)$  to imply $(i)$ in Theorem \ref{thm:phi*semisep}, as the next example shows. 
\begin{es}\label{es:Hmodules} Let $H$ be a bialgebra over a field $\Bbbk$. Consider the category $(\m_H, \otimes_\Bbbk, \Bbbk)$ of right $H$-modules. In general, the unit $\Bbbk$ is not a generator in $\m_H$, 
hence, it is not a left $\otimes$-generator. 
Let $\varphi:R\to S$ be an algebra morphism in $\m_H$. By Remark \ref{rmk:notens-gen} we know that if $\varphi^*$ is semiseparable, then $E=\nu_R\Upsilon'^{-1}_S=\nu_R q^R_{R,S}(u_R\otimes\id_S):S\to R$ 
is in $\Hom_R(S,R)$, where $\nu$ is in $\Nat(\varphi_*\varphi^*,\id_{\cc_R})$ such that $\eta\circ\nu\circ\eta=\eta$. More explicitly, for any $s$ in $S$, we have $E(s)=\nu_R(1_R\otimes_R s)$. We show that $E$ is left $R$-linear. Consider $\xi:\Bbbk\to R$, $\xi(k)=k\xi(1)=kx$, where $x:=\xi(1)\in R$,
and $f_\xi:R\to R$, $r\mapsto \xi(1)r=xr$. Note that $f_\xi\in \m_H$ as $f_\xi(rh)=xrh=f_\xi(r)h$. Then, from $f_\xi\nu_R=\nu_R(f_\xi\otimes_RS)$, we get that 
$
E(xs)=\nu_R(1_R \otimes_Rxs)=\nu_R(x\otimes_Rs)=\nu_R(f_\xi\otimes_RS)(1\otimes_Rs)=f_\xi\nu_R(1_R \otimes_Rs)=xE(s).
$
\end{es}
Let $\cc$ be a right closed monoidal category and let $\varphi : R\to S$ be a morphism of algebras in $\cc$. By adapting \cite[Corollary 3.8]{PaI} to categories of right modules, since $S$ is in ${}_S\cc_R$, it induces a functor $\varphi^!:=[S,-]_R:\cc_R\to \cc_S$. Moreover, by \cite[Proposition 3.10]{PaI}, there is a natural isomorphism $\Hom_{\cc_S}(M, [S, N]_R)\cong \Hom_{\cc_R}( M \otimes_S S,N) \cong \Hom_{\cc_R}( \varphi_*(M), N)$, for every $M\in\cc_S$, $N\in \cc_R$, so $\varphi^!$ is the right adjoint of the restriction of scalars functor $\varphi_*:\cc_S\to\cc_R$. Thus, $\varphi^*\dashv \varphi_*\dashv \varphi^!$ is an adjoint triple. By \cite[Proposition 2.19]{AB22} we have that $\varphi^!$ is semiseparable (resp., separable, naturally full) if, and only if, so is $\varphi^*$. Therefore, we have the following corollary.

\begin{cor}
Let $(\cc, \otimes, 1)$ be a right closed monoidal category. Let  $\varphi:R\to S$ be an algebra morphism in $\cc$. If $(i)$ or $(ii)$ in Theorem \ref{thm:phi*semisep} holds true, then $\varphi^!$ is semiseparable. Moreover, if $1$ is a left $\otimes$-generator in $\cc$, then the converse holds as well.  
\end{cor}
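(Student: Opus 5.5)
The corollary should follow by combining Theorem \ref{thm:phi*semisep} with the transfer of semiseparability along the adjoint triple $\varphi^*\dashv \varphi_*\dashv \varphi^!$ recorded just before the statement. No new computation in $\cc$ should be needed.

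First, I will confirm that the adjoint triple is available under the hypothesis that $\cc$ is right closed. Adapting \cite[Corollary 3.8]{PaI} to right modules, the bimodule ${}_S S_R$ gives a functor $\varphi^!=[S,-]_R:\cc_R\to\cc_S$. By \cite[Proposition 3.10]{PaI}, there is a natural bijection $\Hom_{\cc_S}(M,[S,N]_R)\cong\Hom_{\cc_R}(M\otimes_S S,N)$, and composing it with $\Upsilon_M:M\otimes_SS\cong M$ shows that $\varphi_*\dashv\varphi^!$. Together with $\varphi^*\dashv\varphi_*$, this gives the triple.

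Next, I will invoke \cite[Proposition 2.19]{AB22}, which says that in an adjoint triple $F\dashv G\dashv H$ the functor $F$ is semiseparable if and only if $H$ is. The underlying reason is that the unit of $F\dashv G$ and the counit of $G\dashv H$ are mates: $\Nat(\id,GF)\cong\Nat(GH,\id)$, and this correspondence preserves regularity. One then applies Theorem \ref{thm:rafael} on each side. Applied to our triple, $\varphi^!$ is semiseparable exactly when $\varphi^*$ is.

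Finally, I will plug in Theorem \ref{thm:phi*semisep}. Condition $(i)$ or $(ii)$ gives $(iii)$, namely that $\varphi^*$ is semiseparable, and hence $\varphi^!$ is semiseparable. Conversely, if $1$ is a left $\otimes$-generator, semiseparability of $\varphi^!$ gives semiseparability of $\varphi^*$, and the ``moreover'' part of Theorem \ref{thm:phi*semisep} returns $(i)$ and $(ii)$.

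The only step that needs care is the first one: making sure the right-module version of the internal-hom construction really yields an object of $\cc_S$, and that the adjunction isomorphism is natural in both variables so that \cite[Proposition 2.19]{AB22} applies. Since the paper already asserts this from \cite{PaI}, I expect this step to be a citation check rather than a real obstacle.
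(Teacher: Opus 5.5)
Your proposal is correct and follows the paper's argument exactly. You build the adjoint triple $\varphi^*\dashv\varphi_*\dashv\varphi^!$ from \cite{PaI}, transfer semiseparability between $\varphi^*$ and $\varphi^!$ via \cite[Proposition 2.19]{AB22}, and then apply Theorem \ref{thm:phi*semisep} in both directions; your explanation that the unit of $\varphi^*\dashv\varphi_*$ and the counit of $\varphi_*\dashv\varphi^!$ are mates, so regularity passes between them, is also the right reason the cited proposition holds.
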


Now, we turn our attention to abelian monoidal categories. Here, an abelian monoidal category is a
monoidal category which is abelian with additive tensor functors. Recall from \cite[Corollary 1.16]{AB22} that a functor is semiseparable if and only if it factors as $S \circ N$, where $S$ is a separable functor and $N$ is a naturally full functor. It is also well-known that there is an image factorization for any morphism in an abelian category, see e.g. \cite[Theorem 1.5.5]{BorII94}. These two facts give rise to a natural question: Does the image factorization lead to a factorization of a semiseparable induction functor as the composition of a naturally full induction functor followed by a separable induction functor? The answer is positive when the category is abelian monoidal with unit being a left $\otimes$-generator and with right exact tensor functors. 

First, we show the next lemma.
 
\begin{lem}\label{lem:abel}
Let $(\cc, \otimes, 1)$ be an abelian monoidal category with right exact tensor product, and let $f:R\to S$ be an algebra morphism in $\cc$. Consider the image factorization of $f=\varphi \circ \psi$ in $\cc$.
Then, $\im(f)$ is an algebra in $\cc$ and $\psi:R\to\im(f)$ and $\varphi:\im(f)\to S$ are algebra morphisms in $\cc$. 
\end{lem}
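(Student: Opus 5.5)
Write $I:=\im(f)$ and let $f=\varphi\circ\psi$ be the image factorization, with $\psi:R\to I$ an epimorphism (the coimage map) and $\varphi:I\to S$ a monomorphism. The plan is to transport the multiplication and unit of $S$ to $I$: define $u_I:=\psi u_R$, and obtain $m_I:I\otimes I\to I$ as the unique morphism with $\varphi m_I=m_S(\varphi\otimes\varphi)$. For this to exist, the first point is that $\psi\otimes\psi:R\otimes R\to I\otimes I$ is an epimorphism. Right exactness of the tensor product in each variable makes $\psi\otimes\id_R$ and $\id_I\otimes\psi$ epimorphisms, and $\psi\otimes\psi=(\id_I\otimes\psi)(\psi\otimes\id_R)$ is then an epimorphism.

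To construct $m_I$, I will show that the morphism $m_S(\varphi\otimes\varphi):I\otimes I\to S$ factors through the monomorphism $\varphi=\ker(\mathrm{coker} f)$. This amounts to checking that $c\,m_S(\varphi\otimes\varphi)=0$, where $c:=\mathrm{coker}(f)$. Precomposing with the epimorphism $\psi\otimes\psi$ gives $c\,m_S(f\otimes f)=c\,f\,m_R=0$, since $f$ is an algebra morphism. Hence the required $m_I$ exists, and it is unique because $\varphi$ is mono. By construction it satisfies both $\varphi m_I=m_S(\varphi\otimes\varphi)$ and $m_I(\psi\otimes\psi)=\psi m_R$; the latter follows by cancelling $\varphi$ in $\varphi m_I(\psi\otimes\psi)=m_S(f\otimes f)=f m_R=\varphi\psi m_R$. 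I expect this factorization step, with the justification that $\psi\otimes\psi$ is epi, to be the only real obstacle.

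Associativity and unitality of $(I,m_I,u_I)$ then follow by cancelling the monomorphism $\varphi$. For associativity, compare $\varphi m_I(m_I\otimes\id_I)$ and $\varphi m_I(\id_I\otimes m_I)$: both equal $m_S(m_S\otimes\id_S)(\varphi\otimes\varphi\otimes\varphi)$, by associativity of $S$. For unitality, $\varphi u_I=f u_R=u_S$, so $\varphi m_I(u_I\otimes\id_I)=m_S(u_S\otimes\varphi)=\varphi$, and symmetrically on the other side. An alternative is to cancel the epimorphism $\psi\otimes\psi\otimes\psi$ instead.

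Finally, $\varphi$ is an algebra morphism by the defining identity of $m_I$ together with $\varphi u_I=u_S$. Likewise $\psi$ is an algebra morphism, since $m_I(\psi\otimes\psi)=\psi m_R$ and $\psi u_R=u_I$ holds by definition.
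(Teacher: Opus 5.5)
Your proposal is correct and follows essentially the same route as the paper. Right exactness makes $\psi\otimes\psi$ an epimorphism. The map $m_S(\varphi\otimes\varphi)$ is killed by the cokernel: the paper uses $\mathrm{coker}(\varphi)$, which coincides with your $\mathrm{coker}(f)$ since $\psi$ is epi. This yields $m_I$ via the kernel property, and associativity, unitality with $u_I=\psi u_R$, and the algebra-morphism conditions for $\varphi$ and $\psi$ then follow by cancelling the monomorphism $\varphi$. Your factorization $\psi\otimes\psi=(\id_I\otimes\psi)(\psi\otimes\id_R)$ simply spells out a step the paper states in one line.
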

\begin{proof}
Because the tensor product is right exact, we know that $\psi\otimes \psi$ is an epimorphism. Consider the cokernel $\pi:S\to\mathrm{Coker}(\varphi)$ of $\varphi$. Since \[\pi\circ m_S \circ (\varphi\otimes \varphi) (\psi\otimes \psi)=\pi\circ m_S \circ (f\otimes f)=\pi\circ f\circ m_R=\pi\circ\varphi\circ\psi\circ m_R=0,\] we get $\pi m_S(\varphi\otimes\varphi)=0$. Note that $\mathrm{Ker}(\pi)\cong\mathrm{Im}(\varphi)=\mathrm{Im}(f)$. Hence by the universal property of the kernel, there exists a unique $m_{\im(f)}:\im(f)\otimes \im(f)\to \im(f)$ such that $\varphi\circ m_{\im(f)}=m_S\circ (\varphi\otimes\varphi)$. 

As a result, $m_{\im(f)} \circ (m_{\im(f)} \otimes \id_{\im(f)}) = m_{\im(f)} \circ ( \id_{\im(f)} \otimes m_{\im(f)})$ follows from the associativity of $m_S$ and since $\varphi$ is a monomorphism. Indeed, 
\[
\begin{split}
\varphi m_{\im(f)}(m_{\im(f)}\otimes\id_{\im(f)})&=m_S(\varphi\otimes\varphi)(m_{\im(f)}\otimes \id_{\im(f)})=m_S(\varphi m_{\im(f)}\otimes\varphi)\\&=m_S(m_S(\varphi\otimes\varphi)\otimes\varphi)=m_S(m_S\otimes\id_S)(\varphi\otimes\varphi\otimes\varphi)\\&=m_S(\id_S\otimes m_S)(\varphi\otimes\varphi\otimes\varphi)=m_S(\varphi\otimes m_S(\varphi\otimes\varphi))\\&=m_S(\varphi\otimes\varphi m_{\im(f)})=m_S(\varphi\otimes\varphi)(\id_{\im(f)}\otimes m_{\im(f)})\\&=\varphi m_{\im(f)}(\id_{\im(f)}\otimes m_{\im(f)}).
\end{split}
\]

Define $u_{\im(f)}:= \psi \circ u_R:1 \to \im(f)$. Note that $u_S = f \circ u_R = \varphi \circ \psi \circ u_R = \varphi \circ u_{\im(f)}$. We consider $\varphi \circ m_{\im(f)} \circ (u_{\im(f)} \otimes \id_{\im(f)}) = m_S \circ (\varphi \otimes \varphi) \circ ( u_{\im(f)} \otimes \id_{\im(f)}) = m_S \circ ((\varphi \circ u_{\im(f)}) \otimes \varphi) = m_S \circ (u_S\otimes \id_S)(\id_1 \otimes \varphi) = \varphi$. Due to the fact that $\varphi$ is a monomorphism, this means $ m_{\im(f)} \circ (u_{\im(f)} \otimes \id_{\im(f)}) = \id_{\im(f)}$. Similarly, one can show that $ m_{\im(f)} \circ (\id_{\im(f)} \otimes 
u_{\im(f)}) = \id_{\im(f)}$. Therefore, $\im(f)$ is an algebra in $\cc$. Since $u_S = \varphi \circ u_{\im(f)}$, we get that $\varphi$ is an algebra morphism in $\cc$. Because $\varphi \circ m_{\im(f)} \circ (\psi \otimes \psi) = m_S \circ (\varphi \otimes \varphi) \circ (\psi \otimes \psi) =  m_S \circ (f \otimes f) = f \circ m_R = \varphi \circ \psi \circ m_R$, and $\varphi$ is a monomorphism, then $m_{\im(f)} \circ (\psi \otimes \psi) = \psi \circ m_R$. Recall that $u_{\im(f)}= \psi \circ u_R$, so we get that $\psi$ is also an algebra morphism in $\cc$.
\end{proof}

As an application of Theorem \ref{thm:phi*semisep}, we have the following for an abelian monoidal category. 

\begin{cor}\label{semisimple factorization}
Let $(\cc, \otimes, 1)$ be an abelian monoidal category with right exact tensor product such that $1$ is a left $\otimes$-generator in $\cc$, and let $f:R\to S$ be an algebra morphism in $\cc$ such that $f^*=-\otimes_R S:\cc_R\to \cc_S$ is semiseparable. Consider the image factorization of $f=\varphi \circ \psi$ in $\cc$. Then, $\psi^*=-\otimes_R \im(f): \cc_{R}\to\cc_{\im(f)}$ is naturally full and $\varphi^*= -\otimes_{\im(f)} S: \cc_{\im(f)}\to\cc_S$ is separable.   
\end{cor}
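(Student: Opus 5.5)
Since $1$ is a left $\otimes$-generator and $f^*$ is semiseparable, Theorem \ref{thm:phi*semisep} gives an $R$-bimodule morphism $E:S\to R$ with $fEf=f$. By Lemma \ref{lem:abel}, $\im(f)$ is an algebra and $\psi$, $\varphi$ are algebra morphisms, with $\psi$ an epimorphism and $\varphi$ a monomorphism. The plan is to build, from $E$, an $R$-bimodule splitting of $\psi$ and an $\im(f)$-bimodule retraction of $\varphi$, and then conclude with Proposition \ref{prop:phi*natfull} and Theorem \ref{thm:sepmon}.

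\emph{Natural fullness of $\psi^*$.} Set $E':=\psi E\varphi:\im(f)\to \im(f)\to R\to\im(f)$; more precisely, I want a morphism $\im(f)\to R$, so I take $E_\psi:=E\varphi:\im(f)\to R$. Since the $R$-bimodule structure on $\im(f)$ is induced through $\psi$ and $\varphi$ is an algebra morphism, $\varphi$ is $R$-bilinear when $S$ carries its $R$-bimodule structure via $f$. Hence $E_\psi$ is $R$-bilinear. Moreover,
\[
\varphi\psi E\varphi\psi=fEf=f=\varphi\psi .
\]
Since $\varphi$ is mono and $\psi$ is epi, this gives $\psi E_\psi=\id_{\im(f)}$. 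Proposition \ref{prop:phi*natfull} $(i)\Rightarrow(iii)$ then yields that $\psi^*$ is naturally full.

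\emph{Separability of $\varphi^*$.} Define $E_\varphi:=\psi E:S\to\im(f)$. Then
\[
E_\varphi\varphi\psi=\psi E f=\psi E_\psi\psi=\psi ,
\]
and since $\psi$ is epi, $E_\varphi\varphi=\id_{\im(f)}$. By Theorem \ref{thm:sepmon}, separability of $\varphi^*$ follows once $E_\varphi$ is an $\im(f)$-bimodule morphism.

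The main obstacle is this bilinearity over $\im(f)$ rather than over $R$. For left linearity I need
\[
\psi E\, m_S(\varphi\otimes \id_S)=m_{\im(f)}(\id\otimes\psi E).
\]
Precomposing with the epimorphism $\psi\otimes\id_S$ (right exactness of the tensor product), the left side becomes $\psi E\,m_S(f\otimes\id_S)$. By $R$-bilinearity of $E$ this equals $\psi\, m_R(\id_R\otimes E)$, which equals $m_{\im(f)}(\psi\otimes\psi E)$ because $\psi$ is an algebra morphism. This is exactly the right side precomposed with $\psi\otimes\id_S$. Right linearity is symmetric, using that $\id_S\otimes\psi$ is epi. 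So the key step is cancelling the epimorphisms $\psi\otimes\id$ and $\id\otimes\psi$, which is where right exactness of $\otimes$ is used.
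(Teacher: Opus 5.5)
Your proposal is correct and follows essentially the same route as the paper. You extract $E$ with $fEf=f$ from Theorem \ref{thm:phi*semisep} and cancel the mono $\varphi$ and the epi $\psi$ to get $\psi E\varphi=\id_{\im(f)}$; you then use $E\varphi$ as the $R$-bilinear splitting of $\psi$ and $\psi E$ as the retraction of $\varphi$, and prove $\im(f)$-bilinearity of $\psi E$ by cancelling the epimorphisms $\psi\otimes\id_S$ and $\id_S\otimes\psi$ coming from right exactness. (The stray definition $E':=\psi E\varphi$ and the separate derivation of $E_\varphi\varphi=\id_{\im(f)}$ are unnecessary, since $E_\varphi\varphi=\psi E\varphi$ is the identity you already have.)
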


\begin{proof}
By Theorem \ref{thm:phi*semisep}, there is an $R$-bimodule morphism $E:S\to R$ in $\cc$ such that $f\circ E\circ f=f$, i.e. $\varphi \circ \psi \circ E\circ \varphi \circ \psi = \varphi \circ \psi$. 
Therefore, since $\varphi$ is a monomorphism and $\psi$ is an epimorphism, we have $\psi  E \varphi = \id_{\im(f)}$. Moreover, $\psi$ and $\varphi$ are left $R$-module morphisms because, by Lemma \ref{lem:abel}, $m_{\im(f)} (\psi \otimes \id_{\im(f)})(\id_R\otimes \psi) = \psi m_R$ and $\varphi m_{\im(f)} (\psi \otimes \id_{\im(f)})=m_S(\varphi\otimes \varphi)(\psi\otimes \id_{\im(f)})=m_S (f\otimes \id_S) (\id_R \otimes\varphi)$, respectively. Similarly, $\psi$ and $\varphi$ are right $R$-module morphisms. It follows that $E\varphi$ is an $R$-bimodule morphism.  
Since $E$ is an $R$-bimodule morphism, we have $Em_S(f\otimes \id_S) = m_R(\id_R \otimes E)$. This implies that $\psi Em_S(\varphi \psi \otimes \id_S) = \psi Em_S(f\otimes \id_S) = \psi m_R(\id_R \otimes E) = m_{\im(f)} (\psi \otimes \psi E)$. Because $\psi \otimes \id_S$ is an epimorphism, we obtain that $\psi E m_S(\varphi\otimes \id_S) = m_{\im(f)} (\id_{\im(f)} \otimes \psi E)$ i.e. $\psi E$ is a left $\im(f)$-module morphism. Similarly, one can show that it is a right $\im(f)$-module morphism. By Proposition \ref{prop:phi*natfull} and Theorem \ref{thm:sepmon}, $\psi^*:-\otimes_R \im(f)$ is naturally full and $\varphi^*= -\otimes_{\im(f)} S$ is separable, respectively.
\end{proof}

\subsection{Examples}\label{subsect:examples}

We now apply Theorem \ref{thm:phi*semisep} and Proposition \ref{prop:phi*natfull} to some examples.

\begin{es}\label{es1}
    Let $K$ be a commutative unital ring and denote by ${}_K\!\m$ the category of left $K$-modules and left $K$-linear maps. It is known that ${}_K\!\m$ is monoidal with tensor product $\otimes =\otimes_K$, unit $1=K$. 
    Note that 
the tensor functors are right exact, see e.g. \cite[Proposition 19.13]{AF92}, so they preserve coequalizers, i.e.\ every object in ${}_K\m$ is coflat. As observed in \cite[Example 3.2 2)]{BT15}, $K$ is a left $\otimes$-generator in ${}_K\!\m$. In fact, any morphism $\varepsilon: K\to M$ in ${}_K\!\m$ is completely determined by $\varepsilon(1) = m\in M$.
\begin{invisible}
Indeed, suppose $f,g:M\otimes Z\to W$ are two morphisms in ${}_K\!\m$ such that $f(\varepsilon\otimes\id_Z)=g(\varepsilon\otimes\id_Z)$, for all $\varepsilon:K\to M$ in ${}_K\!\m$. We can write $\varepsilon(k) = km$ where $m$ is in $M$. For any $m$ in $M$ and $z$ in $Z$, we have $f(km\otimes z)=g(km\otimes z)$. By taking $k=1_K$, we get $f(m\otimes z)=g(m\otimes z)$, which means $f=g$. 
\end{invisible}
Thus, Theorem \ref{thm:phi*semisep} and Proposition \ref{prop:phi*natfull} apply.

In particular, for $K=\mathbb{Z}$ (the ring of integers), we get the monoidal category $(\mathsf{Ab}, \otimes_\mathbb{Z}, \mathbb{Z})$ of abelian groups. In this case, the algebras in $\mathsf{Ab}$ are  unital rings, so Theorem \ref{thm:phi*semisep} retrieves Proposition \ref{prop:inducfunc}, while Proposition \ref{prop:phi*natfull} recovers \cite[Proposition 3.1 2)]{ACMM06}. 
\end{es}


\begin{es}
The category $(\Set, \times, \{0\})$ of sets and functions is a cartesian monoidal category where the unit object is a one-element set $\{0\}$. The unit is a left $\otimes$-generator, see \cite[Example 3.2 1)]{BT15}. An algebra in $\Set$ is a monoid. Since $\Set$ is cartesian closed, see e.g. \cite[p. 98]{Mac98}, $-\times B$ is a left adjoint, hence it preserves coequalizers. 

\begin{invisible}In fact, given monoids $R$, $A$, for every $X\in \Set_R$, and $Y\in {}_R\Set_A$, we have $q^R_{X,Y}\otimes\id_A=q^R_{X,Y\otimes A}$ 
since $\nu^R_X\times\id_Y=\id_X\times\mu^R_Y$ if and only if $\nu^R_X\times\id_Y\times\id_A=\id_X\times\mu^R_Y\times\id_A$.
\end{invisible}

Consider the morphisms of monoids (with respect the usual products) $$\psi:\mathbb{N}\times\mathbb{Z}\to \mathbb{N},\quad (n,z)\mapsto n,\quad \text{and} \quad \varphi:\mathbb{N}\to \mathrm{M}_2(\mathbb{N}),\quad  n\mapsto n\mathrm{I}_2,$$  where $\mathrm{I}_2$ is the identity matrix of order 2.
We show that $\varphi^*\circ\psi^*\cong (\varphi\circ \psi)^*$ is semiseparable. One can check that $D:\mathbb{N}\to \mathbb{N}\times\mathbb{Z}$, $m\mapsto (m,0)$ is an $(\mathbb{N}\times\mathbb{Z})$-bimodule morphism in $\Set$. 
\begin{invisible}
Indeed, $D((n,z) m)=D(\psi(n,z)m)=D(nm)=(nm,0)=(n,z)(m,0)=(n,z)D(m)$ and $D(m(n,z))=D(m\psi(n,z))=D(mn)=(mn,0)=(m,0)(n,z)=D(m)(n,z)$. 
\end{invisible}
Moreover, $\psi D(n)=\psi(n,0)=n$, for every $n\in \mathbb{N}$, so $\psi^*$ is naturally full by Proposition \ref{prop:phi*natfull}. Consider $E:\mathrm{M}_2(\mathbb{N})\to \mathbb{N}$, 
$(a_{ij})\mapsto a_{11}$. 
Then, $E\varphi(n)=E(n\mathrm{I}_2)=n$ and $E$ is an $\mathbb{N}$-bimodule morphism in $\Set$. In fact, $E(n(a_{ij}))=E(\varphi(n)(a_{ij}))=E((na_{ij})) =na_{11}=n E((a_{ij}))$ and similarly for the other side. 
Hence, by Theorem \ref{thm:sepmon} $\varphi^*$ is separable. 

Then, by Lemma \ref{lem:comp-semisep}, $ (\varphi\circ \psi)^*\cong \varphi^*\circ\psi^*$ is semiseparable. We observe that $\varphi\psi$ is neither injective nor surjective, so $\varphi^*\circ\psi^*$ is neither separable nor naturally full. By Theorem \ref{thm:phi*semisep} there exists an $(\mathbb{N}\times\mathbb{Z})$-bimodule morphism $\xi:\mathrm{M}_2(\mathbb{N})\to \mathbb{N}\times\mathbb{Z}$ in $\cc$ such that $\varphi\psi\xi \varphi\psi=\varphi\psi$. In particular, $\xi$ can be chosen as $D\circ E$, $(a_{ij}) \mapsto (a_{11},0)$.

\begin{invisible}
We observe that, for every $X\in \Set_{\mathbb{N}\times\mathbb{Z}}$, $X\otimes_{\mathbb{N}\times\mathbb{Z}}\mathrm{M}_2(\mathbb{N})=\dfrac{X\times M_{2}(\mathbb{N})}{\tilde{R}}$, where $\tilde{R}$ is the least equivalence relation containing 
\[
\begin{split}
R=&\lbrace \left((\nu^{\mathbb{N}\times\mathbb{Z}}_X\times \id)(x, (n,z), (a_{ij})), (\id\times\mu^{\mathbb{N}\times \mathbb{Z}}_{\mathrm{M}_2(\mathbb{N})})(x, (n,z), (a_{ij})
)\right)\,\vert\\& \,  (x, (n,z),(a_{ij}))\in X\times (\mathbb{N}\times\mathbb{Z})\times\mathrm{M}_{2}(\mathbb{N})\rbrace
\\=&\left\lbrace \left((\nu^{\mathbb{N}\times\mathbb{Z}}_X(x,(n,z)), (a_{ij})), (x,(na_{ij}) )\right)\vert\, (x, (n,z), (a_{ij}))\in X\times (\mathbb{N}\times\mathbb{Z})\times\mathrm{M}_{2}(\mathbb{N})
\right\rbrace ,
\end{split}
\]
see \cite[page 65]{Mac98}. The elements of $X\otimes_{\mathbb{N}\times\mathbb{Z}}\mathrm{M}_2(\mathbb{N})$ are denoted by $\overline{(x,(a_{ij}) )}$.

The unit $\eta$ of the adjunction $(\varphi\psi)^*\dashv (\varphi\psi)_*$ is given for every $X\in \Set_{\mathbb{N}\times\mathbb{Z}}$ by $\eta_X:X\to X\otimes_{\mathbb{N}\times\mathbb{Z}}\mathrm{M}_2(\mathbb{N})$
\[
\eta_X=q^{\mathbb{N}\times\mathbb{Z}}_{X,\mathrm{M}_2(\mathbb{N})}(\id_{X}\times u_{\mathrm{M}_2(\mathbb{N})}), \quad (x,*)\mapsto \overline{(x, \mathrm{I}_2).}
\]
By Theorem \ref{thm:phi*semisep}, we have $\eta$ is regular through $\nu:(\varphi\psi)_*(\varphi\psi)^* \to \id$, given for every $X\in \Set_{\mathbb{N}\times\mathbb{Z}}$ by 
\[
\nu_X=\Upsilon_X\widetilde{(DE)}: X\otimes_{\mathbb{N}\times\mathbb{Z}} \mathrm{M}_2(\mathbb{N})\to X, 
\]
where $\Upsilon_X:X\otimes_{\mathbb{N}\times\mathbb{Z}} \mathbb{N}\times\mathbb{Z} \to X$ is the isomorphism uniquely determined by the property $\Upsilon_Xq^{\mathbb{N}\times\mathbb{Z}}_{X,\mathbb{N}\times\mathbb{Z}}=\nu^{\mathbb{N}\times\mathbb{Z}}_X$, and $\widetilde{DE}=\tilde{D}\tilde{E}$ is the unique morphism in $\Set$ satisfying the equation 
\[
\widetilde{(DE)}q^{\mathbb{N}\times\mathbb{Z}}_{X,\mathrm{M}_2(\mathbb{N})}=q^{\mathbb{N}\times\mathbb{Z}}_{X, \mathbb{N}\times\mathbb{Z}}(\id_X\times DE).
\]
We have $\widetilde{DE}: X\otimes_{\mathbb{N}\times \mathbb{Z}}\mathrm{M}_2(\mathbb{N})\to  X\otimes_{\mathbb{N}\times \mathbb{Z}} (\mathbb{N}\times \mathbb{Z})$, $\overline{(x,(a_{ij}))}\mapsto\overline{(x,(a_{11},0))}$. Then, $\nu_X(\overline{(x, (a_{ij})}))=\nu^{\mathbb{N}\times\mathbb{Z}}_X(x,(a_{11},0))$.

By the proof of \cite[Theorem 2.1]{AB22}, the idempotent natural transformation associated to the semiseparable left adjoint is given by $e=\nu\circ\eta$. Explicitly, for every $X\in \Set_{\mathbb{N}\times\mathbb{Z}}$, we have 
$e_X(x)=(\nu_X\circ\eta_X)(x)=\nu_X^{\mathbb{N}\times\mathbb{Z}}(x,(1,0))$. By \cite[Proposition 1.4]{AB22} the following universal property holds:
\[
f\otimes_{\mathbb{N}\times\mathbb{Z}}\mathrm{M}_2(\mathbb{N})=g\otimes_{\mathbb{N}\times\mathbb{Z}}\mathrm{M}_2(\mathbb{N})\Leftrightarrow 
\nu_Y^{\mathbb{N}\times\mathbb{Z}}(f(x), (1,0))=\nu^{\mathbb{N}\times \mathbb{Z}}_Y(g(x), (1,0)),
\]
for every $f,g:X\to Y$ in $\Set_{\mathbb{N}\times\mathbb{Z}}$.
\end{invisible}
\end{es}

Now we consider some monoidal categories where the unit object may not be a left $\otimes$-generator.

\begin{es}\label{es:bialgmod}
 Cf. Example \ref{es:Hmodules}. Let $H$ be a bialgebra over a field $\Bbbk$. We denote the tensor product $\otimes_\Bbbk$ over $\Bbbk$ by the unadorned $\otimes$.  The category $({}_H\!\m, \otimes, \Bbbk)$ of left modules over $H$ is monoidal, and the unit object $\Bbbk\in{}_H\!\m$ via the \emph{trivial action} $hk=\varepsilon (h)k$. For $X,Y\in{}_H\!\m$, $X\otimes Y\in{}_H\!\m$ via the \emph{diagonal action}
 \[
H\otimes (X\otimes Y)\to X\otimes Y, \,\, h\otimes x\otimes y\mapsto \sum h_{1}x\otimes h_2y. 
\]
Every object in ${}_H\m$ is coflat, because tensor functors are right exact. 
\begin{invisible}
We show that $\Bbbk$ is not in general a generator, hence not a left $\otimes$-generator. Let $H$ be a bialgebra. Note that a left $H$-module morphism $\xi:\Bbbk \to H$, $\xi(k)=k\xi(1)$, is completely determined by $t:=\xi(1)$, which is a left integral in $H$ as $ht=h\xi(1)=\xi(h1)=\xi(\varepsilon(h)1)=\varepsilon(h)\xi(1)=\varepsilon (h)t$. 

Consider now the Hopf algebra $\Bbbk G $, for a finite group $G$, and the left $\Bbbk G$-module morphisms $f_g: \Bbbk G\to \Bbbk G$, $x\mapsto xg$, and  $ f_{g^{-1}}: \Bbbk G\to \Bbbk G$, $x\mapsto xg^{-1}$, for $g\in G$ such that $g^2\neq 1$. Since a left $\Bbbk G$-module morphism $\xi:\Bbbk\to \Bbbk G$ is completely determined by $t:=\xi(1)=k\sum_{g\in G} g$, $k\in\Bbbk$, we get $f_g(t)=tg=t=tg^{-1}=f_{g^{-1}}(t)$, but $f_g\neq f_{g^{-1}}$.
\end{invisible}
\begin{invisible}
Since a morphism $H\to M$ in ${}_H\m$ is completely determined by an element $m\in M$, it follows that $H$ is a left $\otimes$-generator in ${}_H\!\m$. Indeed, consider two morphisms $f,g:M\otimes Z\to W$ in ${}_H\!\m$ such that $f(\varepsilon\otimes\id_Z)=g(\varepsilon\otimes\id_Z)$, for all $\varepsilon:H\to M$ in ${}_H\!\m$, so for all $h\in H$, we have $
hf(m\otimes z)=f(hm\otimes z)=g(hm\otimes z)=hg(m\otimes z),$
hence taking $h=1_H$, we get $f(m\otimes z)=g(m\otimes z)$, and then $f=g$. 
\end{invisible}
An algebra $A$ in ${}_H\!\m$ is a $\Bbbk$-algebra $A$ where the multiplication and the unit are left $H$-linear morphisms, i.e. a left $H$-module algebra. More precisely, a $\Bbbk$-algebra with a left $H$-module structure such that 
$$h\cdot (ab)=(h_1\cdot a)(h_2\cdot b),\quad\text{ and}\quad h\cdot 1=\varepsilon_H(h)1,$$ for all $h\in H$, $a,b\in A$.

Let $H$ be a Hopf algebra over a field $\Bbbk$ with antipode $S$, unit $u:\Bbbk \to H$ and counit $\varepsilon:H\to \Bbbk$. We recall that $H$ is a left $H$-module algebra with respect to the \emph{left adjoint action} $H\otimes H\to H$, $h\otimes a\mapsto h\triangleright_{\mathrm{Ad}} a:=h_1aS(h_2)$, see \cite[Example 1.6.3]{Majid-book}.  
Note that $\Bbbk$ is an $H$-module algebra via the trivial action. 
Moreover, $u$ is an algebra morphism in ${}_H\!\m$. 
Indeed, $u$ is left $H$-linear as 
$u(hk)=u(\varepsilon(h)k)=\varepsilon (h) k1_H= h_1k1_HS(h_2) = h_1 u(k)S(h_2)=h\triangleright_{\mathrm{Ad}} u(k)$ for every $h\in H$, $k\in \Bbbk$. Moreover, $\varepsilon$ 
is left $H$-linear as, for every $h,a\in H$, 
\begin{equation*}
\begin{split}
\varepsilon(h\triangleright_{\mathrm{Ad}} a)&=\varepsilon(h_1aS(h_2))=\varepsilon(h_1)\varepsilon(a)\varepsilon(S(h_2))
=\varepsilon(\varepsilon(h)1_H)\varepsilon(a)=\varepsilon(h)\varepsilon(a)=h\varepsilon(a).\end{split}\end{equation*}

Note that $\varepsilon$ is in ${}_\Bbbk\Hom_\Bbbk(H,\Bbbk)$ as $\varepsilon(k\cdot h)=\varepsilon(u(k)h)=\varepsilon(k1h)=\varepsilon(kh)=k\varepsilon(h)$ and $\varepsilon(h\cdot k)=\varepsilon(hu(k))=\varepsilon(hk1)=\varepsilon(h)k$.
Since for every $k\in \Bbbk$ $(\varepsilon\circ u)(k)=\varepsilon(k1_H)=k\varepsilon(1_H)=k1_\Bbbk=k$, $u$ is a split-mono. By Remark \ref{rmk:notens-gen}, we have that $u^*=-\otimes H:\vect_\Bbbk\to\m_H$ is separable.



Let $H'$ be an $H$-module algebra.
We now consider the projection $\psi:\Bbbk \oplus H'\to \Bbbk $, $(k,h)\mapsto k$. We check that $\Bbbk\oplus H'$ is an $H$-module algebra where the $H$-module structure is componentwise. In fact,
\[
\begin{split}
h\cdot ((k,a)(k',b))&=h\cdot (kk',ab)=(\varepsilon(h)kk',(h_1\cdot a)(h_2\cdot b ))\\&=
(\varepsilon(h_1)k, h_1\cdot a)(\varepsilon(h_2)k',h_2\cdot b)
=
(h_1\cdot (k,a))(h_2\cdot (k',b))
\end{split}\]
and 
$h\cdot (1, 1)=(\varepsilon(h)1_\Bbbk, \varepsilon(h)1_H)=\varepsilon(h)(1,1),$ for all $h\in H$, $k,k'\in \Bbbk$ $a,b\in H'$. Note that $\psi$ is an algebra morphism in ${}_H\m$. Consider $D:\Bbbk\to \Bbbk\oplus H'$, $k\mapsto (k,0)$, which is a morphism in ${}_H\m$. We have
\[
\begin{split}
    D((k,h)\cdot k')&=D(\psi((k,h)k')=D(kk')=(kk',0)=(k,h)(k',0)=(k,h)D(k'),
\\D(k'\cdot(k,h))&=D(k'\psi(k,h))=D(k'k)=(k'k,0)=(k',0)(k,h)=D(k')(k,h),
\end{split}
\]
so $D\in {}_{\Bbbk\oplus H'}\Hom_{\Bbbk\oplus H'}(\Bbbk, \Bbbk\oplus H')$ and $\psi D(k)=\psi((k,0))=k$. Thus, $\psi^*$ is naturally full by Proposition \ref{prop:phi*natfull}. Hence, the composite $u^*\circ \psi^*\cong (u\circ\psi)^*:({}_H\m)_{\Bbbk\oplus H'}\to ({}_H\m)_H$ is a semiseparable functor, which is neither separable nor naturally full.
\begin{invisible}
The unit $\eta$ of the adjunction $(u_H\psi)^*\dashv (u_H\psi)_*$ is given for every $X\in ({}_H\m)_{\Bbbk \oplus H'}$ by 
\[
\eta_X:X\to X\otimes_{\Bbbk \oplus H'} H, \quad \eta_X=q^{\Bbbk \oplus H'}_{X,H}(\id_X\otimes u), \quad x\mapsto x\otimes_{\Bbbk \oplus H'} 1_H.
\]
 By Theorem \ref{thm:rafael}, there exists a natural transformation $\nu : (u\psi)_*(u\psi)^*\to\id$ such that $\eta\nu\eta=\eta$. In particular, we have that $\nu_X=\Upsilon_X\tilde{E}:X\otimes_{\Bbbk \oplus H'}H\to X$, where $E=D\varepsilon_H$ and $\tilde{E}:X\otimes_{\Bbbk \oplus H'}H\to X\otimes_{\Bbbk \oplus H'}(\Bbbk \oplus H')$ is the unique morphism in ${}_{H}\m$ such that $\tilde{E} q^{\Bbbk \oplus H'}_{X,H}=q^{\Bbbk \oplus H'}_{X,\Bbbk \oplus H'}(\id_X\otimes E)$.
 \end{invisible}
 \begin{invisible}
Note that $E=D\varepsilon_H$ is a left $\Bbbk\oplus H'$-linear map. In fact, $D\varepsilon_H((k,h)\cdot h')=D\varepsilon(u\psi((k,h))h')=D(\varepsilon(u\psi((k,h)) \varepsilon(h')))=D(k\varepsilon(h'))=(k\varepsilon(h'),0)=(k,h)(\varepsilon(h'),0)=(k,h)D\varepsilon(h')$.
 \end{invisible}
 \begin{invisible}
 We have $\tilde{E}(x\otimes_{\Bbbk \oplus H'}h)=x\otimes_{\Bbbk \oplus H'}D\varepsilon(h)=x\otimes_{\Bbbk \oplus H'}(\varepsilon(h),0)$. Then, 
 \[
 \nu_X(x\otimes_{\Bbbk \oplus H'} h)=\nu_X^{\Bbbk \oplus H'}(x\otimes(\varepsilon(h),0)).
 \]
The idempotent natural transformation associated to the semiseparable left adjoint is given by $e=\nu\circ\eta$. Explicitly, for every $X\in ({}_H\m)_{\Bbbk \oplus H'}$ 
\[
e_X(x)=(\nu_X\circ\eta_X)(x)=\nu_X^{\Bbbk \oplus H'}(x\otimes ( 1,0)),
\]
for every $x\in X$.

We observe the following fact. Let $\varphi:R\to S$ be an algebra morphism in ${}_H\m$. By Remark \ref{rmk:notens-gen} we know that if $\varphi^*$ is semiseparable, then $E=\nu_R\Upsilon'^{-1}_S=\nu_R(1\otimes_R-):S\to R$
 is in $\Hom_R(S,R)$, where $\nu\in\Nat(\varphi_*\varphi^*,\id_{\cc_R})$ such that $\eta\circ\nu\circ\eta=\eta$. Even though in ${}_H\m$ the unit $\Bbbk$ is not a tensor generator in general, we show that $E$ is left $R$-linear as well. 
 Consider $\xi:\Bbbk\to R$, $\xi(k)=k\xi(1)=kx$, where $x:=\xi(1)\in R$,
and $f_\xi:R\to R$, $r\mapsto r\xi(1)=rx$. Note that $f_\xi\in {}_H\m$ as $f_\xi(hr)=hrx=hf_\xi(r)$. Then, from $f_\xi\nu_R=\nu_R(f_\xi\otimes_RS)$ we do not get $E(xs)=xE(s)$.
\end{invisible}
\end{es}

\begin{invisible}
\begin{es}
Let $K$ be a commutative unital ring and let $R$ be a $K$-algebra. We know that the category ${}_R\m_R$ of $R$-bimodules and $R$-bilinear morphisms is monoidal with $\otimes=\otimes_R$ and unit object $R$. Since the tensor functor $\otimes_{R}$ is right exact, 
the coflatness condition is fulfilled.

By \cite[Example 3.2 3)]{BT15}, $R$ is a left $\otimes$-generator if and only if $R$ is a generator in ${}_R\m_R$. In particular, if $R$ is a separable $K$-algebra, then $R$ is a generator in ${}_R\m_R$.  By Maschke Theorem for Hopf algebras \cite{LSw}, if $H$ is a semisimple Hopf algebra, then it is also separable, so $H$ is a left $\otimes$-generator of $({}_H\m_H, \otimes_H, H)$.  For instance, let $G$ be a finite group of cardinality $n\in \mathbb{N}, n \geq 1$, and consider the group algebra $\Bbbk G$ over a field $\Bbbk$ such that $\mathrm{char}(\Bbbk)\nmid n$. Then, $\Bbbk G$ is separable. Hence, the unit object $\Bbbk G$ is a left $\otimes$-generator in $({}_{\Bbbk G}\m_{\Bbbk G}, \otimes_{\Bbbk G}, \Bbbk G)$. We denote $\otimes:=\otimes_\Bbbk$.

Consider
$$\psi:\Bbbk G\oplus \Bbbk G\to\Bbbk G,\,\, (a,b)\mapsto a,\quad \varphi: \Bbbk G\to \Bbbk G\otimes \Bbbk G,\,\,\sum_{i=0}^{n-1} k_ig_i\mapsto \sum_{i=0}^{n-1}k_ig_i\otimes g_i.$$ Note that $\Bbbk G\oplus \Bbbk G$
is a monoid in ${}_{\Bbbk G}\m_{\Bbbk G}$, with unit
$u_{\Bbbk G\oplus \Bbbk G}:\Bbbk G\to \Bbbk G\oplus \Bbbk G$, $g\mapsto (g,g)$, and multiplication $$m_{\Bbbk G\oplus \Bbbk G}:(\Bbbk G\oplus \Bbbk G)\otimes_{\Bbbk G}(\Bbbk G\oplus \Bbbk G)\to \Bbbk G\oplus \Bbbk G,\,\,(a,b)\otimes_{\Bbbk G} (c,d)\mapsto (ac, bd),$$ where the left $\Bbbk G$-module structure of $\Bbbk G\oplus \Bbbk G$ is given by $g\cdot (a, b)=(ga,gb)$, for $a,b\in G$, while the $\Bbbk G$-module structure of $(\Bbbk G\oplus \Bbbk G)\otimes_{\Bbbk G}(\Bbbk G\oplus \Bbbk G)$ is given by 
\[
g\cdot ((a,b)\otimes_{\Bbbk G} (c,d))=(g\cdot (a,b) )\otimes_{\Bbbk G}(c,d)=(ga, gb)
\otimes_{\Bbbk G} (c,d).
\]
We check that $m_{\Bbbk G\oplus \Bbbk G}$ is left $\Bbbk G$-linear.
\[
\begin{split}
m_{\Bbbk G\oplus \Bbbk G}(g\cdot ((a,b)\otimes_{\Bbbk G} (c,d)))&=m_{\Bbbk G\oplus \Bbbk G}((g\cdot (a,b) )\otimes_{\Bbbk G}(c,d))=m_{\Bbbk G\oplus \Bbbk G}((ga, gb)
\otimes_{\Bbbk G} (c,d))\\&=(gac, gbd)=g\cdot (ac,bd)=g\cdot m_{\Bbbk G\oplus \Bbbk G}((a,b)\otimes_{\Bbbk G} (c,d)).
\end{split}
\]
Similarly, one can show that $m_{\Bbbk G\oplus \Bbbk G}$ is right $\Bbbk G$-linear. 
It is clear that $m_{\Bbbk G\oplus \Bbbk G}$ and $u_{\Bbbk G\oplus \Bbbk G}$ satisfy the associativity and unitality axioms. 

Observe that $\Bbbk G\otimes \Bbbk G$ is a monoid in ${}_{\Bbbk G}\m_{\Bbbk G}$ with unit $u_{\Bbbk G\otimes \Bbbk G}:\Bbbk G\to \Bbbk G\otimes\Bbbk G$, $g\mapsto g\otimes g$, and multiplication $$m_{\Bbbk G\otimes \Bbbk G}:(\Bbbk G\otimes \Bbbk G)\otimes_{\Bbbk G}(\Bbbk G\otimes\Bbbk G)\to \Bbbk G\otimes \Bbbk G,\,\, (a\otimes b)\otimes_{\Bbbk G} (c\otimes d)\mapsto ac\otimes bd,$$ where the $\Bbbk G$-module structure of $\Bbbk G\otimes \Bbbk G$ is given by $g\cdot (a\otimes b)=ga \otimes gb$, for $a,b\in G$, while the $\Bbbk G$-module structure of $(\Bbbk G\otimes \Bbbk G)\otimes_{\Bbbk G}(\Bbbk G\otimes \Bbbk G)$ is given by 
\[
g\cdot ((a\otimes b)\otimes_{\Bbbk G} (c\otimes d))=(g\cdot (a\otimes b) )\otimes_{\Bbbk G}(c\otimes d)=(ga\otimes gb)
\otimes_{\Bbbk G} (c\otimes d).
\]
We have 
\[
\begin{split}
m_{\Bbbk G\otimes \Bbbk G}(g\cdot ((a 
\otimes b)\otimes_{\Bbbk G} (c\otimes d)))&=m_{\Bbbk G\otimes \Bbbk G}((g\cdot (a\otimes b) )\otimes_{\Bbbk G}(c\otimes d))\\&=m_{\Bbbk G\otimes \Bbbk G}((ga\otimes gb)
\otimes_{\Bbbk G} (c \otimes d))=gac\otimes gbd\\&=g\cdot (ac\otimes bd)=g\cdot m_{\Bbbk G\otimes\Bbbk G}((a\otimes b)\otimes_{\Bbbk G} (c\otimes d)).
\end{split}
\]
Similarly, one can show that $m_{\Bbbk G\otimes \Bbbk G}$ is right $\Bbbk G$-linear.
One can check that $\varphi$, $\psi$ are algebra morphisms in ${}_{\Bbbk G}\m_{\Bbbk G}$. 
Indeed, for $t\in G$, $\varphi(t\sum_{i}k_ig_i)=\varphi(\sum_ik_itg_i)=\sum_ik_itg_i\otimes tg_i=t(\sum_ik_ig_i\otimes g_i)=t\varphi(\sum_ik_i g_i)$ and $\varphi((\sum_ik_ig_i) t)=\varphi(\sum_ik_ig_it)=\sum_{i}k_ig_it\otimes g_it=(\sum_ik_ig_i\otimes g_i)t=\varphi(\sum_ik_ig_i)t$, $\psi(g(a,b))=\psi((ga,gb))=ga=g\psi((a,b))$ and $\psi((a,b)g)=\psi((ag,bg))=ag=\psi((a,b))g$. Moreover, we have that $\varphi m_{\Bbbk G}(a\otimes c)=\varphi(ac)=ac\otimes ac=m_{\Bbbk G\otimes \Bbbk G}(\varphi\otimes \varphi)(a\otimes c)$, $\varphi u_{\Bbbk G}(g)=g\otimes g=u_{\Bbbk G\otimes \Bbbk G}(g)$, and 
$\psi m_{\Bbbk G\oplus \Bbbk G}((a,b)\otimes_{\Bbbk G}(c,d))=\psi((ac,bd))=ac=m_{\Bbbk G}(a\otimes_{\Bbbk G} c)=m_{\Bbbk G}(\psi\otimes_{\Bbbk G}\psi)((a,b)\otimes_{\Bbbk G}(c,d))$, $\psi u_{\Bbbk G\oplus \Bbbk G}(g)=\psi ((g,g))=g=u_{\Bbbk G}(g)$.  
Now define $E:=\varepsilon\otimes\id: \Bbbk G\otimes \Bbbk G\to \Bbbk G$, $E(g\otimes g') = g'$, where $g, g'\in G$. Because, for $t\in \Bbbk G$, $E(t(g\otimes g'))=E(tg\otimes tg')=tg'=tE(g\otimes g')$ and $E((g\otimes g')t)=E(gt\otimes  g't)=g't=E(g\otimes g')t$, we obtain that $E$ is in ${}_{\Bbbk G}\Hom_{\Bbbk G}(\Bbbk G\otimes \Bbbk G, \Bbbk G)$. Moreover,  $E\varphi(\sum_ik_ig_i)=E(\sum_ik_ig_i\otimes g_i)=\sum_ik_ig_i$, hence $\varphi^*$ is separable by Theorem \ref{thm:sepmon}. Note that $E$ can be also chosen to be $\id\otimes  \varepsilon$.

Next consider $D:\Bbbk G\to \Bbbk G\oplus \Bbbk G$, $g\mapsto (g,0)$, which is $\Bbbk G$-bilinear.
We have 
\[
\begin{split}
D((a,b)g)&=D(\psi((a,b))g)=D(ag)=(ag,0)=(a,b)(g,0)=(a,b)D(g),\\
D(g(a,b))&=D(g\psi((a,b)))=D(ga)=(ga, 0)=(g,0)(a,b)=D(g)(a,b),
\end{split}\]
so $D\in{}_{\Bbbk G\oplus\Bbbk G}\Hom_{\Bbbk G\oplus\Bbbk G}(\Bbbk G, \Bbbk G\oplus\Bbbk G)$. Moreover, $\psi D(g)=\psi((g,0))=g$, hence $\psi^*$ is naturally full by Proposition \ref{prop:phi*natfull}. Then, by Lemma \ref{lem:comp-semisep} $\varphi^*\circ\psi^*\cong (\varphi\circ \psi)^*$ is semiseparable. We observe that $\varphi\psi$ is neither injective nor surjective, so $\varphi^*\circ\psi^*$ is neither separable nor naturally full.

By Theorem \ref{thm:phi*semisep} there exists a $\Bbbk G\oplus\Bbbk G$-bimodule morphism $\xi:\Bbbk G\otimes\Bbbk G\to \Bbbk G\oplus\Bbbk G$ in ${}_{\Bbbk G}\m_{\Bbbk G}$ such that $\varphi\psi\xi \varphi\psi=\varphi\psi$. For instance, $\xi$ can be chosen as $D E: \Bbbk G\otimes\Bbbk G\to \Bbbk G\oplus\Bbbk G$, $g\otimes g'\mapsto (g',0)$.
The unit $\eta$ of the adjunction $(\varphi\psi)^*\dashv (\varphi\psi)_*$ is given for every $X\in ({}_{\Bbbk G}\m_{\Bbbk G})_{\Bbbk G\oplus\Bbbk G}$ by 
\[
\eta_X:X\to X\otimes_{\Bbbk G\oplus\Bbbk G}(\Bbbk G\otimes\Bbbk G),\quad\eta_X=q^{\Bbbk G\oplus\Bbbk G}_{X,\Bbbk G\otimes\Bbbk G}(\id_{X}\otimes_{\Bbbk G} u_{\Bbbk G\otimes\Bbbk G}), \quad x\mapsto x\otimes_{\Bbbk G\oplus\Bbbk G} 1\otimes 1.
\]
By Theorem \ref{thm:phi*semisep}, we have $\eta$ is regular through $\nu:(\varphi\psi)_*(\varphi\psi)^* \to \id$, given for every $X\in ({}_{\Bbbk G}\m_{\Bbbk G})_{\Bbbk G\oplus\Bbbk G}$ by 
\[
\nu_X=\Upsilon_X\widetilde{(DE)}: X\otimes_{\Bbbk G\oplus\Bbbk G} (\Bbbk G\otimes\Bbbk G)\to X, 
\]
where $\Upsilon_X:X\otimes_{\Bbbk G\oplus\Bbbk G} (\Bbbk G\oplus\Bbbk G) \to X$ is the isomorphism uniquely determined by the property $\Upsilon_Xq^{\Bbbk G\oplus\Bbbk G}_{X,\Bbbk G\oplus\Bbbk G}=\nu^{\Bbbk G\oplus\Bbbk G}_X$, and $\widetilde{DE}=\tilde{D}\tilde{E}$ is the unique morphism in ${}_{\Bbbk G}\m_{\Bbbk G}$ satisfying the equation 
\[
\widetilde{(DE)}q^{\Bbbk G\oplus\Bbbk G}_{X,\Bbbk G\otimes\Bbbk G}=q^{\Bbbk G\oplus\Bbbk G}_{X, \Bbbk G\oplus\Bbbk G}(\id_X\otimes_{\Bbbk G} DE).
\]
We have $\widetilde{DE}: X\otimes_{\Bbbk G\oplus\Bbbk G}(\Bbbk G\otimes \Bbbk G)\to  X\otimes_{\Bbbk G\oplus\Bbbk G} (\Bbbk G\oplus\Bbbk G)$, $x\otimes_{\Bbbk G\oplus\Bbbk G}g\otimes_{\Bbbk} g'\mapsto x\otimes_{\Bbbk G\oplus\Bbbk G}(g', 0)$. 

Then, $$\nu_X(x\otimes_{\Bbbk G\oplus\Bbbk G}(g\otimes g'))=\nu^{\Bbbk G\oplus\Bbbk G}_X(x\otimes_{\Bbbk G}(g',0)).$$

By the proof of \cite[Theorem 2.1]{AB22}, the idempotent natural transformation associated to the semiseparable left adjoint is given by $e=\nu\circ\eta$. Explicitly, for every $X\in ({}_{\Bbbk G}\m_{\Bbbk G})_{\Bbbk G\oplus\Bbbk G}$, we have 
$e_X(x)=(\nu_X\circ\eta_X)(x)=\nu_X^{\Bbbk G\oplus\Bbbk G}(x\otimes_{\Bbbk G}(1,0))$, for every $x\in X$.
\end{es}
\end{invisible}

\begin{es}\label{es:bimodules}
Let $K$ be a commutative unital ring and let $R$ be a $K$-algebra. We know that the category ${}_R\m_R$ of $R$-bimodules and $R$-bilinear morphisms is monoidal with $\otimes=\otimes_R$ and unit object $R$. Since the tensor product $\otimes_{R}$ is right exact, 
the coflatness condition is fulfilled. 
By \cite[Example 3.2 3)]{BT15}, $R$ is a left $\otimes$-generator if and only if $R$ is a generator in ${}_R\m_R$. In particular, if $R$ is a separable $K$-algebra, then $R$ is a generator in ${}_R\m_R$.  By Maschke Theorem for Hopf algebras \cite{LSw}, if $H$ is a semisimple Hopf algebra, then it is also separable, so $H$ is a left $\otimes$-generator of $({}_H\m_H, \otimes_H, H)$.  

For instance, let $G=\{g_1,\cdots , g_n\}$ be a finite group of cardinality $n\in \mathbb{N}, n \geq 1$, and consider the group algebra $\Bbbk G$ over a field $\Bbbk$ such that $\mathrm{char}(\Bbbk)\nmid n$. Then, $\Bbbk G$ is separable. Hence, the unit object $\Bbbk G$ is a left $\otimes$-generator in $({}_{\Bbbk G}\m_{\Bbbk G}, \otimes_{\Bbbk G}, \Bbbk G)$. We denote $\otimes:=\otimes_\Bbbk$.
Let $I$ be a finite index set of cardinality $\vert I \vert$. Denote $\Bbbk^{\vert I\vert}:= \oplus_{i\in I} \Bbbk$. Note that $\Bbbk^{\vert I\vert} \otimes \Bbbk G$ is an algebra in $({}_{\Bbbk G}\m_{\Bbbk G}, \otimes_{\Bbbk G}, \Bbbk G)$. More precisely, the $\Bbbk G$-module structure of $\Bbbk^{\vert I\vert}\otimes \Bbbk G$ is 
$$
\begin{aligned}
 \Bbbk G \otimes (\Bbbk^{\vert I\vert} \otimes \Bbbk G) &\to \Bbbk^{\vert I\vert} \otimes \Bbbk G, \quad   (\Bbbk^{\vert I\vert}\otimes \Bbbk G) \otimes\Bbbk G \to 
 \Bbbk^{\vert I\vert} \otimes \Bbbk G   \\
 h \otimes (a\otimes g) &\mapsto a \otimes hg \quad \quad\quad \quad \quad \quad \quad (a\otimes g)\otimes h  \mapsto a\otimes gh.
\end{aligned}
$$
Besides, the multiplication is 
$$
m:(\Bbbk^{\vert I\vert}\otimes \Bbbk G) \otimes_{\Bbbk G} (\Bbbk^{\vert I\vert}\otimes \Bbbk G) \to \Bbbk^{\vert I\vert}\otimes \Bbbk G,\,\, (a\otimes g)\otimes_{\Bbbk G} (b\otimes g') \to ab\otimes gg'.
$$ 
The left $\Bbbk G$-module structure of the source of $m$ is given by
$h\cdot((a\otimes g)\otimes_{\Bbbk G} (b\otimes g'))=(a\otimes hg)\otimes_{\Bbbk G} (b\otimes g')$. 
\begin{invisible} To be more specific, $m(h\cdot((a\otimes g)\otimes_{\Bbbk G} (b\otimes g'))) = m(h\cdot(a\otimes g)\otimes_{\Bbbk G} (b\otimes g')) = m((a\otimes hg)\otimes_{\Bbbk G} (b\otimes g')) = ab\otimes hgg'$, and $h\cdot m((a\otimes g)\otimes_{\Bbbk G} (b\otimes g')) = h\cdot (ab\otimes gg') =ab\otimes hgg'$. \end{invisible} Furthermore, the unit is $u : \Bbbk G \to \Bbbk^{\vert I\vert} \otimes \Bbbk G$, $g \mapsto 1\otimes g$. It is routine to show that the multiplication $m$ and unit $u$ are $\Bbbk G$-bilinear.
\begin{invisible} In fact, $u_{\Bbbk^{\vert I\vert} \otimes \Bbbk G}(h\cdot g)=1\otimes hg=h\cdot (1\otimes g)=h\cdot u_{\Bbbk^{\vert I\vert} \otimes \Bbbk G}(g)$.\end{invisible}

Now, suppose $\Bbbk^{\vert I\vert}$ has basis $\{a_j\}_{j\in I}$, where $a_j\in \Bbbk^{\vert I\vert}$ is the element with the $j$th component $1$ and the others are $0$. For any $l\in I$, define 
$$
\psi_l: \Bbbk^{\vert I\vert} \otimes \Bbbk G \to \Bbbk G,\,\, \sum_{i,j} k_{ij} a_i \otimes g_j \mapsto \sum_j k_{lj} g_j \text{ and } D_l: \Bbbk G  \to \Bbbk^{\vert I\vert} \otimes \Bbbk G,\,\, g \mapsto a_l\otimes g.
$$
It is clear that $\psi_l \circ D_l = \id_{\Bbbk G}$. We now show that $\psi_l$ is an algebra morphism in ${}_{\Bbbk G}\m_{\Bbbk G}$. Because $a_ia_s=a_i$ if $i=s$ and $a_ia_s=0$ if $i\not= s$, we have
\begin{align*}
&\psi_l (m((\sum k_{ij}a_i\otimes g_j)\otimes_{\Bbbk G}(\sum k'_{st}a_s\otimes g_t))) = \psi_l(\sum k_{ij} k'_{st}a_ia_s \otimes g_jg_t) = \psi_l(\sum k_{ij} k'_{it}a_i \otimes g_jg_t) \\= &\sum k_{lj} k'_{lt} g_jg_t
=(\sum k_{lj} g_j)(\sum k'_{lt} g_t) =m_{\Bbbk G}(\psi_l (\sum k_{ij}a_i\otimes g_j) \otimes_{\Bbbk G}\psi_l(\sum k'_{st}a_s\otimes g_t) )\\
= &m_{\Bbbk G}(\psi_l\otimes_{\Bbbk G}\psi_l) ((\sum k_{ij}a_i\otimes g_j) \otimes_{\Bbbk G}(\sum k'_{st}a_s\otimes g_t)),
\end{align*}
and $\psi_l u(g)=\psi_l(1\otimes g)=g$. Besides, $D_l$ is $\Bbbk G$-bilinear as $D_l(hg)=a_l\otimes hg=h(a_l\otimes g)=hD_l(g)$ and similarly for the right side.
It  is also $\Bbbk^{\vert I\vert} \otimes\Bbbk G$-bilinear, because
\[
\begin{split}
&D_l((a_k\otimes g)h)=D_l(\psi_l((a_k\otimes g))h)=\begin{cases}
    0\quad \text{if}\,\, k\neq l\\
    D_l(gh)=a_l\otimes gh\quad\text{if}\,\, k=l
\end{cases}\\
& (a_k\otimes g)D_l(h)=(a_k\otimes g)(a_l\otimes  h)=a_ka_l\otimes gh=\begin{cases}
    0\quad\text{if}\,\, k\neq l,\\
    a_l\otimes gh \quad\text{if}\,\, k= l
\end{cases}
\end{split}\]
and similarly for the othe side.
\begin{invisible}
\[
\begin{split}
&D_l(h(a_k\otimes g))=D_l(h\psi_l((a_k\otimes g)))=\begin{cases}
    0\quad \text{if}\,\, k\neq l\\
    D_l(hg)=a_l\otimes hg\quad\text{if}\,\, k=l
\end{cases}\\
& D_l(h)(a_k\otimes g)=(a_l\otimes  h)(a_k\otimes g)=a_la_k\otimes gh=\begin{cases}
    0\quad\text{if}\,\, k\neq l,\\
    a_l\otimes hg \quad\text{if}\,\, k= l
\end{cases}
\end{split}\]
\end{invisible}
Moreover, $\psi_l D_l(g)=\psi_l(a_l\otimes g)=g$, hence $\psi_l^*$ is naturally full by Proposition \ref{prop:phi*natfull}.

Now consider $\Bbbk G^{\otimes m }:=\Bbbk G\otimes\ldots\otimes \Bbbk G$ ($m$ times), where $m>1$ is a fixed natural number. The left $\Bbbk G$-module structure of $ \Bbbk G^{\otimes m }$ is defined on the basis, for $h\in G$, by
$ h \otimes (g_1\otimes\ldots \otimes g_m) \mapsto hg_1 \otimes\ldots\otimes hg_m$, 
and similarly for the right side. We observe that $ \Bbbk G^{\otimes m}$ is a monoid in ${}_{\Bbbk G}\m_{\Bbbk G}$ with multiplication $$m_{\Bbbk G^{\otimes m }}:\Bbbk G^{\otimes m }\otimes_{\Bbbk G}\Bbbk G^{\otimes m }\to \Bbbk G^{\otimes m },\,\,(a_1\otimes\ldots\otimes a_m)\otimes_{\Bbbk G} (b_1\otimes\ldots\otimes b_m)\mapsto a_1b_1\otimes \ldots\otimes a_mb_m,$$ where the $\Bbbk G$-module structure of $\Bbbk G^{\otimes m }\otimes_{\Bbbk G}\Bbbk G^{\otimes m }$ is 
given by
\[
\begin{split}
g\cdot ((a_1\otimes \ldots\otimes a_m)\otimes_{\Bbbk G} (b_1\otimes \ldots\otimes b_m))&=(g\cdot (a_1\otimes \ldots\otimes a_m))\otimes_{\Bbbk G}(b_1\otimes \ldots\otimes b_m)\\&=(ga_1\otimes\ldots\otimes ga_m)
\otimes_{\Bbbk G} (b_1\otimes \ldots\otimes b_m).
\end{split}
\]
\begin{invisible}
We have 
\[
\begin{split}
&m_{\Bbbk G^{\otimes m }}(g\cdot ((a_1 
\otimes \ldots\otimes a_m)\otimes_{\Bbbk G} (b_1\otimes\ldots\otimes b_m)))\\=&m_{\Bbbk G^{\otimes m }}((ga_1\otimes\ldots\otimes ga_m)
\otimes_{\Bbbk G} (b_1 \otimes \ldots\otimes b_m))=ga_1b_1\otimes \ldots\otimes ga_mb_m\\
=&g(a_1b_1\otimes \ldots\otimes a_mb_m)=g\cdot m_{\Bbbk G^{\otimes m }}((a_1\otimes\ldots\otimes  a_m)\otimes_{\Bbbk G} (b_1\otimes \ldots\otimes b_m)).
\end{split}
\]
Similarly, 
\end{invisible}
One can check that $m_{\Bbbk G^{\otimes m }}$ is left and right $\Bbbk G$-linear. The unit is given by $u_{\Bbbk G^{\otimes m }}:\Bbbk G\to \Bbbk G^{\otimes m }$, $g\mapsto g\otimes \ldots\otimes g$.
Consider the algebra morphism $$\varphi: \Bbbk G\to \Bbbk G^{\otimes m },\quad \sum_{i=1}^{n} k_ig_i\mapsto \sum_{i=1}^{n}k_ig_i\otimes\ldots\otimes g_i$$
in ${}_{\Bbbk G}\m_{\Bbbk G}$. \begin{invisible}
Note that $\varphi$ is an algebra morphism in ${}_{\Bbbk G}\m_{\Bbbk G}$ as $\varphi m_{\Bbbk G}(a\otimes c)=\varphi(ac)=ac\otimes  \ldots\otimes ac=m_{\Bbbk G^{\otimes m }}(\varphi\otimes \varphi)(a\otimes c)$, for $a,c\in G$, and $\varphi u_{\Bbbk G}(g)=g\otimes \ldots\otimes g=u_{\Bbbk G^{\otimes m }}(g)$.
\end{invisible} Define $E:=\varepsilon^{\otimes (m-1)}\otimes\id: \Bbbk G^{\otimes m }\to \Bbbk G$, $E(h_1\otimes \ldots\otimes  h_m) = h_m$, where $h_1,\ldots, h_m\in G$. Then, $E$ is in ${}_{\Bbbk G}\Hom_{\Bbbk G}(\Bbbk G^{\otimes m }, \Bbbk G)$, since $E(t(h_1\otimes \ldots\otimes h_m))=E(th_1\otimes \ldots\otimes th_m)=th_m=tE(h_1\otimes\ldots\otimes h_m)$ and similarly for the right side.
\begin{invisible}$E((h_1\otimes \ldots\otimes h_m)t)=E(h_1t\otimes\ldots\otimes h_mt)=h_mt=E(h_1\otimes\ldots\otimes h_m)t$.\end{invisible} Moreover,  $E\varphi(\sum_ik_ig_i)=E(\sum_ik_ig_i\otimes\ldots\otimes g_i)=\sum_ik_ig_i$, hence $\varphi^*$ is separable by Theorem \ref{thm:sepmon}. Note that $E$ can be also chosen to be $\varepsilon\otimes\ldots\otimes\id\otimes\varepsilon$, $\cdots$, $\id\otimes \varepsilon\otimes\ldots\otimes\varepsilon$ ($m-1$ ways). Then, by Lemma \ref{lem:comp-semisep} $\varphi^*\circ\psi^*\cong (\varphi\circ \psi)^*$ is semiseparable. We observe that $\varphi\psi$ is neither injective nor surjective, so $\varphi^*\circ\psi^*$ is neither separable nor naturally full. By Theorem \ref{thm:phi*semisep} there exists an $\Bbbk^{\vert I\vert} \otimes \Bbbk G $-bimodule morphism $\xi: \Bbbk G^{\otimes m }\to \Bbbk^{\vert I\vert} \otimes \Bbbk G $ in ${}_{\Bbbk G}\m_{\Bbbk G}$ such that $\varphi\psi\xi \varphi\psi=\varphi\psi$. For instance, $\xi$ can be chosen as $D_l E: \Bbbk G^{\otimes m }\to \Bbbk^{\vert I\vert} \otimes \Bbbk G $, $g_1\otimes\ldots\otimes g_m\mapsto a_l\otimes g_m $.
\begin{invisible}
The unit $\eta$ of the adjunction $(\varphi\psi)^*\dashv (\varphi\psi)_*$ is given for every $X\in ({}_{\Bbbk G}\m_{\Bbbk G})_{\Bbbk^{\vert I\vert} \otimes \Bbbk G }$ by 
\[
\eta_X:X\to X\otimes_{\Bbbk^{\vert I\vert} \otimes \Bbbk G }\Bbbk G^{\otimes m },\quad\eta_X=q^{\Bbbk^{\vert I\vert} \otimes \Bbbk G }_{X,\Bbbk G^{\otimes m }}(\id_{X}\otimes_{\Bbbk G} u_{\Bbbk G^{\otimes m }}), \quad x\mapsto x\otimes_{\Bbbk^{\vert I\vert} \otimes \Bbbk G} (1\otimes\ldots\otimes 1).
\]
By Theorem \ref{thm:phi*semisep}, we have $\eta$ is regular through $\nu:(\varphi\psi)_*(\varphi\psi)^* \to \id$, given for every $X\in ({}_{\Bbbk G}\m_{\Bbbk G})_{\Bbbk^{\vert I\vert} \otimes \Bbbk G}$ by 
\[
\nu_X=\Upsilon_X\widetilde{(D_lE)}: X\otimes_{\Bbbk^{\vert I\vert} \otimes \Bbbk G} \Bbbk G^{\otimes m }\to X, 
\]
where $\Upsilon_X:X\otimes_{\Bbbk^{\vert I\vert} \otimes \Bbbk G} (\Bbbk^{\vert I\vert} \otimes \Bbbk G) \to X$ is the isomorphism uniquely determined by the property $\Upsilon_Xq^{\Bbbk^{\vert I\vert} \otimes \Bbbk G}_{X,\Bbbk^{\vert I\vert} \otimes \Bbbk G}=\nu^{(\Bbbk^{\vert I\vert} \otimes \Bbbk G}_X$, and $\widetilde{D_lE}=\tilde{D_l}\tilde{E}$ is the unique morphism in ${}_{\Bbbk G}\m_{\Bbbk G}$ satisfying the equation 
\[
\widetilde{(D_lE)}q^{\Bbbk^{\vert I\vert} \otimes \Bbbk G}_{X,\Bbbk G^{\otimes m }}=q^{\Bbbk^{\vert I\vert} \otimes \Bbbk G}_{X, \Bbbk^{\vert I\vert} \otimes \Bbbk G}(\id_X\otimes_{\Bbbk G} DE).
\]
We have $\widetilde{D_lE}: X\otimes_{\Bbbk^{\vert I\vert} \otimes \Bbbk G}\Bbbk G^{\otimes m }\to  X\otimes_{\Bbbk^{\vert I\vert} \otimes \Bbbk G} (\Bbbk^{\vert I\vert} \otimes \Bbbk G)$, $x\otimes_{\Bbbk^{\vert I\vert} \otimes \Bbbk G}(g_1\otimes_{\Bbbk}\ldots\otimes g_m)\mapsto x\otimes_{\Bbbk^{\vert I\vert} \otimes \Bbbk G}(a_l\otimes g_m)$. 

Then, $$\nu_X(x\otimes_{\Bbbk^{\vert I\vert} \otimes \Bbbk G}(g_1\otimes\ldots\otimes  g_m))=\nu^{\Bbbk^{\vert I\vert} \otimes \Bbbk G}_X(x\otimes_{\Bbbk G}(a_l\otimes g_m)).$$

By the proof of \cite[Theorem 2.1]{AB22}, the idempotent natural transformation associated to the semiseparable left adjoint is given by $e=\nu\circ\eta$. Explicitly, for every $X\in ({}_{\Bbbk G}\m_{\Bbbk G})_{\Bbbk^{\vert I\vert} \otimes \Bbbk G}$, we have 
$e_X(x)=(\nu_X\circ\eta_X)(x)=\nu_X^{\Bbbk^{\vert I\vert} \otimes \Bbbk G}(x\otimes_{\Bbbk G}(a_l\otimes 1))$, for every $x\in X$.
\end{invisible}
\end{es}




\begin{es}
 Let $\Bbbk$ be 
 a field and $\mathcal{Z}_\Bbbk$ be the category of Zunino over $\Bbbk$, see \cite[2.2]{CDL06}. Objects of $\mathcal{Z}_\Bbbk$ are pairs $\underline{M}=(X, (M_x)_{x\in X})$ consisting of a set $X$ and a family $(M_x)_{x\in X}$ of $\Bbbk$-vector spaces indexed by $X$. A morphism $\varphi:\underline{M}=(X, (M_x)_{x\in X})\to \underline{N}=(Y, (N_y)_{y\in Y})$ in $\mathcal{Z}_\Bbbk$ is a pair $(f, (\varphi_x)_{x\in X})$, where $f:X\to Y$ is a morphism in $\Set$ and $(\varphi_x:M_x\to N_{f(x)})_{x\in X}$ is a family of $\Bbbk$-linear maps. If $\underline{\chi}=(g, (\chi_y)_{y\in Y}):\underline{N}\to\underline{P}$ is another morphism in $\mathcal{Z}_\Bbbk$, then the composition is defined by $\underline{\chi}\circ \underline{\varphi}=(g\circ f, (\chi_{f(x)}\circ\varphi_x)_{x\in X} )$. The monoidal structure of $\mathcal{Z}_\Bbbk$ is given by
\[
\underline{M}\otimes\underline{N}=(X\times Y, (M_x\otimes N_y)_{(x,y)\in X\times Y} )
\]
for objects $\underline{M}, \underline{N}$ in $\mathcal{Z}_\Bbbk$, and
\[
\underline{\varphi}\otimes
\underline{\varphi}'=(f\times f', (\varphi_x\otimes\varphi'_{x'})_{(x,x')\in X\times X'})
\]
for morphisms $\underline{\varphi}:\underline{M}\to \underline{N}$, $\underline{\varphi}':\underline{M}'\to \underline{N}'$ in $\mathcal{Z}_\Bbbk$.
The unit is $(\{*\}, \Bbbk)$ and in \cite[Example 3.2 5)]{BT15} it is shown that it is a left $\otimes$-generator.

By \cite[Proposition 2.11]{CDL06}, the category $\mathcal{Z}_\Bbbk$ has coequalizers. Moreover, by \cite[Proposition 3.4]{AGM23} the category $\mathcal{Z}_\Bbbk=\mathsf{Fam}(\vect_\Bbbk)$ is closed monoidal. Hence, any object in $\mathcal{Z}_\Bbbk$ is coflat. Indeed, one can also check the coflatness of objects in $\mathcal{Z}_\Bbbk$ directly by using the coflatness of objects in $\Set$ and in $\vect_\Bbbk$. 
\begin{invisible}
We also provide a sketch of a direct proof for the coflatness of every object in $\mathcal{Z}_\Bbbk$. Let $\underline{\varphi}=(f, (\varphi_x:M_x\to N_{f(x)})_{x\in X})$, $\underline{\psi}=(g, (\psi_{x}:M_x\to N_{g(x)})_{x\in X}):(X, {(M_x)}_{x\in X})\to (Y, {(N_y)}_{y\in Y})$ be parallel morphisms in $\mathcal{Z}_\Bbbk$. By \cite[Proposition 2.11]{CDL06}, their coequalizer in $\mathcal{Z}_\Bbbk$ is given by 
\[
\underline{q}=(q, (p_y)_{y\in Y}):(Y, {(N_y)}_{y\in Y})\to (\overline{Y}, (\overline{N}_{\overline{y}})_{\overline{y}\in \overline{Y}}),
\]
where $\overline{Y}=\mathrm{Coeq}(f,g)$,
$\overline{N}_{\overline{y}}=\coprod_{y\in\overline{y}} N_y/\sim$,  $\sim$ is the vector space generated by
\[
\lbrace (i_{f(x)}\circ\varphi_x)(m)- (i_{g(x)}\circ\psi_x)(m)\,\vert\, x\in f^{-1}(\overline{y})=g^{-1}(\overline{y}), m\in M_x\rbrace,
\]
and $p_y$ is the composition of the inclusion $i_y:N_y\to \coprod_{y\in\overline{y}} N_y$ with the projection $\pi_{\overline{y}}:\coprod_{y\in\overline{y}} N_y\to\overline{N}_{\overline{y}}$. 
We want to show that \begin{equation*}
\xymatrix@C=0.5cm{(X,(M_x)_{x\in X})\otimes (W, (L_w)_{w\in W}) \ar@<-.8ex>[r]_-{\underline{\psi}\otimes\underline{\id} }\ar@<.8ex>[r]^-{\underline{\varphi}\otimes\underline{\id}}&(Y,(N_y)_{y\in Y})\otimes (W, (L_w)_{w\in W})\ar[r]^-{\underline{q}\otimes\underline{\id}}& (\overline{Y}, (\overline{N}_{\overline{y}})_{\overline{y}\in \overline{Y}})\otimes (W, (L_w)_{w\in W}) }
\end{equation*}
is a coequalizer in $\mathcal{Z}_\Bbbk$. First, it is clear that   $(\underline{q}\otimes\underline{\id})(\underline{\varphi}\otimes\underline{\id})=(\underline{q}\otimes\underline{\id})(\underline{\psi}\otimes\underline{\id})$. We now show the universal property. Suppose there is a morphism 
$$
\underline{\theta}=(h,\theta_{(y,w)}):(Y\times W, (N_y\otimes L_w)_{(y,w)\in Y\times W})\to (Z, (H_z)_{z\in Z})
$$
in $\mathcal{Z}_\Bbbk$ such that $\underline{\theta}(\underline{\varphi}\otimes\underline{\id}) = \underline{\theta}(\underline{\psi}\otimes\underline{\id})$. This means $h(f\times \id_W)= h(g\times \id_W)$ and $\theta_{(f(x),w)}(\varphi_x\otimes \id_{L_w})= \theta_{(g(x),w)}(\psi_x\otimes \id_{L_w})$ for any $x \in X, w\in W$. Because any object in $\Set$ is coflat, $q\times \id_W$ is the coequalizer of $f\times \id_W$ and $g \times \id_W$. By the universal property of this coequalizer, there is a unique morphism $t:\overline{Y} \times W\to Z$ in $\Set$ such that $h=t(q\times \id_W)$. For the second component, we show first that $$\coprod_{(y,w)\in \overline{(y,w)}} (N_y\otimes L_w)/\sim' \cong (\coprod_{y\in\overline{y}} N_y/\sim)\otimes L_w,$$ where $\overline{(y,w)}$ is in $\mathrm{Coeq}(f \times \id_W, g \times \id_W)$, $\overline{y}$ is in $\mathrm{Coeq}(f, g)$ and $\sim'$ is the vector space generated by
$$
\begin{aligned}
\lbrace (i_{(f(x),w)}\circ &(\varphi_x \otimes \id_{L_w}))(m\otimes l)- (i_{(g(x),w)}\circ (\psi_x \otimes \id_{L_w}))(m \otimes l)\,\vert\,\\
&(x,w)\in (f\times \id_W)^{-1}(\overline{(y,w)})=(g\times \id_W)^{-1}(\overline{(y,w)}), m\otimes l \in M_x \otimes L_w\rbrace.
\end{aligned}
$$ 
Since $\mathrm{Coeq}(f \times \id_W, g \times \id_W) = \mathrm{Coeq}(f, g)\times W$, we can rewrite $\overline{(y,w)} = (\overline{y},w)$. Therefore, $\coprod_{(y,w)\in \overline{(y,w)}} (N_y\otimes L_w)=  \coprod_{y\in \overline{y}} (N_y\otimes L_w) \cong (\coprod_{y\in \overline{y}} N_y)\otimes L_w$. Besides, since $i_{(f(x),w)}=\alpha( i_{f(x)}\otimes \id_{L_w})$ and $i_{(g(x),w)}= \alpha(i_{g(x)}\otimes \id_{L_w})$, where $\alpha:(\coprod_{y\in \overline{y}} N_y)\otimes L_w \to \coprod_{y\in \overline{y}} (N_y\otimes L_w)$ is the isomorphism, it follows that $\sim'$ is the vector space generated by 
$$
\begin{aligned}
\lbrace \alpha(i_{f(x)}\otimes \id_{L_w}) &(\varphi_x \otimes \id_{L_w})(m\otimes l)- \alpha(i_{g(x)}\otimes \id_{L_w}) (\psi_x \otimes \id_{L_w})(m \otimes l)\,\vert\,\\
&(x,w)\in (f\times \id_W)^{-1}(\overline{y},w)=(g\times \id_W)^{-1}(\overline{y},w), m\otimes l \in M_x \otimes L_w\rbrace,
\end{aligned}
$$ 
which is isomorphic to the vector space generated by
$$
\lbrace i_{f(x)} \varphi_x(m)\otimes l- i_{g(x)} \psi_x(m) \otimes l\,\vert\,
x\in f^{-1}(\overline{y}) = g^{-1}(\overline{y}), m\otimes l \in M_x \otimes L_w\rbrace.
$$
Hence, $\sim'$ is isomorphic to $\sim \otimes L_w$.
Thus, from coflatness of $\vect_\Bbbk$, we obtain an isomorphism $\Phi:(\coprod_{y\in\overline{y}} N_y/\sim)\otimes L_w \to \coprod_{(y,w)\in \overline{(y,w)}} (N_y\otimes L_w)/\sim'$, which induces the canonical projection $$\pi_{(\overline{y},w)} = \Phi(\pi_{\overline{y}}\otimes \id_{L_w})\alpha^{-1}:\coprod_{y\in\overline{y}}(N_y\otimes L_w) \to \coprod_{y\in \overline{y}} (N_y\otimes L_w)/\sim'.$$ By the universal property of the coqualizer in $\vect_\Bbbk$, there exists a unique $\gamma_{(\overline{y},w)}: \overline{N}_{\overline{y}}\otimes L_w \to H_{t(\overline{y},w)}$ such that the following diagram commutes
$$
\xymatrix{
\sim' \ar[r] & \coprod_{(y,w)\in (\overline{y},w)} (N_y\otimes L_w) \ar[r]^{\pi_{(\overline{y},w)}} \ar[d]_{\sum_{y\in \overline{y}} \theta_{(y,w)} p_{y,w}} & \coprod_{(y,w)\in (\overline{y},w)} (N_y\otimes L_w)/\sim' \ar@{.>}[ld]^{\gamma_{(\overline{y},w)}} \\
& H_{t(\overline{y},w)} & 
}
$$
where $p_{y,w}: \coprod_{(y,w)\in (\overline{y},w)} (N_y\otimes L_w) \to N_y\otimes L_w$ is the canonical projection. Therefore, 
$$
\begin{aligned}
\gamma_{(\overline{y},w)} \Phi (p_y\otimes \id_{L_w}) &= \gamma_{(\overline{y},w)} \Phi (\pi_{\overline{y}} i_y \otimes \id_{L_w}) = \gamma_{(\overline{y},w)} \Phi(\pi_{\overline{y}} \otimes \id_{L_w}) \alpha^{-1} \alpha (i_y \otimes \id_{L_w})\\ 
&= (\sum_{y' \in \overline{y}} \theta_{(y',w)} p_{y',w})\alpha (i_y \otimes \id_{L_w}) = (\sum_{y'\in \overline{y}} \theta_{(y',w)} p_{y',w}) i_{(y,w)} = \theta_{(y,w)}.
\end{aligned}
$$ 
It follows that $(t,\gamma_{(\overline{y},w)} \Phi) (\underline{q}\otimes\underline{\id}) = \underline{\theta}$, and the proof of the universal property is completed.
\end{invisible}
Therefore, Theorem \ref{thm:phi*semisep} and Proposition \ref{prop:phi*natfull} apply. Furthermore, by \cite[Proposition 2.7]{CDL06}  $(G,A)$ is an algebra in $\mathcal{Z}_\Bbbk$ if, and only if, $G$ is a monoid and $A$ is a $G$-algebra in the sense of \cite[Definition 1.6]{CDL06}, which is equivalent to say that algebras in $\mathcal{Z}_\Bbbk$ are in one to one correspondence to algebras graded by a monoid, see \cite[Example 4.2 4)]{BT15}.
\end{es}

\subsection{Tensor functors of algebras in a monoidal category}\label{subsect:alg-mon}

Let $(A, m_A, u_A)$ be an algebra in a monoidal category $\cc$. Consider the forgetful functor $F:\cc_A\to\cc$ and its left adjoint $G=-\otimes A:\cc\to \cc_A$, $M\mapsto (M\otimes A,M\otimes m_A )$. The unit $\eta:\id\to FG$ of the adjunction is given by $\eta_{M}= M\otimes u_A:M\to M\otimes A$, for every object $M$ in $\cc$, while the counit $\epsilon : GF\to \id$ is given by $\epsilon_{N}= \mu^A_N: F(N)\otimes A= N\otimes A\to N$, for every object $(N, \mu^A_N)$ in $\cc_A$.

As observed in \cite[page 730]{BT15}, when the algebra extension is given by the unit morphism $u_A:1\to A$, the induction functor $u_A^*=-\otimes _1 A: \cc_1\to \cc_A$ is just the tensor functor $-\otimes A:\cc\to\cc_A$. In fact, every object in $\cc$ has automatically a right $1$-module structure, hence $\cc_1$ coincides with $\cc$. 
Thus, Theorem \ref{thm:phi*semisep} applies. More precisely, if $u_A:1\to A$ is regular, by Theorem \ref{thm:phi*semisep}, $u_A^*$ is semiseparable. Hence, so is $-\otimes A:\cc\to\cc_A$. Note that Theorem \ref{thm:phi*semisep} requires that the monoidal category $\cc$ has coequalizers and every object is coflat. However, we can obtain the result without these assumptions. 
     
\begin{prop}\label{prop:algebra-induct-functor} 
    Let $A$ be an algebra in a monoidal category $\cc$. Then, the following assertions are equivalent:
    \begin{itemize}
        \item[$(i)$] The unit $u_A:1\to A$ is regular (resp., split-mono, split-epi) as a morphism in $\cc$.

        \item[$(ii)$] The tensor functor $-\otimes A:\cc\to\cc_A$ is semiseparable (resp., separable, naturally full).

        \item[$(iii)$] The tensor functor $A\otimes -:\cc\to {}_A\cc$ is semiseparable (resp., separable, naturally full).
    \end{itemize} 
\end{prop}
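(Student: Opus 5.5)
We use the adjunction $G=-\otimes A\dashv F:\cc_A\to\cc$ with unit $\eta_M=M\otimes u_A$ and the Rafael-type Theorem \ref{thm:rafael}, together with its separable and naturally full versions (\cite[Theorem 1.2]{Raf90} and \cite[Theorem 2.6]{ACMM06}). These say that $G$ is semiseparable (resp., separable, naturally full) if and only if there is $\nu\in\Nat(FG,\id_\cc)$ with $\eta\nu\eta=\eta$ (resp., $\nu\eta=\id$, $\eta\nu=\id$). The key observation is that we never need coequalizers: $FG=-\otimes A$ as an endofunctor of $\cc$, and $\eta=\id\otimes u_A$.

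For $(i)\Rightarrow(ii)$, suppose $E:A\to 1$ satisfies $u_AEu_A=u_A$ (resp., $Eu_A=\id_1$, $u_AE=\id_A$). Set $\nu_M:=M\otimes E:M\otimes A\to M\otimes 1=M$. This is natural in $M$ by bifunctoriality of $\otimes$. Then $\eta_M\nu_M\eta_M=M\otimes u_AEu_A$, so the required identity follows immediately in each of the three cases. This needs no assumption on $\cc$ and no bimodule condition, since $1$ is the trivial algebra.

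For $(ii)\Rightarrow(i)$, take $\nu$ with the relevant property and set $E:=\nu_1:A=1\otimes A\to 1$. Evaluating the identity at $M=1$ gives $\eta_1=u_A$, so $u_AEu_A=\eta_1\nu_1\eta_1=\eta_1=u_A$, and similarly $Eu_A=\id_1$ or $u_AE=\id_A$. This step is the easy direction. It avoids the left $\otimes$-generator hypothesis precisely because no linearity of $E$ is required: a plain morphism in $\cc$ suffices, and the unit component alone does the job.

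For $(i)\Leftrightarrow(iii)$ we use the symmetric argument with $A\otimes-\dashv F:{}_A\cc\to\cc$, unit $u_A\otimes M$, and $\nu_M=E\otimes M$. The only point needing care, and the one we view as the main check rather than a real obstacle, is that the Rafael-type criteria apply to the unit of the adjunction as a natural transformation $\id_\cc\to FG$ in $\cc$. We therefore only need $\nu$ natural in $\cc$, not $A$-linear, and the construction $M\otimes E$ supplies this. With that verified, all three equivalences hold in each of the three variants.
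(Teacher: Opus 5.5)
Your proposal is correct and matches the paper's proof: both apply the Rafael-type criterion to the adjunction $-\otimes A\dashv F$ with unit $\eta_M=M\otimes u_A$. Both take $\nu_M=M\otimes E$ for $(i)\Rightarrow(ii)$ and $E:=\nu_1$ for the converse, and handle $(iii)$ symmetrically. Your $\nu_M=M\otimes E$ is also the correctly written form of what the paper states as $\chi\otimes M$.
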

\begin{proof} We consider the semiseparable case. The separable and naturally full cases can be shown similarly.

    $(i)\Rightarrow (ii)$. Assume that $u_A:1\to A$ is regular, i.e.\ there exists $\chi:A\to 1$ in $\cc$ such that $u_A\chi  u_A=u_A$. For every $M\in \cc_A$, define $\nu_M= \chi\otimes M: M\otimes A\to M$, which is clearly a natural transformation. 
    Moreover, $\eta_M\nu_M\eta_M=(M\otimes u_A)(M\otimes \chi)(M\otimes u_A)=M\otimes u_A=\eta_M$.
    Thus, by Theorem \ref{thm:rafael} the functor $-\otimes A:\cc\to \cc_A$ is semiseparable.
    


 $(ii)\Rightarrow (i)$. If $-\otimes A$ is semiseparable, then by Theorem \ref{thm:rafael} there exists a natural transformation $\nu : F(-\otimes A)\to\id $ such that $\eta\nu\eta=\eta$. Define $\chi=\nu_1: A\to 1$. Since $u_A\chi u_A=(1\otimes u_A)\nu_1(1\otimes u_A)=\eta_1\nu_1 \eta_1=\eta_1=1\otimes u_A=u_A$, we get $(i)$.


    $(i)\Leftrightarrow (iii)$. It follows similarly. 
\end{proof}

\begin{rmk}\label{rmk:algmonoid-induct}
    We observe that $(i)$ in Proposition \ref{prop:algebra-induct-functor} is equivalent to the existence of a morphism $\chi:A\to 1$ in $\cc$ such that $(\chi\otimes A)(u_A\otimes A)=\id_A$. On one hand, $u_A\chi u_A =u_A$ implies  $u_A\otimes A=u_A\chi u_A\otimes A=(u_A\otimes A)(\chi u_A \otimes A)$. Since $u_A\otimes A$ is a monomorphism, $\id_A=\chi u_A\otimes A=(\chi\otimes A)(u_A\otimes A)$.   On the other hand, $(\chi\otimes A)(u_A\otimes A)=\id_A$ implies $u_A=1\otimes u_A=(\chi\otimes A)(u_A\otimes A)(1\otimes u_A)=(1\otimes u_A)(\chi\otimes 1)(u_A\otimes 1)=u_A\chi u_A$. 
    \end{rmk}

\begin{es}
    Let $A$ be an algebra with augmentation $\varepsilon : A\to 1$ in a monoidal category $\cc$. Since $\varepsilon$ is an algebra morphism in $\cc$, we have $\varepsilon  u_A=\id_1$, so $u_A$ is a split-mono. By Proposition \ref{prop:algebra-induct-functor}, $-\otimes A:\mathcal{C}\to \mathcal{C}_A$ is separable.  Example \ref{es:bialgmod} constitutes an instance of this situation, where  $u:\Bbbk \to H$ is the unit of a Hopf algebra over a field $\Bbbk$. 
\end{es}

\subsection{Induction functors varied by a monoidal functor}\label{subsect:monoidal-functor}
Given monoidal categories $(\cc,\otimes,1)$ and $(\dd,\otimes,1')$, a functor $F:\cc\to\dd$ is said to be \emph{lax monoidal}, see e.g. \cite[Definition 3.1]{AM10}, if there is a natural transformation $\phi^2$, given on components by $F(X)\otimes F(Y)\to F(X\otimes Y)$, and a morphism $\phi^0:1'\to F(1)$ such that
\begin{align*}
   \phi^2_{X, Y\otimes Z}(\id\otimes \phi^2_{Y,Z}) &=\phi^2_{X\otimes Y, Z}(\phi^2_{X,Y}\otimes \id),\\
l_{F(X)}=F(l_X)\phi^2_{1,X}(\phi^0\otimes \id),&\quad r_{F(X)}= F(r_X)\phi^2_{X,1}(\id\otimes \phi^0),
\end{align*}
where $l, r$ are the left and right unit constraints of the understood categories. If $\phi^2$, $\phi^0$ are invertible, then the functor $F$ is said to be \emph{strong monoidal}. Moreover, a lax monoidal functor $(F:\cc\to\dd,\phi^2,\phi^0)$ maps an algebra $(R, m_R, u_R)$ in $\cc$ into an algebra $(F(R), F(m_R) \phi^2_{R,R}, F(u_R) \phi^0)$ in $\dd$, see e.g.\ \cite[Proposition 3.29]{AM10}. Moreover, if $\varphi:R\to S$ is an algebra morphism in $\cc$, then $F(\varphi):F(R)\to F(S)$ is an algebra morphism in $\dd$. 

\begin{invisible}
Recall from \cite[Definition 6.1]{Sz01} the notion of  separable Frobenius monoidal functor. Let $(\cc, \otimes, 1)$ and $(\dd, \otimes, I)$ be  monoidal categories. A functor $F:\cc\to\dd$ is called \emph{Frobenius monoidal} if it is equipped with a monoidal structure $(\phi^2, \phi^0)$ and an opmonoidal structure $(\psi^2,\psi^0)$ such that, for every $X,Y,Z\in\cc$, the diagrams
\begin{equation}\label{eq:semisepFrob1}
\begin{split}	
\xymatrixcolsep{1cm}\xymatrixrowsep{0.5cm}\xymatrix{F(X\otimes Y)\otimes F(Z)\ar[r]^-{\psi^2_{X,Y}\otimes\id}\ar[d]_-{\phi^2_{X\otimes Y, Z}}&F(X)\otimes F(Y)\otimes F(Z)\ar[d]^-{\id\otimes\phi^2_{Y,Z}}\\F(X\otimes Y\otimes Z)\ar[r]_-{\psi^2_{X,Y\otimes Z}}&F(X)\otimes F(Y\otimes Z)\\F(X)\otimes F(Y\otimes Z)\ar[r]^-{\id\otimes\psi^2_{Y,Z}}\ar[d]_-{\phi^2_{X,Y\otimes Z}}&F(X)\otimes F(Y)\otimes F(Z)\ar[d]^-{\phi^2_{X,Y}\otimes\id}\\F(X\otimes Y\otimes Z)\ar[r]_-{\psi^2_{X\otimes Y, Z}}&F(X\otimes Y)\otimes F(Z)}
\end{split}
\end{equation}
are commutative. If, furthermore, $\phi^2_{X,Y}\psi^2_{X,Y}=\id_{F(X\otimes Y)}$, for every $X,Y\in\cc$, then $F$ is said to be \emph{separable Frobenius monoidal}. 
\end{invisible}

\begin{lem}\label{lem:module-monoidal}
Let $(\cc, \otimes, 1)$, $(\dd, \otimes, 1')$ be  monoidal categories, $(F:\cc\to\dd,\phi^2,\phi^0)$ be a lax monoidal functor, $R$ be an algebra in $\cc$, and let $X$ be a right $R$-module via $\nu_X^R$ in $\cc$. Then, $F(X)$ is a right $F(R)$-module with module structure $F(\nu_X^R)\circ \phi^2_{X,R}$. Similarly, if $X$ is a left $R$-module via $\mu_X^R$, then $F(X)$ is a left $F(R)$-module with module structure $F(\mu_X^R)\circ \phi^2_{R,X}$.

As a consequence, $F$ induces functors $\mathcal{C}_R\to \dd_{F(R)}$, ${}_R\cc\to {}_{F(R)}\dd$ and ${}_R\cc_R\to {}_{F(R)}\dd_{F(R)}$. 
\end{lem}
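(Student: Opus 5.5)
We have to verify the two right-module axioms for $\nu_{F(X)}:=F(\nu^R_X)\circ\phi^2_{X,R}$ with respect to the algebra $(F(R),F(m_R)\phi^2_{R,R},F(u_R)\phi^0)$. Then we check that the construction is functorial. As in the rest of the paper we work with strict monoidal categories, so $l$ and $r$ are identities. The only tools needed are the naturality of $\phi^2$, its associativity axiom, and the unit axioms of the lax structure.

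\emph{Associativity.} We must show
\[
\nu_{F(X)}(\nu_{F(X)}\otimes \id_{F(R)})=\nu_{F(X)}(\id_{F(X)}\otimes F(m_R)\phi^2_{R,R}).
\]
For the left-hand side we use naturality of $\phi^2$ in the first variable with respect to $\nu^R_X$:
\[
\phi^2_{X,R}(F(\nu^R_X)\otimes \id)=F(\nu^R_X\otimes \id_R)\phi^2_{X\otimes R,R},
\]
which gives $F(\nu^R_X(\nu^R_X\otimes \id_R))\,\phi^2_{X\otimes R,R}(\phi^2_{X,R}\otimes \id)$. For the right-hand side we use naturality in the second variable with respect to $m_R$, which gives $F(\nu^R_X(\id_X\otimes m_R))\,\phi^2_{X,R\otimes R}(\id\otimes\phi^2_{R,R})$. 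The module axiom for $X$ in $\cc$ makes the two $F(\cdots)$ factors equal. The associativity axiom for $\phi^2$ makes the two $\phi^2$-composites equal.

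\emph{Unitality.} We must show $\nu_{F(X)}(\id\otimes F(u_R)\phi^0)=\id_{F(X)}$. Naturality of $\phi^2$ in the second variable with respect to $u_R:1\to R$ rewrites the left side as $F(\nu^R_X(\id_X\otimes u_R))\,\phi^2_{X,1}(\id\otimes\phi^0)$. Here $F(\nu^R_X(\id_X\otimes u_R))=F(\id_X)$, and $\phi^2_{X,1}(\id\otimes\phi^0)=\id$ by the right unit axiom $r_{F(X)}=F(r_X)\phi^2_{X,1}(\id\otimes\phi^0)$ in the strict setting.

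\emph{Left modules, bimodules and functoriality.} The left module case is the mirror argument, using naturality in the first variable and the left unit axiom. For a bimodule we must also check compatibility of the two induced actions. This follows the same way: naturality reduces both sides to $F$ applied to the compatibility morphism in $\cc$, precomposed with the two bracketings of $\phi^2$, and these agree by associativity of $\phi^2$. Finally, if $f:X\to Y$ is right $R$-linear, naturality of $\phi^2$ gives
\[
F(f)F(\nu^R_X)\phi^2_{X,R}=F(\nu^R_Y)F(f\otimes\id_R)\phi^2_{X,R}=F(\nu^R_Y)\phi^2_{Y,R}(F(f)\otimes\id),
\]
so $F(f)$ is $F(R)$-linear. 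Preservation of identities and composites is inherited from $F$, so we obtain the functors $\cc_R\to\dd_{F(R)}$, ${}_R\cc\to{}_{F(R)}\dd$ and ${}_R\cc_R\to{}_{F(R)}\dd_{F(R)}$.

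There is no real obstacle here. The only point requiring care is bookkeeping: each application of naturality of $\phi^2$ must be in the correct variable so that the remaining $\phi^2$-composites match exactly the two sides of the associativity axiom.
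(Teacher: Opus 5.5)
Your proof is correct and follows the same route as the verification the paper has in mind when it calls the lemma ``straightforward'': use naturality of $\phi^2$ in the appropriate variable to move it past $F(\nu^R_X)$, $F(m_R)$ and $F(u_R)$, then conclude with the module axioms in $\cc$ together with the associativity and unit axioms of $(\phi^2,\phi^0)$. Your extra checks of bimodule compatibility and of $F(R)$-linearity of $F(f)$ supply exactly what the ``as a consequence'' part needs, which the paper leaves implicit.
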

\begin{proof}
    It is straightforward.
\end{proof}

\begin{invisible}
\begin{proof}
The module structure is verified by the following calculation:
$$
\begin{aligned}
&F(\nu_X^R)\circ \phi^2_{X,R}\circ (\id_{FX}\otimes (F(m_R)\circ \phi^2_{R,R}))
= F(\nu_X^R)\circ \phi^2_{X,R}\circ (\id_{FX}\otimes F(m_R))  \circ (\id_{FX}\otimes \phi^2_{R,R})\\
= &F(\nu_X^R)\circ F(\id_{X}\otimes m_R) \circ \phi^2_{X,R\otimes R} \circ (\id_{FX}\otimes \phi^2_{R,R})
= F(\nu_X^R)\circ F(\nu_{X}^R\otimes \id_{R}) \circ \phi^2_{X\otimes R, R} \circ (\phi^2_{X,R} \otimes \id_{FR})\\
= &F(\nu_X^R)\circ \phi^2_{X,R}\circ ((F(\nu_X^R)\circ \phi^2_{X,R}) \otimes \id_{FR}),
\end{aligned}
$$
and
\[
\begin{aligned}
&F(\nu_X^R)\circ \phi^2_{X,R} \circ (\id_{FX}\otimes (F(u)\circ \phi^0))= F(\nu_X^R)\circ \phi^2_{X,R} \circ (\id_{FX}\otimes F(u)) \circ (\id_{FX}\otimes \phi^0))\\
= &F(\nu_X^R)\circ F(\id_X\otimes u) \circ \phi^2_{X,1} \circ (\id_{FX}\otimes \phi^0) = \id_{FX}.
\end{aligned}
\]
Similarly,
    \[
\begin{split}
    &F(\mu_X^R)\circ \phi^2_{R,X}\circ ( (F(m_R)\circ \phi^2_{R,R})\otimes\id_{FX})\\
= &F(\mu_X^R)\circ \phi^2_{R,X}\circ ( F(m_R)\otimes\id_{FX}) \circ (\phi^2_{R\otimes R, X})^{-1} \circ \phi^2_{R\otimes R, X} \circ (\phi^2_{R,R}\otimes\id_{FX})\\
= &F(\mu_X^R)\circ F( m_R\otimes\id_X) \circ \phi^2_{R\otimes R, X} \circ (\phi^2_{R,R}\otimes\id_{FX})\\
= &F(\mu_X^R)\circ F(\id_R\otimes \mu_{X}^R) \circ \phi^2_{R,R\otimes X} \circ (\id_{FR}\otimes\phi^2_{R,X})\\
= &F(\mu_X^R)\circ \phi^2_{R,X}\circ ( \id_{FR}\otimes(F(\mu_X^R)\circ \phi^2_{R,X}) ),
\end{split}
    \]
    and 
   \[
\begin{split}
    &F(\mu_X^R)\circ \phi^2_{R,X} \circ ( (F(u)\circ \phi^0)\otimes\id_{FX})= F(\mu_X^R)\circ \phi^2_{R,X} \circ ( F(u)\otimes\id_{FX}) \circ (\phi^0\otimes\id_{FX}))\\
= &F(\mu_X^R)\circ F(u\otimes\id_X) \circ \phi^2_{1,X} \circ (\phi^0\otimes\id_{FX}) = \id_{FX}.
\end{split}
   \] 
\end{proof}    
\end{invisible}
The next result shows that, under the tensor generator assumption, the semiseparability (resp., separability, natural fullness) of induction functors is preserved by a lax monoidal functor.

\begin{prop}\label{prop:algebra-monoidalfunctor}
 Let $(\cc, \otimes, 1)$, $(\dd, \otimes, 1')$ be monoidal categories. 
 Assume that $1$ is a left $\otimes$-generator in $\cc$. Let $(F:\cc\to\dd,\phi^2,\phi^0)$ be a lax monoidal functor and let $\varphi:R\to S$ be an algebra morphism in $\cc$ such that $\varphi^*=-\otimes_RS:\cc_R\to\cc_S$ is semiseparable (resp., separable, naturally full). Then, $F(\varphi)^*=-\otimes_{F(R)}F(S):\dd_{F(R)}\to\dd_{F(S)}$ is semiseparable (resp., separable, naturally full).

 Conversely, suppose $1'$ is a left $\otimes$-generator in $\dd$ and $(F:\cc\to\dd,\phi^2,\phi^0)$ is a fully faithful lax monoidal functor such that $\phi^2$ is epimorphism on components. If $F(\varphi)^*=-\otimes_{F(R)}F(S):\dd_{F(R)}\to\dd_{F(S)}$ is semiseparable (resp., separable, naturally full), then $\varphi^*=-\otimes_RS:\cc_R\to\cc_S$ is semiseparable (resp., separable, naturally full).
\end{prop}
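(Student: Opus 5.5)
The plan is to reduce everything to the algebraic characterizations already proved: Theorem \ref{thm:phi*semisep} for semiseparability, Theorem \ref{thm:sepmon} for separability, and Proposition \ref{prop:phi*natfull} for natural fullness. In the direct implication, $1$ is a left $\otimes$-generator in $\cc$. So if $\varphi^*$ is semiseparable (resp.\ separable, naturally full), there is an $R$-bimodule morphism $E:S\to R$ in $\cc$ with $\varphi E\varphi=\varphi$ (resp.\ $E\varphi=\id_R$, $\varphi E=\id_S$). For the conclusion in $\dd$ we only need the sufficiency directions of these results, which hold without any generator hypothesis. (For the separable case, Remark \ref{rmk:notens-gen} gives sufficiency directly.) It therefore suffices to prove that $F(E):F(S)\to F(R)$ is an $F(R)$-bimodule morphism in $\dd$. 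The equations $F(\varphi)F(E)F(\varphi)=F(\varphi)$, $F(E)F(\varphi)=\id$ and $F(\varphi)F(E)=\id$ then follow from functoriality, and $F(\varphi)$ is an algebra morphism, as recalled before Lemma \ref{lem:module-monoidal}.

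For the bimodule property, we first check that the $F(R)$-bimodule structure on $F(S)$ coming from the algebra morphism $F(\varphi)$ agrees with the one given by Lemma \ref{lem:module-monoidal} applied to the $R$-bimodule $S$. On the left, naturality of $\phi^2$ gives
\[F(m_S)\phi^2_{S,S}(F(\varphi)\otimes\id)=F(m_S)F(\varphi\otimes\id)\phi^2_{R,S}=F(\mu^R_S)\phi^2_{R,S},\]
and the right side is symmetric. Lemma \ref{lem:module-monoidal} makes $F$ a functor ${}_R\cc_R\to{}_{F(R)}\dd_{F(R)}$, so it sends the bimodule morphism $E$ to a bimodule morphism; concretely, $F(E)F(\mu^R_S)\phi^2_{R,S}=F(m_R)F(\id\otimes E)\phi^2_{R,S}=F(m_R)\phi^2_{R,R}(\id\otimes F(E))$ by naturality again.

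For the converse, $1'$ is a left $\otimes$-generator in $\dd$, so the assumed property of $F(\varphi)^*$ yields an $F(R)$-bimodule morphism $E':F(S)\to F(R)$ with the corresponding equation. Since $F$ is full, $E'=F(E)$ for some $E:S\to R$ in $\cc$. Faithfulness then transfers $F(\varphi E\varphi)=F(\varphi)$ (resp.\ the split equations) back to $\cc$.

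The main obstacle is showing that $E$ is $R$-bilinear, and this is where the epimorphism hypothesis on $\phi^2$ enters. Bilinearity of $F(E)$ gives
\[F(E\mu^R_S)\phi^2_{R,S}=F(m_R)\phi^2_{R,R}(\id\otimes F(E))=F(m_R)F(\id\otimes E)\phi^2_{R,S}=F(m_R(\id\otimes E))\phi^2_{R,S}.\]
Cancelling the epimorphism $\phi^2_{R,S}$ gives $F(E\mu^R_S)=F(m_R(\id\otimes E))$, and faithfulness gives $E\mu^R_S=m_R(\id\otimes E)$; the right side is analogous. Finally, Theorem \ref{thm:phi*semisep} (resp.\ Theorem \ref{thm:sepmon}, Proposition \ref{prop:phi*natfull}) gives the conclusion for $\varphi^*$; the sufficiency direction needs no generator hypothesis in $\cc$.
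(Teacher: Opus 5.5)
Your proposal is correct and follows essentially the same route as the paper. You push the $R$-bimodule morphism $E$ forward to $F(E)$, which is bilinear by naturality of $\phi^2$ (i.e.\ Lemma~\ref{lem:module-monoidal}); for the converse you lift $E'=F(E)$ by fullness and recover bilinearity of $E$ by cancelling the epimorphism $\phi^2$ and using faithfulness. Your explicit check that the $F(R)$-bimodule structure on $F(S)$ induced by $F(\varphi)$ agrees with the one from Lemma~\ref{lem:module-monoidal} spells out a point the paper leaves implicit. So does your remark that, on the side where the conclusion is drawn, only the generator-free sufficiency directions are needed (with Remark~\ref{rmk:notens-gen} covering the separable case).
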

\begin{proof}
If $\varphi^*$ is semiseparable, then by Theorem \ref{thm:phi*semisep} there exists an $R$-bimodule morphism $E:S\to R$ in $\cc$ such that $\varphi E\varphi=\varphi$. It follows that $F(\varphi)F(E) F(\varphi)=F(\varphi E\varphi)=F(\varphi)$. By Lemma \ref{lem:module-monoidal}, $F(E)$ is an $F(R)$-bimodule morphism in $\dd$. 
\begin{invisible}
Indeed, 
\[
\begin{split}
    F(E)&\circ m_{F(S)}\circ (\id_{F(S)}\otimes F(\varphi))=F(E\circ m_S)\circ\phi^2_{S.S}\circ(\id_{F(S)}\otimes F(\varphi))\\&=F(E\circ m_S\circ (\id_S\otimes\varphi))\circ\phi^2_{S,R}=F(m_R\circ (E\otimes\id_R))\circ\phi^2_{S,R}\\&=F(m_R)\circ\phi^2_{R,R}\circ (F(E)\otimes \id_{F(R)})=m_{F(R)}\circ (F(E)\otimes\id_{F(R)}),
\end{split} 
\]
and similarly $F(E)\circ m_{F(S)}\circ (F(\varphi)\otimes \id_{F(S)})= m_{F(R)}\circ (\id_{F(R)}\otimes F(E))$.
\end{invisible}
Thus, by Theorem \ref{thm:phi*semisep} $F(\varphi)^*$ is semiseparable. 

Now, suppose $1'$ is a left $\otimes$-generator in $\dd$, $F$ is fully faithful, and $F(\varphi)^*=-\otimes_{F(R)}F(S):\dd_{F(R)}\to\dd_{F(S)}$ is semiseparable. We show that $\varphi^*=-\otimes_RS:\cc_R\to\cc_S$ is semiseparable. By Theorem \ref{thm:phi*semisep}, there exists an $F(R)$-bimodule morphism $E':F(S)\to F(R)$ in $\dd$ such that $F(\varphi) E'F(\varphi)=F(\varphi)$. Since $F$ is full, $E'=F(E)$ for some $E:S\to R$ in $\cc$. Hence, $F(\varphi  E \varphi)=F(\varphi)$, which implies $\varphi E\varphi=\varphi$ because $F$ is faithful. It remains to show that $E$ is an $R$-bimodule morphism in $\cc$. Because $E'=F(E)$ is a right $F(R)$-module morphism, which means $F(E) m_{F(S)} (\id_{F(S)}\otimes F(\varphi))= m_{F(R)} (F(E)\otimes\id_{F(R)})$, we have $F(E m_S (\id_S\otimes\varphi))\phi^2_{S,R}=F(m_R (E\otimes\id_R))\phi^2_{S,R}$. Since $\phi^2$ is epimorphism on components and $F$ is faithful, we get $E m_S (\id_S\otimes\varphi)=m_R (E\otimes\id_R)$. This means that $E$ is a right $R$-module morphism. Similarly, one can show that  $E$ is also  left $R$-linear. Separable and naturally full cases follow in a similar way.
\end{proof}

Next, we prove results that also hold true without the assumption that $1$ is a left $\otimes$-generator in $\cc$. The first one is the special case of induction functors for algebras in a monoidal category, which is an immediate consequence of Proposition \ref{prop:algebra-induct-functor}.

\begin{prop}\label{lax monoidal variation unit}
Let $(\cc, \otimes, 1)$, $(\dd, \otimes, 1')$ be monoidal categories, $A$ be an algebra in $\cc$ with unit $u_A:1\to A$. Let $(F:\cc\to\dd,\phi^2,\phi^0)$ be a lax monoidal functor. 
\begin{itemize}
\item[$i)$] If  $\phi^0$ is a split-epi and $-\otimes A:\cc\to\cc_A$ is semiseparable (resp., naturally full), then $-\otimes F(A):\dd \to \dd_{F(A)}$ is semiseparable (resp.,  naturally full).

\item[$ii)$] If $\phi^0$ is a split-mono and $-\otimes A:\cc\to\cc_A$ is separable, then $-\otimes F(A):\dd \to \dd_{F(A)}$ is separable. 
\end{itemize}
\end{prop}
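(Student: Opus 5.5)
The plan is to reduce everything to Proposition \ref{prop:algebra-induct-functor}, which says that $-\otimes A$ is semiseparable (resp., separable, naturally full) exactly when $u_A$ is regular (resp., split-mono, split-epi) in $\cc$. This characterization needs no tensor-generator or coflatness hypotheses. The unit of the algebra $F(A)$ in $\dd$ is $u_{F(A)}=F(u_A)\circ\phi^0:1'\to F(A)$. So it suffices to transport the relevant property of $u_A$ to $F(u_A)\phi^0$, and then apply Proposition \ref{prop:algebra-induct-functor} in $\dd$.

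For $i)$, semiseparable case: Proposition \ref{prop:algebra-induct-functor} gives $\chi:A\to 1$ with $u_A\chi u_A=u_A$. Let $\sigma:F(1)\to 1'$ be a section witnessing that $\phi^0$ is split-epi, so $\phi^0\sigma=\id_{F(1)}$. Define $\chi':=\sigma F(\chi):F(A)\to 1'$. Then
\[
u_{F(A)}\chi' u_{F(A)}=F(u_A)\phi^0\sigma F(\chi)F(u_A)\phi^0=F(u_A\chi u_A)\phi^0=F(u_A)\phi^0=u_{F(A)}.
\]
Hence $u_{F(A)}$ is regular, and $-\otimes F(A)$ is semiseparable.

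For $i)$, naturally full case: we have $u_A\chi=\id_A$. The same $\chi'$ gives $u_{F(A)}\chi'=F(u_A)\phi^0\sigma F(\chi)=F(u_A\chi)=\id_{F(A)}$. So $u_{F(A)}$ is split-epi.

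For $ii)$: we have $\chi u_A=\id_1$. Let $\tau:F(1)\to 1'$ satisfy $\tau\phi^0=\id_{1'}$, and set $\chi':=\tau F(\chi)$. Then $\chi' u_{F(A)}=\tau F(\chi u_A)\phi^0=\tau\phi^0=\id_{1'}$. So $u_{F(A)}$ is split-mono, and $-\otimes F(A)$ is separable.

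The only point needing care is the identification of the unit of $F(A)$ as $F(u_A)\phi^0$, which was recalled before Lemma \ref{lem:module-monoidal}. It also explains why each case needs exactly the stated one-sided splitting of $\phi^0$: $\phi^0$ must cancel on the side where it meets $\sigma$ or $\tau$.
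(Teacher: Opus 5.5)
Your proof is correct and is essentially the paper's own argument. The paper also reduces via Proposition \ref{prop:algebra-induct-functor} to the corresponding property of $u_{F(A)}=F(u_A)\phi^0$, transporting the witness $\chi$ to $\sigma F(\chi)$ (resp.\ $\tau F(\chi)$) through the one-sided inverse of $\phi^0$, and your computations in all three cases check out.
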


\begin{proof}
$i)$. If $-\otimes A:\cc\to\cc_A$ is semiseparable, then by Proposition \ref{prop:algebra-induct-functor}, there is a morphism $E:A \to 1$ in $\cc$ such that $u_A E u_A=u_A$. It follows that $F(u_A) F(E) F(u_A)=F(u_A)$. Since there is a morphism $\psi^0:F(1) \to 1'$ in $\dd$ satisfying $\phi^0 \psi^0 =\id_{F(1)}$, we get $F(u_A) \phi^0 \psi^0 F(E) F(u_A) \phi^0=F(u_A) \phi^0$, which means $u_{F(A)} \psi^0 F(E) u_{F(A)} = u_{F(A)}$. By Proposition \ref{prop:algebra-induct-functor} again, we obtain that $-\otimes F(A):\dd \to \dd_{F(A)}$ is semiseparable. The naturally full case follows similarly.
 
\begin{invisible}
Now, suppose $-\otimes A:\cc\to\cc_A$ is naturally full. By Proposition \ref{prop:algebra-induct-functor}, there is a morphism $E:A \to 1$ in $\cc$ such that $u_A E =\id_A$. Thus, $F(u_A) F(E) = \id_{F(A)}$. Hence, $F(u_A) \phi^0 \psi^0 F(E)= \id_{F(A)}$, which means $u_{F(A)} \psi^0 F(E) = \id_{F(A)}$. By Proposition \ref{prop:algebra-induct-functor}, we obtain that $-\otimes F(A):\dd \to \dd_{F(A)}$ is naturally full. 
\end{invisible}

$ii)$.   If $-\otimes A:\cc\to\cc_A$ is separable, then by Proposition \ref{prop:algebra-induct-functor} there is a morphism $E:A \to 1$ in $\cc$ such that $E u_A=\id_1$. It follows that $ F(E) F(u_A)=\id_{F(1)}$. Since there is a morphism $\psi^0:F(1) \to 1'$ in $\dd$ satisfying $\psi^0 \phi^0 =\id_{1'}$, we have $\psi^0 F(E) u_{F(A)}= \psi^0 F(E)F(u_A)\phi^0=\psi^0\phi^0 = \id_{1'}$. By Proposition \ref{prop:algebra-induct-functor} $-\otimes F(A)$ is separable.
\end{proof}

We now provide a converse of the previous result.
\begin{prop}
 \label{lax-monoidal-separable}
Let $(\cc, \otimes, 1)$, $(\dd, \otimes, 1')$ be monoidal categories, $A$ be an algebra in $\cc$ with unit $u_A:1\to A$. Let $(F:\cc\to\dd,\phi^2,\phi^0)$ be a fully faithful lax monoidal functor. If $\phi^0$ is an epimorphism and $-\otimes F(A):\dd \to \dd_{F(A)}$ is semiseparable (resp., separable, naturally full), then $-\otimes A:\cc\to\cc_A$ is semiseparable (resp., separable, naturally full).
\end{prop}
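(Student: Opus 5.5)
The plan is to reduce everything to Proposition \ref{prop:algebra-induct-functor}, which characterizes semiseparability (resp., separability, natural fullness) of $-\otimes A$ by regularity (resp., split-mono, split-epi) of $u_A$ in $\cc$. This characterization holds in any monoidal category, with no coequalizer, coflatness or generator assumptions. Recall that $u_{F(A)}=F(u_A)\phi^0$. So it suffices to transport a regularity (resp., splitting) condition on $u_{F(A)}$ in $\dd$ back to one on $u_A$ in $\cc$, using that $F$ is fully faithful and $\phi^0$ is an epimorphism.

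\textbf{Semiseparable case.} By Proposition \ref{prop:algebra-induct-functor}, there is $E':F(A)\to 1'$ in $\dd$ with $u_{F(A)}E'u_{F(A)}=u_{F(A)}$, that is,
\[
F(u_A)\,\phi^0 E' F(u_A)\,\phi^0=F(u_A)\,\phi^0 .
\]
Since $\phi^0$ is an epimorphism, we may cancel it on the right and obtain $F(u_A)\,(\phi^0E')\,F(u_A)=F(u_A)$. The morphism $\phi^0E':F(A)\to F(1)$ lies in the image of $F$ because $F$ is full, so $\phi^0E'=F(\chi)$ for some $\chi:A\to 1$ in $\cc$. Then $F(u_A\chi u_A)=F(u_A)$, and faithfulness gives $u_A\chi u_A=u_A$. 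Proposition \ref{prop:algebra-induct-functor} now yields that $-\otimes A$ is semiseparable.

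\textbf{Naturally full case.} Here $E'$ satisfies $u_{F(A)}E'=\id_{F(A)}$, i.e.\ $F(u_A)\phi^0E'=\id_{F(A)}$. No cancellation is needed: writing $\phi^0E'=F(\chi)$ gives $F(u_A\chi)=F(\id_A)$, hence $u_A\chi=\id_A$ by faithfulness.

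\textbf{Separable case.} This is the step needing care, since $\phi^0$ is not assumed invertible. We have $E'u_{F(A)}=\id_{1'}$, i.e.\ $E'F(u_A)\phi^0=\id_{1'}$. So $\phi^0$ is a split mono, and being also an epimorphism, it is an isomorphism. Then set $F(\chi):=\phi^0E'$, which exists by fullness. We get $F(\chi u_A)=\phi^0E'F(u_A)=\phi^0E'F(u_A)\phi^0(\phi^0)^{-1}=\phi^0(\phi^0)^{-1}=\id_{F(1)}=F(\id_1)$. Faithfulness gives $\chi u_A=\id_1$, and Proposition \ref{prop:algebra-induct-functor} concludes. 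The main point to watch is exactly this use of the epimorphism hypothesis to upgrade $\phi^0$ to an isomorphism; everything else is a direct transport along the fully faithful $F$.
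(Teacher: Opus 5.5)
Your proof is correct and follows essentially the same route as the paper: reduce to Proposition \ref{prop:algebra-induct-functor}, lift $\phi^0E'$ to some $\chi:A\to 1$ by fullness, cancel the epimorphism $\phi^0$, and conclude by faithfulness. In the separable case the paper cancels $\phi^0$ directly from $\phi^0E'F(u_A)\phi^0=\phi^0$ instead of first observing that $\phi^0$ is invertible, but this is the same argument.
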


\begin{proof} If $-\otimes F(A)$ is semiseparable, by Proposition \ref{prop:algebra-induct-functor}, we know that $u_{F(A)} E' u_{F(A)} = u_{F(A)}$ for some morphism $E':F(A) \to 1'$ in $\dd$. Since $F$ is full, there is a morphism $E:A \to 1$ in $\cc$ such that $F(E) = \phi^0 E'$. This implies $F(u_A) F(E) F(u_A) \phi^0=F(u_A) \phi^0$. Because $\phi^0$ is an epimorphism and $F$ is faithful, we obtain $u_A E u_A=u_A$. As a result, by Proposition \ref{prop:algebra-induct-functor}, $-\otimes A:\cc\to\cc_A$ is semiseparable. The naturally full case follows similarly.

If $-\otimes F(A)$ is separable, by Proposition \ref{prop:algebra-induct-functor}, we know that $E'F(u_A)\phi^0 = E' u_{F(A)} = \id_{1'}$ for some morphism $E':F(A) \to 1'$ in $\dd$. Then, $\phi^0 E'F(u_A)\phi^0 =\phi^0$. Since $\phi^0$ is an epimorphism, we have $\phi^0 E'F(u_A)=\id_{F(1)}$. Since $F$ is full, there is a morphism $E:A \to 1$ in $\cc$ such that $F(E) = \phi^0 E'$. Thus, $F(E) F(u_A)=\id_{F(1)}$. Because $F$ is faithful, we obtain that $ E u_A=\id_1$. As a result, by Proposition \ref{prop:algebra-induct-functor}, $-\otimes A:\cc\to\cc_A$ is separable.
\begin{invisible}
For the naturally full case we have: suppose $u_{F(A)} E' = F(u_{A}) \phi^0 E'= \id_{F(A)}$ for some morphism $E':F(A) \to 1'$ in $\dd$.  Since $F$ is full, there is a morphism $E:A \to 1$ such that $F(E) = \phi^0 E'$. Thus, $F(u_{A}) F(E)= F(u_A)\phi^0 E'=\id_{F(A)}$.    
\end{invisible}    
\end{proof}

Our next aim is to extend Proposition \ref{lax monoidal variation unit} and Proposition \ref{lax-monoidal-separable} to an arbitrary algebra morphism in $\cc$, which is based on a full, essentially surjective, strong monoidal functor that preserves coequalizers,  see Proposition \ref{prop:monoidalfunct} and Proposition \ref{prop:refl-semisep-ind}. First, we need the following lemma.

\begin{lem}\label{lem:PhiRS}
Let $(\cc, \otimes, 1)$, $(\dd, \otimes , 1')$ be  monoidal categories, $(F:\cc\to\dd,\phi^2,\phi^0)$ be a strong monoidal functor, $\varphi:R\to S$ be an algebra morphism in $\cc$. Assume that $F$ preserves coequalizers. Then, there is an isomorphism $\Phi_{X,S}^R:FX\otimes_{FR}FS \to F(X\otimes_R S)$, for every $X\in\cc_R$, such that $\Phi_{X,S}^R\circ q^{FR}_{FX,FS}=F(q^R_{X,S})\circ \phi_{X,S}^2$. Moreover, $\Phi_{X,S}^R$ is natural on $X$.
\end{lem}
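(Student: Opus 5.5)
The plan is to exhibit $F(X\otimes_R S)$, together with $F(q^R_{X,S})\circ\phi^2_{X,S}$, as a coequalizer of the pair defining $FX\otimes_{FR}FS$. The isomorphism $\Phi^R_{X,S}$ then exists by uniqueness of coequalizers up to unique isomorphism. Recall that $FX$ is a right $FR$-module via $F(\nu^R_X)\phi^2_{X,R}$ and $FS$ a left $FR$-module via $F(\mu^R_S)\phi^2_{R,S}$ by Lemma \ref{lem:module-monoidal}. So $FX\otimes_{FR}FS$ is the coequalizer of
\[
F(\nu^R_X)\phi^2_{X,R}\otimes \id_{FS},\qquad \id_{FX}\otimes F(\mu^R_S)\phi^2_{R,S}\colon FX\otimes FR\otimes FS\to FX\otimes FS .
\]

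First, since $F$ preserves coequalizers, applying $F$ to \eqref{eq:coeq} (with $M=X$, $A=R$, $X\mapsto S$) shows that $F(q^R_{X,S})$ is a coequalizer of $F(\nu^R_X\otimes\id_S)$ and $F(\id_X\otimes\mu^R_S)$. Next, precompose with the isomorphisms $\phi^2_{X,S}\colon FX\otimes FS\to F(X\otimes S)$ and $\phi^{(3)}:=\phi^2_{X\otimes R,S}(\phi^2_{X,R}\otimes\id)=\phi^2_{X,R\otimes S}(\id\otimes\phi^2_{R,S})$, which is an isomorphism $FX\otimes FR\otimes FS\to F(X\otimes R\otimes S)$. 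Naturality of $\phi^2$ gives
\[
F(\nu^R_X\otimes \id_S)\,\phi^{(3)}=\phi^2_{X,S}\bigl(F(\nu^R_X)\phi^2_{X,R}\otimes\id_{FS}\bigr),
\]
where the first description of $\phi^{(3)}$ is used. Similarly,
\[
F(\id_X\otimes\mu^R_S)\,\phi^{(3)}=\phi^2_{X,S}\bigl(\id_{FX}\otimes F(\mu^R_S)\phi^2_{R,S}\bigr),
\]
where the second description of $\phi^{(3)}$ (associativity of $\phi^2$) is used. Transporting a coequalizer diagram along isomorphisms of the source and target objects yields a coequalizer. Therefore $F(q^R_{X,S})\phi^2_{X,S}$ coequalizes the pair defining $FX\otimes_{FR}FS$, and is universal for it. Hence there is a unique isomorphism $\Phi^R_{X,S}$ with $\Phi^R_{X,S}\, q^{FR}_{FX,FS}=F(q^R_{X,S})\phi^2_{X,S}$.

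For naturality in $X$, take $f\colon X\to Y$ in $\cc_R$. Then $F(f)$ is right $FR$-linear, by naturality of $\phi^2$. It suffices to check
\[
\Phi^R_{Y,S}\,(F(f)\otimes_{FR}FS)\,q^{FR}_{FX,FS}=F(f\otimes_R S)\,\Phi^R_{X,S}\,q^{FR}_{FX,FS},
\]
and then cancel the epimorphism $q^{FR}_{FX,FS}$. The left side equals $\Phi^R_{Y,S}q^{FR}_{FY,FS}(Ff\otimes\id)=F(q^R_{Y,S})\phi^2_{Y,S}(Ff\otimes \id)=F(q^R_{Y,S})F(f\otimes\id)\phi^2_{X,S}$. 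The right side equals $F(\hat f\, q^R_{X,S})\phi^2_{X,S}=F(q^R_{Y,S}(f\otimes\id))\phi^2_{X,S}$, which agrees.

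The only delicate point is bookkeeping the associativity of $\phi^2$ so that a single isomorphism $\phi^{(3)}$ serves both parallel arrows. Everything else is formal.
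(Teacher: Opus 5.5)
Your proof is correct and follows the paper's argument. You apply $F$ to the coequalizer, use naturality and associativity of $\phi^2$ to match the two parallel pairs, and verify naturality in $X$ by cancelling the epimorphism $q^{FR}_{FX,FS}$. The only difference is how invertibility of $\Phi^R_{X,S}$ is obtained: you get it directly by transporting the coequalizer along the isomorphisms $\phi^{(3)}$ and $\phi^2_{X,S}$. The paper instead constructs the inverse $\Psi^R_{X,S}$ from the universal property of $F(q^R_{X,S})$ and checks that the two maps are mutually inverse.
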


\begin{proof}
Since $F$ preserves coequalizers, we have that
\begin{equation}\label{eq:coeq1}
\xymatrix@C=1.5cm{
F(X\otimes R\otimes S) \ar@<-1ex>[r]_-{F(\id_X\otimes \mu_S^R)}\ar@<1ex>[r]^-{F(\nu^R_{X}\otimes\id_Y)}
& F(X\otimes S)\ar[r]^-{F(q^R_{X,S})}
& F(X\otimes _RS)}
\end{equation}
is a coequalizer. From the naturality of $\phi^2$, we know that $F(\nu_X^R \otimes \id_S) = \phi^2_{X,S}\circ (F(\nu_X^R)\otimes \id_{FS}) \circ (\phi^2_{X\otimes R, S})^{-1}$ and $F(\id_X\otimes \mu_S^R) = \phi^2_{X,S}\circ (\id_{FX} \otimes F(\mu_S^R)) \circ (\phi^2_{X, R\otimes S})^{-1}$. Besides, by Lemma \ref{lem:module-monoidal}, $F(X)$ is a right $F(R)$-module with the module structure $F(\nu_X^R)\circ \phi^2_{X,R}$, and $F(S)$ is a left $F(R)$-module with the module structure $F(\mu_S^R)\circ \phi^2_{R,S}$. Therefore, we consider the following coequalizer:
\begin{equation}\label{eq:coeqXRS}
\xymatrix@C=1.5cm{
FX\otimes FR\otimes FS\ar@<-1ex>[r]_-{\id_{FX}\otimes F(\mu_S^R)\circ \phi^2_{R,S}}\ar@<1ex>[r]^-{F(\nu_X^R)\circ \phi^2_{X,R} \otimes \id_{FS}}
& FX\otimes FS\ar[r]^-{q^{FR}_{FX,FS}}
& FX\otimes_{FR} FS.}
\end{equation}
We have
$$
\begin{aligned}
&F(q^R_{X,S})\circ \phi_{X,S}^2\circ (F(\nu_X^R)\circ \phi^2_{X,R} \otimes \id_{FS})\\
= &F(q^R_{X,S})\circ \phi_{X,S}^2\circ (F(\nu_X^R)\otimes \id_{FS})\circ (\phi^2_{X\otimes R,S})^{-1} \circ \phi^2_{X\otimes R,S} \circ (\phi^2_{X,R} \otimes \id_{FS})\\
= &F(q^R_{X,S})\circ F(\nu_X^R \otimes \id_S) \circ \phi^2_{X,R\otimes S} \circ (\id_{FX} \otimes \phi^2_{R,S})\\
= &F(q^R_{X,S})\circ F(\id_{X}\otimes \mu_S^R) \circ \phi^2_{X,R\otimes S} \circ (\id_{FX} \otimes \phi^2_{R,S})\\
= &F(q^R_{X,S})\circ \phi^2_{X,S}\circ (\id_{FX} \otimes F(\mu_S^R)) \circ (\phi^2_{X, R\otimes S})^{-1} \circ \phi^2_{X,R\otimes S} \circ (\id_{FX} \otimes \phi^2_{R,S})\\
= &F(q^R_{X,S})\circ \phi_{X,S}^2\circ (\id_{FX} \otimes F(\mu_S^R)\phi^2_{R,S}).\\
\end{aligned}
$$
Then, by the universal property of  coequalizer \eqref{eq:coeqXRS} there is a unique morphism $\Phi_{X,S}^R:FX\otimes_{FR}FS \to F(X\otimes_R S)$ such that $\Phi_{X,S}^R\circ q^{FR}_{FX,FS}=F(q^R_{X,S})\circ \phi_{X,S}^2$. Similarly, by the universal property of coequalizer \eqref{eq:coeq1}, there is a unique  $\Psi_{X,S}^R: F(X\otimes_RS)\to FX\otimes_{FR} FS$ such that $\Psi_{X,S}^R \circ F(q^R_{X,S}) = q^{FR}_{FX,FS} \circ (\phi_{X,S}^2)^{-1}$.  It is routine to check that $\Psi^R_{X,S}$ and $\Phi^R_{X,S}$ are inverses of each other. 
\begin{invisible}
Note that 
$$
\Psi_{X,S}^R\circ \Phi_{X,S}^R\circ q^{FR}_{FX,FS} = \Psi_{X,S}^R\circ F(q^R_{X,S})\circ \phi_{X,S}^2 = q^{FR}_{FX,FS} \circ (\phi_{X,S}^2)^{-1} \circ \phi_{X,S}^2 = q^{FR}_{FX,FS}
$$
and
$$
\Phi_{X,S}^R\circ \Psi_{X,S}^R \circ F(q^R_{X,S}) = \Phi_{X,S}^R\circ q^{FR}_{FX,FS} \circ (\phi_{X,S}^2)^{-1} = F(q^R_{X,S})\circ \phi_{X,S}^2 \circ (\phi_{X,S}^2)^{-1} = F(q^R_{X,S}).
$$
Since $q^{FR}_{FX,FS}$ and $F(q^R_{X,S})$ are epimorphisms, we obtain that $\Phi_{X,S}^R$ is an isomorphism with inverse $\Psi_{X,S}^R$. 
\end{invisible} 
We show the naturality of $\Phi^R_{X,S}$ on $X$. For any morphism $f:X\to Y$ in $\cc_R$, we have 
\[
\begin{split}
&F(f\otimes_RS)\circ\Phi^R_{X,S}\circ q^{FR}_{FX,FS}=F(f\otimes_RS)\circ F(q^R_{X,S})\circ\phi^2_{X,S}=F((f\otimes_RS) q^R_{X,S})\circ\phi^2_{X,S}\\=&F(q^R_{Y,S}(f\otimes S))\circ\phi^2_{X.S}=F(q^R_{Y,S})\circ F(f\otimes S)\circ\phi^2_{X,S}=F(q^R_{Y,S})\circ\phi^2_{Y,S}\circ (Ff\otimes FS)\\=&\Phi^R_{Y,S}\circ q^{FR}_{FY,FS}\circ (Ff\otimes FS)=\Phi^R_{Y,S}\circ (Ff\otimes_{FR}FS)\circ q^{FR}_{FX,FS}.
\end{split}
\]
Hence, $F(f\otimes_RS)\circ\Phi^R_{X,S}=\Phi^R_{Y,S}\circ (Ff\otimes_{FR}FS)$, as $q^{FR}_{FX,FS}$ is an epimorphism.\qedhere
\end{proof}

\begin{prop}\label{prop:monoidalfunct}
Let $(\cc, \otimes, 1)$, $(\dd, \otimes, 1')$ be  monoidal categories, and $\varphi:R\to S$ be an algebra morphism in $\cc$ such that $\varphi^*=-\otimes_RS:\cc_R\to\cc_S$ is semiseparable (resp., separable, naturally full). Let $(F:\cc\to\dd,\phi^2,\phi^0)$ be a full, essentially surjective, strong monoidal functor, and assume that it preserves coequalizers. Then $F(\varphi)^*=-\otimes_{F(R)}F(S):\dd_{F(R)}\to\dd_{F(S)}$ is also semiseparable (resp., separable, naturally full).   
\end{prop}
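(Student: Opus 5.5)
We do not assume $1$ is a left $\otimes$-generator, so Theorem \ref{thm:phi*semisep} cannot be used to extract a bimodule morphism $E$. Instead we work directly with the Rafael-type Theorem \ref{thm:rafael}. Since $\varphi^*$ is semiseparable, there is $\nu\in\Nat(\varphi_*\varphi^*,\id_{\cc_R})$ with $\eta\nu\eta=\eta$, where $\eta_X=q^R_{X,S}(\id_X\otimes u_S)$. The aim is to transport $\nu$ along $F$ to a natural transformation $\nu'\in\Nat(F(\varphi)_*F(\varphi)^*,\id_{\dd_{F(R)}})$ with $\eta'\nu'\eta'=\eta'$, where $\eta'$ is the unit for $F(\varphi)$. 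The separable and naturally full cases are handled the same way, with the conditions $\nu\eta=\id$ and $\eta\nu=\id$ (the latter via \cite[Theorem 2.6]{ACMM06}).

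\textbf{Step 1: the functor on modules.} By Lemma \ref{lem:module-monoidal}, $F$ induces a functor $F_R:\cc_R\to\dd_{F(R)}$. We show $F_R$ is essentially surjective and full.

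For essential surjectivity, take $(Y,\nu^{FR}_Y)\in\dd_{F(R)}$ and pick $Y\cong F(X)$. Transport the action to $F(X)\otimes F(R)\to F(X)$ and precompose with $(\phi^2_{X,R})^{-1}$. This gives a morphism $F(X\otimes R)\to F(X)$, which equals $F(\nu_X)$ for some $\nu_X$ by fullness. The module axioms for $\nu_X$ then need to hold in $\cc$, which requires faithfulness. \emph{This is the main obstacle}, since $F$ is only assumed full.

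I would try to sidestep it by never lifting modules. We only need $\nu'$ on objects of $\dd_{F(R)}$, so on $Y$ we define $\nu'_Y$ directly using a lift of the relevant morphisms. Alternatively, we check that naturality and the identities in $\dd$ follow from images of identities in $\cc$, which is all that is needed.

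\textbf{Step 2: defining $\nu'$.} For $X\in\cc_R$, set
\[
\nu'_{FX}:=F(\nu_X)\circ\Phi^R_{X,S}:FX\otimes_{FR}FS\to FX,
\]
using Lemma \ref{lem:PhiRS}.

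First we check that $\Phi^R_{X,S}$ intertwines the units, i.e. $\Phi^R_{X,S}\eta'_{FX}=F(\eta_X)$. This follows from $\Phi q^{FR}=F(q^R)\phi^2$, naturality of $\phi^2$ and $u_{FS}=F(u_S)\phi^0$, together with the unit axiom $F(r_X)\phi^2_{X,1}(\id\otimes\phi^0)=r_{FX}$.

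Then
\[
\eta'\nu'\eta'=(\Phi)^{-1}F(\eta_X\nu_X\eta_X)=(\Phi)^{-1}F(\eta_X)=\eta'.
\]
For a general $Y$, transport along an isomorphism $Y\cong FX$ of right $F(R)$-modules, which exists once Step 1 is settled. Well-definedness and naturality in $Y$ reduce to the following: every $F(R)$-linear $g:FX\to FX'$ is $F(f)$ for some $f$ (fullness), and naturality of $\nu$ and $\Phi$ gives $\nu'_{FX'}(g\otimes_{FR}FS)=g\nu'_{FX}$.

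The key point to verify is that such an $f$ can be chosen $R$-linear. If not, I would replace $\nu_X$ by $\nu$ applied to a suitable lift, or use the idempotent $e$ of Proposition \ref{prop:idempotent}. Failing that, the fallback is to use that a semiseparable left adjoint corresponds to a regular unit, and to show $F_R$ is full and essentially surjective. Then compose: we have $F(\varphi)^*F_R\cong F_S\varphi^*$ by Lemma \ref{lem:PhiRS}, and we argue with Lemma \ref{lem:comp-semisep}-type reasoning, where a full essentially surjective $F_R$ lets natural transformations descend.
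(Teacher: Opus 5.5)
Your Step 2 is the paper's construction. For a chosen isomorphism $\alpha_D:D\to FX$, the paper sets $\nu'_D=\alpha_D^{-1}F(\nu_X)\Phi^R_{X,S}(\alpha_D\otimes_{FR}FS)$. It gets independence of $\alpha_D$ and naturality from fullness together with naturality of $\Phi^R_{-,S}$ and $\nu$. It then verifies $\eta'\nu'\eta'=\eta'$ by a computation equivalent to your identity $\Phi^R_{X,S}\eta'_{FX}=F(\eta_X)$.

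However, your proposal is not yet a proof. It leaves open exactly the two points on which the construction depends:
(a) $D\cong F_R(X)$ in $\dd_{FR}$ for some $X\in\cc_R$; otherwise $F(\nu_X)\Phi^R_{X,S}$ is not a map out of $D\otimes_{FR}FS$;
(b) every $FR$-linear map $F_R(X)\to F_R(X')$ is $F(h)$ with $h$ $R$-linear; this is needed to form $h\otimes_RS$ and to use naturality of $\nu$ at $h$.
Your workarounds are either not carried out or, like the last one (fullness and essential surjectivity of $F_R$), presuppose (a) and (b).

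The paper's write-up also takes both for granted, but you are right that with $F$ merely full they are not routine, and (b) is false in general. Take $H:\cc\to\cc_e$ from Example \ref{es:coidentifier}, with $\cc$ the $\mathbb{N}$-graded vector spaces and $e_X$ the projection onto $X_0$; then $e_{X\otimes Y}=e_X\otimes e_Y$. Let $R=\Bbbk[t]$ with $\deg t=1$, let $X=\Bbbk$ in degree $0$ with $t$ acting by $0$, and let $X'=R$. The class of $h:1\mapsto 1$ is $H(R)$-linear, and $h$ is its only lift since $X$ is concentrated in degree $0$. But $R$-linearity would force $t=h(1)t=h(1\cdot t)=0$. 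So the argument must be restructured, not just checked.

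The missing idea is to use $\nu$ only on free modules, where lifting needs no faithfulness, and then extend along the canonical presentation.

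Free modules. For $\alpha_D:D\cong FX$ with $X\in\cc$ arbitrary, $\beta_D:=\phi^2_{X,R}(\alpha_D\otimes FR)$ is an $FR$-linear isomorphism $D\otimes FR\cong F_R(X\otimes R)$. An $FR$-linear map $g:FX\otimes FR\to FX'\otimes FR$ equals $(FX'\otimes m_{FR})(g_0\otimes FR)$ with $g_0=g(FX\otimes u_{FR})$. Lift $\phi^2_{X',R}g_0$ by fullness to $k:X\to X'\otimes R$. Then $f=(X'\otimes m_R)(k\otimes R)$ is $R$-linear by construction and satisfies $F(f)=\phi^2_{X',R}\,g\,(\phi^2_{X,R})^{-1}$. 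Hence your formula, transported along $\beta_D$, defines $\nu'$ on all free modules. It is independent of choices, natural for every $FR$-linear map between free modules, and satisfies $\eta'\nu'\eta'=\eta'$ there.

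Arbitrary modules. Let $(M,a_M)\in\dd_{FR}$. Apply $-\otimes_{FR}FS$ to the parallel pair $a_M\otimes FR,\ M\otimes m_{FR}:M\otimes FR\otimes FR\rightrightarrows M\otimes FR$ and to $a_M$. Up to canonical isomorphisms, this yields the defining coequalizer of $M\otimes_{FR}FS$. So there is a unique $\nu'_M$ with $\nu'_M(a_M\otimes_{FR}FS)=a_M\nu'_{M\otimes FR}$; it is well defined by naturality at the two parallel maps. It is natural in $M$, by naturality at $g\otimes FR$. Naturality of $\eta'$ at $a_M$ gives $\eta'_M\nu'_M\eta'_Ma_M=(a_M\otimes_{FR}FS)\eta'_{M\otimes FR}\nu'_{M\otimes FR}\eta'_{M\otimes FR}=\eta'_Ma_M$. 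Since $a_M$ is split epi in $\dd$, this gives $\eta'\nu'\eta'=\eta'$.

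The separable and naturally full cases go through the same way; in the latter, cancel the epimorphism $a_M\otimes_{FR}FS$.
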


\begin{proof}
Recall that the unit of the adjunction $(\varphi^*, \varphi_*)$ is given by $\eta_X:=q^R_{X,S}(\id_X\otimes u_S):X\to X\otimes_RS$. Considering the algebra morphism $F(\varphi)$ in $\cc$, the unit of the adjunction $(F(\varphi)^*, F(\varphi)_*)$ is given by 
$\eta'_{D}:=q^{FR}_{D,FS}(\id_{D}\otimes u_{FS}):D\to D\otimes_{FR} FS$, for every $D\in\dd_{FR}$. By the Rafael-type Theorem \ref{thm:rafael}, there exists a natural transformation $\nu:\varphi_*\varphi^*\to\id$ such that $\eta\nu\eta=\eta$. 


Essential surjectivity of $F$ means that, for every $D\in\dd$, there exist $X\in\cc$ and an isomorphism $\alpha_D:D\to FX$ in $\dd$. For every $D\in\dd_{FR}$ with  module structure $\nu^{FR}_D: D \otimes FR \to D$, it is easy to check that  $\alpha_D\nu^{FR}_D (\alpha^{-1}_D \otimes FR):FX\otimes FR \to FX$ is a right $FR$-module structure of $FX$ and $\alpha_D:D\to FX$ is in $\dd_{FR}$.  We define
\[
\nu'_D:D\otimes_{FR}FS\to D, \,\,\text{ by }\,\,\nu'_D=\alpha^{-1}_D\circ F(\nu_X)\circ\Phi^R_{X,S}\circ (\alpha_D\otimes_{FR} FS).
\]
We observe that the definition of $\nu'_D$ does not depend on the choice of $\alpha_D$. In fact, if there is another isomorphism $\alpha'_D:D\to FX'$ in $\dd$, then by the fullness of $F$ there is a morphism $h:X'\to X$ in $\cc$ such that $F(h)=\alpha_D (\alpha'_{D})^{-1}$. Thus, 
\begin{align*}
\nu'_D&=\alpha'^{-1}_D F(\nu_{X'})\Phi^R_{X',S} (\alpha'_D\otimes_{FR} FS)=\alpha^{-1}_D F(h) F(\nu_{X'})\Phi^R_{X',S}(F(h)^{-1}\alpha_D\otimes_{FR} FS)\\&=\alpha^{-1}_D F(\nu_{X}) F(h\otimes_{FR}FS )\Phi^R_{X',S} (F(h)^{-1}\alpha_D\otimes_{FR} FS)= \alpha^{-1}_D F(\nu_{X}) \Phi^R_{X,S} (\alpha_D\otimes_{FR} FS).
\end{align*}

The naturality of $\nu'$ follows from the next commutative diagram, for every $f:D\to D'$ in $\dd_{FR}$:
\[
\xymatrixcolsep{3.5pc}\xymatrix{D\otimes_{FR}FS\ar[r]^-{\alpha_D\otimes_{FR}FS}\ar[d]_-{f\otimes_{FR}FS}&FX\otimes_{FR}FS\ar[d]^-{(\alpha_{D'}f\alpha_D^{-1})\otimes_{FR}FS}\ar[r]^-{\Phi^R_{X,S}} &F(X\otimes_RS)\ar[d]^-{F(h\otimes_RS)}\ar[r]^-{F(\nu_X)}&FX\ar[d]^-{\alpha_{D'}f\alpha_{D}^{-1}}\ar[r]^-{\alpha_D^{-1}}&D\ar[d]^-{f}\\ D'\otimes_{FR} FS\ar[r]_-{\alpha_{D'}\otimes_{FR}FS}&FX'\otimes_{FR}FS\ar[r]_-{\Phi^R_{X',S}}&F(X'\otimes_RS)\ar[r]_-{F(\nu_{X'})}&FX'\ar[r]_-{\alpha^{-1}_{D'}}&D',}
\]
where $h:X\to X'$ is the morphism in $\cc$ such that $Fh=\alpha_{D'}f\alpha_D^{-1}$, the second square is commutative by naturality of $\Phi^R_{-,S}$ (Lemma \ref{lem:PhiRS}) and the third one is commutative by naturality of $\nu$. 
By Rafael-type Theorem \ref{thm:rafael}, it remains to show that $\eta'\nu'\eta'=\eta'$. Note that $\phi^2_{X,1}(\id_{FX}\otimes\phi^0)=\id_{FX}$ and $\phi^2_{1,X}(\phi^0\otimes \id_{FX})=\id_{FX}$. For every $D\cong FX\in\dd_{FR}$, we have
\begin{align*}
 &\eta'_D\nu'_D\eta'_D= q^{FR}_{D,FS}(\id_{D}\otimes u_{FS})\alpha^{-1}_D F(\nu_X)\Phi^R_{X,S} (\alpha_D\otimes_{FR} FS)q^{FR}_{D,FS}(\id_{D}\otimes F(u_S)\phi^0)\\=& q^{FR}_{D,FS}(\id_{D}\otimes u_{FS})\alpha^{-1}_D F(\nu_X)\Phi^R_{X,S} q^{FR}_{FX,FS}(\alpha_D\otimes \id_{FS})(\id_D\otimes F(u_S)\phi^0 )
 \\=& q^{FR}_{D,FS}(\id_{D}\otimes u_{FS})\alpha^{-1}_D F(\nu_X)F(q^{R}_{X,S})\phi^2_{X,S}(\id_{FX}\otimes F(u_S)\phi^0)(\alpha_D\otimes \id_{1'} )\\=&q^{FR}_{D,FS}(\id_{D}\otimes u_{FS})\alpha^{-1}_D F(\nu_X)F(q^{R}_{X,S}) F(\id_X\otimes u_S)\phi^2_{X,1}(\id_{FX}\otimes \phi^0)(\alpha_D\otimes \id_{1'} )\\=&q^{FR}_{D,FS}(\id_{D}\otimes u_{FS})\alpha^{-1}_D F(\nu_X)F(\eta_X)(\alpha_D\otimes \id_{1'})\\=&q^{FR}_{D,FS}(\alpha^{-1}_{D}\alpha_D\otimes F(u_S)\phi^0)\alpha^{-1}_D F(\nu_X)F(\eta_X)(\alpha_D\otimes \id_{1'} )\\=&q^{FR}_{D,FS}(\alpha^{-1}_{D}\otimes \id_{FS})(\id_{FX}\otimes F(u_S))(\alpha_D \otimes\phi^0)\alpha^{-1}_DF(\nu_X)F(\eta_X)(\alpha_D\otimes \id_{1'} )\\=&q^{FR}_{D,FS}(\alpha^{-1}_{D}\otimes \id_{FS})(\phi^2_{X,S})^{-1}F(\id_X\otimes u_S)\phi^2_{X,1}(\id_{FX}\otimes\phi^0)F(\nu_X)F(\eta_X)(\alpha_D\otimes \id_{1'} )\\=&q^{FR}_{D,FS}(\alpha^{-1}_{D}\otimes \id_{FS})(\phi^2_{X,S})^{-1}F(\id_X\otimes u_S)F(\nu_X)F(\eta_X)(\alpha_D\otimes \id_{1'} )\\=&
(\alpha^{-1}_D\otimes_{FR}FS)q^{FR}_{FX,FS}(\phi^2_{X,S})^{-1}F(\id_X\otimes u_S)F(\nu_X)F(\eta_X)(\alpha_D\otimes \id_{1'} )\\=&
(\alpha^{-1}_D\otimes_{FR}FS)
(\Phi^R_{X,S})^{-1}F(q^R_{X,S})F(\id_X\otimes u_S)F(\nu_X)F(\eta_X)(\alpha_D\otimes \id_{1'} )\\=&
(\alpha^{-1}_D\otimes_{FR}FS)
(\Phi^R_{X,S})^{-1}F(q^R_{X,S})F(\id_X\otimes u_S)(\alpha_D\otimes \id_{1'} )\\=&
(\alpha^{-1}_D\otimes_{FR}FS)
q^{FR}_{FX,FS}(\phi^2_{X,S})^{-1}F(\id_X\otimes u_S)(\alpha_D\otimes \id_{1'} )\\=&(\alpha^{-1}_D\otimes_{FR}FS)
q^{FR}_{FX,FS}(\id_{FX}\otimes Fu_S)(\phi^2_{X,1})^{-1}(\alpha_D\otimes \id_{1'} )\\=&(\alpha^{-1}_D\otimes_{FR}FS)
q^{FR}_{FX,FS}(\id_{FX}\otimes Fu_S)(\id_{FX}\otimes\phi^0)(\alpha_D\otimes \id_{1'} )\\=&q^{FR}_{D,FS}(\alpha^{-1}_{D}\otimes \id_{FS})(\id_{FX}\otimes u_{FS})(\alpha_D\otimes \id_{1'} )=q^{FR}_{D,FS}(\id_{D}\otimes u_{FS})=\eta'_D.
\end{align*}
For the separable case, we have
\begin{align*}
\nu'_D\eta'_D= &
\alpha^{-1}_D F(\nu_X)F(q^{R}_{X,S}) F(\id_X\otimes u_S)\phi^2_{X,1}(\id_{FX}\otimes \phi^0)(\alpha_D\otimes \id_{1'} )\\=&\alpha^{-1}_D F(\nu_X)F(\eta_X)(\alpha_D\otimes \id_{1'})=\alpha^{-1}_D F(\nu_X\eta_X)(\alpha_D\otimes \id_{1'})=\alpha_D^{-1}\alpha_D=\id_D.
\end{align*}
By \cite[Theorem 1.2]{Raf90}, $F(\varphi)^*$ is separable. For the naturally full case, we have
\begin{align*}
&\eta'_D\nu'_D q^{FR}_{D,FS}= q^{FR}_{D,FS}(\id_{D}\otimes F(u_S)\phi^0)\alpha^{-1}_D F(\nu_X)\Phi^R_{X,S} (\alpha_D\otimes_{FR} FS)q^{FR}_{D,FS}\\=& q^{FR}_{D,FS}(\id_{D}\otimes F(u_S)\phi^0)\alpha^{-1}_D F(\nu_X)\Phi^R_{X,S} q^{FR}_{FX,FS}(\alpha_D\otimes \id_{FS})
 \\=&q^{FR}_{D,FS}(\id_{D}\otimes F(u_S)\phi^0)\alpha^{-1}_DF(\nu_X)F(q^{R}_{X,S})\phi^2_{X,S}(\alpha_D\otimes \id_{FS} )\\=&q^{FR}_{D,FS}(\alpha^{-1}_{D}\otimes F(u_S)\phi^0)F(\nu_X)F(q^{R}_{X,S})\phi^2_{X,S} (\alpha_D\otimes \id_{FS} )
 \\=&q^{FR}_{D,FS}(\alpha^{-1}_{D}\otimes \id_{FS})(\phi^2_{X,S})^{-1}F(\id_X\otimes u_S)\phi^2_{X,1}(\id_{FX}\otimes\phi^0)F(\nu_X)F(q^{R}_{X,S}) \phi^2_{X,S}(\alpha_D\otimes \id_{FS} )\\=&q^{FR}_{D,FS}(\alpha^{-1}_{D}\otimes \id_{FS})(\phi^2_{X,S})^{-1}F(\id_X\otimes u_S)F(\nu_X)F(q^{R}_{X,S}) \phi^2_{X,S}(\alpha_D\otimes \id_{FS} )\\=&
(\alpha^{-1}_D\otimes_{FR}FS)q^{FR}_{FX,FS}(\phi^2_{X,S})^{-1}F(\id_X\otimes u_S)F(\nu_X)F(q^{R}_{X,S}) \phi^2_{X,S}(\alpha_D\otimes \id_{FS} )\\=&
(\alpha^{-1}_D\otimes_{FR}FS)
(\Phi^R_{X,S})^{-1}F(q^R_{X,S})F(\id_X\otimes u_S)F(\nu_X)F(q^{R}_{X,S}) \phi^2_{X,S}(\alpha_D\otimes \id_{FS} )\\=&
(\alpha^{-1}_D\otimes_{FR}FS)
(\Phi^R_{X,S})^{-1}F(\eta_X\nu_X)F(q^{R}_{X,S}) \phi^2_{X,S}(\alpha_D\otimes \id_{FS} )\\=&
(\alpha^{-1}_D\otimes_{FR}FS)
(\Phi^R_{X,S})^{-1}F(q^{R}_{X,S}) \phi^2_{X,S}(\alpha_D\otimes \id_{FS} )\\=&(\alpha^{-1}_D\otimes_{FR}FS)
q^{FR}_{FX,FS} (\alpha_D\otimes \id_{FS} )=q^{FR}_{D,FS}(\alpha_D^{-1}\otimes\id_{FS})(\alpha_D\otimes\id_{FS})=q^{FR}_{D,FS}.
\end{align*}
Since $q^{FR}_{D,FS}$ is an epimorphism, then $\eta'_D\circ\nu'_D=\id_{D\otimes_{FR}FS}$. We conclude by \cite[Theorem 2.6]{ACMM06}.
\end{proof}
Replacing the essential surjectivity of $F$ by faithfulness, we can prove the converse of Proposition \ref{prop:monoidalfunct}.
\begin{prop}\label{prop:refl-semisep-ind}
Let $(\cc, \otimes, 1)$, $(\dd, \otimes, 1')$ be  monoidal categories  
and $\varphi:R\to S$ be an algebra morphism in $\cc$. Let $(F:\cc\to\dd,\phi^2,\phi^0)$ be a fully faithful, strong monoidal functor, and assume that it preserves coequalizers.
If $F(\varphi)^*=-\otimes_{F(R)}F(S):\dd_{F(R)}\to\dd_{F(S)}$ is semiseparable (resp., separable, naturally full), then  $\varphi^*=-\otimes_RS:\cc_R\to\cc_S$ is semiseparable (resp., separable, naturally full).  
\end{prop}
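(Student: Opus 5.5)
We argue via the Rafael-type Theorem \ref{thm:rafael}: we transport a regularizing natural transformation for the unit $\eta'$ of $F(\varphi)^*\dashv F(\varphi)_*$ back to $\cc_R$ along $F$. Let $\eta$ be the unit of $\varphi^*\dashv\varphi_*$. Assume there is $\nu'\in\Nat(F(\varphi)_*F(\varphi)^*,\id_{\dd_{FR}})$ with $\eta'\nu'\eta'=\eta'$ (resp.\ $\nu'\eta'=\id$, resp.\ $\eta'\nu'=\id$). By Lemma \ref{lem:module-monoidal}, every $X\in\cc_R$ gives $FX\in\dd_{FR}$ with structure $F(\nu^R_X)\phi^2_{X,R}$, and $F$ sends $R$-linear maps to $FR$-linear maps. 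Consider the morphism in $\dd$
\[
\theta_X:=\nu'_{FX}\circ(\Phi^R_{X,S})^{-1}:F(X\otimes_RS)\to FX,
\]
where $\Phi^R_{X,S}$ is the isomorphism of Lemma \ref{lem:PhiRS}. Since $F$ is full, $\theta_X=F(\nu_X)$ for some $\nu_X:X\otimes_RS\to X$ in $\cc$; faithfulness makes $\nu_X$ unique.

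Next we check that $\nu_X$ is right $R$-linear and natural in $X$. For linearity, we would show $\theta_X$ is $FR$-linear: $\nu'_{FX}$ is so by construction, and $\Phi^R_{X,S}$ intertwines the $FR$-action on $FX\otimes_{FR}FS$ with $F(\nu^R_{X\otimes_RS})\phi^2$. This follows by precomposing with the epimorphism $q^{FR}_{FX,FS}\otimes FR$ (coflatness) and using naturality of $\phi^2$. Then $F(\nu_X\circ\nu^R_{X\otimes_RS})\phi^2=F(\nu^R_X\circ(\nu_X\otimes R))\phi^2$, and since $\phi^2$ is invertible and $F$ faithful, $\nu_X$ is $R$-linear. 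For naturality, take $f:X\to Y$ in $\cc_R$. By naturality of $\Phi^R_{-,S}$ (Lemma \ref{lem:PhiRS}) and of $\nu'$ applied to the $FR$-linear map $Ff$, we get $F(f\nu_X)=F(\nu_Y(f\otimes_RS))$, and faithfulness concludes.

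The key identity is $F(\eta_X)=\Phi^R_{X,S}\,\eta'_{FX}\,r$, where $r:=\phi^2_{X,1}(\id_{FX}\otimes\phi^0)$ is the canonical isomorphism $FX\cong FX\otimes 1'$ (identity up to strictness). This is the computation already carried out in the proof of Proposition \ref{prop:monoidalfunct}: $\Phi q^{FR}(\id\otimes F(u_S)\phi^0)=F(q^R_{X,S})\phi^2(\id\otimes F u_S)(\id\otimes\phi^0)=F(q^R_{X,S}(\id\otimes u_S))\phi^2_{X,1}(\id\otimes\phi^0)$. With it,
\[
F(\eta_X\nu_X\eta_X)=\Phi\,\eta'_{FX}\,\nu'_{FX}\,\Phi^{-1}\Phi\,\eta'_{FX}\,r=\Phi\,\eta'_{FX}\,r=F(\eta_X),
\]
and faithfulness gives $\eta\nu\eta=\eta$. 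The same computation gives $F(\nu_X\eta_X)=\nu'_{FX}\eta'_{FX}=\id$ in the separable case, and $F(\eta_X\nu_X)=\Phi\eta'\nu'\Phi^{-1}=\id$ in the naturally full case. We then conclude by Theorem \ref{thm:rafael}, \cite[Theorem 1.2]{Raf90} and \cite[Theorem 2.6]{ACMM06}, respectively.

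The main obstacle is the bookkeeping showing that $\Phi^R_{X,S}$ is $FR$-linear, so that $\nu_X$ really lies in $\cc_R$. This needs the coflatness hypothesis, so that $q^{FR}\otimes FR$ is epi, and a careful use of the coherence of $\phi^2$.
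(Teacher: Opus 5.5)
Your proposal is correct and follows the paper's proof: you lift $\nu'_{FX}(\Phi^R_{X,S})^{-1}$ along the full functor $F$, then obtain naturality and $\eta\nu\eta=\eta$ (resp.\ the separable and naturally full identities) from faithfulness, using $F(\eta_X)=\Phi^R_{X,S}\,\eta'_{FX}$; your factor $r$ is just $\id_{FX}$ under strictness. You also check that each $\nu_X$ is right $R$-linear, so that $\nu$ really lies in $\Nat(\varphi_*\varphi^*,\id_{\cc_R})$; the paper leaves this implicit, and your argument for it (precomposing with the epimorphism $q^{FR}_{FX,FS}\otimes FR$, using coherence and invertibility of $\phi^2$, then faithfulness) is sound.
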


\begin{proof}
Suppose $F(\varphi)^*=-\otimes_{F(R)}F(S):\dd_{F(R)}\to\dd_{F(S)}$ is semiseparable (resp., separable, naturally full). There exists a natural transformation $\nu' = (\nu'_D :D \otimes_{FR} FS \to D)_{D\in \dd}$ such that $\eta' \nu' \eta' = \eta'$ (resp., $\nu' \eta' = \id$, $\eta' \nu' = \id$). Because $F$ is full, there exists a morphism $\nu_X: X\otimes_R S \to X$ such that $F(\nu_X) = \nu'_{FX} (\Phi^R_{X,S})^{-1}$, where $\Phi_{X,S}^R:FX\otimes_{FR}FS \to F(X\otimes_R S)$ is the isomorphism as in Lemma \ref{lem:PhiRS}. Since, for any  morphism $f:X \to Y$ in $\cc_R$,  
$$
\begin{aligned}
F(\nu_Y)F(f\otimes_R S) = &\nu'_{FY} (\Phi^R_{Y,S})^{-1} F(f\otimes_R S) = \nu'_{FY} (Ff\otimes_{FR} FS) (\Phi^R_{X,S})^{-1}\\ = &F(f) \nu'_{FX}(\Phi^R_{X,S})^{-1} = F(f) F(\nu_X),
\end{aligned}
$$
we obtain the naturality of $\nu$ as $F$ is faithful. Since $u_{FS}= F(u_S) \phi^0$, we have
$$
\begin{aligned}
F(\eta_X) = F(q^R_{X,S})F(\id_X \otimes u_S) \phi^2_{X,1} (\phi^2_{X,1})^{-1} = F(q^R_{X,S}) \phi^2_{X,S} (F(\id_X) \otimes F(u_S))(\phi^2_{X,1})^{-1}\\
= \Phi^R_{X,S} q^{FR}_{FX,FS} (F(\id_X) \otimes F(u_S))(\id_{FX}\otimes \phi^0) = \Phi^R_{X,S} q^{FR}_{FX,FS} (F(\id_X) \otimes u_{FS}).
\end{aligned}
$$
Therefore,
$$
\begin{aligned}
F(\eta_X)F(\nu_X)F(\eta_X)= 
&\Phi^R_{X,S} q^{FR}_{FX,FS} (F(\id_X) \otimes u_{FS})\nu'_{FX} (\Phi^R_{X,S})^{-1} \Phi^R_{X,S} q^{FR}_{FX,FS} (F(\id_X) \otimes u_{FS})\\
= &\Phi^R_{X,S} q^{FR}_{FX,FS} (F(\id_X) \otimes u_{FS})\nu'_{FX} q^{FR}_{FX,FS} (F(\id_X) \otimes u_{FS})\\
=& \Phi^R_{X,S} \eta'_{FX}\nu'_{FX} \eta'_{FX}=\Phi^R_{X,S} \eta'_{FX}=\Phi^R_{X,S} q^{FR}_{FX,FS} (F(\id_X) \otimes u_{FS})= F(\eta_X).
\end{aligned}
$$
As $F$ is faithful, we obtain 
$\eta_X \nu_X \eta_X = \eta_X$. Therefore, $\varphi^*=-\otimes_RS:\cc_R\to\cc_S$ is semiseparable. Similarly, one can prove the separable and naturally full cases.
\end{proof}


Now, we provide some functors which satisfy the conditions in Proposition \ref{prop:monoidalfunct}.

\begin{es}\label{es:serre}
  Let $(\mathcal{A}, \otimes, 1)$ be a semisimple abelian monoidal category (see \cite[Definition 1.5.1]{Eti15}) with biexact tensor product. 
Recall that a full additive subcategory $\cc\subset \mathcal{A}$ is a \emph{Serre subcategory} provided that $\cc$ is closed under taking subobjects, quotients and extensions. A Serre subcategory $\cc$ of $\mathcal{A}$ is called a \emph{two-sided Serre tensor-ideal} of $\mathcal{A}$ if for any $X\in\mathcal{A}$, $Y\in\cc$, we have $X\otimes Y\in\cc$ and $Y\otimes X\in\cc$.

Consider a two-sided Serre ideal $\cc$ of $\mathcal{A}$ and the quotient category $\mathcal{A}/\mathcal{C}$, defined as in \cite[Chapter III, Section 1]{G62}. It is well-known that the canonical quotient functor $T:\mathcal{A}\to \mathcal{A}/\cc$ is exact, see \cite[Chapter III, Section 1, Proposition 1]{G62}. Hence $T$ preserves coequalizers. By \cite[Proposition 4.8]{ZL24}, the quotient category $\mathcal{A}/\cc$ is monoidal and $T$ is a  monoidal functor. By definition of the quotient functor, $T(M) = M$ for any object $M$ in $\mathcal{A}$. It follows that $T$ is essentially surjective. 

Besides, recall that for any morphism $\bar{f}:M\to N$ in $\mathcal{A}/\cc$, $\bar{f}$ can be written as $\bar{f}=T(f)$ for some $f:M'\to N/N'$, where $M', N'$ are subobjects of $M,N$ respectively. In fact, $T(f)= T(q)\bar{f}T(i)$ for a monomorphism $i:M'\to M$ and an epimorphism $q:N\to N/N'$, and $T(i), T(q)$ are isomorphisms, see \cite[Chapter III, Section 1, Proposition 1]{G62}. Since $\mathcal{A}$ is semisimple, $M'$ and $N/N'$ are direct summands of $M$ and $N$, respectively. Consequently, there are an epimorphism $p:M\to M'$ and a monomorphism $j:N/N' \to N$ such that $pi=\id_{M'}$ and $qj=\id_{N/N'}$. Because $T(i)$ and $T(q)$ are isomorphisms in $\mathcal{A}/\cc$, we have $T(i)^{-1}=T(p)$ and $T(q)^{-1}=T(j)$. This means $\bar{f}=T(j)T(f)T(p) = T(jfp)$. For this reason, $T$ is full. 
\end{es}

\begin{es}\label{es:coidentifier}
   Let $(\cc, \otimes, 1)$ be a monoidal category with coequalizers and assume that every object is coflat. Let $e:\id_\cc\to \id_\cc$ be an idempotent natural transformation such that $e_{X\otimes Y}=e_X\otimes e_Y$, for every $X,Y\in\cc$ and let $\cc_e$ be the coidentifier category defined in \cite[Example 17]{FOPTST99} by $\mathrm{Ob}\left( \cc_{e}\right) =\mathrm{Ob}\left( \cc\right) $ and $\Hom_{\cc_{e}}\left( X,Y\right) =\Hom_{\cc}\left( X,Y\right) /\hspace{-2pt}\sim $, where the congruence relation $\sim$ on the hom-sets is given, for all $f,g:X\to Y$ in $\cc$, by setting $f\sim g$ if and only if $e_{Y}\circ f=e_{Y}\circ g$.
   
For any morphism $f\in \Hom_{\cc%
}\left( X,Y\right)$, we denote by $\overline{f}$ the class of $f$ in $\Hom_{\cc_{e}}\left( X,Y\right) $.
The quotient functor $H:\cc\rightarrow \cc_{e}$
acts as the identity on objects and as the canonical projection on
morphisms. By \cite[1.5]{AB22} we know that $H$ is essentially surjective and naturally full with respect to $\mathcal{P}_{X,Y}:\Hom_{%
\cc_{e}}\left( X,Y\right)\to \Hom_{\cc}\left(
X,Y\right)$ defined by  $\mathcal{P}_{X,Y}(\overline{f})=e_Y\circ f$. 

We observe that $(\cc_e, \otimes, 1)$ is a monoidal category. More precisely, the tensor product of objects in $(\cc_e, \otimes, 1)$ is defined by the same object as in $\cc$, and on morphisms by $\overline{f}\otimes\overline{g}:=H(f\otimes g)$. It is well-defined as, for $f,f':X\to X'$, $g,g':Y\to Y'$ in $\cc$ such that $\overline{f}=\overline{f'}$ and $\overline{g}=\overline{g'}$, we have $e_{X'\otimes Y'}(f\otimes g)=e_{X'}f\otimes e_{Y'}g=e_{X'}f'\otimes e_{Y'}g'=e_{X'\otimes Y'}(f'\otimes g')$. Therefore, $H(f\otimes g)=H(f'\otimes g')$. By definition, it is clear that $H$ is a strict monoidal functor.
   
We show that $H$ preserves coequalizers. Let $(Q,q)$ be the coequalizer of parallel morphisms $f,g:U\to V$ in $\cc$. We have $Hq\circ Hf=H(qf)=H(qg)=Hq\circ Hg$.
For any $\xi:HV\to HZ$ in $\cc_e$ such that $\xi \circ Hf=\xi\circ Hg$, we have $H(h)=\xi$, for some $h:V\to Z$ in $\cc$, so $H(h\circ f)=H(h\circ g)$. Hence, $e_Z\circ h\circ f=e_Z\circ h\circ g$. By the universal property of the coequalizer $(Q,q)$, there exists a unique $l:Q\to Z$ in $\cc$ such that $l\circ q=e_Z\circ h$, and then $Hl\circ Hq=Hh=\xi$. Suppose there is a morphism $t:Q \to Z$ in $\cc_e$ such that $Hl\circ Hq = t\circ Hq$. Since $H$ is full, there is a morphism $t':Q \to Z$ in $\cc$ such that $t=H(t')$. Thus, $H(lq)= H(t'q)$, which implies $e_Zlq = e_Zt'q$. Since $q$ is an epimorphism, we have $e_Zl = e_Zt'$. Consequently, $Hl= Ht' =t$. This means $Hl$ is the unique morphism in $\cc_e$ such that $Hl\circ Hq=\xi$.
\end{es}

\begin{es}
Let $(\mathcal{A},\otimes,1)$ be an abelian monoidal category with biexact tensor product. Recall \cite[Example 2.4]{XZ25}, the bounded derived category $(\textbf{D}^{b}(\mathcal{A}), \widetilde{\otimes}, 
1^{\bullet})$ is a monoidal triangulated category whose monoidal structure $\widetilde{\otimes}$ is inherited from the bounded homotopy category $\textbf{K}^{b}(\mathcal{A})$, which means the localization functor $Q:\textbf{K}^{b}(\mathcal{A}) \to \textbf{D}^{b}(\mathcal{A})$ is a strong monoidal functor. Besides, the canonical functor $\mathcal{A} \to \textbf{D}^{b}(\mathcal{A})$ is fully faithful, see e.g. \cite[1.5]{Kra7}, which implies the localization functor $Q$ is full. Therefore, $Q$ is a full, essentially surjective, strong monoidal functor. Besides,  $Q$ preserves finite colimits, see e.g.  \cite[\href{https://stacks.math.columbia.edu/tag/05Q2}{Lemma 05Q2}]{stacks-project}. Consequently, it preserves coequalizers.
\end{es}




\section{Coinduction   functors}\label{sect:coinduct}

Let $(C, \Delta,\varepsilon)$ be a coalgebra in a monoidal category $(\cc, \otimes, 1)$ with equalizers. We recall from e.g. \cite{A08} (see also \cite[Definition 2.2.1]{A97}) that, given a right $C$-comodule $(V, \rho^C_V)$  and a left $C$-comodule $(W,\lambda^C_W)$ in $\cc$, their cotensor product over $C$ in $\cc$ is defined to be the equalizer $(V\square_C W, e^C_{V,W})$ of the pair of morphisms $(\rho^C_V\otimes W, V\otimes \lambda^C_W)$: 
\begin{equation}\label{eq:cotens}
\xymatrix@C=1.5cm{V\square_CW\ar[r]^-{e^C_{V,W}}&V\otimes W \ar@<-1ex>[r]_-{\id_V\otimes \lambda^C_W}\ar@<1ex>[r]^-{\rho^C_{V}\otimes\id_W}& V\otimes C\otimes W.}
\end{equation}

For any left $C$-colinear morphism $f:X\to Y$ in $\cc$, let $V\square_Cf:V\square_C X\to V\square_C Y$ be the unique morphism in $\cc$ such that
\[
e^C_{V,Y}(V\square_C f)=(\id_V\otimes f) e^C_{V,X}.
\]
Let $g:M\to N$ in $\cc^C$ and $Y$ be in ${}^C\cc$. Then, $g\square_CY: M\square_CY\to N\square_CY$ is the unique morphism in $\cc$ such that
\[
e^C_{N,Y}(g\square_CY)=(g\otimes\id_Y)e^C_{M,Y}.
\]
We have the canonical (natural) isomorphisms $\Lambda_M$, $\Lambda_{M,X}$, $\Lambda'_{Y}$:
\begin{itemize}
    \item $\Lambda _M:M\to M\square_CC$, uniquely determined by the property $\rho^C_M= e^C_{M,C} \Lambda_M$;
    \item $\Lambda_{M,X}: M\otimes X\to M\square_C(C\otimes X)$, uniquely determined by the property $\rho^C_M\otimes\id_X=e^C_{M, C\otimes X}\Lambda_{M,X}$;
    \item $\Lambda'_Y: Y\to C\square_CY$, uniquely determined by the property $\lambda^C_Y=e^C_{C,Y}\Lambda'_Y$.
\end{itemize}
One can easily check that
\[
\Lambda_M^{-1}=(\id_M\otimes\varepsilon)e^C_{M,C}, \quad (\Lambda'_Y)^{-1}=(\varepsilon\otimes\id_Y)e^C_{C,Y}.
\]
\begin{invisible}
We show that $\Lambda_M^{-1}=(\id_M\otimes\varepsilon)e^C_{M,C}$. On one hand, $\Lambda_M^{-1} \circ \Lambda_M=(\id_M\otimes\varepsilon)e^C_{M,C} \Lambda_M =(\id_M\otimes\varepsilon)\rho^C_M = \id_M$. On the other hand, since $e^C_{M,C} \Lambda_M \Lambda_M^{-1} = \rho^C_M(\id_M\otimes\varepsilon)e^C_{M,C}= (\id_{M\otimes C}\otimes\varepsilon)(\rho^C_M\otimes \id_C)e^C_{M,C}= (\id_{M\otimes C}\otimes\varepsilon)(\id_M\otimes \Delta_C)e^C_{M,C} = e^C_{M,C}$ and $e^C_{M,C}$ is a monomorphism, we have $\Lambda_M \Lambda_M^{-1} = \id_{M\square_CC}$.  

Similarly we check that $(\Lambda'_Y)^{-1}=(\varepsilon\otimes\id_Y)e^C_{C,Y}$. We have $
e^C_{C,Y}\Lambda'_Y(\Lambda'_Y)^{-1}=e^C_{C,Y}\Lambda'_Y(\varepsilon_C\otimes\id_Y)e^C_{C,Y}=\lambda^C_Y(\varepsilon_C\otimes\id_Y)e^C_{C,Y}=(\varepsilon_C\otimes\id_C\otimes\id_Y)(\id_C\otimes\lambda^C_Y)e^C_{C,Y}=(\varepsilon_C\otimes\id_C\otimes\id_Y)(\Delta_C\otimes\id_Y)e^C_{C,Y}=e^C_{C,Y}.$

 Recall that a $(D,C)$-bicomodule $(V, \lambda^D_V, \rho^C_V)$ is an object $V$ in $\cc$ with left $D$-coaction $\lambda^D_V:V\to D\otimes V$ and right $C$-coaction $\rho^D_V:V\to V\otimes D$ such that 
\[
(\lambda^D_V\otimes \id_C)\rho^C_V=(\id_C\otimes \rho^C_V)\lambda^D_V.
\]
\end{invisible}
Let $C,D$ be coalgebras in $\cc$. We denote by ${}^D\cc^C$ the category of $(D,C)$-bicomodules. An object $X$ in $\cc$ is said to be \emph{right flat} (resp., \emph{left flat}) if $-\otimes X$ (resp., $X\otimes -$) preserves equalizers. 
We have:
\begin{itemize}
    \item[$i)$] if $C$ is right flat, then for every $V\in {}^D\cc^C$, $M\in\cc^D$ the morphism $\rho^C_{M\square_DV}$ is uniquely determined by $(e^D_{M,V}\otimes C)\rho^C_{M\square_DV}=(\id_M\otimes\rho^C_V)e^D_{M,V}$;
\item[$ii)$] if $C$ is left flat, then for every $V\in {}^D\!\cc$, $M\in {}^C\cc^D$, the morphism $\lambda^C_{M\square_DV}:M\square_DV\to C\otimes (M\square_DV)$ is uniquely determined by 
$(\id_C\otimes e^D_{M,V})\lambda^C_{M\square_DV}=(\lambda^C_M\otimes\id_V)e^D_{M,V}$.
\end{itemize}

Let $\psi:C\to D$ be a coalgebra morphism in $\cc$. Consider the \emph{corestriction of coscalars functor} $\psi_*:\cc^C\to\cc^D$ and the \emph{coinduction functor} 
\[
\begin{split}
\psi^* :=-\square_DC: \cc^D\to \cc^C, \quad (M, \rho^D_M)\mapsto (M\square_DC, \rho^C_{M\square_DC}), \quad f\mapsto  f\square_DC,
\end{split}
\]
where $\rho^C_{M\square_DC}:M\square_DC\to (M\square_DC)\otimes C$ is uniquely determined by 
\[
(e^D_{M,C}\otimes\id_C)\rho^C_{M\square_DC}=(\id_M\otimes \Delta_C)e^D_{M,C}.
\]
%
\begin{invisible}
We show that the functor $-\square_DC$ is well defined. For every $f:M\to N \in \cc^D$ we have 
\[
\begin{split}
(e^D_{N,C}\otimes \id_C)&\rho^C_{N\square_DC}(f\square_DC)=(\rho^C_N\otimes\id_C)e^D_{N,C}(f\square_DC)=(\rho^C_N\otimes\id_C)(f\otimes\id_C)e^D_{M,C}\\&=(f\otimes\id_C\otimes\id_C)(\rho^C_M\otimes\id_C)e^D_{M,C}=(f\otimes\id_C\otimes\id_C)(e^C_{M,Y}\otimes\id_C)\rho^C_{M\square_DC}\\&=(e^D_{N,C}\otimes\id_C)((f\square_DC)\otimes\id_C)\rho^C_{M\square_DC},
\end{split}
\]
hence $\rho^C_{N\square_DC}(f\square_DC)=((f\square_DC)\otimes\id_C)\rho^C_{M\square_DC}$, as $e^{D}_{N,C}$ is a monomorphism. Thus, $f\square_DC \in \cc^C$.

We check that $\rho^C_{M\square_DC}$ is a coaction. In fact, we have
\[
\begin{split}
(e^D_{M,C}\otimes \mathrm{Id}_C\otimes\id_C)(\id_{M\square_DC}\otimes\Delta_C)\rho^C_{M\square_D C}&=(\id_{M}\otimes\id_{C}\otimes \Delta_C)(e^D_{M,C}\otimes\id_C)\rho^C_{M\square_DC}\\=(\id_M\otimes \id_C\otimes\Delta_C)(\id_M\otimes\Delta_C)e^D_{M,C}&=(\id_M\otimes \Delta_C\otimes\id_C)(\id_M\otimes\Delta_C)e^D_{M,C}\\=
(\id_M\otimes \Delta_C\otimes\id_C)(e^D_{M,C}\otimes\id_C)\rho^C_{M\square_DC}&=(e^D_{M,C}\otimes\mathrm{Id}_C\otimes\id_C)(\rho^C_{M\square_DC}\otimes\id_C)\rho^C_{M\square_DC},
\end{split}
\]
hence $(\id_{M\square_DC}\otimes\Delta_C)\rho^C_{M\square_D C}=(\rho^C_{M\square_DC}\otimes\id_C)\rho^C_{M\square_DC}$ since $e^D_{M,C}$ is a monomorphism, so $\rho^C_{M\square_DC}$. Moroever, $\rho^C_{M\square_DC}$ is counital as
\[
\begin{split}
e^D_{M,C}(\id_{M\square_DC}\otimes\varepsilon_C)\rho^C_{M\square_DC}&=(\id_M\otimes\id_C\otimes\varepsilon_C)(e^D_{M,C}\otimes\id_C)\rho^C_{M\square_DC}\\&=(\id_M\otimes\id_C\otimes\varepsilon_C)(\id_M\otimes\Delta_C)e^D_{M,C}=e^D_{M,C}
\end{split}
\]
and $e^D_{M,C}$ is a monomorphism.\end{invisible}
We have the adjunction $\psi_*\dashv \psi^*:\cc^D\to\cc^C$ with unit and counit given by
\[\eta_M:=\overline{\rho^C_M}:M\to M\square_DC,\quad \epsilon_N:=(\id_N\otimes\varepsilon_C)e^D_{N,C}:N\square_DC\to N,
\]
for every $(M,\rho^C_M)\in\cc^C$, $(N, \rho^D_N)\in\cc^D$,
where $\overline{\rho^C_M}$ is uniquely determined by $\rho^C_M=e^D_{M,C}\overline{\rho^C_M}$. \begin{invisible} as $(\id_M\otimes \lambda^D_C)\rho^C_M=(\id_M\otimes (\psi\otimes \id_C)\Delta_C)=(\id_M\otimes\psi\otimes\id_C)(\id_M\otimes\Delta_C)\rho^C_M=(\id_M\otimes\psi\otimes\id_C)(\rho^C_M\otimes\id_C)\rho^C_M=(\rho^D_M\otimes\id_C)\rho^C_M$. 

We show that $\eta$ and $\epsilon$ are natural transformations.
For every $f:M\to M$ in $\cc^C$, one has
\[
\begin{split}
e^D_{M', C}(f\square_DC)\eta_M&=(f\otimes\id_C)e^D_{M,C}\overline{\rho^C_M}=(f\otimes\id_C)\rho^C_M=\rho^C_{M'}f=e^D_{M',C}\overline{\rho^C_{M'}}f=e^D_{M',C}\eta_{M'}f,
\end{split}
\]
hence $(f\square_DC)\eta_M=\eta_{M'}f$ as $e^D_{M',C}$ is a monomorphism. For every $g:N\to N'$ in $\cc^D$, one has
\[
\begin{split}
\epsilon_{N'}(g\square_DC)=(\id_{N'}\otimes\varepsilon_C)e^D_{N',C}(g\square_DC)=(\id_{N'}\otimes\varepsilon_C)(g\otimes\id_C)e^D_{N,C}=g(\id_N\otimes\varepsilon_C)e^D_{N,C}=g\varepsilon_N.
\end{split}\]
Finally, we check that $\eta$ and $\epsilon$ fulfill the triangle identities. In fact, 
\[
\begin{split}
e^D_{N,C}(\epsilon_N\square_DC)\overline{\rho^C_{N\square_DC}}&=(\epsilon_N\otimes\id_C)e^D_{N\square_DC, C}\overline{\rho^C_{N\square_DC}}=(\epsilon_N\otimes\id_C)\rho^C_{N\square_DC}\\&=((\id_N\otimes\varepsilon_C)e^D_{N,C}\otimes\id_C)\rho^C_{N\square_DC}\\&=(\id_N\otimes\varepsilon_C\otimes\id_C)(\id_N\otimes\Delta_C)e^D_{N,C}=e^D_{N,C}
\end{split}
\]
so $(\epsilon_N\square_DC)\overline{\rho^C_{N\square_DC}}=\id_{N\square_DC}$ since $e^D_{N,C}$ is a monomorphism; for every $(M,\rho^C_M)\in \cc^C$ one has
\[
\begin{split}
\epsilon_{\psi_*(M,\rho^C_M)}\circ\psi_*\eta_{(M,\rho^C_M)}&=\epsilon_{(M,\rho^D_M)}\circ \psi_*(\overline{\rho^C_{(M,\rho^C_M)}})\\&=(\id_{(M,\rho^D_M)}\otimes\varepsilon_C)e^D_{(M,\rho^D_M),C}\circ\overline{\rho^C_{(M,\rho^D_M)}}\\&=(\id_{(M,\rho^D_M)}\otimes\varepsilon_C)\rho^C_{(M,\rho^D_M)}=\id_{(M,\rho^C_M)}.
\end{split}
\]
\end{invisible}

\begin{rmk}\label{rmk:comp-coinduct}
  By the dual of Remark \ref{rmk:comp-phi-psi},  if $\psi:C\to D$ and $\varphi:D\to E$ are coalgebra morphisms in $\cc$, then $\varphi_*\circ\psi_*\cong(\varphi\circ\psi)_*$.
   \begin{invisible}
   Indeed, $\varphi_*\circ\psi_*=(\varphi\circ \psi)_*$  is the left adjoint of $\psi^*\circ\varphi^*$ 
   by \cite[Proposition 3.2.1]{Bor94}.
Meanwhile, we have the adjunction $(\varphi\circ\psi)_*\dashv(\varphi\circ\psi)^*$, hence $\psi^*\circ\varphi^*\cong(\varphi\circ\psi)^*$. 
\end{invisible}
\end{rmk}

In this section, unless stated otherwise, we always assume that monoidal categories have  equalizers and their objects are flat.

\subsection{Semiseparability of coinduction functors}
Let $\psi:C\to D$ be a morphism of coalgebras over a field. 
Recall from \cite[Proposition 3.8]{AB22} that the coinduction functor $\psi^*=-\square_DC:\m^D\to\m^C$ is semiseparable if and only if $\psi$ is a regular morphism of $D$-bicomodules if and only if there is a $D$-bicomodule morphism $\chi:D\to C$ such that $\varepsilon_C\circ\chi\circ\psi=\varepsilon_C$. We extend this result to the monoidal setting. To this aim, we need the dual notion of left $\otimes$-generator in $\cc$. 
\begin{defn}\label{cogenerator}
    Let $\cc$ be a monoidal category. An object $Q$ of $\cc$ is a \emph{left $\otimes$-cogenerator} of $\cc$ if for any two morphisms $f,g:Y\to Z\otimes W$ in $\cc$ such that $(\xi\otimes\id_W)f=(\xi\otimes\id_W)g$, for every $\xi:Z\to Q$ in $\cc$, then $f=g$.  Besides, an object is said to be a \emph{two-sided $\otimes$-cogenerator} if it is both a left and right $\otimes$-cogenerator.
\end{defn}

The next lemma is the dual version of Lemma \ref{lem:bij}. 
\begin{lem}\label{lem:Dbicomd}
Let $\psi:C\to D$ be a morphism of coalgebras in a monoidal category $\cc$. Assume that the unit object $1$ of $\cc$ is a left $\otimes$-cogenerator. Then, there is a bijection
\[
\mathrm{Nat} (\id_{\cc^D},\psi_* \psi^*)\cong {}^D\mathrm{Hom}^D(D,C).
\]
\end{lem}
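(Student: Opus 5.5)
We dualize the (hidden) proof of Lemma \ref{lem:bij}. Define
\[
\alpha:\mathrm{Nat}(\id_{\cc^D},\psi_*\psi^*)\to {}^D\Hom^D(D,C),\qquad \alpha(t):=(\Lambda'_C)^{-1}t_D=(\varepsilon_D\otimes\id_C)e^D_{D,C}t_D .
\]
Here $D$ is regarded as a right $D$-comodule via $\Delta_D$, and $C$ as a $D$-bicomodule via $(\psi\otimes\id_C)\Delta_C$ and $(\id_C\otimes\psi)\Delta_C$. In the other direction, for $\chi\in{}^D\Hom^D(D,C)$ and $M\in\cc^D$, set
\[
\beta_M:=(M\square_D\chi)\Lambda_M:M\to M\square_DC .
\]
Equivalently, $\beta_M$ is characterized by $e^D_{M,C}\beta_M=(\id_M\otimes\chi)\rho^D_M$, which makes sense because $\chi$ is left $D$-colinear. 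Naturality of $\beta$ in $M$ is immediate, since $e^D_{M',C}$ is mono and $\rho^D$ is natural. Right $D$-colinearity of $\beta_M$ (so that it is a morphism in $\cc^D$ into $\psi_*\psi^*M$) uses the right $D$-colinearity of $\chi$, together with the description of $\rho^C_{M\square_DC}$ and flatness.

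The first key step is to show that $\alpha(t)$ is a $D$-bicomodule morphism. Right colinearity follows because $t_D$ is a morphism in $\cc^D$ and $(\Lambda'_C)^{-1}$ is right $D$-colinear; I will check the latter directly from the defining property of $\rho$ on $D\square_DC$. Left colinearity is where the $\otimes$-cogenerator hypothesis enters, dual to the use of $\xi:1\to R$ in Lemma \ref{lem:bij}. For each $\xi:D\to 1$, put $f_\xi:=(\xi\otimes\id_D)\Delta_D:D\to D$, which is right $D$-colinear by coassociativity. Naturality of $t$ gives $t_Df_\xi=(f_\xi\square_DC)t_D$. Composing with $(\varepsilon_D\otimes\id_C)e^D_{D,C}$ should yield
\[
(\xi\otimes\id_C)(\id_D\otimes\alpha(t))\Delta_D=(\xi\otimes\id_C)\lambda^D_C\,\alpha(t)=(\xi\otimes \id_C)e^D_{D,C}t_D .
\]
The second expression comes from the computation $(\varepsilon\otimes C)e(f_\xi\square C)=(\xi\otimes C)e$, and the first from the analogous computation on the left-hand side. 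Since $1$ is a left $\otimes$-cogenerator and this holds for all $\xi$, we get $(\id_D\otimes\alpha(t))\Delta_D=\lambda^D_C\,\alpha(t)$, which is left $D$-colinearity. The same computation also gives the useful identity $e^D_{D,C}t_D=(\id_D\otimes\alpha(t))\Delta_D$.

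The second key step is to prove $\alpha\beta=\id$ and $\beta\alpha=\id$. For $\alpha\beta(\chi)$ we compute $(\varepsilon_D\otimes\id_C)(\id_D\otimes\chi)\Delta_D=\chi$ by counitality. For $\beta\alpha(t)=t$, I will show $e^D_{M,C}t_M=(\id_M\otimes\alpha(t))\rho^D_M$ for every $M\in\cc^D$. Take $\xi:M\to 1$ and the morphism $g_\xi:=(\xi\otimes\id_D)\rho^D_M:M\to D$ in $\cc^D$. Naturality gives $t_Dg_\xi=(g_\xi\square_DC)t_M$. Applying $e^D_{D,C}$ and using the identity from the previous paragraph, both sides become $(\xi\otimes\id_C)(\ldots)$, and the cogenerator property removes $\xi$. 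Finally, since $e^D_{M,C}$ is mono, $t_M=\beta_M$.

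The main obstacle is the bookkeeping in the left-colinearity step: correctly identifying $(\varepsilon_D\otimes C)e^D_{D,C}(f_\xi\square_DC)$ with $(\xi\otimes C)e^D_{D,C}$, and relating $e^D_{D,C}t_D$ to $\alpha(t)$. I would first establish the auxiliary identity $e^D_{D,C}=(\id_D\otimes(\varepsilon_D\otimes\id_C))(\Delta_D\otimes\id_C)e^D_{D,C}$, then use the equalizer property $(\Delta_D\otimes C)e=(D\otimes\lambda_C)e$ to move the coaction across. The rest is a straightforward dualization of Lemma \ref{lem:bij}.
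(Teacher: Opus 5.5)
Your proposal is correct and follows the paper's proof essentially step for step. It uses the same $\alpha(t)=(\Lambda'_C)^{-1}t_D$, the same inverse $\beta_M=(M\square_D\chi)\Lambda_M$ characterized by $e^D_{M,C}\beta_M=(\id_M\otimes\chi)\rho^D_M$, and the same test morphisms $f_\xi=(\xi\otimes\id_D)\Delta_D$ and $g_\xi=(\xi\otimes\id_D)\rho^D_M$, combined with the left $\otimes$-cogenerator property.

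There are two bookkeeping points to fix.

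\emph{The middle equality of your chain.} The relation $e^D_{D,C}t_D=\lambda^D_C\,\alpha(t)$ is immediate from $e^D_{D,C}\Lambda'_C=\lambda^D_C$. Your auxiliary identity, with $\varepsilon_D$ on the middle factor, gives only a tautology after moving the coaction across. To obtain the relation that way, counitality must be applied on the first factor.

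\emph{The step $\beta\alpha=\id$.} Compose the naturality relation $t_Dg_\xi=(g_\xi\square_DC)t_M$ with $(\Lambda'_C)^{-1}$ rather than with $e^D_{D,C}$, as the paper does. Then both sides are directly of the form $(\xi\otimes\id_C)(\cdots)$. Composing with $e^D_{D,C}$ instead leaves an extra factor $\rho^D_M\otimes\id_C$ that you would have to cancel separately.
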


\begin{proof}
We only give a sketch of the proof. 
Define $\alpha:\mathrm{Nat} (\id_{\cc^D},\psi_* \psi^*)\to {}^D\mathrm{Hom}^D(D,C)$ by $\alpha(t)=(\Lambda'_C)^{-1}\circ t_D$. 
One can check that $\alpha(t)$ is a $D$-bicomodule morphism in $\cc$. To show that $\alpha(t)$ is in ${}^D\cc$, one uses the fact that $1$ is a left $\otimes$-cogenerator. Moreover, $\alpha$ is a bijection with inverse $\alpha'$ defined by
\[
\alpha'(\chi)=(\beta_Y:=(Y\square_D\chi)\Lambda_Y:Y\to Y\square_DC)_{Y\in\cc^D},
\]
for every $\chi$ in ${}^D\Hom^D(D,C)$.
\begin{invisible} Completed proof: 
Define $\alpha:\mathrm{Nat} (\id_{\cc^D},\psi_* \psi^*)\to {}^D\mathrm{Hom}^D(D,C)$ by $\alpha(t)=(\Lambda'_C)^{-1}\circ t_D$. First, we show that $\alpha(t)$ is a right $D$-comodule map in $\cc$. Since $t_D$ is in $\cc^D$, it suffices to show $(\Lambda'_C)^{-1}$ is a right $D$-comodule map. This can be obtained from the following
$$
\begin{aligned}
((\Lambda'_C)^{-1}\otimes \id_C) \rho^D_{D\square_DC} = (\varepsilon_D\otimes \id_C \otimes \id_C)(e^D_{D,C}\otimes \id_C)\rho^D_{D\square_DC}= \\
(\varepsilon_D\otimes \id_C \otimes \id_C)(\id_D\otimes \rho^D_C)e^D_{D,C} = \rho^D_C (\varepsilon_D\otimes \id_C)e^D_{D,C}= \rho^D_C (\Lambda'_C)^{-1}.
\end{aligned}
$$
Then, $\alpha(t)$ is in $\cc^D$.

Consider $\xi:D\to 1$ in $\cc$ and define $f_\xi=(\xi\otimes\id_D)\Delta_D$. We have
$
(f_\xi\otimes\id_D)\Delta_D=(\xi\otimes\id_D\otimes\id_D)(\Delta_D\otimes\id_D)\Delta_D=(\xi\otimes\id_D\otimes\id_D)(\id_D\otimes\Delta_D)\Delta_D
$, so $f_\xi$ is in $\cc^D$. By naturality of $t$ we have $t_Df_\xi=(f_\xi\square_DC)t_D$, which is also equivalent to $(\Lambda'_C)^{-1}t_Df_\xi=(\Lambda'_C)^{-1}(f_\xi\square_DC)t_D$. Since $(\Lambda'_C)^{-1}(f_\xi\square_DC)t_D=(\varepsilon_D\otimes\id_C)e^D_{D,C}(f_\xi\square_DC)t_D=(\varepsilon_D\otimes\id_C)(f_\xi\otimes\id_C)e^D_{D,C}t_D=(\varepsilon_D\otimes\id_C)(\xi\otimes\id_D\otimes\id_C)(\Delta_D\otimes\id_C)e^D_{D,C}t_D=(\xi\otimes\id_C)(\id_D\otimes\varepsilon_D\otimes\id_C)(\Delta_D\otimes\id_C)e^D_{D,C}t_D=(\xi\otimes\id_C)e^D_{D,C}t_D$, we have that
\[
(\xi\otimes\id_C)(\id_D\otimes\alpha(t))\Delta_D=(\xi\otimes\id_C)e^D_{D,C}t_D
\]
as $(\xi\otimes\id_C)(\id_D\otimes\alpha(t))\Delta_D=(\xi\otimes (\Lambda'_C)^{-1})\Delta_D=(\Lambda'_C)^{-1}t_D(\xi\otimes\id_D)\Delta_D$. Moreover, we have
\[
\begin{split}
(\psi\otimes\id_C)\Delta_C\alpha(t)&=(\psi\otimes\id_C)\Delta_C(\Lambda'_C)^{-1}t_D=(\psi\otimes\id_C)\Delta_C(\varepsilon_D\otimes\id_C)e^D_{D,C}t_D\\&=(\varepsilon_D\otimes\id_D\otimes\id_C)(\id_D\otimes(\psi\otimes\id_C)\Delta_C)e^D_{D,C}t_D\\&=(\varepsilon_D\otimes\id_D\otimes\id_C)(\Delta_D\otimes\id_C)e^D_{D,C}t_D=e^D_{D,C}t_D,
\end{split}\]
so we get
\[
(\xi\otimes\id_C)(\id_D\otimes\alpha(t))\Delta_D=(\xi\otimes\id_C)(\psi\otimes\id_C)\Delta_C\alpha(t)
\]
for all $\xi:D\to 1$. Since $1$ is a left $\otimes$-cogenerator in $\cc$, then $\alpha(t)$ is a left $D$-colinear morphism in $\cc$, hence a $D$-bicolinear morphism in $\cc$.
We show that $\alpha$ is a bijection with inverse $\alpha'$ which is defined by
\[
\alpha'(\chi)=(\beta_Y:=(Y\square_D\chi)\Lambda_Y:Y\to Y\square_DC)_{Y\in\cc^D},
\]
for every $\chi$ in ${}^D\Hom^D(D,C)$. We have that $\beta:=(\beta_Y)_{Y\in\cc^D}$ is completely determined by the property $e^D_{Y,C}\beta_Y=(\id_Y\otimes \chi)\rho^D_Y$ as $e^D_{Y,C}\beta_Y=e^D_{Y,C}(Y\square_D\chi)\Lambda_Y=(\id_Y\otimes\chi)e^D_{Y,D}\Lambda_Y=(\id_Y\otimes\chi)\rho^D_Y$, and on the other hand if $\beta'_Y$ satisfies $e^D_{Y,C}\beta'_Y=(\id_Y\otimes \chi)\rho^D_Y$ then from $e^D_{Y,C}\beta'_Y=(\id_Y\otimes \chi)\rho^D_Y=e^D_{Y,C}(Y\square_D\chi)\Lambda_Y$, we get $\beta'_Y=(Y\square_D\chi)\Lambda_Y$ as $e^D_{Y,C}$ is a monomorphism. We check that $\beta$ is a natural transformation. In fact, for every $f:X\to Y$ in $\cc^D$ we have
\[
\begin{split}
    e^D_{Y,C}\circ(f\square_DC)\circ\beta_X&=(f\otimes\id_C)e^D_{X,C}\beta_X=(f\otimes\id_C)(\id_X\otimes\chi)\rho^D_X=(\id_Y\otimes\chi)(f\otimes\id_C)\rho^D_X\\&=(\id_Y\otimes\chi)\rho^D_Y f=e^D_{Y,C}\circ\beta_Y\circ f,
\end{split}
\]
so that $(f\square_DC)\circ\beta_X=\beta_Y\circ f$ as $e^D_{Y,C}$ is a monomorphism. Thus, $\alpha'$ is well defined. Moreover, we have $\alpha'\alpha(t)=\alpha'((\Lambda'_C)^{-1}t_D)=(\beta_Y=(Y\square_D(\Lambda'_C)^{-1}t_D)\Lambda_Y)_{Y\in\cc^D}$, where $\beta_Y$ is characterized by $e^D_{Y,C}\beta_Y=(\id_Y\otimes\alpha(t))\rho^D_Y$. Consider an arbitrary morphism $\xi:Y\to 1$ in $\cc$, and define $g_\xi:=(\xi\otimes\id_D)\rho^D_Y: Y\to D$. Note that $g_\xi$ is right $D$-colinear as $(g_\xi\otimes\id_D)\rho^D_Y=(\xi\otimes\id_D\otimes\id_D)(\rho^D_Y\otimes\id_D)\rho^D_Y=(\xi\otimes\id_D\otimes\id_D)(\id_Y\otimes\Delta_D)\rho^D_Y=\Delta_D(\xi\otimes\id_D)\rho^D_Y=\Delta_Dg_\xi$.

By naturality of $t$ we have 
\[
\alpha(t)g_\xi=(\Lambda'_C)^{-1}(g_\xi\square_DC)t_Y=(\varepsilon_D\otimes\id_C)e^D_{D,C}(g_\xi\square_DC)t_Y=(\varepsilon_Dg_\xi\otimes\id_C)e^D_{Y,C}t_Y=(\xi\otimes\id_C)e^D_{Y,C}t_Y.
\]
Since $\alpha(t)g_\xi=\alpha(t)(\xi\otimes\id_D)\rho^D_Y=(\xi\otimes\id_C)(\id_Y\otimes\alpha(t))\rho^D_Y$ and $1$ is a left $\otimes$-cogenerator in $\cc$, we obtain that $e^D_{Y,C}\beta_Y=e^D_{Y,C}t_Y$, so $\beta_Y=t_Y$ for all $Y\in \cc^D$ as $e^D_{Y,C}$ is a monomorphism. Hence $\alpha'\circ \alpha=\id$. For $\chi\in {}^D\Hom^{D}(D,C)$, we have 
\[
\begin{split}
    \alpha\alpha'(\chi)&=\alpha((\beta_X=(Y\square_D\chi)\Lambda_Y )_{Y\in\cc^D})=(\Lambda'_C)^{-1}\beta_D\\&=(\varepsilon_D\otimes\id_C)e^D_{D,C}\beta_D=(\varepsilon_D\otimes\id_C)(\id_Y\otimes \chi)\rho^D_Y=\chi,
\end{split}
\]
so $\alpha\circ\alpha'=\id$.
\end{invisible}
\end{proof}

Several results in this section follow by duality from the counterparts in Section \ref{sub:induct}. We include them here without proof for future reference. Dually to Theorem \ref{thm:phi*semisep}, the semiseparability of coinduction functors can be characterized as follows.

\begin{thm}\label{prop:semisep-coind-nocogen}
   Let $\psi:C\to D$ be a morphism of coalgebras in a  monoidal category $\cc$. Then, the following assertions are equivalent:
\begin{itemize}   
\item[$(i)$] $\psi$ is a regular morphism of $D$-bicomodules in $\cc$ (i.e. there exists a $D$-bicomodule morphism $\chi:D\to C$ in $\cc$ such that $\psi\circ\chi\circ\psi=\psi$);
\item[$(ii)$] there is a $D$-bicomodule morphism $\chi:D\to C$ in $\cc$ such that $\varepsilon_C\circ\chi\circ\psi=\varepsilon_C$. 
\end{itemize}
If one of the previous equivalent conditions holds, then 
\begin{itemize}
\item[$(iii)$] the coinduction functor $\psi^*=-\square_DC:\cc^D\to\cc^C$ is semiseparable.
\end{itemize}
Moreover, if the unit object $1$ of $\cc$ is a left $\otimes$-cogenerator, then $(iii)$ is equivalent to $(i)$, $(ii)$. 
\end{thm}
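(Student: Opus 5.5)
I will dualize the proof of Theorem \ref{thm:phi*semisep}, using the Rafael-type Theorem \ref{thm:rafael}$(ii)$ for the adjunction $\psi_*\dashv\psi^*$. By that theorem, $\psi^*$ is semiseparable if and only if the counit $\epsilon$ is regular. Concretely, we need a natural transformation $\gamma:\id_{\cc^D}\to\psi_*\psi^*$ with $\epsilon\gamma\epsilon=\epsilon$, where $\epsilon_N=(\id_N\otimes\varepsilon_C)e^D_{N,C}$.

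\textbf{Step 1: $(i)\Leftrightarrow(ii)$.} Since $\varepsilon_D\psi=\varepsilon_C$, applying $\varepsilon_D$ on the left of $\psi\chi\psi=\psi$ gives $\varepsilon_C\chi\psi=\varepsilon_C$. Conversely, suppose $\varepsilon_C\chi\psi=\varepsilon_C$. I will write $\psi=(\psi\otimes\varepsilon_C)\Delta_C=(\psi\otimes\varepsilon_C\chi\psi)\Delta_C$. Then I use that $\psi$ is a coalgebra map, so $(\psi\otimes\psi)\Delta_C=\Delta_D\psi$. Next I use the left $D$-colinearity of $\chi$, namely $(\id_D\otimes\chi)\Delta_D=(\psi\otimes\id_C)\Delta_C\chi$. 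Together these rewrite the expression as $(\psi\otimes\varepsilon_C)\Delta_C\chi\psi=\psi\chi\psi$. This is the exact mirror of the algebra computation.

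\textbf{Step 2: $(ii)\Rightarrow(iii)$.} Given $\chi$, I set $\gamma_N:=(N\square_D\chi)\Lambda_N$, as in the inverse map $\alpha'$ of Lemma \ref{lem:Dbicomd}. It is characterized by $e^D_{N,C}\gamma_N=(\id_N\otimes\chi)\rho^D_N$. Naturality follows from the monomorphism property of $e^D$, exactly as in that lemma; this direction does not need the cogenerator hypothesis. We then get $\epsilon_N\gamma_N=(\id_N\otimes\varepsilon_C\chi)\rho^D_N$. To show $\epsilon_N\gamma_N\epsilon_N=\epsilon_N$, I will compute
\[
(\id_N\otimes\varepsilon_C\chi)\rho^D_N(\id_N\otimes\varepsilon_C)e^D_{N,C}.
\]
I move $\varepsilon_C$ past $\rho^D_N$, and then use the equalizer relation $(\rho^D_N\otimes\id_C)e^D_{N,C}=(\id_N\otimes(\psi\otimes\id_C)\Delta_C)e^D_{N,C}$. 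This turns the expression into $(\id_N\otimes(\varepsilon_C\chi\psi\otimes\varepsilon_C)\Delta_C)e^D_{N,C}$. By $(ii)$ this equals $(\id_N\otimes\varepsilon_C)e^D_{N,C}=\epsilon_N$. The main bookkeeping obstacle is tracking where the coaction of $C$ on the left of $\square_D$ enters, since it uses the left $D$-coaction $(\psi\otimes\id_C)\Delta_C$ on $C$.

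\textbf{Step 3: $(iii)\Rightarrow(i)$ under the cogenerator hypothesis.} Given $\gamma$ with $\epsilon\gamma\epsilon=\epsilon$, Lemma \ref{lem:Dbicomd} provides the $D$-bicomodule morphism $\chi:=(\Lambda'_C)^{-1}\gamma_D$. I will check that $\psi=\epsilon_D\Lambda'_C$; this is the dual of $\varphi=\Upsilon'_S\eta_R$. Indeed, $\epsilon_D\Lambda'_C=(\id_D\otimes\varepsilon_C)(\psi\otimes\id_C)\Delta_C=\psi$. Then
\[
\psi\chi\psi=\epsilon_D\Lambda'_C(\Lambda'_C)^{-1}\gamma_D\epsilon_D\Lambda'_C=\epsilon_D\gamma_D\epsilon_D\Lambda'_C=\epsilon_D\Lambda'_C=\psi.
\]
The only essential use of the cogenerator assumption is in Lemma \ref{lem:Dbicomd}, where it guarantees that $\chi$ is left $D$-colinear.
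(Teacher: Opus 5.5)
Your proposal is correct and follows essentially the same route as the paper. The paper obtains this theorem by dualizing Theorem \ref{thm:phi*semisep}, with the same ingredients as yours: the Rafael-type Theorem \ref{thm:rafael}$(ii)$; $\gamma_N=(N\square_D\chi)\Lambda_N$ for $(ii)\Rightarrow(iii)$; and, for the converse, $\chi=(\Lambda'_C)^{-1}\gamma_D$ together with $\psi=\epsilon_D\Lambda'_C$ and Lemma \ref{lem:Dbicomd}. Your computations for $(i)\Leftrightarrow(ii)$ and for $\epsilon\gamma\epsilon=\epsilon$ match the dualized ones step for step.
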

\begin{invisible}
\begin{proof}
$(i)\Rightarrow (ii)$. Suppose that $\psi$ is a regular morphism of $D$-bicomodules in $\cc$, i.e. there exists a $D$-bicomodule morphism $\chi:D\to C$ in $\cc$ such that $\psi\circ\chi\circ\psi=\psi$. Then, $\varepsilon_C\circ\chi\circ\psi=\varepsilon_D\circ\psi\circ\chi\circ\psi=\varepsilon_D\circ\psi=\varepsilon_C$.

$(ii)\Rightarrow (i)$. Assume that $\chi$ is a $D$-bicomodule morphism in $\cc$ such that $\varepsilon_C\circ\chi\circ\psi=\varepsilon_C$. Then, \[
\begin{split}
    \psi&=(\psi\otimes\varepsilon_C)\Delta_C=(\psi\otimes \varepsilon_C\chi\psi)\Delta_C=(\id_D\otimes\varepsilon_C\chi)(\psi\otimes\psi)\Delta_C\\&=(\id_D\otimes\varepsilon_C)(\id_D\otimes \chi)\Delta_D\psi=(\id_D\otimes\varepsilon_C)\rho^D_C\chi\psi=(\id_D\otimes\varepsilon_C)(\psi\otimes\id_C)\Delta_C\chi\psi\\&=(\psi\otimes\id_1)(\id_C\otimes \varepsilon_C)\Delta_C\chi\psi=\psi\chi\psi.
\end{split}
\]


$(ii)\Rightarrow (iii)$. Suppose that there is a $D$-bicomodule morphism $\chi:D\to C$ in $\cc$ such that $\varepsilon_C\circ\chi\circ\psi=\varepsilon_C$. As in the proof of Lemma \ref{lem:Dbicomd}, we know that $\gamma_N: N\to N\square_DC$ defined by
\[
\gamma_N=(N\square_D\chi)\Lambda_N : N\to N\square_DC, 
\]
which is completely determined by $e^D_{N,C}\gamma_N=(\id_N\otimes\chi)\rho^D_N$, determines a natural transformation $\gamma:=(\gamma_N)_{N\in\cc^D}$ (here we do not need the assumption that $1$ is a left $\otimes$-cogenerator). For every $N\in\cc^D$, we have 
\[
\begin{split}
\epsilon_N\circ\gamma_N\circ\epsilon_N&=(\id_N\otimes\varepsilon_C)e^D_{N,C}\gamma_N(\id_N\otimes\varepsilon_C)e^D_{N,C}=(\id_N\otimes\varepsilon_C\chi)\rho^D_{N}(\id_N\otimes\varepsilon_C)e^D_{N,C}\\&=(\id_N\otimes\varepsilon_C\chi)(\id_{N\otimes D}\otimes\varepsilon_C)(\rho^D_N\otimes\id_C)e^D_{N,C}\\&=(\id_N\otimes\varepsilon_C\chi)(\id_{N\otimes D}\otimes\varepsilon_C)(\id_N\otimes\lambda^D_C)e^D_{N,C}\\&=(\id_N\otimes\varepsilon_C\chi)(\id_{N}\otimes \id_{D}\otimes\varepsilon_C)(\id_N\otimes(\psi\otimes\id_C)\Delta_C)e^D_{N,C}\\&=(\id_N\otimes\varepsilon_C\chi)(\id_{N}\otimes\psi)e^D_{N,C}=(\id_N\otimes\varepsilon_C)e^D_{N,C}=\epsilon_N.
\end{split}
\]

Assuming that the unit object $1$ of $\cc$ is a left $\otimes$-cogenerator, we show $(iii)\Rightarrow (i)$. 
If $\psi^*$ is a semiseparable functor, then by Theorem \ref{thm:rafael} there exists a natural transformation $\gamma:\id_{\cc^D}\to \psi_*\psi^*$ such that $\epsilon_N\circ\gamma_N\circ\epsilon_N=\epsilon_N$, for every $N\in \cc^D$. 
By Lemma \ref{lem:Dbicomd} we know that $\chi:D\to C$, $\chi:=(\Lambda'_C)^{-1}\circ\gamma_D=(\varepsilon_D\otimes\id_C)e^D_{D,C}\circ\gamma_D$, is a morphism of $D$-bicomodules. Note that $\psi=\epsilon_D\Lambda'_C$ as $\epsilon_D\Lambda'_C=(\id_D\otimes \varepsilon_C)e^D_{D,C}\Lambda'_C=(\id_D\otimes\varepsilon_C)\lambda^D_C=(\id_D\otimes\varepsilon_C)(\psi\otimes\id_C)\Delta_C=\psi(\id_C\otimes\varepsilon_C)\Delta_C=\psi$. Then, we have $\psi\circ\chi\circ\psi=\epsilon_D\Lambda'_C(\Lambda'_C)^{-1}\gamma_D\epsilon_D\Lambda'_C=\epsilon_D\Lambda'_C=\psi$. Note that $\psi$ is a $D$-bicomodule morphism in $\cc$. Indeed, $(\psi\otimes\id_D)(\id_C\otimes\psi)\Delta_C=\Delta_D\psi=(\id_D\otimes\psi)(\psi\otimes\id_C)\Delta_C$. 
\qedhere  
\end{proof}
\end{invisible}


The next is the dual of Theorem \ref{thm:sepmon} (see also Remark \ref{rmk:notens-gen}).

\begin{prop}\label{prop:sep-coind-nocogen}
   Let $\psi:C\to D$ be a morphism of coalgebras in a  monoidal category $\cc$. Then, the following assertions are equivalent:
\begin{itemize}   
\item[$(i)$] $\psi$ is a split-epi as a $D$-bicomodule morphism in $\cc$ (i.e. there exists a $D$-bicomodule morphism $\chi:D\to C$ in $\cc$ such that $\psi\circ\chi =\id_D$);
\item[$(ii)$] there is a $D$-bicomodule morphism $\chi:D\to C$ in $\cc$ such that $\varepsilon_C\circ\chi=\varepsilon_D$. 
\end{itemize}
If one of the previous equivalent conditions holds, then 
\begin{itemize}
\item[$(iii)$] the coinduction functor $\psi^*=-\square_DC:\cc^D\to\cc^C$ is separable.
\end{itemize}
Moreover, if the unit object $1$ of $\cc$ is a left $\otimes$-cogenerator, then $(iii)$ is equivalent to $(i)$, $(ii)$. 
\end{prop}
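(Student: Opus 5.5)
The proof dualizes the argument for Theorem \ref{thm:sepmon} (in the form of Remark \ref{rmk:notens-gen}) and runs parallel to the proof of Theorem \ref{prop:semisep-coind-nocogen}. The tool is the classical Rafael Theorem: the right adjoint $\psi^*$ of $\psi_*$ is separable if and only if the counit $\epsilon$ of $\psi_*\dashv\psi^*$ is a split-epi, i.e.\ there is $\gamma\in\mathrm{Nat}(\id_{\cc^D},\psi_*\psi^*)$ with $\epsilon\circ\gamma=\id$.

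First, $(i)\Leftrightarrow(ii)$. If $\psi\chi=\id_D$, then $\varepsilon_C\chi=\varepsilon_D\psi\chi=\varepsilon_D$. Conversely, assume $\varepsilon_C\chi=\varepsilon_D$. Since $\chi$ is left $D$-colinear, where $C$ is a left $D$-comodule via $(\psi\otimes\id_C)\Delta_C$, we compute
\[
\psi\chi=(\id_D\otimes\varepsilon_C)(\psi\otimes\id_C)\Delta_C\chi=(\id_D\otimes\varepsilon_C)(\id_D\otimes\chi)\Delta_D=(\id_D\otimes\varepsilon_D)\Delta_D=\id_D .
\]
This is the same manipulation as in the $(ii)\Rightarrow(i)$ step of Theorem \ref{prop:semisep-coind-nocogen}, with the factor $\psi$ on the right removed.

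Next, $(ii)\Rightarrow(iii)$. Given $\chi$, define $\gamma_N:=(N\square_D\chi)\Lambda_N$. It is characterized by $e^D_{N,C}\gamma_N=(\id_N\otimes\chi)\rho^D_N$, and it is natural by the argument in Lemma \ref{lem:Dbicomd}; this part uses no cogenerator hypothesis. Then
\[
\epsilon_N\gamma_N=(\id_N\otimes\varepsilon_C)e^D_{N,C}\gamma_N=(\id_N\otimes\varepsilon_C\chi)\rho^D_N=(\id_N\otimes\varepsilon_D)\rho^D_N=\id_N .
\]
By Rafael's Theorem \cite[Theorem 1.2]{Raf90}, $\psi^*$ is separable. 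Alternatively, one can cite Theorem \ref{prop:semisep-coind-nocogen} for semiseparability and note that $e=\id$, via Corollary \ref{cor:sep-idemp}.

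Finally, $(iii)\Rightarrow(i)$ when $1$ is a left $\otimes$-cogenerator. Separability gives $\gamma$ with $\epsilon\gamma=\id$. Lemma \ref{lem:Dbicomd} then produces the $D$-bicomodule morphism $\chi:=(\Lambda'_C)^{-1}\gamma_D$. In the proof of Theorem \ref{prop:semisep-coind-nocogen} one checks $\psi=\epsilon_D\Lambda'_C$, so
\[
\psi\chi=\epsilon_D\Lambda'_C(\Lambda'_C)^{-1}\gamma_D=\epsilon_D\gamma_D=\id_D .
\]

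The only point needing care is the bicolinearity of $\chi$ in the last step, which is exactly where the cogenerator hypothesis enters through Lemma \ref{lem:Dbicomd}. Once that lemma is invoked, everything else is a short diagram chase.
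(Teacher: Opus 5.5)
Your proposal is correct and takes essentially the same route as the paper's proof. The two share the same computation for $(ii)\Rightarrow(i)$ via left $D$-colinearity of $\chi$. For $(ii)\Rightarrow(iii)$ both use $\gamma_N=(N\square_D\chi)\Lambda_N$ with $\epsilon_N\gamma_N=\id_N$ and Rafael's Theorem. For the converse both use $\chi=(\Lambda'_C)^{-1}\gamma_D$ from Lemma \ref{lem:Dbicomd} together with $\psi=\epsilon_D\Lambda'_C$.

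The identity $\psi=\epsilon_D\Lambda'_C$ follows directly from $e^D_{D,C}\Lambda'_C=\lambda^D_C=(\psi\otimes\id_C)\Delta_C$. So you do not need to cite the proof of Theorem \ref{prop:semisep-coind-nocogen}, which the paper does not actually include.
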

\begin{invisible}
\begin{proof}
$(i)\Rightarrow (ii)$. Suppose that $\psi$ is a split-epi as a morphism of $D$-bicomodules in $\cc$, i.e. there exists a $D$-bicomodule morphism $\chi:D\to C$ in $\cc$ such that $\psi\circ\chi=\id_D$. Then, $\varepsilon_C\circ\chi=\varepsilon_D\circ\psi\circ\chi=\varepsilon_D$. 

$(ii)\Rightarrow (i)$. Assume that $\chi$ is a $D$-bicomodule morphism in $\cc$ such that $\varepsilon_C\circ\chi=\varepsilon_D$. Then, \begin{displaymath}
\begin{split}
   \psi\chi&=(\id_D\otimes\varepsilon_D)\Delta_D\psi\chi=(\id_D\otimes\varepsilon_D)(\psi\otimes\psi)\Delta_C\chi=(\id_D\otimes\varepsilon_D\psi)(\psi\otimes\id_C)\Delta_C\chi\\&=(\id_D\otimes\varepsilon_C)(\id_D\otimes\chi)\Delta_D=(\id_D\otimes\varepsilon_C\chi)\Delta_D=(\id_D\otimes\varepsilon_D)\Delta_D=\id_D.
\end{split}
\end{displaymath}
\par 
$(ii)\Rightarrow (iii)$. Suppose that there is a $D$-bicomodule morphism $\chi:D\to C$ in $\cc$ such that $\varepsilon_C\circ\chi=\varepsilon_D$. As in the proof of Lemma \ref{lem:Dbicomd}, we know that $\gamma_N: N\to N\square_DC$ defined by
\[
\gamma_N=(N\square_D\chi)\Lambda_N : N\to N\square_DC, 
\]
which is completely determined by $e^D_{N,C}\gamma_N=(\id_N\otimes\chi)\rho^D_N$, determines a natural transformation $\gamma:=(\gamma_N)_{N\in\cc^D}$ (here we do not need the assumption that $1$ is a left $\otimes$-cogenerator). For every $N\in\cc^D$, we have 
\[
\begin{split}
\epsilon_N\circ\gamma_N =(\id_N\otimes\varepsilon_C)e^D_{N,C}\gamma_N=(\id_N\otimes\varepsilon_C\chi)\rho^D_{N}=(\id_N\otimes\varepsilon_D)\rho^D_N=\id_N.
\end{split}
\]
Assuming that the unit object $1$ of $\cc$ is a left $\otimes$-cogenerator, we show $(iii)\Rightarrow (i)$. if $\psi^*$ is a separable functor, then by \cite[Theorem 1.2]{Raf90} there exists a natural transformation $\gamma:\id_{\cc^D}\to \psi_*\psi^*$ such that $\epsilon_N\circ\gamma_N=\id_N$, for every $N\in \cc^D$. 
By Lemma \ref{lem:Dbicomd} we have a morphism of $D$-bicomodules $\chi:D\to C$ in $\cc$ given by $\chi:=(\Lambda'_C)^{-1}\circ\gamma_D=(\varepsilon_D\otimes\id_C)e^D_{D,C}\circ\gamma_D$. 
Since $\psi=\epsilon_D\Lambda'_C$, we have $\psi\circ\chi=\epsilon_D\Lambda'_C(\Lambda'_C)^{-1}\gamma_D=\epsilon_D\gamma_D=\id_D$. 
Moreover, $\psi$ is a $D$-bicomodule morphism in $\cc$.
\qedhere   
\end{proof}
\end{invisible}

The next result describes natural fullness for coinduction functors, dually to Proposition \ref{prop:phi*natfull}.

\begin{prop}\label{prop:natfull-coind}
 Let $\psi:C\to D$ be a morphism of coalgebras in a monoidal category $\cc$. 
Then, the following assertions are equivalent:
\begin{itemize}   
\item[$(i)$] $\psi$ is a split-mono as a $D$-bicomodule morphism in $\cc$ (i.e. there exists a $D$-bicomodule morphism $\chi:D\to C$ in $\cc$ such that $\chi\circ\psi =\id_C$);
\item[$(ii)$] $\psi$ is a monomorphism and there is a $D$-bicomodule morphism $\chi:D\to C$ in $\cc$ such that $\varepsilon_C\circ\chi\circ\psi=\varepsilon_C$. 
\end{itemize}
If one of the previous equivalent conditions holds, then 
\begin{itemize}
\item[$(iii)$] the coinduction functor $\psi^*=-\square_DC:\cc^D\to\cc^C$ is naturally full.
\end{itemize}
Moreover, if the unit object $1$ of $\cc$ is a left $\otimes$-cogenerator, then $(iii)$ is equivalent to $(i)$, $(ii)$.
\end{prop}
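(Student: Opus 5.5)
The plan is to dualize the proof of Proposition \ref{prop:phi*natfull}, leaning on Theorem \ref{prop:semisep-coind-nocogen} and Lemma \ref{lem:Dbicomd}. The tool for $(iii)$ is the right-adjoint version of \cite[Theorem 2.6]{ACMM06}. Since $\psi^*$ is the right adjoint in $\psi_*\dashv\psi^*$, it is naturally full if and only if there is $\gamma\in\Nat(\id_{\cc^D},\psi_*\psi^*)$ with $\gamma\circ\epsilon=\id$.

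For $(i)\Rightarrow(ii)$: $\chi\psi=\id_C$ forces $\psi$ to be a monomorphism, and $\varepsilon_C\chi\psi=\varepsilon_C$ holds trivially. For $(ii)\Rightarrow(i)$: Theorem \ref{prop:semisep-coind-nocogen} turns $(ii)$ into $\psi\chi\psi=\psi$. Cancelling the monomorphism $\psi$ on the left gives $\chi\psi=\id_C$.

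For $(i)\Rightarrow(iii)$, take $\chi$ with $\chi\psi=\id_C$ and define $\gamma_N=(N\square_D\chi)\Lambda_N$, as in the proof of Lemma \ref{lem:Dbicomd}. It is natural, needs no cogenerator hypothesis, and is characterized by $e^D_{N,C}\gamma_N=(\id_N\otimes\chi)\rho^D_N$. Since $e^D_{N,C}$ is a monomorphism, it suffices to prove $e^D_{N,C}\gamma_N\epsilon_N=e^D_{N,C}$. Compute:
\[
e^D_{N,C}\gamma_N\epsilon_N=(\id_N\otimes\chi)\rho^D_N(\id_N\otimes\varepsilon_C)e^D_{N,C}=(\id_N\otimes\chi)(\id_{N\otimes D}\otimes\varepsilon_C)(\rho^D_N\otimes\id_C)e^D_{N,C}.
\]
Apply the equalizer relation $(\rho^D_N\otimes\id_C)e^D_{N,C}=(\id_N\otimes(\psi\otimes\id_C)\Delta_C)e^D_{N,C}$ and then counitality. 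This gives $(\id_N\otimes\chi\psi)e^D_{N,C}=e^D_{N,C}$. This mirrors the computation in Proposition \ref{prop:phi*natfull}.

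For $(iii)\Rightarrow(i)$, assume $1$ is a left $\otimes$-cogenerator and let $\gamma$ satisfy $\gamma\epsilon=\id$. Lemma \ref{lem:Dbicomd} gives a $D$-bicomodule morphism $\chi=(\Lambda'_C)^{-1}\gamma_D$. Using $\psi=\epsilon_D\Lambda'_C$, which is the identity already used dually in Theorem \ref{prop:semisep-coind-nocogen}, we get
\[
\chi\psi=(\Lambda'_C)^{-1}\gamma_D\epsilon_D\Lambda'_C=\id_C.
\]

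The main obstacle is the direct computation in $(i)\Rightarrow(iii)$. The care needed is in the bookkeeping of which equalizer identity to apply before cancelling $e^D_{N,C}$. The remaining steps are formal once the earlier results are invoked.
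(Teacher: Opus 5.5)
Your proposal is correct and follows essentially the same argument as the paper's own proof. You use the same natural transformation $\gamma_N=(N\square_D\chi)\Lambda_N$ with the equalizer computation cancelling $e^D_{N,C}$, and for the converse the same $\chi=(\Lambda'_C)^{-1}\gamma_D$ from Lemma \ref{lem:Dbicomd} together with $\psi=\epsilon_D\Lambda'_C$.
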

\begin{invisible}
\begin{proof}
$(i)\Rightarrow (ii)$. It is obvious.

$(ii)\Rightarrow (i)$. By Theorem \ref{prop:semisep-coind-nocogen} $\varepsilon_C\circ\chi\circ\psi=\varepsilon_C$, is equivalent to $\psi\circ\chi\circ\psi=\psi$. Since $\psi$ is a monomorphism, one has $\chi\circ\psi=\id_C$.

$(i)\Rightarrow (iii)$. Assume that there exists a morphism of $D$-comodules $\chi:D\to C$ in $\cc$ such that $\chi\circ\psi =\id_C$.

   As in the proof of Lemma \ref{lem:Dbicomd}, consider the natural transformation $\gamma$ given by 
\[
\gamma_N=(N\square_D\chi)\Lambda_N : N\to N\square_DC, 
\]
which is completely determined by $e^D_{N,C}\gamma_N=(\id_N\otimes\chi)\rho^D_N$. 
For every $N\in\cc^D$, we have 
\[
\begin{split}
e^D_{N,C}\circ\gamma_N\circ\epsilon_N &=e^D_{N,C}\gamma_N (\id_N\otimes\varepsilon_C)e^D_{N,C} =
(\id_N\otimes\chi)\rho^D_N (\id_N\otimes\varepsilon_C) e^D_{N,C} \\&=(\id_N\otimes\chi\otimes \id_1)(\id_N\otimes\id_D\otimes\varepsilon_C)(\rho^D_N\otimes\id_C)e^D_{N,C}\\&=(\id_N\otimes\chi\otimes\id_1)(\id_N\otimes\id_D\otimes\varepsilon_C)(\id_N\otimes\lambda^D_C)e^D_{N,C}\\&=(\id_N\otimes\chi\psi\otimes\varepsilon_C)(\id_N\otimes\Delta_C)e^D_{N,C}\\&=(\id_N\otimes\id_C\otimes\varepsilon_C)(\id_N\otimes\Delta_C)e^D_{N,C}=e^D_{N,C},
\end{split}
\]
hence $\gamma_N\circ\epsilon_N=\id_{N\square_DC}$. We conclude by Rafael-type Theorem \cite[Theorem 2.6]{ACMM06}. 

Assuming that the unit object $1$ of $\cc$ is a left $\otimes$-cogenerator, we show $(iii)\Rightarrow (i)$.

If $\psi^*$ is naturally full, then there exists a natural transformation $\gamma:\id_{\cc^D}\to \psi_*\psi^*$ such that $\gamma_N\circ\epsilon_N=\id_{N\square_DC}$, for every $N\in \cc^D$. 
By Lemma \ref{lem:Dbicomd} we have a morphism of $D$-bicomodules $\chi:D\to C$ in $\cc$ given by $\chi:=(\Lambda'_C)^{-1}\circ\gamma_D=(\varepsilon_D\otimes\id_C)e^D_{D,C}\circ\gamma_D$. 
Since $\psi=\epsilon_D\Lambda'_C$, we have $\chi\circ\psi=(\Lambda'_C)^{-1}\gamma_D\epsilon_D\Lambda'_C=(\Lambda'_C)^{-1}\Lambda'_C=\id_C$. 
\end{proof}
\end{invisible}
\begin{rmk}\label{rmk:coind-nocotens}
    One can observe that if the coinduction functor $\psi^*:\cc^D\to\cc^C$ is semiseparable (resp., separable, naturally full), then there exists $\chi\in\Hom^D(D,C)$ such that $\psi\chi\psi=\psi$ (resp., $\psi\circ\chi=\id_D$, $\chi\circ\psi=\id_C$).
\end{rmk}
\begin{cor}
    Let $\psi:C\to D$ be a coalgebra morphism in a monoidal category $\cc$ with unit being a left $\otimes$-cogenerator. Then, the coinduction functor $\psi^*:\cc^D\to\cc^C$ is naturally full if, and only if, $\psi^*$ is semiseparable and $\psi$ is a monomorphism. Similarly, $\psi^*$ is separable if, and only if, $\psi^*$ is semiseparable and $\psi$ is an epimorphism.
\end{cor}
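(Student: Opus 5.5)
This corollary is the coalgebraic mirror of the corollary following Proposition \ref{prop:phi*natfull}. I would assemble it from Theorem \ref{prop:semisep-coind-nocogen}, Proposition \ref{prop:sep-coind-nocogen} and Proposition \ref{prop:natfull-coind}. All three characterize the relevant property of $\psi^*$ when $1$ is a left $\otimes$-cogenerator, and the hypothesis of the corollary is exactly that.

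For the naturally full case, I first assume $\psi^*$ is naturally full. Proposition \ref{prop:natfull-coind} gives a $D$-bicomodule morphism $\chi:D\to C$ with $\chi\circ\psi=\id_C$. Then $\psi$ is a (split) monomorphism, and $\psi\chi\psi=\psi$, so $\psi^*$ is semiseparable by Theorem \ref{prop:semisep-coind-nocogen}. Alternatively, Proposition \ref{prop:sep}$(ii)$ gives semiseparability directly. Conversely, suppose $\psi^*$ is semiseparable and $\psi$ is a monomorphism. Theorem \ref{prop:semisep-coind-nocogen}, direction $(iii)\Rightarrow(ii)$, uses the cogenerator hypothesis and yields a $D$-bicomodule morphism $\chi$ with $\varepsilon_C\chi\psi=\varepsilon_C$. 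Together with $\psi$ being mono, this is condition $(ii)$ of Proposition \ref{prop:natfull-coind}, so $\psi^*$ is naturally full. The underlying cancellation is just $\psi\chi\psi=\psi\Rightarrow\chi\psi=\id_C$.

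The separable case is symmetric. If $\psi^*$ is separable, Proposition \ref{prop:sep-coind-nocogen} gives $\chi$ with $\psi\chi=\id_D$. Hence $\psi$ is a split epimorphism, and $\psi\chi\psi=\psi$ gives semiseparability. Conversely, semiseparability gives $\chi$ with $\psi\chi\psi=\psi$ by Theorem \ref{prop:semisep-coind-nocogen}$(iii)\Rightarrow(i)$. Cancelling the epimorphism $\psi$ on the right gives $\psi\chi=\id_D$, and Proposition \ref{prop:sep-coind-nocogen}$(i)\Rightarrow(iii)$ concludes.

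There is no real obstacle here. The only point needing care is the variance. Naturally full corresponds to $\psi$ mono and separable to $\psi$ epi, which is opposite to the induction case, because $\psi^*$ is now a right adjoint and the relevant splittings are $\chi\psi=\id$ and $\psi\chi=\id$ respectively.
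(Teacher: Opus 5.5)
Your proposal is correct and follows essentially the same route the paper intends. The corollary is the dual of the one after Proposition~\ref{prop:phi*natfull}, obtained by combining Theorem~\ref{prop:semisep-coind-nocogen} with Proposition~\ref{prop:natfull-coind} in the naturally full case and with Proposition~\ref{prop:sep-coind-nocogen} in the separable case, using the same cancellations of $\psi\chi\psi=\psi$ to $\chi\psi=\id_C$ (for $\psi$ mono) and to $\psi\chi=\id_D$ (for $\psi$ epi) that you use.
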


\begin{rmk}\label{rmk:coind-left-rightsymm}
Given a coalgebra morphism $\psi:C\to D$ in $\cc$, we can also consider the coinduction functor $\psi^*=C\square_D-:{}^D\cc\to {}^C\cc$. Indeed, $(i)$ in Proposition \ref{prop:semisep-coind-nocogen} implies that $\psi^*:{}^C\cc\to {}^D\cc$ is semiseparable. If $1$ is a right $\otimes$-cogenerator, then also the converse holds. Therefore, if $1$ is a two-sided $\otimes$-cogenerator, then $C\square_D- $ is semiseparable if, and only if, so is $-\square_DC$. Similar statements hold for separable and naturally full cases.
\end{rmk}

\subsection{Examples}
We now apply our results to some examples.

\begin{es}\label{Vec cogenerator}
Let $\Bbbk$ be a field. Consider the monoidal category $(\vect_\Bbbk, \otimes_\Bbbk, \Bbbk)$ of $\Bbbk$-vector spaces. 
For every  $X\in \vect_\Bbbk$, it is well-known that the functor $-\otimes_\Bbbk X$ (resp., $X\otimes_\Bbbk -$) preserves equalizers. Now, we show that $\Bbbk$ is a left
 $\otimes$-cogenerator. Let $f,g: Y\to Z\otimes_\Bbbk W$ be $\Bbbk$-linear maps. Suppose that $(\xi\otimes_\Bbbk \id_W)(f(y))=(\xi\otimes_\Bbbk\id_W)(g(y))$, for all $\xi:Z\to \Bbbk$ in $\vect_\Bbbk$. Write $f(y)$ and $g(y)$ on a basis, we obtain $\sum_{i,j}k^f_{ij}\xi(z_i)\otimes w_j=\sum_{i,j}k^g_{ij}\xi(z_i)\otimes w_j$. Then, $\sum_{i}k^f_{ij}\xi(z_i)=\sum_i k^g_{ij}\xi(z_i)$, for all $\xi:Z\to \Bbbk$. In particular, for $\xi=z^*_l:Z\to \Bbbk$, we get $k^f_{lj}=k^g_{lj}$. Therefore $f=g$.
Then, Theorem \ref{prop:semisep-coind-nocogen} recovers \cite[Proposition 3.8]{AB22}, Proposition \ref{prop:sep-coind-nocogen} recovers \cite[Theorem 2.7]{CGN97}, and Proposition \ref{prop:natfull-coind} recovers \cite[Examples 3.23 (1)]{ACMM06}. 
\end{es}

\begin{es}
Let $(\cc, \otimes, 1)$ and $(\cc', \otimes, 1')$ be two monoidal categories. By \cite[page 4]{AM10}, their product category $\cc_1\times \cc_2$ is a monoidal category with tensor product $(X, X')\otimes (Y, Y'):=(X\otimes Y, X'\otimes Y')$
and unit object $(1, 1')$. If $1$ and $1'$ are left $\otimes$-cogenerators of $\cc$ and $\cc'$ respectively, then $(1,1')$ is a left $\otimes$-cogenerator in $\cc\times \cc'$. In fact, let $(f,f')$, $(g,g'): (Y,Y') \to (Z,Z') \otimes (W,W')$ be morphisms in $\cc\times \cc'$ 
such that $((\xi, \xi')\otimes \id_{(W,W')})(f,f') = ((\xi, \xi')\otimes \id_{(W,W')})(g,g')$ for all $(\xi, \xi'): (Z,Z') \to (1,1')$. 
This means $(\xi\otimes \id_W)f = (\xi\otimes \id_W)g$ and $(\xi'\otimes \id_{W'})f' = (\xi'\otimes \id_{W'})g'$, for all $\xi:Z\to 1$ in $\cc$ and $\xi':Z'\to 1'$ in $\cc'$. Since $1$ and $1'$ are left $\otimes$-cogenerators of $\cc$ and $\cc'$, we have $f=g$ and $f'=g'$. It follows that $(f,f') = (g,g')$, i.e. $(1,1')$ is a left $\otimes$-cogenerator in $\cc\times \cc'$.

As an example, consider the category $\cc=\cc'=\vect_\Bbbk$ of vector spaces over a field $\Bbbk$. By Example \ref{Vec cogenerator}, we know that $1=\Bbbk$ is a left $\otimes$-cogenerator of $\vect_\Bbbk$. Then, $(1, 1)$ is a left $\otimes$-cogenerator of $\vect_\Bbbk \times\vect_\Bbbk$. Besides, it is easy to check that $\vect_\Bbbk \times \vect_\Bbbk$ has equalizers and every object is flat. Then, Theorem \ref{prop:semisep-coind-nocogen}, Proposition \ref{prop:sep-coind-nocogen}, and Proposition \ref{prop:natfull-coind} apply.
\end{es}

\begin{es}\label{es:set-coinduct}
    Consider the monoidal category $(\Set, \times, \{0\})$ of sets. Note that $\{0\}$ is not a left $\otimes$-cogenerator. Otherwise, wherever we consider $f,g:Y\to Z\times W$ in $\Set$ such that $(\xi\times\id_W)f=(\xi\times\id_W)g$, for every $\xi:Z\to\{0\}$, then $f=g$. Consider $Z=\{0,1\}$ and for a fixed $w\in W$, let $f:Y\to Z\times W$, $f(y)=(0,w)$, and $g: Y\to Z\times W$, $g(y)=(1,w)$. Then, $(\xi\times\id_W)f=(\xi\times\id_W)g$, but $f\neq g$.  In fact, a left $\otimes$-cogenerator can be chosen as a set with at least two distinct elements, see e.g. \cite[Proposition 4.7.1]{Bor94}. 
    \begin{invisible}
        Given distinct maps $f,g:Y\to Z\times W$, there exists $y\in Y$ such that $(z_y,w_y)=f(y)\neq g(y)=(z'_y,w'_y)$. Choosing any map $\xi:Z\to \{0,1\}$ such that $\xi(z_y)=0$, $\xi(z'_y)=1$, we get $(\xi\times W)(f(y))\neq (\xi\times W)(g(y))$.
    \end{invisible}
Besides, for any $S\in \Set$, $-\times S$ (resp., $S\times -$) preserves equalizers. In fact, 
$\mathrm{Eq}(f,g)\times S=\{(v,s)\in V\times  S\mid f(v)=g(v)\}=\{(v,s)\in V\times S\mid (f\times\id_S)(v,s)=(g\times\id_S)(v,s)\}=\mathrm{Eq}(f\times\id_S,g\times\id_S).$  
Recall that any set $S$ is a coalgebra in $\Set$ with comultiplication $\Delta_S:S\to S\times S$, $s\mapsto (s,s)$ and counit $S\to \{0\}$, $s\mapsto 0$; any map is a coalgebra morphism in $\Set$. Given a morphism $\psi:C\to D$ in $\Set$, we claim that the coinduction functor $\psi^*$ is separable if, and only if, $\psi$ is surjective. In fact, if $\psi$ is surjective, then by the Axiom of Choice $\psi$ is a split-epi, so there exists $\chi:D\to C$ such that $\psi\chi=\id_D$. We observe that $\chi$ is $D$-bicomodule morphism in $\Set$, because $(\chi\times\id_D)\Delta_D(d)=(\chi(d),d)=(\chi(d), \psi\chi(d))=(\id_C\times\psi)\Delta_C(\chi(d))$ and similarly $(\id_D\times\chi)\Delta_D=(\psi\times\id_C)\Delta_C\chi$. Then, $\psi^*$ is separable by Proposition \ref{prop:sep-coind-nocogen}. Conversely, if $\psi^*$ is separable, then by Remark \ref{rmk:coind-nocotens} we know that there exists  $\chi\in\Hom^D(D,C)$ such that $\psi\chi=\id_D$. Then, $\psi$ is surjective.

However, we do not have a similar claim for the naturally full case. More precisely, if $\psi:C\to D$ is injective, then $\psi^*$ is not necessarily naturally full. An example is given by $\psi:\{0\}\to S$, $\psi(0)=s_1$, for a non-empty set $S$, where $s_1$ is a fixed element in $S$. Then, the only  $\chi:S\to\{0\}$ is given by $\chi(s)=0$, which is not an $S$-bicomodule morphism in $\Set$ if $S$ has at least two elements. By Remark \ref{rmk:coind-nocotens}, $\psi^*$ is not naturally full.

Now, for a coalgebra morphism $\psi:C\to D$ in $\Set$, we claim that  $\psi^*$ is semiseparable if, and only if, $\psi^*$ is separable. Indeed, if $\psi^*$ is semiseparable, by Remark \ref{rmk:coind-nocotens}, there is a right $D$-comodule morphism $\chi:D\to C$ such that $\psi\chi\psi=\psi$. Since $\chi$ is a right $D$-comodule morphism, we know that $\psi\chi(d)=d$, for all $d\in D$, hence $\psi$ is surjective. Then, $\psi^*$ is separable. 
On the other hand, if $\psi^*$ is separable then it is semiseparable by Proposition \ref{prop:sep}. As a consequence, $\psi^*$ is naturally full if and only if it is fully faithful. Moreover, if $\psi^*$ is naturally full, then by Remark \ref{rmk:coind-nocotens} $\psi$ is bijective. Conversely, if $\psi$ is bijective then there exists a map $\chi:D\to C$ such that $\chi\psi=\id_C$ and $\psi\chi=\id_D$. From what observed above, $\chi$ is a $D$-bicomodule morphism, hence $\psi^*$ is naturally full by Proposition \ref{prop:natfull-coind}.



\end{es}


\begin{es}
Let $H$ be a finite dimensional bialgebra over a field $\Bbbk$, and let $({}_H\!\m, \otimes, \Bbbk)$ be the category of left $H$-modules considered in Example \ref{es:bialgmod}. It is well-known that, for any object $X$ in $_H\!\m$, the functors $X\otimes -$ and $-\otimes X$ are exact. 
\begin{invisible}
In fact, for a monomorphism $i$ in $_H\!\m$, it is a split-mono in $\vect$, i.e. there is a $j$ such that $ji = \id$. Thus, $(X\otimes j) (X\otimes i) =X\otimes (ji) = \id$, which means that $X\otimes i$ is a monomorphism in $\vect$. Hence, $X\otimes i$ is a monomorphism in $_H\!\m$.
\end{invisible} 
Recall that a coalgebra $C$ in ${}_H\m$ is an $H$-module coalgebra, i.e. a coalgebra $C$  with coproduct $\Delta_C$ and counit $\varepsilon_C$ such that
$\Delta_C(h\cdot c)=(h_1\cdot c_1)\otimes (h_2\cdot c_2)$, and $\varepsilon_C (h\cdot c)=\varepsilon_H(h)\varepsilon_C(c)$. 
We observe that $\Bbbk$ is not a left $\otimes$-cogenerator in general. For example, consider the group algebra $\Bbbk G $, where $G$ is a finite group. Define $f_g: \Bbbk G\to \Bbbk G$, $x\mapsto xg$, for $g\neq 1$. 
Clearly, $f_g$ and the identity $\id_{\Bbbk G}$ are left $\Bbbk G$-module morphisms. For any left $\Bbbk G$-module morphism $\xi: \Bbbk G\to \Bbbk$, the $\Bbbk G$-linearity means $\xi(h)=\varepsilon(h)\xi(1)$, for all $h\in \Bbbk G$. Consequently, $\xi f_g(x)=\xi(xg)=\varepsilon(x)\xi(1)=\xi(x)$, but $f_g\neq \id_{\Bbbk G}$. As a result, $\Bbbk$ is not a cogenerator, hence not a left $\otimes$-cogenerator. 

We also observe that any simple object cannot be a cogenerator in a semisimple abelian category, which contains at least two non-isomorphic objects. Hence, it cannot be a left $\otimes$-cogenerator in the corresponding monoidal case. To see this, suppose a simple object $X$ is a left cogenerator, and choose another simple object $Y$ such that $Y$ is not isomorphic to $X$. By Schur's Lemma (see e.g. \cite[Lemma 1.5.2]{Eti15}), the only morphism between $Y$ and $X$ is the zero morphism. Thus, we can choose $\id_Y:Y \to Y$ and zero morphism $0_Y:Y \to Y$. We always have $0_{Y,X}\id_Y = 0_{Y,X}0_Y$, but $\id_Y \not= 0_Y$.



Let $H'$ be a coalgebra in ${}_H\m$. It is clear that $\Bbbk\oplus H'$ is in ${}_H\m$ with componentwise $H$-module structure. We claim that $\Bbbk\oplus H'$ is a coalgebra in ${}_H\m$ with coalgebra structure given by $\Delta_{\Bbbk\oplus H'}((k,h)):=(k,0)\otimes (1,0)+(0,h_1)\otimes (0,h_2)$ and $\varepsilon_{\Bbbk\oplus H'}((k,h))=k+\varepsilon_{H'}(h)$. Indeed, this coalgebra structure follows from the fact that $_H\m$ is a duoidal category (see Definition \ref{defn:duoidalcategory}) with $\circ=\oplus$, $\bullet = \otimes$, see e.g. \cite[Example 6.19]{AM10}, and use \cite[Proposition 6.35]{AM10}.
Consider the injection $i_{\Bbbk}: \Bbbk\to \Bbbk\oplus H' $ and the projection $\pi: \Bbbk\oplus H'\to \Bbbk$, which are morphisms in ${}_H\m$.  We show that $\pi$ is $(\Bbbk\oplus H')$-bicolinear. In fact, $(i_\Bbbk\otimes \Bbbk)\Delta_\Bbbk\pi((k,h))=(k,0)\otimes 1=((\Bbbk\oplus H')\otimes\pi)((k,0)\otimes(1,0)+(0,h_1)\otimes (0,h_2))=((\Bbbk\oplus H')\otimes\pi)\Delta((k,h))$. Similarly, one can check that $\pi$ is right $(\Bbbk\oplus H')$-colinear. Moreover, $\pi\circ i_{\Bbbk}=\id_\Bbbk$, hence by Proposition \ref{prop:natfull-coind} the coinduction functor $i_\Bbbk^*: ({}_H\m)^{\Bbbk \oplus H'}\to ({}_H\m)^\Bbbk$ is naturally full. 
\end{es}

\begin{es}
Let $\cc$ be a monoidal category, $A$, $A'$ be algebras in $\cc$, and $\pi:A\to A'$, $\gamma:A'\to A$ be algebra morphisms in $\cc$ such that $\pi\gamma =\id_{A'}$. Since $\pi$ is an algebra morphism in $\cc$, we have that $\pi m_A(\gamma\otimes A)=m_{A'}(\pi\otimes \pi)(\gamma\otimes A)=m_{A'}(A'\otimes \pi)$, and similarly $\pi m_A(A\otimes \gamma)=m_{A'}(\pi\otimes \pi)(A\otimes \gamma)=m_{A'}(\pi\otimes A')$. By Theorem \ref{thm:sepmon} and Remark \ref{rmk:notens-gen}, the induction functor $\gamma^*:\cc_{A'}\to\cc_A$ is separable. Dually, let $C, C'$ be coalgebras in $\cc$, and let $\pi:C\to C'$, $\gamma: C'\to C$ be coalgebra morphisms in $\cc$ such that $\pi\gamma=\id_{C'}$.  
By Proposition \ref{prop:sep-coind-nocogen}, the coinduction functor $\pi^*:\cc^{C'}\to \cc^C$ is separable.

As an example, consider a bialgebra with a projection in a braided monoidal category $\cc$, i.e. a quadruple
$(A, H, \pi, \gamma)$ where $A$ and $H$ are bialgebras in $\cc$, and $\pi:A\to H$, $\gamma:H\to A$ are bialgebra morphisms in $\cc$ such that $\pi\gamma=\id_H$. 
Then, the induction functor $\gamma^*: \cc_H\to \cc_A$ and the coinduction functor $\pi^*:\cc^{H}\to \cc^A$ are separable. 
In particular, let $H$ be a Hopf algebra in $\cc$. Consider a bialgebra $R$ in ${}^H_H\!\mathcal{YD}(\mathcal{C})$, and the bosonization $R\# H$, see \cite[Definition 3.8.1]{HS20}. By \cite[Proposition 3.8.4]{HS20}, $R\# H$ is a bialgebra in $\mathcal{C}$, and by \cite[Lemma 3.8.2]{HS20} $\pi:=\varepsilon_R\otimes \id_H:R\# H\to H$ and $\gamma:=u_R\otimes \id_H: H\to R\#H$ are bialgebra morphisms in $\cc$ with $\pi\gamma =\id_H$. Then, the induction functor $\gamma^*:\cc_H\to \cc_{R\#H}$ and the coinduction functor $\pi^*:\cc^{H}\to\cc^{R\#H}$ are separable.
\end{es}

\subsection{Tensor functors of coalgebras in a monoidal category}

Let $(\cc, \otimes, 1)$ be a monoidal category and let $(C,\Delta, \varepsilon)$ be a coalgebra in $\cc$. Recall that for any object $M$ in $\cc$, $(M\otimes C, M\otimes\Delta)$ is an object in the category $\cc^C$ of right $C$-comodules in $\cc$, and the tensor functor $-\otimes C:\cc\to\cc^C$, $ M\mapsto (M\otimes C, M\otimes \Delta)$, $f\mapsto f\otimes C$, is a right adjoint of the forgetful functor $F:\cc^C\to\cc$. The unit $\eta$ is defined for every $(N,\rho_N)\in\cc^C$ by $\eta_{(N,\rho_N)}=\rho_N:N\to N\otimes C=F(N)\otimes C$; the counit $\epsilon$ is given for any $M\in\cc$ by $\epsilon_M= M\otimes\varepsilon :F(M\otimes C)=M\otimes C\to M$, see e.g. \cite[Section 2]{MT21}.

\begin{lem}\label{lem:coalg-coind}
    Let $(C,\Delta, \varepsilon)$ be a coalgebra in a monoidal category $\cc$. Then, the coinduction functor $\varepsilon^*=-\square_1 C: \cc^1\to\cc^C$ results to be $-\otimes C:\cc\to\cc^C$.
\end{lem}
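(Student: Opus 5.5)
The plan is to dualize the observation made for algebras in Subsection \ref{subsect:alg-mon}, namely that $u_A^*=-\otimes_1A$ coincides with $-\otimes A$. There are three things to identify in turn: the category $\cc^1$, the cotensor product $M\square_1C$ together with its coaction, and the action on morphisms.

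\emph{Step 1: the category $\cc^1$.} The unit $1$ is a coalgebra in $\cc$ with $\Delta_1=\id_1$ (using strictness, $1\otimes 1=1$) and $\varepsilon_1=\id_1$. A right $1$-comodule is a pair $(M,\rho)$ with $\rho:M\to M\otimes 1=M$. Counitality forces $(\id_M\otimes\varepsilon_1)\rho=\rho=\id_M$. So every object carries exactly one $1$-comodule structure, and every morphism is automatically colinear. Hence $\cc^1=\cc$.

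\emph{Step 2: the cotensor product.} The counit $\varepsilon:C\to 1$ makes $C$ a left $1$-comodule via $\lambda^1_C=(\varepsilon\otimes\id_C)\Delta=\id_C$. For $M\in\cc^1$, the equalizer \eqref{eq:cotens} is then the equalizer of the pair $\rho^1_M\otimes\id_C=\id_{M\otimes C}$ and $\id_M\otimes\lambda^1_C=\id_{M\otimes C}$. These two maps are equal, so the equalizer is $(M\otimes C,\id_{M\otimes C})$, and we may take $M\square_1C=M\otimes C$ with $e^1_{M,C}=\id$. Even if another choice of equalizer is fixed, it is canonically isomorphic to this one; I will note that everything is understood up to this canonical identification.

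\emph{Step 3: coaction and morphisms.} The coaction on $M\square_1C$ is characterized by $(e^1_{M,C}\otimes\id_C)\rho^C_{M\square_1C}=(\id_M\otimes\Delta)e^1_{M,C}$. With $e^1_{M,C}=\id$, this gives $\rho^C=M\otimes\Delta$, which is exactly the comodule structure used for $-\otimes C$. Similarly, $f\square_1C$ is determined by $e(f\square_1C)=(f\otimes\id_C)e$, so $f\square_1C=f\otimes C$. Therefore $\varepsilon^*=-\otimes C$. As a sanity check, the unit and counit of $\varepsilon_*\dashv\varepsilon^*$ reduce to $\overline{\rho_N}=\rho_N$ and $(\id_N\otimes\varepsilon)e=N\otimes\varepsilon$, which agree with the adjunction $F\dashv -\otimes C$ recalled above.

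None of these steps is a real obstacle. The only point needing care is the strictness convention $M\otimes 1=M$ and the canonical choice of equalizer. With those fixed, everything is a direct unwinding of the definitions, and no flatness or cogenerator hypothesis is needed.
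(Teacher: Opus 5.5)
Your proposal is correct and follows the paper's argument: the paper likewise observes that the $1$-comodule structures on $V$ and on $C$ are the unit constraints $r_V^{-1}$ and $l_C^{-1}$, which you see as identities under strictness. By the triangle axiom the two parallel maps then coincide, so the equalizer is $\id_{V\otimes C}$. Your additional checks (uniqueness of the $1$-comodule structure, the induced coaction $M\otimes\Delta$, and $f\square_1C=f\otimes C$) fill in details that the paper leaves implicit.
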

\begin{proof}
 The right $1$-comodule structure on any object $V$ in $\cc$ is given by the unit constraint $r^{-1}_V$. 
 By the triangle axiom we have that $r^{-1}_{V}\otimes\id_C=\id_V\otimes l_C^{-1}$.
 Thus, we have the equalizer \[
    \xymatrix@C=1.5cm{
    V\otimes  C\ar[r]^-{\id_{V\otimes C}}& V\otimes C\ar@<-1ex>[r]_-{\id_V\otimes l^{-1}_C}\ar@<1ex>[r]^-{r^{-1}_{V}\otimes\id_W}&V\otimes 1\otimes C}
    \]
    This means that $V\square_1C=V\otimes C$. \qedhere
\end{proof}

If $\varepsilon_C:C\to 1$ is a regular morphism in $\cc$, by Theorem \ref{prop:semisep-coind-nocogen} $\varepsilon_C^*$ is semiseparable. Hence, so is $-\otimes C: \cc\to \cc^C$. Note that this theorem requires that $\cc$ has equalizers and every object is flat. However, we can still obtain the result when we get rid of these conditions.

\begin{prop}\label{prop:coalgebra-induct-functor}
    Let $C$ be a coalgebra in a monoidal category $\cc$. Then, the following assertions are equivalent.
\begin{enumerate}
  \item[$(i)$] The counit $\varepsilon_{C}:C\to 1$ is regular (resp., split-epi, split-mono) as a morphism in $\cc$. 
  \item[$(ii)$] The tensor functor $-\otimes C:\mathcal{C}\to \mathcal{C}^C$ is semiseparable (resp., separable, naturally full).

  \item[$(iii)$] The tensor functor $C\otimes -: \mathcal{C}\to {}^C\mathcal{C}$ is semiseparable (resp., separable, naturally full). 
\end{enumerate} 
\end{prop}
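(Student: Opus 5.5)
This is the dual of Proposition \ref{prop:algebra-induct-functor}, and I would prove it the same way, through the Rafael-type Theorem \ref{thm:rafael}(ii) applied to the adjunction $F\dashv -\otimes C$. The counit of this adjunction is $\epsilon_M=M\otimes\varepsilon_C$, so $-\otimes C$ is semiseparable exactly when there is a natural transformation $\gamma:\id_\cc\to F(-\otimes C)$ with $\epsilon\gamma\epsilon=\epsilon$. For the separable case I would use Rafael's Theorem \cite[Theorem 1.2]{Raf90} (condition $\epsilon\gamma=\id$). For the naturally full case I would use \cite[Theorem 2.6]{ACMM06} (condition $\gamma\epsilon=\id$). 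I treat the semiseparable case in detail; the other two follow verbatim.

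$(i)\Rightarrow(ii)$. Suppose $\chi:1\to C$ satisfies $\varepsilon_C\chi\varepsilon_C=\varepsilon_C$. Define $\gamma_M:=M\otimes\chi:M\cong M\otimes 1\to M\otimes C$ in $\cc$. This is natural in $M$ because it is tensoring with a fixed morphism. Then
\[
\epsilon_M\gamma_M\epsilon_M=M\otimes(\varepsilon_C\chi\varepsilon_C)=M\otimes\varepsilon_C=\epsilon_M .
\]
Note that $\gamma$ only needs to be a natural transformation of functors $\cc\to\cc$, namely $\id_\cc\to F\circ(-\otimes C)$. So no colinearity of $\chi$ is required, and this is why no equalizer or flatness hypotheses are needed.

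$(ii)\Rightarrow(i)$. Given $\gamma$ with $\epsilon\gamma\epsilon=\epsilon$, set $\chi:=\gamma_1:1\to 1\otimes C=C$. Since $\epsilon_1=1\otimes\varepsilon_C=\varepsilon_C$ (in the strict setting), evaluating the identity at $M=1$ gives $\varepsilon_C\chi\varepsilon_C=\varepsilon_C$. In the separable and naturally full cases, evaluating at $1$ likewise gives $\varepsilon_C\chi=\id_1$ or $\chi\varepsilon_C=\id_C$. This matches the split-epi and split-mono conditions on $\varepsilon_C$.

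$(i)\Leftrightarrow(iii)$. This is the mirror argument for the adjunction ${}^C\cc\to\cc$, $F\dashv C\otimes-$, with counit $\varepsilon_C\otimes M$ and $\gamma_M:=\chi\otimes M$. I expect no real obstacle. The only point needing care is bookkeeping the unit constraints, which is handled by assuming strictness via Mac Lane coherence, as the paper does throughout.
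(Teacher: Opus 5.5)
Your proposal is correct and follows the paper's own argument: the paper proves this as the dual of Proposition \ref{prop:algebra-induct-functor}, applying the Rafael-type Theorem \ref{thm:rafael}(ii) (and its separable and naturally full analogues) to the counit $\epsilon_M=M\otimes\varepsilon_C$. It uses $\gamma_M=M\otimes\chi$ for $(i)\Rightarrow(ii)$ and $\chi=\gamma_1$ for $(ii)\Rightarrow(i)$, exactly as you do, with the mirror argument for $C\otimes-$.
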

\begin{proof}
It is the dual of the proof of Proposition \ref{prop:algebra-induct-functor}.
\begin{invisible}
We just consider the semiseparable case.

$(i)\Rightarrow (ii)$. Assume $(i)$, i.e.\ there exists $\chi: 1\to C$ such that $\varepsilon_C\chi\varepsilon_C=\varepsilon_C$. For every $M\in \cc$ define $\gamma_M=M\otimes\chi:M\to M\otimes C$. Then, $\gamma$ is a natural transformation as for every $f:M\to M'$ in $\cc$, one has $(M'\otimes \chi)(f\otimes\id_1)=f\otimes\chi=(f\otimes C)(M\otimes\chi)$. Moreover, $\epsilon_M\gamma_M\epsilon_M=(M\otimes\varepsilon_C)(M\otimes\chi)(M\otimes \varepsilon_C)=M\otimes\varepsilon_C=\epsilon_M$, for every $M\in\cc$.


$(ii)\Rightarrow (i)$. By Theorem \ref{thm:rafael} there exists a natural transformation $\gamma:\id_\cc\to F(-\otimes C)$ such that $\epsilon\gamma\epsilon=\epsilon$. Define $\chi=\gamma_1:1\to C$. Since $\varepsilon_C\chi\varepsilon_C=(1\otimes\varepsilon_C)\gamma_1(1\otimes\varepsilon_C)=\epsilon_1\gamma_1\epsilon_1=\epsilon_1=1\otimes\varepsilon_C=\varepsilon_C$, we get $(i)$. 

$(i)\Leftrightarrow (iii)$. It follows similarly.
\end{invisible}
\end{proof}

\begin{rmk}
    We observe that $(i)$ in Proposition \ref{prop:coalgebra-induct-functor} is equivalent to the existence of a morphism $\chi:1\to C$ in $\cc$ such that $(\varepsilon_C\otimes C)(\chi\otimes C)=\id_C$, dually to Remark \ref{rmk:algmonoid-induct}.
    \begin{invisible}
    On one hand, $\varepsilon_C\chi\varepsilon_C=\varepsilon_C$ implies  $\varepsilon_C\otimes C=\varepsilon_C\chi\varepsilon_C\otimes C=(\varepsilon_C\chi\otimes C)(\varepsilon_C\otimes C)$. Since $\varepsilon_C\otimes C$ is an epimorphism, $\id_C=\varepsilon_C\chi\otimes C=(\varepsilon_C\otimes C)(\chi\otimes C)$.   On the other hand, $(\varepsilon_C\otimes C)(\chi\otimes C)=\id_C$ implies $\varepsilon_C=\id_1\otimes\varepsilon_C=(\id_1\otimes\varepsilon_C)(\varepsilon_C\otimes C)(\chi\otimes C)=(\varepsilon_C\otimes 1)(\chi\otimes 1)(1\otimes \varepsilon_C)=\varepsilon_C\chi\varepsilon_C$. 
    \end{invisible}
\end{rmk}

Now, we discuss some examples.

\begin{es}
    Let $C$ be a bialgebra in a braided monoidal category $\cc$. Since $\varepsilon_C u_C=\id_1$, we have that $\varepsilon_C$ is a split-epi. By Proposition \ref{prop:coalgebra-induct-functor}, $-\otimes C: \mathcal{C}\to \mathcal{C}^C$ is separable. 
\end{es}

\begin{es}
  Let $R$ be a ring. Consider the monoidal category  $({}_R\m_R, \otimes_R, R)$ of $R$-bimodules and let $C$ be an $R$-coring, i.e. a coalgebra in ${}_R\m_R$.  
  Then, Proposition \ref{prop:coalgebra-induct-functor} recovers Proposition \ref{prop:coring}, \cite[Theorem 3.3]{Brz02}, \cite[Proposition 3.13]{ACMM06} for the semiseparable, separable, naturally full cases, respectively.   
  

\end{es}

\begin{es}\label{v}
    Let $\Bbbk$ be a field and consider the monoidal category $(\vect, \otimes_\Bbbk, \Bbbk)$ of vector spaces over $\Bbbk$. Let $C\neq 0$ be a $\Bbbk$-coalgebra. By Proposition \ref{prop:coalgebra-induct-functor} the functor $-\otimes_\Bbbk C:\vect\to \mathcal{M}^C$ is semiseparable if, and only if, it is separable, since in this case the counit $\varepsilon_C$ is surjective. 
    Besides, by \cite[3.29]{BW03} $-\otimes_\Bbbk C$ is separable if, and only if, there exists $z\in C$ such that $\varepsilon_C(z)=1$. Since $\varepsilon_C$ is surjective, then $-\otimes_\Bbbk C$ is always separable. Moreover, $-\otimes_\Bbbk C$ is full if, and only if, it is naturally full if, and only if, it is fully faithful.

    Consider now the category $({}_H\m,\otimes_\Bbbk, \Bbbk)$ of left $H$-modules over a $\Bbbk$-bialgebra $H$.  Let $C$ be a coalgebra in ${}_H\m$, i.e. an $H$-module coalgebra. Since $\varepsilon_C$ is surjective, the functor $-\otimes_\Bbbk C:{}_H\m\to{{}_H\m}^C$ is semiseparable if, and only if, it is separable.

    More generally, let $\cc$ be an abelian monoidal category such that the unit object is simple, for instance a ring category with left duals, see \cite[Theorem 4.3.8]{Eti15}. The counit $\varepsilon_C$ of a non-zero coalgebra in $\cc$ is always an epimorphism. Consequently, $-\otimes C: \cc \to \cc^C$ is semiseparable if, and only if, it is separable.
\end{es}

\begin{es}
    Consider the monoidal category $(\Set, \times, \{0\})$ of sets. Any set $S$ is a coalgebra in $\Set$, see Example \ref{es:set-coinduct}, and the counit $\varepsilon_S:S\to \{0\}$ is surjective if $S$ is non-empty. Moreover, let $\overline{s}\in S$ and define $\chi:\{0\}\to S$, $0\mapsto \overline{s}$. We have $\varepsilon_S\chi(0)=0$, so $\varepsilon_S$ is a split-epi.  Then, for any non-empty set $S$, Proposition \ref{prop:coalgebra-induct-functor} implies that the functor $-\times S$ is separable. As a consequence, it is semiseparable. Therefore, $-\times S$ is full if, and only if, it is naturally full if, and only if, $\varepsilon_S$ is a bijection.
\end{es}


\subsection{Coinduction functors varied by a monoidal functor}

Let $(F:\cc\to\dd,\psi^2,\psi^0)$ be a colax monoidal functor, see e.g. \cite[Definition 3.2]{AM10}, i.e.\ a functor $F:\cc\to\dd$ between monoidal categories which is equipped with a natural transformation $\psi^2$, given on components by $\psi^2_{X,Y}:F(X\otimes Y)\to F(X)\otimes F(Y)$, and a morphism $\psi^0: F(1)\to 1'$ satisfying axioms dual to those for a lax monoidal functor (see Subsection \ref{subsect:monoidal-functor}). If  $(C, \Delta_C, \varepsilon_C)$ is a coalgebra in $\cc$, then $(F(C), \psi^2_{C,C}F(\Delta_C), \psi^0F(\varepsilon_C))$ is a coalgebra in $\dd$, see e.g. \cite[Proposition 3.29]{AM10}. Moreover, if $\varphi:C\to D$ is a coalgebra morphism in $\cc$, then the induced morphism $F(\varphi):F(C)\to F(D)$ is a coalgebra morphism in $\dd$.

The next result is the dual of Proposition \ref{prop:algebra-monoidalfunctor}. 

\begin{prop}\label{prop:coalgebra-monoidalfunctor}
 Let $(\cc, \otimes, 1)$, $(\dd, \otimes, 1')$ be monoidal categories. Assume that $1$ is a left $\otimes$-cogenerator in $\cc$. Let $(F:\cc\to\dd,\psi^2,\psi^0)$ be a colax monoidal functor and let $\varphi:C\to D$ be a coalgebra morphism in $\cc$ such that $\varphi^*=-\square_DC:\cc^D\to\cc^C$ is semiseparable (resp., separable, naturally full). Then, $F(\varphi)^*=-\square_{F(D)}F(C):\dd^{F(D)}\to\dd^{F(C)}$ is semiseparable (resp., separable, naturally full).

 Conversely, assume $1'$ is a left $\otimes$-cogenerator in $\dd$ and $F$ is a fully faithful lax monoidal functor such that $\psi^2$ is monomorphism on components. If $F(\varphi)^*=-\square_{F(D)}F(C):\dd^{F(D)}\to\dd^{F(C)}$ is semiseparable (resp., separable, naturally full), then $\varphi^*=-\square_DC:\cc^D\to\cc^C$ is semiseparable (resp., separable, naturally full).
\end{prop}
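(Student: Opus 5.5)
The plan is to dualize the proof of Proposition \ref{prop:algebra-monoidalfunctor}, working entirely at the level of the bicomodule criteria of Theorem \ref{prop:semisep-coind-nocogen}, Proposition \ref{prop:sep-coind-nocogen} and Proposition \ref{prop:natfull-coind}. No natural transformations will be constructed directly.

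For the direct implication, assume $\varphi^*$ is semiseparable. Since $1$ is a left $\otimes$-cogenerator, Theorem \ref{prop:semisep-coind-nocogen} gives a $D$-bicomodule morphism $\chi:D\to C$ with $\varphi\chi\varphi=\varphi$. Applying $F$ yields $F(\varphi)F(\chi)F(\varphi)=F(\varphi)$. The main point is to check that $F(\chi)$ is an $F(D)$-bicomodule morphism in $\dd$.

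To this end, first state the dual of Lemma \ref{lem:module-monoidal}: a colax monoidal functor sends a right $D$-comodule $(X,\rho)$ to the right $F(D)$-comodule $(F(X),\psi^2_{X,D}F(\rho))$, and similarly on the left. The $F(D)$-coactions on $F(C)$ induced via $F(\varphi)$ are $(F(\varphi)\otimes F(C))\psi^2_{C,C}F(\Delta_C)=\psi^2_{D,C}F((\varphi\otimes C)\Delta_C)$ by naturality of $\psi^2$, and symmetrically on the other side. Colinearity of $F(\chi)$ then follows from naturality of $\psi^2$: for instance
\[
(F(D)\otimes F(\chi))\psi^2_{D,D}F(\Delta_D)=\psi^2_{D,C}F((D\otimes\chi)\Delta_D)=\psi^2_{D,C}F((\varphi\otimes C)\Delta_C\chi),
\]
and the last expression is the left $F(D)$-coaction of $F(C)$ composed with $F(\chi)$. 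The right side is handled the same way. The sufficiency direction $(i)\Rightarrow(iii)$ of Theorem \ref{prop:semisep-coind-nocogen} needs no cogenerator hypothesis in $\dd$, so $F(\varphi)^*$ is semiseparable. The separable and naturally full cases are identical, replacing $\varphi\chi\varphi=\varphi$ with $\varphi\chi=\id_D$, respectively $\chi\varphi=\id_C$, both of which are preserved by $F$.

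For the converse, read ``lax'' as the colax structure $(\psi^2,\psi^0)$, with $\psi^2$ monic on components. Since $1'$ is a left $\otimes$-cogenerator in $\dd$, the criterion gives an $F(D)$-bicomodule morphism $\chi':F(D)\to F(C)$ with $F(\varphi)\chi'F(\varphi)=F(\varphi)$. Fullness provides $\chi$ with $F(\chi)=\chi'$, and faithfulness gives $\varphi\chi\varphi=\varphi$. Left colinearity of $\chi'$ reads
\[
\psi^2_{D,C}F((D\otimes\chi)\Delta_D)=\psi^2_{D,C}F((\varphi\otimes C)\Delta_C\chi).
\]
Since $\psi^2_{D,C}$ is a monomorphism and $F$ is faithful, $\chi$ is left $D$-colinear; right colinearity is analogous. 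The sufficiency part of Theorem \ref{prop:semisep-coind-nocogen} in $\cc$ then concludes, and the other two cases follow likewise.

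The only step needing care is the bookkeeping showing that the comodule structures on $F(C)$ induced via $F(\varphi)$ agree with the ones transported by $\psi^2$. This is the dual of the calculation hidden in the proof of Proposition \ref{prop:algebra-monoidalfunctor} and is routine via naturality of $\psi^2$.
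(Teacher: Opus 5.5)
Your proposal is correct and follows essentially the same route as the paper. You take the bicolinear $\chi$ from Theorem \ref{prop:semisep-coind-nocogen} (or its separable and naturally full analogues), check via naturality of $\psi^2$ that $F(\chi)$ is $F(D)$-bicolinear, and for the converse lift $\chi'$ by fullness and cancel $\psi^2$ and $F$ using monicity and faithfulness. Your reading of ``lax'' in the converse as the colax structure $(\psi^2,\psi^0)$ is also what the paper's argument actually uses.
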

\begin{invisible}
\begin{proof}
If $\varphi^*$ is semiseparable, then by Theorem \ref{prop:semisep-coind-nocogen} there exists a $D$-bicomodule morphism $\chi:D\to C$ in $\cc$ such that $\varphi\circ\chi\circ\varphi=\varphi$. We obtain that $F(\varphi) \circ F(\chi) \circ F(\varphi)= F(\varphi \chi \varphi) =F(\varphi)$. We show that $F(\chi)$ is an $F(D)$-bicomodule morphism in $\dd$. Indeed, 
\[
\begin{split}
&\rho_{F(C)}^{F(D)} F(\chi)= (\id_{FC}\otimes F(\varphi))\psi^2_{C,C} F(\Delta_C) F(\chi) = \psi^2_{C,D} F(\id_C\otimes \varphi)  F(\Delta_C) F(\chi)\\
= &\psi^2_{C,D} F((\id_C\otimes \varphi)\Delta_C  \chi) = \psi^2_{C,D} F((\chi \otimes \id_D)  \Delta_D ) = \psi^2_{C,D} F(\chi \otimes \id_D)  F(\Delta_D )\\
= &(F(\chi)\otimes \id_{FD}) \psi^2_{D,D}F(\Delta_D) = (F(\chi)\otimes \id_{FD}) \rho_{F(D)}^{F(D)},
\end{split} 
\]
and similarly $\lambda_{F(C)}^{F(D)} F(\chi)= (\id_{FD} \otimes F(\chi)) \lambda_{F(D)}^{F(D)}$.
Thus, by Theorem \ref{prop:semisep-coind-nocogen} again, $F(\varphi)^*$ is semiseparable. Separable and naturally full cases follow in a similar way.

Conversely, suppose $1'$ is a $\otimes$-cogenerator in $\dd$ and $F$ is fully faithful lax monoidal with $\psi^2$ monomorphism on components. If $F(\varphi)^*=-\square_{F(D)}F(C):\dd^{F(D)}\to\dd^{F(C)}$ is semiseparable, by Theorem \ref{prop:semisep-coind-nocogen} there exists a $F(D)$-bicomodule morphism $\chi': F(D)\to F(C)$ in $\cc$ such that $F(\varphi)\chi'F(\varphi)=F(\varphi)$. Since $F$ is full, $\chi'=F(\chi)$ for some $\chi:D\to C$ in $\cc$. Hence, $F(\varphi \chi \varphi)=F(\varphi)$, which implies $\varphi \chi\varphi=\varphi$ because $F$ is faithful. It remains to show $\chi$ is a $D$-bicomodule map. Because $\chi'=F(\chi)$ is a right $F(D)$-comodule morphism, which means $(\id_{F(C)}\otimes F(\varphi))\Delta_{F(C)} F(\chi) = (F(\chi)\otimes \id_{F(D)}) \Delta_{F(D)}$, we have $\psi^2_{C,D} F(\id_C\otimes \varphi)  F(\Delta_C) F(\chi) = \psi^2_{C,D} F(\chi \otimes \id_D)  F(\Delta_D )$. Since $F$ is faithful and  $\psi^2$ mono, we have $(\id_C\otimes \varphi) \Delta_C  \chi = (\chi \otimes \id_D) \Delta_D$. This means that $\chi$ is a right $D$-comodule morphism in $\cc$. Similarly, $\chi$ is also a left $D$-comodule morphism in $\cc$. Separable and naturally full cases follow in a similar way.
\end{proof}
\end{invisible}

For the particular case where the coalgebra morphism is given by the counit $\varepsilon:C\to 1$ of a coalgebra $C$ in $\cc$, the monoidal unit is not required to be a left $\otimes$-cogenerator. In fact, we get the following duals of Proposition \ref{lax monoidal variation unit} and Proposition \ref{lax-monoidal-separable}.

\begin{prop}\label{colax-monoidal-counit}
Let $(\cc, \otimes, 1)$, $(\dd, \otimes, 1')$ be monoidal categories, $C$ be a coalgebra in $\cc$ with counit $\varepsilon_C:C\to 1$. Let $(F:\cc\to\dd,\psi^2,\psi^0)$ be a colax monoidal functor. 
\begin{itemize}
\item[$i)$] If  $\psi^0$ is a split-mono and $-\otimes C:\cc\to\cc^C$ is semiseparable (resp., naturally full), then $-\otimes F(C):\dd \to \dd^{F(C)}$ is semiseparable (resp.,  naturally full).

\item[$ii)$] If $\psi^0$ is a split-epi and $-\otimes C:\cc\to\cc^C$ is separable, then $-\otimes F(C):\dd \to \dd^{F(C)}$ is separable. 
\end{itemize}
\end{prop}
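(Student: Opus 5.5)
The plan is to dualize the proof of Proposition \ref{lax monoidal variation unit}, reducing everything to the criterion of Proposition \ref{prop:coalgebra-induct-functor}. For a coalgebra $C$ in $\cc$, the functor $-\otimes C:\cc\to\cc^C$ is semiseparable (resp., separable, naturally full) if and only if $\varepsilon_C$ is regular (resp., split-epi, split-mono) in $\cc$. This criterion needs no equalizer or flatness hypotheses, so it applies both in $\cc$ and in $\dd$. The coalgebra $F(C)$ has counit $\varepsilon_{F(C)}=\psi^0F(\varepsilon_C)$. Hence it suffices to transport the corresponding property from $\varepsilon_C$ to $\psi^0F(\varepsilon_C)$.

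For $i)$ in the semiseparable case, Proposition \ref{prop:coalgebra-induct-functor} gives $\chi:1\to C$ with $\varepsilon_C\chi\varepsilon_C=\varepsilon_C$. Applying $F$ yields $F(\varepsilon_C)F(\chi)F(\varepsilon_C)=F(\varepsilon_C)$. Let $\phi^0:1'\to F(1)$ satisfy $\phi^0\psi^0=\id_{F(1)}$, which is available since $\psi^0$ is split-mono. Set $\chi':=F(\chi)\phi^0:1'\to F(C)$. Then
\[
\varepsilon_{F(C)}\chi'\varepsilon_{F(C)}=\psi^0F(\varepsilon_C)F(\chi)\phi^0\psi^0F(\varepsilon_C)=\psi^0F(\varepsilon_C\chi\varepsilon_C)=\psi^0F(\varepsilon_C)=\varepsilon_{F(C)},
\]
so $\varepsilon_{F(C)}$ is regular, and we conclude by Proposition \ref{prop:coalgebra-induct-functor}.

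In the naturally full case we have $\chi\varepsilon_C=\id_C$. The same $\chi'$ then gives $\chi'\varepsilon_{F(C)}=F(\chi)\phi^0\psi^0F(\varepsilon_C)=F(\chi\varepsilon_C)=\id_{F(C)}$. So $\varepsilon_{F(C)}$ is split-mono.

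For $ii)$, separability gives $\chi$ with $\varepsilon_C\chi=\id_1$. Now take $\phi^0$ with $\psi^0\phi^0=\id_{1'}$, available since $\psi^0$ is split-epi, and again set $\chi'=F(\chi)\phi^0$. Then $\varepsilon_{F(C)}\chi'=\psi^0F(\varepsilon_C\chi)\phi^0=\psi^0\phi^0=\id_{1'}$, so $\varepsilon_{F(C)}$ is split-epi. Proposition \ref{prop:coalgebra-induct-functor} then gives separability.

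There is no real obstacle here. The only point to check is that the counit of $F(C)$ is indeed $\psi^0F(\varepsilon_C)$, following \cite[Proposition 3.29]{AM10}. One must also match which one-sided inverse of $\psi^0$ is needed in each case: split-mono for the regular and split-mono conditions, split-epi for the split-epi condition.
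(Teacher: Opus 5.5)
Your proposal is correct and follows the paper's own argument: reduce via Proposition \ref{prop:coalgebra-induct-functor} to the counit $\varepsilon_{F(C)}=\psi^0F(\varepsilon_C)$, and use $\chi':=F(\chi)\phi^0$ with $\phi^0\psi^0=\id_{F(1)}$ in case $i)$ and $\psi^0\phi^0=\id_{1'}$ in case $ii)$. Your computations in all three cases (regular, split-mono, split-epi) check out, so the proof is complete.
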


\begin{invisible}
\begin{proof}
    $i)$. If $-\otimes C$ is semiseparable, by Proposition \ref{prop:coalgebra-induct-functor}, there is a morphism $\chi: 1\to C$ in $\cc$ such that $\varepsilon_C\chi \varepsilon_C =\varepsilon_C$. It follows that $F(\varepsilon_C) F(\chi) F(\varepsilon_C)=F(\varepsilon_C)$. Since there is a morphism $\phi^0:1'\to F(1)$ in $\dd$ satisfying $\phi^0 \psi^0 =\id_{F(1)}$, we get $\psi^0 F(\varepsilon_C) F(\chi) \phi^0 \psi^0 F(\varepsilon_C)=\psi^0 F(\varepsilon_C)$, which means $\varepsilon_{F(C)}  F(\chi)\phi^0 \varepsilon_{F(C)} = \varepsilon_{F(C)}$. By Proposition \ref{prop:coalgebra-induct-functor} again, we obtain that $-\otimes F(C)$ is semiseparable. The naturally full case follows similarly.

$ii)$.   If $-\otimes C$ is separable, then by Proposition \ref{prop:coalgebra-induct-functor} there is a morphism $\chi: 1\to C$ in $\cc$ such that $\varepsilon_C\chi=\id_1$. It follows that $ F(\varepsilon_C) F(\chi)=\id_{F(1)}$. Since there is a morphism $\phi^0:1'\to F(1)$ in $\dd$ satisfying $\psi^0 \phi^0 =\id_{1'}$, we have $\varepsilon_{F(C)} F(\chi)\phi^0= \psi^0 F(\varepsilon_C)F(\chi)\phi^0=\psi^0\phi^0 = \id_{1'}$. By Proposition \ref{prop:coalgebra-induct-functor} $-\otimes F(C)$ is separable.
 
    \end{proof}
\end{invisible}

\begin{prop}
 \label{colax-monoidal-separable}
Let $(\cc, \otimes, 1)$, $(\dd, \otimes, 1')$ be monoidal categories, $C$ be a coalgebra in $\cc$ with counit $\varepsilon_C:C\to 1$. Let $(F:\cc\to\dd,\psi^2,\psi^0)$ be a fully faithful colax monoidal functor. If $\psi^0$ is a monomorphism and $-\otimes F(C):\dd \to \dd^{F(C)}$ is semiseparable (resp., separable, naturally full), then $-\otimes C:\cc\to\cc^C$ is semiseparable (resp., separable, naturally full).
\end{prop}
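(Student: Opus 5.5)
This is the formal dual of Proposition \ref{lax-monoidal-separable}, and I will argue it the same way: reduce everything to the counit criterion of Proposition \ref{prop:coalgebra-induct-functor}, then transport morphisms through $F$ using fullness and faithfulness. Recall that $\varepsilon_{F(C)}=\psi^0F(\varepsilon_C)$. The role played by the epimorphism $\phi^0$ in the algebra case is now played by the monomorphism $\psi^0$.

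\emph{Semiseparable case.} Assume $-\otimes F(C)$ is semiseparable. By Proposition \ref{prop:coalgebra-induct-functor} there is $\chi':1'\to F(C)$ with $\varepsilon_{F(C)}\chi'\varepsilon_{F(C)}=\varepsilon_{F(C)}$, that is,
\[
\psi^0F(\varepsilon_C)\chi'\psi^0F(\varepsilon_C)=\psi^0F(\varepsilon_C).
\]
The morphism $\chi'\psi^0:F(1)\to F(C)$ lies between objects in the image of $F$. Since $F$ is full, there is $\chi:1\to C$ with $F(\chi)=\chi'\psi^0$. The displayed identity then reads $\psi^0F(\varepsilon_C\chi\varepsilon_C)=\psi^0F(\varepsilon_C)$. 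Cancelling the monomorphism $\psi^0$ and using that $F$ is faithful gives $\varepsilon_C\chi\varepsilon_C=\varepsilon_C$. Proposition \ref{prop:coalgebra-induct-functor} then yields that $-\otimes C$ is semiseparable.

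\emph{Naturally full case.} Assume there is $\chi'$ with $\chi'\varepsilon_{F(C)}=\id_{F(C)}$, that is, $\chi'\psi^0F(\varepsilon_C)=\id_{F(C)}$. Choose $\chi$ with $F(\chi)=\chi'\psi^0$ as before. Then $F(\chi\varepsilon_C)=\id_{F(C)}=F(\id_C)$, and faithfulness gives $\chi\varepsilon_C=\id_C$.

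\emph{Separable case.} Here the hypothesis is $\varepsilon_{F(C)}\chi'=\psi^0F(\varepsilon_C)\chi'=\id_{1'}$. This case needs the one extra step, dual to the argument in the algebra case. Postcompose with $\psi^0$ to get
\[
\psi^0F(\varepsilon_C)\chi'\psi^0=\psi^0.
\]
Cancel the monomorphism $\psi^0$ on the left to obtain $F(\varepsilon_C)\chi'\psi^0=\id_{F(1)}$. With $F(\chi)=\chi'\psi^0$ this says $F(\varepsilon_C\chi)=F(\id_1)$, so faithfulness gives $\varepsilon_C\chi=\id_1$, i.e. $\varepsilon_C$ is split-epi. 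Proposition \ref{prop:coalgebra-induct-functor} again concludes.

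The only delicate point is this separable case. One must remember to precompose by $\psi^0$ before lifting through $F$, so that the morphism to be lifted runs between objects of the form $F(-)$. Everything else is routine.
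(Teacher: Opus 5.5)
Your proof is correct and follows the paper's route. You lift $\chi'\psi^0$ through fullness, cancel $\psi^0$, use faithfulness, and conclude via Proposition \ref{prop:coalgebra-induct-functor}, exactly dual to Proposition \ref{lax-monoidal-separable}; your direct left-cancellation of $\psi^0$ uses only the monomorphism hypothesis as stated, whereas the paper's written argument appeals to a left inverse $\phi^0$ of $\psi^0$, which the statement does not supply.
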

\begin{invisible}
    \begin{proof} If $-\otimes F(C)$ is semiseparable, by Proposition \ref{prop:coalgebra-induct-functor}, we know that $\varepsilon_{F(C)} \chi' \varepsilon_{F(C)} = \varepsilon_{F(C)}$ for some morphism $\chi':1'\to F(C) $ in $\dd$. Since $F$ is full, there is a morphism $\chi:1\to C$ such that $F(\chi) = \chi'\psi^0$. This implies $\psi^0 F(\varepsilon_C) F(\chi) F(\varepsilon_C) =\psi^0 F(\varepsilon_C)$. Because $\phi^0 \psi^0 =\id_{F(1)}$ and $F$ is faithful, we obtain $\varepsilon_C\chi \varepsilon_C=\varepsilon_C$. As a result, by Proposition \ref{prop:coalgebra-induct-functor}, $-\otimes C$ is semiseparable. The naturally full case follows similarly.

If $-\otimes F(C)$ is separable, by Proposition \ref{prop:coalgebra-induct-functor}, we know that $\psi^0F(\varepsilon_C)\chi' = \varepsilon_{F(C)}\chi' = \id_{1'}$ for some morphism $\chi':1'\to F(C) $ in $\dd$. 
    Since $F$ is full, there is a morphism $\chi: 1\to C$ in $\cc$ such that $F(\chi) = \chi'\psi^0$. Since $\psi^0$ is a split-mono, there is a morphism $\phi^0:1'\to F(1)$ in $\dd$ such that $\id_{F(1)}=\phi^0\psi^0=\phi^0 \psi^0 F(\varepsilon_C)\chi'\psi^0= F(\varepsilon_C)F(\chi)$. Because $F$ is faithful, we obtain that $ \varepsilon_C\chi=\id_1$. As a result, by Proposition \ref{prop:coalgebra-induct-functor}, $-\otimes C$ is separable.
    \end{proof}
\end{invisible}

We now exhibit an example in which the counit $\varepsilon_C$ of a coalgebra in a monoidal category is not an epimorphism.

\begin{es}
Let $\Bbbk$ be a field and $\vect_f$ be the category of finite dimensional $\Bbbk$-vector spaces. Consider the category $\mathrm{Mat}_n(\vect_f)$ of $n$-by-$n$ matrices, for $n\geq 2$, with entries in $\vect_f$. By \cite[Example 4.1.3]{Eti15}, we know that $\mathrm{Mat}_n(\vect_f)$ is a multitensor category with tensor product given by
\[
(V\otimes W)_{il}:=\bigoplus_{j=1}^nV_{ij}\otimes W_{jl},
\]
and unit $\Bbbk \mathrm{I}_n$, where $\mathrm{I}_n$ is the identity matrix.


Consider the inclusion functor $F:\vect_f\to \mathrm{Mat}_n(\vect_f)$, given by $F(V)_{ij}=\delta_{i1}\delta_{j1}V$. It is clear that 
$$
(F(V)\otimes F(W))_{il}=\bigoplus_{j} F(V)_{ij}\otimes F(W)_{jl}=\delta_{i1}\delta_{l1}V\otimes W=F(V\otimes W)_{il}.
$$
Hence, let $\psi^2_{V,W}:=\id:F(V\otimes W)\to F(V)\otimes F(W)$, $ \psi^0: F(\Bbbk)=(\delta_{i1}\delta_{j1}\Bbbk)_{ij}\to \Bbbk\mathrm{I}_n,$ given by ${(\psi^0)}_{11}=\id_\Bbbk$ and ${(\psi^0)}_{ij}=0$ otherwise. 
One can check  that $(F,\psi^2, \psi^0)$ is a colax monoidal functor. 
\begin{invisible}
It is obvious that $(\id\otimes\phi^2(Y,Z))\phi^2(X,Y\otimes Z)=(\phi^2(X,Y)\otimes\id)\phi^2(X\otimes Y, Z)$. For every $ M=(M_{ij})_{ij}$, the left unit constraint in $\mathrm{Mat}_n(\vect_f)$is given by $\lambda'_M: \Bbbk\mathrm{I}_n\otimes M\to M$, $\lambda'_M=(\lambda_{M_{ij}})_{ij}$, where $\lambda_{M_{ij}}:\Bbbk\otimes M_{ij}\to M_{ij}$ is the left constraint for $M_{ij}$ in $\vect_f$. Then, for every $X\in\vect_f$, it follows that $\lambda'_{F(X)}(\psi^0\otimes \id)\psi^2(\Bbbk, X)=F(\lambda_X)$. The axiom for the right unit constraints is similar. 
\end{invisible}
Recall that if $(C, \Delta_C, \varepsilon_C)$ is a coalgebra in $\vect_f$, then $F(C)$ is a coalgebra in $\mathrm{Mat}_n(\vect_f)$ with comultiplication $\psi^2_{C,C} F(\Delta_C)$ and counit $\psi^0 F(\varepsilon_C)$. It is clear that the counit of $F(C)$ is not surjective. 


Let $C$ be a nonzero coalgebra in $\vect_f$.  By Proposition \ref{prop:coalgebra-induct-functor}, we know that 
$-\otimes_\Bbbk C:\vect_f\to \mathcal{M}^C$ is always separable, because $\varepsilon_C$ is surjective.  Since $\psi^0$ is a split-mono through the canonical projection $\phi^0: \Bbbk\mathrm{I}_n\to F(\Bbbk)$, then by Proposition  \ref{colax-monoidal-counit} $i)$ we get that $-\otimes F(C)$ is semiseparable.  However, $-\otimes F(C)$ is  not separable as the counit of $F(C)$ is not surjective. 

\end{es}

\section{Semiseparability results in a duoidal category}\label{sect:duoidal}

As seen in Proposition \ref{prop:algebra-monoidalfunctor} and Proposition \ref{prop:coalgebra-monoidalfunctor}, one of the most natural variations of (co)induction functors is given through a monoidal functor. One may consider if there are other variations which preserve the semiseparability of (co)induction functors. Indeed, such a behavior can be fulfilled by the direct sum in an abelian monoidal category or the tensor product in a pre-braided monoidal category. More generally, we now prove these results within the realm of duoidal categories.

We recall
the notion of duoidal category \cite[Definition 6.1]{AM10}, also known as
2-monoidal category. 

\begin{defn}\label{defn:duoidalcategory}
    A \emph{duoidal category} is a category $\cc$ equipped with two monoidal structures $\cc^\circ= (\cc, \circ, I, a^\circ, l^\circ, r^\circ )$ and $\cc^\bullet = (\cc, \bullet, J, a^\bullet, l^\bullet, r^\bullet )$,  related through a natural transformation 
\[
\zeta=(\zeta_{A,B,C,D}: (A\bullet B)\circ (C\bullet D)\to (A\circ C)\bullet (B\circ D))_{A,B,C,D\in\cc},
\]
called the \emph{interchange law}, and morphisms $\delta:I\to I\bullet I$, $\varpi: J\circ J\to J$, $\tau:I\to J$ 
such that 
\begin{itemize}
    \item[$i)$] $(J,\varpi, \tau)$ is an algebra in $\cc^\circ$ and $(I, \delta, \tau)$ is a coalgebra in $\cc^\bullet$;
    \item[$ii)$] the following associativity conditions hold, for all $A,B,C,D,E,F\in\cc$,
    \begin{equation}\label{eq:assoc1}
    \begin{split}
\zeta_{A,B, C\circ E, D\circ F}(\id_{A\bullet B}&\circ\zeta_{C,D,E,F})a^\circ_{A\bullet B, C\bullet D, E\bullet F}\\&=(a^\circ_{A,C,E}\bullet a^\circ_{B,D,F})\zeta_{A\circ C, B\circ D,E,F}(\zeta_{A,B,C,D}\circ\id_{E\bullet F})
 \end{split}   \end{equation}
 \begin{equation}\label{eq:assoc2}
    \begin{split}
(\id_{A\circ D}\bullet \zeta_{B,C,E,F})&\zeta_{A,B\bullet C, D, E\bullet F}(a^\bullet_{A,B,C}\circ a^\bullet_{D,E,F})\\&=a^\bullet_{A\circ D, B\circ E, C\circ F}(\zeta_{A,B,D, E}\bullet \id_{C\circ F})\zeta_{A\bullet B, C, D\bullet E, F};
    \end{split} 
 \end{equation}
 \item[$iii)$] the following unitality conditions hold, for all $A,B\in\cc$
 \begin{equation}\label{eq:unit1}
         (l^\circ_{A}\bullet l^\circ_B)\zeta_{I,I, A,B}(\delta\circ\id_{A\bullet B})=l^\circ_{A\bullet B};\quad (r^\circ_A\bullet r^\circ_B)\zeta_{A,B,I,I}(\id_{A\bullet B}\circ\delta)=r^\circ_{A\bullet B};
 \end{equation}
 \begin{equation}\label{eq:unit2}
    l^\bullet_{A\circ B}(\varpi\bullet \id_{A\circ B})\zeta_{J,A,J,B}=l^\bullet_A\circ l^\bullet_B;\quad r^\bullet_{A\circ B}(\id_{A\circ B}\bullet \varpi)\zeta_{A,J,B,J}=r^\bullet_A\circ r^\bullet_B. 
 \end{equation}
\end{itemize}
\end{defn}

In this section, we always consider a duoidal category $\cc$ with structure $\circ, I,\bullet,  J,\zeta, \delta, \varpi, \tau$ defined as above. 
In \cite[Theorem 2.16]{BM12} it is shown that every duoidal category is duoidal equivalent to a strict duoidal category, i.e.\ one where both monoidal categories are strict. In view of this, we will omit the associativity constraints. 

We denote by $\mathrm{Alg}(\cc^{\circ})$ the category of algebras and algebra morphisms in $\cc^{\circ}$ and by $\mathrm{Coalg}(\cc^{\bullet})$ the category of coalgebras and coalgebra morphisms in $\cc^{\bullet}$. Recall from \cite[Proposition 6.35]{AM10} that $\bullet$ induces a monoidal structure on $\mathrm{Alg}(\cc^{\circ})$, and $\circ$ induces a monoidal structure on $\mathrm{Coalg}(\cc^{\bullet})$. Explicitly, for $(A, m_A, u_A)$, $(B, m_B, u_B)$ in $\mathrm{Alg}(\cc^{\circ})$, $A\bullet B$ is in $\mathrm{Alg}(\cc^{\circ})$ with multiplication $m_{A\bullet B}=(m_A\bullet m_B)\zeta_{A,B,A,B}$ and unit $u_{A\bullet B}=(u_A\bullet u_B)\delta$. For $(C, \Delta_C, \varepsilon_C)$, $(D, \Delta_D, \varepsilon_D)$ in $\mathrm{Coalg}(\cc^{\bullet})$,  $C\circ D$ is in $\mathrm{Coalg}(\cc^{\bullet})$ with comultiplication $\Delta_{C\circ D}=\zeta_{C,C,D,D}(\Delta_C\circ\Delta_D)$ and counit $\varepsilon_{C\circ D}=\varpi (\varepsilon_C\circ \varepsilon_D)$.

\subsection{Combination of induction functors}\label{subsect:induct-duoidal} Throughout this subsection, let $\cc$ be a duoidal category with coequalizers such that any object in $\cc^\circ$ is coflat. We consider the induction functor attached to an algebra morphism in $\cc^\circ$, defined as in Subsection \ref{subsect:induct}. 

\begin{prop}\label{prop:duoidal}
Suppose $I$ is a left $\circ$-generator of $(\cc,\circ,I)$. Let $f_1:R_1\to S_1$, $f_2:R_2\to S_2$ be morphisms in $\mathrm{Alg}(\cc^{\circ})$ such that $f_1^*=-\circ_{R_1} S_1:\cc_{R_1}\to \cc_{S_1}$ and $f_2^*=-\circ_{R_2} S_2:\cc_{R_2}\to \cc_{S_2}$ are semiseparable (resp., separable, naturally full). Then, $(f_1\bullet f_2)^*= -\circ_{R_1\bullet R_2} (S_1\bullet S_2):\cc_{R_1\bullet R_2}\to \cc_{S_1\bullet S_2}$ is also semiseparable (resp., separable, naturally full).
\end{prop}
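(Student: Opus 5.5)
The plan is to reduce everything to the algebraic characterizations in Theorem \ref{thm:phi*semisep}, Theorem \ref{thm:sepmon} and Proposition \ref{prop:phi*natfull}, applied in the monoidal category $(\cc,\circ,I)$, where $I$ is a left $\circ$-generator. With this hypothesis, semiseparability (resp., separability, natural fullness) of $f_i^*$ yields an $R_i$-bimodule morphism $E_i:S_i\to R_i$ in $\cc^\circ$ with $f_iE_if_i=f_i$ (resp., $E_if_i=\id_{R_i}$, $f_iE_i=\id_{S_i}$), for $i=1,2$. The candidate for $f_1\bullet f_2$ is
\[
E:=E_1\bullet E_2:S_1\bullet S_2\to R_1\bullet R_2 .
\]
By functoriality of $\bullet$, the identity needed for $E$ is immediate in each case: $(f_1\bullet f_2)(E_1\bullet E_2)(f_1\bullet f_2)=f_1E_1f_1\bullet f_2E_2f_2=f_1\bullet f_2$, and similarly $E(f_1\bullet f_2)=\id$ or $(f_1\bullet f_2)E=\id$. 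Since the converse directions of the cited results do not require the generator hypothesis, it then suffices to check two things: that $f_1\bullet f_2$ is a morphism in $\mathrm{Alg}(\cc^\circ)$, and that $E$ is an $(R_1\bullet R_2)$-bimodule morphism.

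The first point holds because, by \cite[Proposition 6.35]{AM10}, $\bullet$ is a functor on $\mathrm{Alg}(\cc^\circ)$. Concretely, $(f_1\bullet f_2)(m_{R_1}\bullet m_{R_2})\zeta=(m_{S_1}\bullet m_{S_2})((f_1\circ f_1)\bullet(f_2\circ f_2))\zeta$, and naturality of $\zeta$ moves $f$'s past it; compatibility with the units is immediate.

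The main computation is bimodule linearity of $E$. The left $R_1\bullet R_2$-action on $S_1\bullet S_2$ is $m_{S_1\bullet S_2}((f_1\bullet f_2)\circ \id)=(m_{S_1}\bullet m_{S_2})\zeta_{S_1,S_2,S_1,S_2}((f_1\bullet f_2)\circ\id)$. Using naturality of $\zeta$, this equals $(\mu^{R_1}_{S_1}\bullet\mu^{R_2}_{S_2})\zeta_{R_1,R_2,S_1,S_2}$, where $\mu^{R_i}_{S_i}=m_{S_i}(f_i\circ \id)$. Then
\[
E\,(\mu^{R_1}_{S_1}\bullet\mu^{R_2}_{S_2})\zeta=(E_1\mu^{R_1}_{S_1}\bullet E_2\mu^{R_2}_{S_2})\zeta=(m_{R_1}(\id\circ E_1)\bullet m_{R_2}(\id\circ E_2))\zeta_{R_1,R_2,S_1,S_2},
\]
and naturality of $\zeta$ in its last two variables turns this into $m_{R_1\bullet R_2}(\id\circ(E_1\bullet E_2))$. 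The right action is handled symmetrically. In other words, the interchange law makes $-\bullet-$ send bimodules to bimodules and bimodule maps to bimodule maps, and I would isolate this as a small auxiliary observation.

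The only expected obstacle is bookkeeping with the naturality of $\zeta$, in particular making sure the $R_i$-module structures on $S_i$ are the restricted ones, so that the rewriting of the action above is legitimate. Once $E$ is shown to be an $(R_1\bullet R_2)$-bimodule morphism with the required identity, Theorem \ref{thm:phi*semisep}, Theorem \ref{thm:sepmon} (via Remark \ref{rmk:notens-gen}) or Proposition \ref{prop:phi*natfull}, respectively, gives the conclusion for $(f_1\bullet f_2)^*$.
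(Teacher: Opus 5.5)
Your proposal is correct and follows essentially the same route as the paper. Both arguments extract bilinear $E_1,E_2$ from Theorem \ref{thm:phi*semisep} (resp.\ Theorem \ref{thm:sepmon}, Proposition \ref{prop:phi*natfull}), take $E_1\bullet E_2$, verify its $(R_1\bullet R_2)$-bilinearity by the same naturality-of-$\zeta$ computation you wrote, and conclude via functoriality of $\bullet$; the only difference is that you spell out why $f_1\bullet f_2$ is an algebra morphism, which the paper just notes in passing.
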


\begin{proof}
Suppose $f_1^*=-\circ_{R_1} S_1$ and $f_2^*=-\circ_{R_2} S_2$ are semiseparable (resp., separable, resp., naturally full). By Theorem \ref{thm:phi*semisep} (resp., Theorem \ref{thm:sepmon}, resp., Proposition \ref{prop:phi*natfull}), there exist an $R_1$-bilinear morphism $E_1:S_1\to R_1$ and an $R_2$-bilinear morphism $E_2:S_2\to R_2$ in $\cc$ such that $f_1E_1f_1=f_1$, $f_2E_2f_2=f_2$ (resp., $E_1f_1=\id_{R_1}$, $E_2f_2=\id_{R_2}$, resp., $f_1E_1=\id_{S_1}$, $f_2E_2=\id_{S_2}$). Due to the naturality of $\zeta$, we have $((f_1 \circ \id_{S_1})\bullet (f_2 \circ \id_{S_2})) \zeta_{R_1,R_2,S_1,S_2} = \zeta_{S_1,S_2,S_1,S_2}((f_1\bullet f_2)\circ \id_{S_1\bullet S_2})$ and $\zeta_{R_1,R_2,R_1,R_2}(\id_{R_1 \bullet R_2} \circ (E_1\bullet E_2))= ((\id_{R_1} \circ E_1)\bullet (\id_{R_2} \circ E_2)) \zeta_{R_1,R_2,S_1,S_2}$. Note that $f_1\bullet f_2$ is an algebra morphism in $\cc^\circ$, so it induces an $R_1\bullet R_2$-bimodule structure on $S_1\bullet S_2$. Thus, 
$$
\begin{aligned}
(E_1\bullet E_2)m_{S_1\bullet S_2}((f_1\bullet f_2)\circ \id_{S_1\bullet S_2}) &= (E_1\bullet E_2)(m_{S_1}\bullet m_{S_2})\zeta_{S_1,S_2,S_1,S_2}((f_1\bullet f_2)\circ \id_{S_1\bullet S_2})\\ 
&= (E_1\bullet E_2)(m_{S_1}\bullet m_{S_2})((f_1 \circ \id_{S_1})\bullet (f_2 \circ \id_{S_2})) \zeta_{R_1,R_2,S_1,S_2}\\
&= ((E_1 m_{S_1}(f_1 \circ \id_{S_1}))\bullet (E_2 m_{S_2}(f_2 \circ \id_{S_2})))\zeta_{R_1,R_2,S_1,S_2}\\
&= (m_{R_1}(\id_{R_1}\circ E_1))\bullet (m_{R_2}(\id_{R_2}\circ E_2))\zeta_{R_1,R_2,S_1,S_2}\\ 
&=(m_{R_1}\bullet m_{R_2})\zeta_{R_1,R_2,R_1,R_2}(\id_{R_1 \bullet R_2} \circ (E_1\bullet E_2))\\
&=m_{R_1\bullet R_2}(\id_{R_1 \bullet R_2} \circ (E_1\bullet E_2)).
\end{aligned}
$$
As a result, $E_1\bullet E_2$ is left $R_1 \bullet R_2$-linear. Similarly, one can check that it is also right $R_1 \bullet R_2$-linear. \begin{invisible}
 Indeed, \[\begin{aligned}
(E_1\bullet E_2)m_{S_1\bullet S_2}(\id_{S_1\bullet S_2}\circ (f_1\bullet f_2)) &= (E_1\bullet E_2)(m_{S_1}\bullet m_{S_2})\zeta_{S_1,S_2,S_1,S_2}( \id_{S_1\bullet S_2}\circ (f_1\bullet f_2))\\ 
&= (E_1\bullet E_2)(m_{S_1}\bullet m_{S_2})(( \id_{S_1}\circ f_1)\bullet (\id_{S_2}\circ f_2)) \zeta_{S_1,S_2,R_1,R_2}\\
&= ((E_1 m_{S_1}( \id_{S_1}\circ f_1))\bullet (E_2 m_{S_2}( \id_{S_2}\circ f_2)))\zeta_{S_1,S_2,R_1,R_2}\\
&= (m_{R_1}(E_1\circ\id_{R_1}))\bullet (m_{R_2}(E_2\circ\id_{R_2}))\zeta_{S_1,S_2,R_1,R_2}\\ 
&=(m_{R_1}\bullet m_{R_2})\zeta_{R_1,R_2,R_1,R_2}( (E_1\bullet E_2)\circ\id_{R_1 \bullet R_2} )\\
&=m_{R_1\bullet R_2}( (E_1\bullet E_2)\circ \id_{R_1 \bullet R_2}).
\end{aligned}   \]
\end{invisible}
Note that $(f_1\bullet f_2)(E_1\bullet E_2)(f_1\bullet f_2)=f_1\bullet f_2$ (resp., $(E_1\bullet E_2)(f_1\bullet f_2)=\id_{R_1}\bullet \id_{R_2}$, resp., $(f_1\bullet f_2)(E_1\bullet E_2)= \id_{S_1}\bullet \id_{S_2}$). 
It follows from Theorem \ref{thm:phi*semisep} (resp., Theorem \ref{thm:sepmon},  Proposition \ref{prop:phi*natfull}) that $(f_1\bullet f_2)^*= -\circ_{R_1\bullet R_2} (S_1\bullet S_2)$ is also semiseparable (resp., separable,  naturally full).
\end{proof}

The following corollaries are direct applications of the previous result.

\begin{cor}\label{additivemonoidal}
Let $(\cc,\otimes,1)$ be a monoidal category with finite products such that the unit $1$ is a left $\otimes$-generator. Suppose $f_1:R_1\to S_1$, $f_2:R_2\to S_2$ are algebra morphisms in $\cc$ such that $f_1^*=-\otimes_{R_1} S_1$ and $f_2^*=-\otimes_{R_2} S_2$ are semiseparable (resp., separable, naturally full), then $(f_1\times f_2)^*=-\otimes_{R_1\times R_2} (S_1\times S_2):\cc_{R_1\times R_2}\to \cc_{S_1\times S_2}$ is also semiseparable (resp., separable, naturally full).
\end{cor}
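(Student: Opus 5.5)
The plan is to realize $(\cc,\otimes,1)$ together with its finite product structure as a duoidal category and then apply Proposition \ref{prop:duoidal} directly. Take $\cc^\circ=(\cc,\otimes,1)$ and $\cc^\bullet=(\cc,\times,J)$, where $\times$ is the categorical product and $J$ is the terminal object. In any monoidal category with finite products, this data is duoidal; this is the dual of the cocartesian case recorded in \cite[Example 6.19]{AM10}, which we used above for ${}_H\m$ with $\oplus$.

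The structure maps are all induced by the universal property of the product. The interchange law is
\[
\zeta_{A,B,C,D}=\langle \pi_1\otimes\pi_1,\ \pi_2\otimes\pi_2\rangle:(A\times B)\otimes(C\times D)\to(A\otimes C)\times(B\otimes D),
\]
and $\delta=\langle \id_1,\id_1\rangle:1\to 1\times 1$ is the diagonal. The remaining maps $\varpi:J\otimes J\to J$ and $\tau:1\to J$ are the unique morphisms into the terminal object.

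The axioms of Definition \ref{defn:duoidalcategory} follow by testing against product projections:
\begin{itemize}
\item Any morphism into $J$ is unique, so $(J,\varpi,\tau)$ is automatically an algebra in $\cc^\circ$.
\item The diagonal, together with the terminal map, is coassociative and counital for $\times$, so $(1,\delta,\tau)$ is a coalgebra in $\cc^\bullet$.
\item The associativity and unitality conditions are verified componentwise after composing with the projections. For example, composing the left-hand side of \eqref{eq:unit1} with $\pi_1$ gives $l^\circ_A(\id_1\otimes\pi_1)$, which equals $\pi_1 l^\circ_{A\times B}$ by naturality of $l^\circ$. Conditions \eqref{eq:unit2} are immediate because $l^\bullet$ and $r^\bullet$ are the canonical projection isomorphisms.
\end{itemize}

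Under this identification, the algebra $R_1\bullet R_2$ is $R_1\times R_2$ with multiplication $(m_{R_1}\times m_{R_2})\zeta$ and unit $\langle u_{R_1},u_{R_2}\rangle$. This is exactly the usual product algebra, and $f_1\bullet f_2=f_1\times f_2$.

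The hypotheses of Proposition \ref{prop:duoidal} are then met. The unit $I=1$ is a left $\circ$-generator by assumption. The standing hypotheses of Subsection \ref{subsect:induct-duoidal} (coequalizers, and coflatness of objects in $\cc^\circ$) are inherited from the section's global assumptions on $\cc$. Proposition \ref{prop:duoidal} therefore yields the semiseparable, separable and naturally full cases together.

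The main obstacle is the bookkeeping in verifying associativity \eqref{eq:assoc1} and \eqref{eq:assoc2} for $\zeta$. My approach is to post-compose each side with every projection and reduce both to the same tensor of projections, using naturality of the associators. Alternatively, one could cite the standard fact that a cartesian structure paired with any monoidal structure is duoidal, which is a dual form of \cite[Example 6.19]{AM10}.

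A more hands-on route is also available, should one prefer to avoid duoidal axioms entirely. Take $E_1\times E_2$ as the required bimodule morphism and check its $R_1\times R_2$-linearity directly via projections. This is exactly the computation in the proof of Proposition \ref{prop:duoidal}, after which Theorem \ref{thm:phi*semisep}, Theorem \ref{thm:sepmon} and Proposition \ref{prop:phi*natfull} apply.
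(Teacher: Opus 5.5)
Your proposal is correct and is the paper's argument: the paper likewise views $\cc$ as a duoidal category with $\circ=\otimes$, $\bullet=\times$, $J$ the terminal object and interchange law $\zeta_{A,B,C,D}=\langle p_A\otimes p_C, p_B\otimes p_D\rangle$ (citing \cite[Example 6.19]{AM10}), and then applies Proposition \ref{prop:duoidal}. Your componentwise checks of the duoidal axioms and your remark on the direct route via $E_1\times E_2$ only add detail that the paper leaves to the citation.
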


\begin{proof}
It follows from Proposition \ref{prop:duoidal}, since we have the cartesian
monoidal structure $(C, \times, 0)$ and $\cc$ is a duoidal category with $\circ = \otimes$, $\bullet=\times$, in which $I=1$, $J=0$, $\delta:1\to 1\times 1$, $\varpi:0\otimes0\to 0$, $\tau=0_{1,0}:1 \to 0$. In this case, $\zeta_{A,B,C,D}:(A\times B)\otimes(C\times D) \to (A\otimes C)\times (B\otimes D)$ is defined by $\zeta_{A,B,C,D}=\langle
    p_A\otimes p_C,
    p_B\otimes p_D
\rangle$, see \cite[Example 6.19]{AM10}. 
\end{proof}


\begin{cor}\label{cor:prebraid}
Let $\cc$ be a pre-braided monoidal category satisfying the unit $1$ is a left $\otimes$-generator for $\cc$. Suppose $f_1:R_1\to S_1$, $f_2:R_2\to S_2$ are algebra morphisms in $\cc$ such that $f_1^*=-\otimes_{R_1} S_1$ and $f_2^*=-\otimes_{R_2} S_2$ are semiseparable (resp., separable, naturally full), then $(f_1\otimes f_2)^*=-\otimes_{R_1\otimes R_2} (S_1\otimes S_2)$ is also semiseparable (resp., separable, naturally full).
\end{cor}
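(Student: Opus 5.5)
The plan is to realize a pre-braided monoidal category as a duoidal category with both monoidal structures equal to $\otimes$, and then apply Proposition \ref{prop:duoidal} directly, exactly as Corollary \ref{additivemonoidal} is obtained from the cartesian case. Concretely, take $\circ=\bullet=\otimes$, $I=J=1$, $\delta=\varpi=\tau=\id_1$ (up to the unit constraints, which we suppress by strictness), and define the interchange law via the pre-braiding $c$ by
\[
\zeta_{A,B,C,D}:=\id_A\otimes c_{B,C}\otimes \id_D:(A\otimes B)\otimes (C\otimes D)\to (A\otimes C)\otimes (B\otimes D).
\]
That this gives a duoidal structure is the standard example of \cite[Example 6.18]{AM10}; I would cite it rather than reverify it, but if checking is needed, the associativity conditions \eqref{eq:assoc1} and \eqref{eq:assoc2} reduce to the two hexagon identities $c_{X,Y\otimes Z}=(\id_Y\otimes c_{X,Z})(c_{X,Y}\otimes\id_Z)$ and $c_{X\otimes Y,Z}=(c_{X,Z}\otimes \id_Y)(\id_X\otimes c_{Y,Z})$, and the unit conditions \eqref{eq:unit1}, \eqref{eq:unit2} reduce to $c_{1,X}=c_{X,1}=\id_X$, which follow from the hexagons. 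Importantly, invertibility of $c$ is never used, so a pre-braiding suffices.

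Next I would check that the hypotheses of Proposition \ref{prop:duoidal} are met. Here $\cc^\circ=(\cc,\otimes,1)$, so the standing assumptions (coequalizers, coflatness) are the ones already in force in Section \ref{sub:induct}, and $I=1$ is a left $\circ$-generator by hypothesis. The algebra structure on $R_1\bullet R_2=R_1\otimes R_2$ induced via \cite[Proposition 6.35]{AM10} is $m=(m_{R_1}\otimes m_{R_2})(\id\otimes c_{R_2,R_1}\otimes\id)$ with unit $u_{R_1}\otimes u_{R_2}$. This is the usual braided tensor product algebra, so $(f_1\otimes f_2)^*$ in the statement is exactly $(f_1\bullet f_2)^*$.

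Proposition \ref{prop:duoidal} then yields the semiseparable, separable and naturally full cases simultaneously. The only point needing care is to make sure the interchange law is oriented so that the induced algebra structure on $S_1\otimes S_2$ matches the one intended in the statement. Since the statement leaves that structure implicit, I would fix it to be the one induced by $c$, and no real obstacle remains.
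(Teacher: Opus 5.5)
Your route is the paper's own. Its proof of Corollary~\ref{cor:prebraid} observes that a pre-braided monoidal category is duoidal with $\circ=\bullet=\otimes$ and interchange law built from the pre-braiding, citing \cite[Proposition 6.14]{AM10}. Proposition~\ref{prop:duoidal} then gives all three cases at once. Your identification of the induced algebra structure on $R_1\otimes R_2$ is also correct.

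One remark in your optional verification sketch is wrong, however. The identities $c_{1,X}=\id_X=c_{X,1}$ do \emph{not} follow from the hexagons when $c$ is not invertible. Substituting the unit object into the hexagons only yields identities such as $c_{X,1}c_{X,1}=c_{X,1}$ and $c_{X,Y}(c_{X,1}\otimes\id_Y)=c_{X,Y}$. Concluding $c_{X,1}=\id$ requires cancelling $c_{X,Y}$, which is exactly where invertibility is used. Indeed, $c=0$ on $(\vect_\Bbbk,\otimes,\Bbbk)$ is natural and satisfies both hexagons, but $c_{X,\Bbbk}\neq\id$ for $X\neq 0$.

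So the statement ``invertibility of $c$ is never used'' is only true if the unit conditions are taken as part of the definition of a pre-braiding. With that convention, which you should state explicitly, \eqref{eq:unit1} and \eqref{eq:unit2} hold. The same conditions are also needed for $u_{R_1}\otimes u_{R_2}$ to be a unit for the braided product algebra $R_1\otimes R_2$, so the statement implicitly presupposes them. Once this is fixed, your argument is complete.
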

\begin{proof}
   It follows from Proposition \ref{prop:duoidal} since a pre-braided monoidal category is a duoidal category with $\circ = \bullet = \otimes$, see \cite[Proposition 6.14]{AM10}.
\end{proof}

\begin{es}\label{es:duoidal}
Let $H$ be a bialgebra over a field $\Bbbk$. By \cite[Remark 1.6]{Sar21}, see also  \cite[Proposition 2.7]{BPT25}, the category $\cc={}_H\m_H$ of $H$-bimodules is duoidal with monoidal structures $\cc^\circ =(\cc, \circ=\otimes_H, I=H)$ and $\cc^\bullet=(\cc, \bullet=\otimes_\Bbbk, J=\Bbbk )$. Recall from \cite[Section 2]{BPT25} that, if $H$ is an algebra over a field $\Bbbk$, an algebra in $\cc$ is a pair $(A,i)$ consisting of a $\Bbbk$-algebra $A$ and a $\Bbbk$-algebra morphism $i:H\to A$. An algebra morphism $f:(A,i)\to (B,j)$ in $\cc$ is a $\Bbbk$-algebra morphism $f:A\to B$ such that $fi=j$. By \cite[Lemma 2.1]{BPT25}, an algebra morphism in $\cc$ is a $\Bbbk$-algebra morphism that is at the same time left and right $H$-linear.

Assume that $H$ is a left $\circ$-generator. By Proposition \ref{prop:duoidal}, if $f$ and $g$ are algebra morphisms in $\cc^\circ=(\cc, \otimes_H, H)$ such that the induction functors $f^*$, $g^*$ are semiseparable (resp., separable, naturally full), then so is $(f\otimes_\Bbbk g)^{*}$. 
For instance, consider $H$ to be the group algebra $\Bbbk G$, where $G$ is a finite group such that $\mathrm{char}(\Bbbk)\nmid \vert G\vert$. By Example \ref{es:bimodules}, $\Bbbk G$ is a left $\otimes_{\Bbbk G}$-generator, and we know that $\psi:\Bbbk G\oplus \Bbbk G\to\Bbbk G$, $(a,b)\mapsto a$, and $\varphi: \Bbbk G\to \Bbbk G\otimes_\Bbbk \Bbbk G$, $\sum_{i=0}^{n-1} k_ig_i\mapsto \sum_{i=0}^{n-1}k_ig_i\otimes_\Bbbk g_i$ are algebra morphisms in $\cc^\circ$, such that the induction functor $\varphi^*$ is separable and the induction functor $\psi^*$ is naturally full. In particular, $\varphi^*$, $\psi^*$ are semiseparable. By Proposition \ref{prop:duoidal}, we obtain that $(\varphi\otimes_\Bbbk \psi)^*$ is semiseparable.
\end{es}
    

Now, we turn our attention to the cases where the unit object $I$ is not necessarily a left $\circ$-generator.

\begin{prop}\label{prop:duoidal-delta}
Let $\cc$ be a duoidal category. Let $R$, $S$ be algebras in $\cc^\circ$ such that the functors 
$-\circ R: \cc\to \cc_R$, $-\circ S:\cc\to\cc_S$ are semiseparable (resp., naturally full). Suppose $\delta:I \to I \bullet I$ is split-epi. Then, the functor 
$-\circ (R\bullet S):\cc\to\cc_{R\bullet S}$ is semiseparable (resp., naturally full).
\end{prop}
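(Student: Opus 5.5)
By Proposition \ref{prop:algebra-induct-functor} (applied in the monoidal category $\cc^\circ$, whose unit is $I$), the semiseparability (resp., natural fullness) of $-\circ R$ and $-\circ S$ is equivalent to the regularity (resp., split-epi property) of the units $u_R:I\to R$ and $u_S:I\to S$ as plain morphisms in $\cc$. So we may fix morphisms $\chi_R:R\to I$ and $\chi_S:S\to I$ with $u_R\chi_Ru_R=u_R$ and $u_S\chi_Su_S=u_S$ (resp., $u_R\chi_R=\id_R$ and $u_S\chi_S=\id_S$). Since $\delta$ is split-epi, we also fix $\delta':I\bullet I\to I$ with $\delta\delta'=\id_{I\bullet I}$. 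The same Proposition reduces the claim to showing that the unit $u_{R\bullet S}=(u_R\bullet u_S)\delta$ of the algebra $R\bullet S$ in $\cc^\circ$ is regular (resp., split-epi). No generator hypothesis and no bimodule linearity are needed.

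The candidate quasi-inverse is
\[
\chi:=\delta'(\chi_R\bullet\chi_S):R\bullet S\to I .
\]
For the semiseparable case, bifunctoriality of $\bullet$ and $\delta\delta'=\id$ give
\[
u_{R\bullet S}\,\chi\,u_{R\bullet S}=(u_R\bullet u_S)\delta\delta'(\chi_R\bullet\chi_S)(u_R\bullet u_S)\delta=(u_R\chi_Ru_R\bullet u_S\chi_Su_S)\delta=(u_R\bullet u_S)\delta=u_{R\bullet S}.
\]
For the naturally full case, the same computation gives
\[
u_{R\bullet S}\,\chi=(u_R\bullet u_S)\delta\delta'(\chi_R\bullet\chi_S)=u_R\chi_R\bullet u_S\chi_S=\id_{R\bullet S}.
\]
Applying Proposition \ref{prop:algebra-induct-functor} again then yields the conclusion for $-\circ(R\bullet S)$.

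The only point needing care is that $R\bullet S$ is indeed an algebra in $\cc^\circ$ with unit $(u_R\bullet u_S)\delta$. This is recalled from \cite[Proposition 6.35]{AM10} before Subsection \ref{subsect:induct-duoidal}. Beyond that, the main thing to check is where the hypothesis on $\delta$ enters: the composite $\delta\delta'$ sits in the middle of the expression, so it is the splitting $\delta\delta'=\id$ that is required, and this is exactly the split-epi assumption. This also explains why the separable case is absent from the statement: it would instead need $\chi u_{R\bullet S}=\delta'(\chi_Ru_R\bullet\chi_Su_S)\delta=\delta'\delta=\id_I$, that is, $\delta$ split-mono.
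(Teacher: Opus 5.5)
Your proof is correct and is essentially the paper's argument. You reduce via Proposition \ref{prop:algebra-induct-functor} to the unit $u_{R\bullet S}=(u_R\bullet u_S)\delta$ and use the section $\delta'$ (called $\lambda$ in the paper) to form $\chi=\delta'(\chi_R\bullet\chi_S)$, as the paper does. One small correction to your closing remark: the separable case is not missing because it would need an extra hypothesis. In fact $\delta$ is always split-mono by counitality of $(I,\delta,\tau)$ in $\cc^\bullet$, with left inverse $r^\bullet_I(\id_I\bullet\tau)$, and this is exactly how the paper proves the separable version in the next proposition with no assumption on $\delta$.
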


\begin{proof}
By Proposition \ref{prop:algebra-induct-functor}, there are morphisms $\chi_R$, $\chi_S$ such that $u_R \chi_R u_R = u_R$, $u_S \chi_S u_S = u_S$ (resp., $u_R \chi_R = \id_R$, $u_S \chi_S = \id_S$). It follows that $(u_R \bullet u_S) (\chi_R \bullet \chi_S) (u_R \bullet u_S) = u_R \bullet u_S$ (resp., $(u_R \bullet u_S) (\chi_R \bullet \chi_S) = \id_R \bullet \id_S$). Since $\delta \lambda = \id_{I \bullet I}$ for some morphism $\lambda:I\bullet I\to I$, we get $(u_R \bullet u_S) \delta \lambda (\chi_R \bullet \chi_S) (u_R \bullet u_S) \delta = (u_R \bullet u_S) \delta$ (resp., $(u_R \bullet u_S) \delta \lambda (\chi_R \bullet \chi_S) = \id_R \bullet \id_S$). 
This means $u_{R\bullet S} \lambda (\chi_R \bullet \chi_S) u_{R\bullet S} = u_{R\bullet S}$ (resp., $u_{R\bullet S} \lambda (\chi_R \bullet \chi_S) = \id_R\bullet \id_S$), i.e. $u_{R\bullet S}$ is regular (resp., split-epi). By Proposition \ref{prop:algebra-induct-functor} again, the functor $-\circ (R\bullet S)$ is semiseparable (resp., naturally full).
\end{proof}

\begin{prop}
Let $\cc$ be a duoidal category. Let $R$, $S$ be algebras in $\cc^\circ$ such that the functors $-\circ R:\cc\to\cc_R$, $-\circ S:\cc\to\cc_S$ are separable. Then, $-\circ (R\bullet S):\cc\to\cc_{R\bullet S}$ is separable.
\end{prop}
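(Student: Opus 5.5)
We reduce to the unit criterion of Proposition \ref{prop:algebra-induct-functor}: $-\circ R$ is separable if and only if $u_R$ is split-mono in $\cc$. So there are $\chi_R:R\to I$ and $\chi_S:S\to I$ with $\chi_R u_R=\id_I$ and $\chi_S u_S=\id_I$. We must then produce $\chi:R\bullet S\to I$ with $\chi\, u_{R\bullet S}=\id_I$, where $u_{R\bullet S}=(u_R\bullet u_S)\delta$.

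Unlike Proposition \ref{prop:duoidal-delta}, no hypothesis on $\delta$ is available, so the retraction of $\delta$ must come from the duoidal structure itself. Since $(I,\delta,\tau)$ is a coalgebra in $\cc^\bullet$, counitality gives
\[
l^\bullet_I(\tau\bullet\id_I)\delta=\id_I ,
\]
and, in the strict setting, $(\tau\bullet \id_I)\delta=\id_I$. Thus $\delta$ is always split-mono, with retraction $\lambda:=l^\bullet_I(\tau\bullet\id_I):I\bullet I\to I$. (The counit axiom on the other side would work equally well.)

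We then set
\[
\chi:=\lambda(\chi_R\bullet\chi_S):R\bullet S\to I
\]
and compute
\[
\chi\, u_{R\bullet S}=\lambda(\chi_R\bullet\chi_S)(u_R\bullet u_S)\delta=\lambda(\chi_Ru_R\bullet\chi_Su_S)\delta=\lambda\delta=\id_I .
\]
This uses only functoriality of $\bullet$. Hence $u_{R\bullet S}$ is split-mono, and Proposition \ref{prop:algebra-induct-functor} yields that $-\circ(R\bullet S)$ is separable.

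The main point to check is that $\delta$ is split-mono in general. This is exactly the counit axiom of the coalgebra $(I,\delta,\tau)$ in $\cc^\bullet$ from Definition \ref{defn:duoidalcategory}, so we expect no obstacle beyond writing out the unit constraints.
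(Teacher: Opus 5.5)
Your proposal is correct and follows the paper's proof. You reduce via Proposition \ref{prop:algebra-induct-functor} to showing that $u_{R\bullet S}=(u_R\bullet u_S)\delta$ is split-mono, using that $\delta$ always has a retraction coming from the counit $\tau$ of the $\bullet$-coalgebra $I$; the only cosmetic difference is that the paper uses the right counit axiom, taking $r^\bullet_I(\id_I\bullet\tau)(\chi_R\bullet\chi_S)$ as the retraction, whereas you use the left one.
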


\begin{proof}
By Proposition \ref{prop:algebra-induct-functor}, there are morphisms $\chi_R: R\to I$, $\chi_S:S\to I$ in $\cc$ such that $\chi_R u_R = \id_I$, $\chi_S u_S = \id_I$, hence we have that $r^\bullet_I(\id_I \bullet \tau) (\chi_R \bullet \chi_S) u_{R\bullet S}= r^\bullet_I(\id_I \bullet \tau)(\chi_R \bullet \chi_S) (u_R \bullet u_S) \delta = r^\bullet_I(\id_I \bullet \tau) \delta = r^\bullet_I(r^\bullet_I)^{-1}=\id_{I}$, where $\tau: I \to J$ is the counit of $I$ in $\cc^{\bullet}$.
\end{proof}

\begin{es}
    Let $X$ be a set. We consider the duoidal category $\mathsf{span}(X)$ of spans over $X$, see \cite[4.2]{BCZ13}, which was introduced in \cite[Example 6.17]{AM10} under the name of \emph{category of directed graphs with vertex set $X$}. The objects of $\mathsf{span}(X)$ are triples $(A,s,t)$, where $A$ is a set and $s$, $t$ are maps $A\to X$, called the \emph{source} and \emph{target maps}, respectively. The morphisms in $\mathsf{span}(X)$ are maps $f: (A,s,t)\to (A',s',t')$ such that $s'f=s$ and $t'f=t$.

For any spans $A,B$ over $X$, one monoidal structure is 
\[
A\circ B= \{ (a,b)\in A\times B\,\vert\, s(a)=t(b)\}\quad \text{and}\quad I=(X, \id,\id),
\]
where $s(a,b):=s(b)$ and $t(a,b):=t(a)$, 
while the other is 
\[
A\bullet B= \{(a,b)\in A\times B\,\vert\, s(a)=s(b), t(a)=t(b)\}\quad \text{and}\quad J=(X\times X, p_2, p_1),
\]
where $p_1$, $p_2$ are the canonical projections and $s(a,b):=s(a)=s(b)$, $t(a,b):=t(a)=t(b)$. 
The interchange law is
\[
\zeta := (A\bullet B)\circ (A'\bullet B')\to (A\circ A')\bullet (B\circ B'),\quad (a,b,a',b')\mapsto (a,a',b,b'). 
\]
The $\circ$-monoidal unit $I$ is a comonoid with respect to $\bullet$ via the comultiplication 
\[
\delta : I\to I \bullet I = \{ (x,y)\in X\times X\,\vert\, x=y\}\cong I,\quad x\mapsto (x,x)\cong x.
\]
The $\bullet$-monoidal unit $J$ is a monoid with respect to $\circ$ via the multiplication
\[
\varpi :J\circ J =\{ (x,y,x',y')\,\vert\, y=x'\}\to J, \quad (x,y,x', y')\mapsto (x,y').
\]
The counit of the comonoid $I$ and the unit of the monoid $J$ are both given by 
$\tau:I\to J$, $x\mapsto (x,x)$.

We adopt some notations here. Let $A$ be an object in $\mathsf{span}(X)$. For any pair of vertices $i,j$ in $X$, the set of arrows from $i$ to $j$ in $A$, i.e. of elements $a\in A$ such that $s(a)=i$ and $t(a)=j$,  is denoted by $\mathrm{Arr}_{A}(i,j)$, which is a subset of $A$. For a morphism $u:A \to B$ in $\mathsf{span}(X)$, we denote the restriction of $u$ to the arrow set $\mathrm{Arr}_{A}(i,j)$ by $u_{ij}: \mathrm{Arr}_{A}(i,j) \to \mathrm{Arr}_{B}(i,j)$.

For two morphisms $u,v: A_1 \to A_2$ in $\mathsf{span}(X)$, we claim that their coequalizer exists. For any pair of vertices $i,j$ in $X$, we define  $\mathrm{Coeq}(u,v)$ by the directed graph such that $\mathrm{Arr}_{\mathrm{Coeq}(u,v)}(i,j):= \mathrm{Coeq}(u_{ij},v_{ij})$, where the latter is the coequalizer of $u_{ij}$, $v_{ij}$ in $\Set$. For any morphism $h: A_2 \to Z$ in $\mathsf{span}(X)$ such that $hu=hv$, we know $h_{ij}u_{ij}=h_{ij}v_{ij}$. Therefore, by the universal property of the coequalizer $q_{ij}: \mathrm{Arr}_{A_2}(i,j) \to \mathrm{Arr}_{\mathrm{Coeq}(u,v)}(i,j)$ in $\Set$, we obtain a unique map $l_{ij}:\mathrm{Arr}_{\mathrm{Coeq}(u,v)}(i,j) \to \mathrm{Arr}_{Z}(i,j)$ such that $l_{ij} q_{ij} = h_{ij}$ for any pair of vertices $i$, $j$ in $X$, and hence a morphism $l:\mathrm{Coeq}(u,v) \to Z$ such that $l q = h$, where $q: A_2 \to \mathrm{Coeq}(u,v)$ is induced by $q_{ij}$ for all vertices $i,j$ in $X$. This means $\mathrm{Coeq}(u,v)$ is the coequalizer of $u,v$ in $\mathsf{span}(X)$.  

Next, we claim that every object in $(\mathsf{span}(X), \circ)$ is coflat. Let $A_1$, $A_2$, $B$ be objects in $\mathsf{span}(X)$. For two morphisms $u,v: A_1 \to A_2$ in $\mathsf{span}(X)$, we want to show $\id_{B} \circ\mathrm{Coeq}(u, v)$ is the coequalizer of $\id_{B} \circ u$ and $\id_{B} \circ v$. Suppose there is a morphism $h: B\circ A_2 \to Z$ in $\mathsf{span}(X)$ such that $h(\id_{B} \circ u) = h (\id_{B} \circ v)$, i.e.\ for any element $b\circ a$ in $B\circ A_1$, we have $h(b\circ u(a))= h(b\circ v(a))$. We claim that $h_{s(a),t(b)}(\id \times u_{s(a),t(a)}) = h_{s(a),t(b)}(\id \times v_{s(a),t(a)})$. Indeed, for any $(b', a')$ in $\mathrm{Arr}_B(t(a),t(b)) \times \mathrm{Arr}_{A_1}(s(a),t(a)) = \mathrm{Arr}_{B\circ A_1}(s(a), t(b))$, we have 
\begin{align*}
h_{s(a),t(b)}(\id \times u_{s(a),t(a)})(b',a') = &h_{s(a),t(b)}(b',u_{s(a),t(a)}(a')) = h_{s(a),t(b)}(b'\circ u(a')) = h(b'\circ u(a')) \\ = &h(b'\circ v(a')) = h_{s(a),t(b)}(\id \times v_{s(a),t(a)})(b',a').
\end{align*}
Since every object in $\Set$ is coflat, we know that $\mathrm{Arr}_B(t(a), t(b))\times \mathrm{Arr}_{\mathrm{Coeq(u,v)}}(s(a), t(a))$ is the coequalizer of $\id \times u_{s(a),t(a)}$ and $\id \times v_{s(a),t(a)}$. Consequently, by its universal property, there exists a unique  map $l_{s(a),t(b)}$ such that the diagram 
\[
\xymatrixcolsep{1.2cm}\xymatrix{
\mathrm{Arr}_B(t(a), t(b))\times \mathrm{Arr}_{A_2}(s(a), t(a))\ar[r]^-{\id\times  q_{s(a),t(a)}}\ar[d]_-{h_{s(a),t(b)}}&\mathrm{Arr}_B(t(a), t(b))\times \mathrm{Arr}_{\mathrm{Coeq(u,v)}}(s(a), t(a)) \ar@{.>}[ld]^-{l_{s(a),t(b)} }\\\mathrm{Arr}_Z(s(a), t(b)) &}
\]
commutes. Since $a$ is in $A_1$, the maps $l_{s(a),t(b)}$ are not sufficient to determine a morphism from $B\circ \mathrm{Coeq(u,v)}$ to $Z$ directly. To overcome this, we discuss two different cases.

For any element $b\circ a'$ in $B\circ A_2$, $(b, a')$ is in $\mathrm{Arr}_B(t(a'), t(b))\times \mathrm{Arr}_{A_2}(s(a'), t(a'))$. In the first case, suppose there is an arrow $a$ in $\mathrm{Arr}_{A_1}(s(a), t(a))$ such that $s(a) = s(a')$, $t(a) = t(a')$. This means $(b, a')$ is in $\mathrm{Arr}_B(t(a), t(b))\times \mathrm{Arr}_{A_2}(s(a), t(a))$. As said above, there is a unique  $l_{s(a'),t(b)}$ such that $h_{s(a'),t(b)} = l_{s(a'),t(b)}(\id\times  q_{s(a'),t(a')})$. In the second case, there is no arrow in $\mathrm{Arr}_{A_1}(s(a'),t(a'))$ i.e. $\mathrm{Arr}_{A_1}(s(a'),t(a'))$ is an empty set. By definition, 
$$
\mathrm{Arr}_{\mathrm{Coeq(u,v)}}(s(a'), t(a'))= \mathrm{Coeq}(u_{s(a')t(a')},v_{s(a')t(a')}) = \mathrm{Arr}_{A_2}(s(a'),t(a')),
$$
since the domains of $u_{s(a')t(a')}$ and $v_{s(a')t(a')}$ are empty sets. Also note that $q_{s(a'),t(a')} = \id$. Hence, we define $l_{s(a'),t(a')} = h_{s(a'),t(a')}$. It is clear that $h_{s(a'),t(b)} = l_{s(a'),t(b)}(\id\times  q_{s(a'),t(a')})$. In summary, there is a unique morphism $l: B\circ \mathrm{Coeq(u,v)} \to Z$ such that $h= l (\id_B\circ q)$.

Note that Proposition \ref{prop:duoidal-delta} applies as $\delta$ is an isomorphism. Moreover, an algebra in $(\mathsf{span}(X), \circ )$ is precisely a small category with object set $X$, see \cite[Example 6.43]{AM10}.


\end{es}

\subsection{Combination of coinduction functors} Now, let $\cc$ be a duoidal category with equalizers and we assume that any object in $\cc^\bullet$ is flat. We consider the coinduction functor attached to a coalgebra morphism in $\cc^\bullet$, as defined in Section \ref{sect:coinduct}. Here, given a coalgebra morphism $\psi:C\to D$ in $\cc^\bullet$, for every $M$ in $\cc^D$, we denote  $M\blacksquare_D C :=\mathrm{Eq}(\rho^D_M\bullet\id_C, \id_M\bullet \lambda^D_C)$. 

\begin{prop}\label{prop:duoidal-coind}
Suppose $J$ is a left $\bullet$-cogenerator of $(\cc,\bullet,J)$. Let $f_1:C_1\to D_1$, $f_2:C_2\to D_2$ be morphisms in $\Coalg(\cc^{\bullet})$ such that the coinduction functors $f_1^*=-\blacksquare_{D_1} C_1:\cc^{D_1}\to\cc^{C_1}$ and $f_2^*=-\blacksquare_{D_2} C_2:\cc^{D_2}\to\cc^{C_2}$ are semiseparable (resp., separable, naturally full). Then, $(f_1\circ f_2)^*= -\blacksquare_{D_1\circ D_2} (C_1\circ C_2):\cc^{D_1\circ D_2}\to \cc^{C_1\circ C_2}$ is also semiseparable (resp., separable, naturally full).
\end{prop}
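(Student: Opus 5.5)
This is the dual of Proposition \ref{prop:duoidal}, and I would argue exactly as there, working in the monoidal category $\cc^\bullet$. Since $J$ is a left $\bullet$-cogenerator, Theorem \ref{prop:semisep-coind-nocogen} (resp., Proposition \ref{prop:sep-coind-nocogen}, resp., Proposition \ref{prop:natfull-coind}) applies to $f_1^*$ and $f_2^*$. It yields a $D_1$-bicomodule morphism $\chi_1:D_1\to C_1$ and a $D_2$-bicomodule morphism $\chi_2:D_2\to C_2$ in $\cc^\bullet$ such that $f_i\chi_i f_i=f_i$ (resp., $f_i\chi_i=\id_{D_i}$, resp., $\chi_i f_i=\id_{C_i}$) for $i=1,2$. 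The candidate for $f_1\circ f_2$ is then $\chi_1\circ\chi_2:D_1\circ D_2\to C_1\circ C_2$. By functoriality of $\circ$ we have $(f_1\circ f_2)(\chi_1\circ\chi_2)(f_1\circ f_2)=f_1\circ f_2$, and similarly in the separable and naturally full cases.

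The remaining work is to check that $\chi_1\circ\chi_2$ is a $(D_1\circ D_2)$-bicomodule morphism. We know $f_1\circ f_2$ is a coalgebra morphism in $\cc^\bullet$ for the structure $\Delta_{C_1\circ C_2}=\zeta_{C_1,C_1,C_2,C_2}(\Delta_{C_1}\circ\Delta_{C_2})$ recalled from \cite[Proposition 6.35]{AM10}. Hence the left coaction on $C_1\circ C_2$ is $((f_1\circ f_2)\bullet\id)\zeta_{C_1,C_1,C_2,C_2}(\Delta_{C_1}\circ\Delta_{C_2})$. By naturality of $\zeta$ this equals $\zeta_{D_1,C_1,D_2,C_2}(\lambda^{D_1}_{C_1}\circ\lambda^{D_2}_{C_2})$, where $\lambda^{D_i}_{C_i}=(f_i\bullet\id)\Delta_{C_i}$.

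For left colinearity I would then compute
\[
\zeta_{D_1,C_1,D_2,C_2}(\lambda^{D_1}_{C_1}\circ\lambda^{D_2}_{C_2})(\chi_1\circ\chi_2)
=\zeta_{D_1,C_1,D_2,C_2}\big(((\id\bullet\chi_1)\Delta_{D_1})\circ((\id\bullet\chi_2)\Delta_{D_2})\big),
\]
using the colinearity of each $\chi_i$. Naturality of $\zeta$ then rewrites the right-hand side as $(\id_{D_1\circ D_2}\bullet(\chi_1\circ\chi_2))\Delta_{D_1\circ D_2}$. The right colinearity is the mirror computation, with $\zeta_{C_1,D_1,C_2,D_2}$ in place of $\zeta_{D_1,C_1,D_2,C_2}$.

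Finally, I would apply the converse direction of the same three results to $f_1\circ f_2$ to conclude. That direction does not even require the cogenerator hypothesis; it only uses that $\cc$ has equalizers and that objects of $\cc^\bullet$ are flat, both of which are standing assumptions of this subsection. The only step needing care is the naturality bookkeeping of $\zeta$ in the colinearity check. It is formal and exactly dual to the linearity computation for $E_1\bullet E_2$ in Proposition \ref{prop:duoidal}, so I do not expect a genuine obstacle.
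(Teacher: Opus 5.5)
Your proposal is correct and follows the paper's proof essentially step for step. You use the cogenerator hypothesis to extract the bicolinear morphisms $\chi_i$, then check that $\chi_1\circ\chi_2$ is $(D_1\circ D_2)$-bicolinear via naturality of $\zeta$, and conclude by the cogenerator-free implication $(i)\Rightarrow(iii)$ of the same three results; your colinearity computation is the paper's, read in the opposite direction.
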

\begin{proof}
    Suppose $f_1^*=-\blacksquare_{D_1}C_1$ and $f_2^*=-\blacksquare_{D_2}C_2$ are semiseparable (resp., separable, naturally full). By Theorem \ref{prop:semisep-coind-nocogen} (resp., Proposition \ref{prop:sep-coind-nocogen}, Proposition \ref{prop:natfull-coind}), there exist a $D_1$-bicolinear morphism $\chi_1: D_1\to C_1$ such that $f_1 \chi_1 f_1= f_1$ (resp., $f_1\chi_1=\id_{D_1}$, $\chi_1 f_1=\id_{C_1}$)
    and a $D_2$-bicolinear morphism $\chi_2: D_2\to C_2$ such that $f_2 \chi_2 f_2= f_2$   
    (resp., $f_2\chi_2=\id_{D_2}$, $\chi_2 f_2=\id_{C_2}$). We check that $\chi_1\circ\chi_2$ is $(D_1\circ D_2)$-colinear.     Note that $(f_1\circ f_2)(\chi_1\circ\chi_2)(f_1\circ f_2)=f_1\circ f_2$ (resp., $(f_1\circ f_2)(\chi_1\circ \chi_2)=\id_{D_1\circ D_2}$, $(\chi_1\circ \chi_2)(f_1\circ f_2)=\id_{C_1\circ C_2}$). We check that $\chi_1\circ\chi_2$ is a $(D_1\circ D_2)$-bicolinear morphism in $\cc$. We have 
    \[
    \begin{split}
    (\id_{D_1\circ D_2}\bullet (\chi_1\circ\chi_2))\Delta_{D_1\circ D_2}&= (\id_{D_1\circ D_2}\bullet (\chi_1\circ\chi_2))\zeta_{D_1,D_1, D_2, D_2}(\Delta_{D_1}\circ\Delta_{D_2})\\&=\zeta_{D_1, C_1, D_2, C_2}((\id_{D_1}\bullet\chi_1)\circ (\id_{D_2}\bullet\chi_2))(\Delta_{D_1}\circ\Delta_{D_2})\\&=\zeta_{D_1, C_1, D_2, C_2}((\id_{D_1}\bullet\chi_1)\Delta_{D_1}\circ (\id_{D_2}\bullet\chi_2)\Delta_{D_2})\\&=\zeta_{D_1, C_1, D_2, C_2}(((f_1\bullet C_1)\Delta_{C_1}\chi_1)\circ ((f_2\bullet C_2)\Delta_{C_2}\chi_2))\\&=\zeta_{D_1, C_1, D_2, C_2}((f_1\bullet C_1)\circ (f_2\bullet C_2))(\Delta_{C_1}\circ \Delta_{C_2})(\chi_1\circ\chi_2)
    \\&=((f_1\circ f_2)\bullet \id_{C_1\circ C_2})\zeta_{C_1,C_1,C_2,C_2}(\Delta_{C_1}\circ\Delta_{C_2})(\chi_1\circ\chi_2)\\&=\lambda^{D_1\circ D_2}_{C_1\circ C_2}(\chi_1\circ\chi_2)
   \end{split} \]
Thus, $\chi_1\circ\chi_2$ is left $(D_1\circ D_2)$-colinear in $\cc$. Similarly, one can check that it is also right  $(D_1\circ D_2)$-colinear. By Theorem \ref{prop:semisep-coind-nocogen} (resp., Proposition \ref{prop:sep-coind-nocogen}, Proposition \ref{prop:natfull-coind}) $(f_1\circ f_2)^*$ is semiseparable (resp, separable, naturally full).  
\end{proof}

Finally, we present direct applications of Proposition \ref{prop:coalgebra-induct-functor} where the unit object $J$ is not necessarily a left $\bullet$-cogenerator.

\begin{prop}\label{prop:duoidal-coalgebras}
Let $\cc$ be a duoidal category. Let $C$, $D$ be coalgebras in $\cc^\bullet$ such that the functors $-\bullet C:\cc\to\cc^C$ and $-\bullet D:\cc\to\cc^D$ are semiseparable (resp., naturally full). Suppose $\varpi:J\circ J \to J$ is split-mono. Then, $-\bullet (C\circ D):\cc\to\cc^{C\circ D}$ is semiseparable (resp., naturally full).
\end{prop}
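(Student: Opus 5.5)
This is the dual of Proposition \ref{prop:duoidal-delta}, and I would argue through Proposition \ref{prop:coalgebra-induct-functor}. In the semiseparable case that proposition gives morphisms $\chi_C:J\to C$ and $\chi_D:J\to D$ in $\cc$ with $\varepsilon_C\chi_C\varepsilon_C=\varepsilon_C$ and $\varepsilon_D\chi_D\varepsilon_D=\varepsilon_D$. In the naturally full case it gives $\chi_C,\chi_D$ with $\chi_C\varepsilon_C=\id_C$ and $\chi_D\varepsilon_D=\id_D$. Here the monoidal structure is $\cc^\bullet$, so the unit is $J$, and the counits are $\varepsilon_C:C\to J$ and $\varepsilon_D:D\to J$.

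Apply the functor $\circ$ to these identities. In the semiseparable case this gives $(\varepsilon_C\circ\varepsilon_D)(\chi_C\circ\chi_D)(\varepsilon_C\circ\varepsilon_D)=\varepsilon_C\circ\varepsilon_D$. In the naturally full case it gives $(\chi_C\circ\chi_D)(\varepsilon_C\circ\varepsilon_D)=\id_{C\circ D}$.

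Since $\varpi$ is split-mono, choose $\lambda:J\to J\circ J$ with $\lambda\varpi=\id_{J\circ J}$. Recall that the counit of $C\circ D$ in $\cc^\bullet$ is $\varepsilon_{C\circ D}=\varpi(\varepsilon_C\circ\varepsilon_D)$. Set $\chi:=(\chi_C\circ\chi_D)\lambda:J\to C\circ D$.

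In the semiseparable case,
\[
\varepsilon_{C\circ D}\,\chi\,\varepsilon_{C\circ D}=\varpi(\varepsilon_C\circ\varepsilon_D)(\chi_C\circ\chi_D)\lambda\varpi(\varepsilon_C\circ\varepsilon_D).
\]
Using $\lambda\varpi=\id_{J\circ J}$ and then the $\circ$-image of the regularity identity, this equals $\varpi(\varepsilon_C\circ\varepsilon_D)=\varepsilon_{C\circ D}$. So $\varepsilon_{C\circ D}$ is regular.

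In the naturally full case,
\[
\chi\,\varepsilon_{C\circ D}=(\chi_C\circ\chi_D)\lambda\varpi(\varepsilon_C\circ\varepsilon_D)=(\chi_C\circ\chi_D)(\varepsilon_C\circ\varepsilon_D)=\id_{C\circ D}.
\]
So $\varepsilon_{C\circ D}$ is split-mono.

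A final application of Proposition \ref{prop:coalgebra-induct-functor} then shows that $-\bullet(C\circ D):\cc\to\cc^{C\circ D}$ is semiseparable, respectively naturally full. There is no real obstacle. The only point to check is that the counit of $C\circ D$ is really $\varpi(\varepsilon_C\circ\varepsilon_D)$, as recalled from \cite[Proposition 6.35]{AM10}, and that Proposition \ref{prop:coalgebra-induct-functor} is applied in the monoidal category $\cc^\bullet$.
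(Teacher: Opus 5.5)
Your proposal is correct and follows the paper's proof essentially verbatim: you obtain $\chi_C,\chi_D$ from Proposition \ref{prop:coalgebra-induct-functor}, insert the retraction $\lambda$ of $\varpi$, and verify that $(\chi_C\circ\chi_D)\lambda$ witnesses regularity (resp., split-monicity) of $\varepsilon_{C\circ D}=\varpi(\varepsilon_C\circ\varepsilon_D)$. You then conclude by Proposition \ref{prop:coalgebra-induct-functor} again, exactly as the paper does.
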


\begin{proof}
By Proposition \ref{prop:coalgebra-induct-functor}, there are morphisms $\chi_C:J\to C$, $\chi_D:J\to D$ in $\cc$ such that $\varepsilon_C \chi_C \varepsilon_C = \varepsilon_C$, $\varepsilon_D \chi_D \varepsilon_D = \varepsilon_D$ (resp., $\chi_C \varepsilon_C = \id_C$, $\chi_D \varepsilon_D= \id_D$). It follows that $(\varepsilon_C \circ \varepsilon_D) (\chi_C \circ \chi_D) (\varepsilon_C \circ \varepsilon_D) = \varepsilon_C \circ \varepsilon_D$ (resp., $(\chi_C \circ \chi_D) (\varepsilon_C \circ \varepsilon_D) = \id_C \circ \id_D$). Since $\varpi$ is split-mono, there exists a morphism $\lambda$ in $\cc$ such that $\lambda \varpi = \id_{J \circ J}$. Then, $\varepsilon_{C\circ D} (\chi_C \circ \chi_D) \lambda\varepsilon_{C\circ D}=\varpi(\varepsilon_C \circ \varepsilon_D) (\chi_C \circ \chi_D) \lambda \varpi (\varepsilon_C \circ \varepsilon_D) = \varpi(\varepsilon_C \circ \varepsilon_D) = \varepsilon_{C\circ D}$ (resp., $(\chi_C \circ \chi_D) \lambda \varepsilon_{C\circ D}=(\chi_C \circ \chi_D) \lambda \varpi(\varepsilon_C \circ \varepsilon_D) = \id_C \circ \id_D$). Therefore, $\varepsilon_{C\circ D}$ is regular (resp., split-mono). By Proposition \ref{prop:coalgebra-induct-functor} again, we obtain that $-\bullet (C\circ D)$ is semiseparable (resp., naturally full).
\end{proof}

\begin{prop}\label{prop:duoidal-coalgebras-sep}
Let $\cc$ be a duoidal category. Let $C$, $D$ be coalgebras in $\cc^\bullet$ such that $-\bullet C$, $-\bullet D$, are separable. Then, $-\bullet (C\circ D)$ is separable.
\end{prop}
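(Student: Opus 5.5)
This is the dual of the separable statement for $-\circ(R\bullet S)$ proved just above, and I would prove it the same way, through Proposition \ref{prop:coalgebra-induct-functor}. Since $-\bullet C$ and $-\bullet D$ are separable, that proposition, applied in $\cc^\bullet$, gives morphisms $\chi_C:J\to C$ and $\chi_D:J\to D$ with $\varepsilon_C\chi_C=\id_J$ and $\varepsilon_D\chi_D=\id_J$. Recall that the counit of $C\circ D$ in $\mathrm{Coalg}(\cc^\bullet)$ is $\varepsilon_{C\circ D}=\varpi(\varepsilon_C\circ\varepsilon_D)$. By the same proposition it therefore suffices to produce $\chi:J\to C\circ D$ with $\varepsilon_{C\circ D}\chi=\id_J$.

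We cannot simply use a splitting of $\varpi$, because none is assumed. Instead we use the unit structure of the algebra $(J,\varpi,\tau)$ in $\cc^\circ$ together with the right unitor of $\circ$. I would set
\[
\chi:=(\chi_C\circ\chi_D)(\id_J\circ\tau)(r^\circ_J)^{-1}:J\to C\circ D.
\]
Then
\[
\varepsilon_{C\circ D}\chi=\varpi(\varepsilon_C\chi_C\circ\varepsilon_D\chi_D)(\id_J\circ\tau)(r^\circ_J)^{-1}=\varpi(\id_J\circ\tau)(r^\circ_J)^{-1}.
\]
By the right unit axiom of the algebra $(J,\varpi,\tau)$, we have $\varpi(\id_J\circ\tau)=r^\circ_J$, so the right-hand side equals $\id_J$.

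Applying Proposition \ref{prop:coalgebra-induct-functor} once more, $-\bullet(C\circ D)$ is separable. The only point needing care is using the unitality of $J$ as a $\circ$-algebra rather than some extra hypothesis on $\varpi$; this mirrors the use of the counitality of $(I,\delta,\tau)$ in the algebra case. No generator or cogenerator assumption enters, because Proposition \ref{prop:coalgebra-induct-functor} holds without one.
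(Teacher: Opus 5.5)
Your proof is correct and is essentially the paper's argument: you take the sections $\chi_C,\chi_D$ of the counits from Proposition \ref{prop:coalgebra-induct-functor}, and you use $\chi=(\chi_C\circ\chi_D)(\id_J\circ\tau)(r^\circ_J)^{-1}$ together with the right unit axiom $\varpi(\id_J\circ\tau)=r^\circ_J$ to split $\varepsilon_{C\circ D}=\varpi(\varepsilon_C\circ\varepsilon_D)$. You are also right that no splitting of $\varpi$ and no cogenerator hypothesis is needed.
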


\begin{proof}
By Proposition \ref{prop:coalgebra-induct-functor}, there are morphisms $\chi_C:J\to C$, $\chi_D:J\to D$ in $\cc$ such that $\varepsilon_{C} \chi_C  = \id_J$, $\varepsilon_D \chi_D = \id_J$. It follows that $\varepsilon_{C\circ D}(\chi_C \circ \chi_D)(\id_J \circ \tau) (r_J^{\circ})^{-1} = \varpi( \varepsilon_C \circ \varepsilon_D) (\chi_C \circ \chi_D)(\id_J \circ \tau) (r_J^{\circ})^{-1} = \varpi(\id_J \circ \tau) (r_J^{\circ})^{-1} = r_J^{\circ} (r_J^{\circ})^{-1} = \id_J$, where $\tau: I \to J$ is the unit of $J$ in $\cc^{\circ}$.
\end{proof}

\begin{es}
    Let $C$, $D$ be  coalgebras in a pre-braided monoidal category $\cc$ such that $-\otimes C$, $-\otimes D$ are semiseparable (resp., separable, naturally full). Since $\varpi:1\otimes 1\to 1$ is an isomorphism, by Proposition \ref{prop:duoidal-coalgebras} (resp., Proposition \ref{prop:duoidal-coalgebras-sep}, Proposition \ref{prop:duoidal-coalgebras}), we obtain that $-\otimes (C\otimes D)$ is also semiseparable (resp., separable, naturally full).  
\end{es}    
    

\noindent\textbf{Acknowledgements.} The authors would like to thank A.\@ Ardizzoni for meaningful suggestions and comments. This paper was written while the authors were members of the ``National Group for Algebraic and Geometric Structures and their Applications'' (GNSAGA-INdAM). L. Bottegoni was supported by a postdoctoral fellowship at the Department of Mathematics of the Universit\'e Libre de Bruxelles, within the framework of the ARC project ``From algebra to combinatorics, and back''. Z. Zuo sincerely acknowledges the support provided by CSC (China Scholarship Council) through a PhD
student fellowship (No. 202406190047). This work was partially supported by the project funded by the European Union - NextGenerationEU under NRRP, Mission 4 Component 2 CUP D53D23005960006 - Call PRIN 2022 No.\, 104 of February 2, 2022 of Italian Ministry of University and Research; Project 2022S97PMY \textit{Structures for Quivers, Algebras and Representations (SQUARE).}

\end{document}